\newcommand\reallywidehat[1]{%
\savestack{\tmpbox}{\stretchto{%
  \scaleto{%
    \scalerel*[\widthof{\ensuremath{#1}}]{\kern-.6pt\bigwedge\kern-.6pt}%
    {\rule[-\textheight/2]{1ex}{\textheight}}%WIDTH-LIMITED BIG WEDGE
  }{\textheight}% 
}{0.5ex}}%
\stackon[1pt]{#1}{\tmpbox}%
}
\definecolor{darkgreen}{rgb}{0,0.5,0}
\definecolor{darkblue}{rgb}{0,0,0.7}
\definecolor{darkred}{rgb}{0.9,0.1,0.1}
\definecolor{lightblue}{rgb}{0,0.51,1}
\newtheorem{theorem}{Theorem}
\newtheorem{proposition}{Proposition}
\newtheorem{lemma}[proposition]{Lemma}
\newtheorem{corollary}[proposition]{Corollary}
\theoremstyle{remark}
\newtheorem{remark}[proposition]{Remark}
\theoremstyle{definition}
\numberwithin{equation}{section}
\numberwithin{proposition}{section}
\newcommand{\Z}{\mathbb{Z}}
\newcommand{\N}{\mathbb{N}}
\newcommand{\R}{\mathbb{R}}
\newcommand{\C}{\mathbb{C}}
\newcommand{\ep}{\varepsilon}
\renewcommand{\subset}{\subseteq}
\DeclareMathOperator{\Rel}{Re}
\renewcommand{\tilde}{\widetilde}
\begin{document}

\title[Navier-Stokes equations in the half-space for non localized data]{Estimates for the Navier-Stokes equations in the half-space for non localized data}

\begin{abstract}
This paper is devoted to the study of the Stokes and Navier-Stokes equations, in a half-space, for initial data in a class of locally uniform Lebesgue integrable functions, namely $L^q_{uloc,\sigma}(\R^d_+)$. We prove the analyticity of the Stokes semigroup $e^{-t{\bf A}}$ in $L^q_{uloc,\sigma}(\R^d_+)$ for $1<q\leq\infty$. This follows from the analysis of the Stokes resolvent problem for data in $L^q_{uloc,\sigma}(\R^d_+)$, $1<q\leq\infty$. We then prove bilinear estimates for the Oseen kernel, which enables to prove the existence of mild solutions. The three main original aspects of our contribution are: (i) the proof of Liouville theorems for the resolvent problem and the time dependent Stokes system under weak integrability conditions, (ii) the proof of pressure estimates in the half-space and (iii) the proof of a  concentration result for blow-up solutions of the Navier-Stokes equations. This concentration result improves a recent result by Li, Ozawa and Wang and provides a new proof.
\end{abstract}

\author[Y. Maekawa]{Yasunori Maekawa}
\address[Y. Maekawa]{Kyoto University, Department of Mathematics, Kyoto, Japan}
\email{maekawa@math.kyoto-u.ac.jp}

\author[H. Miura]{Hideyuki Miura}
\address[H. Miura]{Tokyo Institute of Technology, Department of 
Mathematical and Computing Sciences, Tokyo, Japan}
\email{miura@is.titech.ac.jp}

\author[C. Prange]{Christophe Prange}
\address[C. Prange]{Universit\'e de Bordeaux, CNRS, UMR [5251], IMB, Bordeaux, France}
\email{christophe.prange@math.u-bordeaux.fr}

\keywords{}
\subjclass[2010]{}
\date{\today}

\maketitle

\section{Introduction}

This paper is devoted to the study of fluid equations in the half-space $\R^d_+$. Our goal is twofold. First we show the analyticity of the Stokes semigroup for data belonging to the locally uniform Lebesgue space $L^q_{uloc,\sigma}(\R^d_+)$ for $1<q\leq \infty$ (uniformly locally in $L^p$, divergence-free and normal component zero on the boundary; a precise definition is given below in Section \ref{sec.setting}). Second we prove optimal bounds for the Oseen kernel $e^{-t{\bf A}}\mathbb P\nabla\cdot$ and get as a by-product the short time existence of mild solutions to the Navier-Stokes system with noslip boundary conditions
\begin{equation}\label{eq.ns.intro}
  \left\{
\begin{aligned}
 \partial_t u - \Delta u + \nabla p & = -\nabla \cdot   (u \otimes u ) , \quad \nabla \cdot u = 0  \qquad \mbox{in}~ (0,T)\times \R^d_+\,, \\
u & = 0\quad \mbox{on}~ (0,T)\times \partial\R^d_+,  \qquad u|_{t=0}=u_0 \quad \mbox{in}~\partial \R^d_+,
\end{aligned}\right.
\end{equation}
for non localized initial data $u_0\in L^q_{uloc,\sigma}(\R^d_+)$, $q\geq d$. Our results directly yield the concentration of the scale critical $L^d$ norm for blow-up solutions of the Navier-Stokes equations. 

\subsection{Outline of our results}

Our analysis relies on the study of the Stokes resolvent problem. The first paragraph below contains our main result in this direction. This enables (see second paragraph) to prove the analyticity of the Stokes semigroup in $L^q_{uloc,\sigma}$ for $q\in(1,\infty]$. We then (third paragraph below) state the bilinear estimates for the Oseen kernel, which allow the study of mild solutions in a way that is standard since the work of Fujita and Kato \cite{FK64}. We state the concentration result. The fourth paragraph is devoted to the Liouville theorems proved in Appendix \ref{sec.liouville} and Appendix \ref{sec.liouville.nonsteady}.

Let us emphasize three aspects of our results, which we believe are the most original. First, our Liouville theorems for the resolvent problem and the time dependent Stokes system hold under weak integrability conditions. Second, we prove pressure estimates in the half-space, which are key to our analysis of local energy weak solutions in \cite{MMP17b}. Third, we show a concentration phenomenon for blow-up solutions of the Navier-Stokes equations. Our result improves a recent result by Li, Ozawa and Wang \cite{LOW16} and provides a new proof. These aspects are discussed more extensively in the proceedings paper \cite{P18Xedp}.

\subsubsection*{Stokes resolvent problem}
The following statements are the main tools of the rest of the paper.  A considerable part of our work is concerned with estimates for the resolvent problem for the (stationary) Stokes system
\begin{equation}
\label{e.resol}
\left\{ 
\begin{aligned}
& \lambda u -\Delta u+ \nabla p = f, \quad  \nabla\cdot u=0  & \mbox{in} &\ \R^d_+, \\
& u = 0  & \mbox{on} & \ \partial\R^d_+. 
\end{aligned}
\right.
\end{equation}
for non localized data $f$ in the class $L^q_{uloc,\sigma}(\R^d_+)$, for $1<q\leq\infty$.

\begin{theorem}
\label{prop.fresolvent}
Let $1 < q \leq \infty$, $\ep >0$. Let $\lambda$ be a complex number in the sector $S_{\pi-\ep}$.  Let $f\in L^q_{uloc,\sigma}(\R^d_+)$. Then there exist $C(d,\ep,q)<\infty$ (independent of $\lambda$) and  a unique solution $(u,\nabla p)\in L^q_{uloc}(\R^d_+)^d \times L^1_{uloc} (\R^d_+)^d$ to \eqref{e.resol} in the sense of distributions such that 
\begin{align} 
|\lambda| \| u\|_{L^q_{uloc}} + |\lambda|^\frac12 \| \nabla u \|_{L^q_{uloc}} & \leq C\| f\|_{L^q_{uloc}},\label{est.prop.fresolvent.1}\\
\| \nabla^2 u \|_{L^q_{uloc}} + \| \nabla p \|_{L^q_{uloc}} & \leq C \left(1+e^{-c|\lambda|^\frac12} \log |\lambda| \right)\| f\|_{L^q_{uloc}},\quad\mbox{for}\quad q\neq \infty\label{est.prop.fresolvent.2}
\end{align}
and 
\begin{equation}\label{e.condpressure}
\lim_{R \rightarrow \infty} \| \nabla' p \|_{L^1(|x'|<1, R<x_d<R+1)}=0.
\end{equation}
Moreover, for $1<q=p\leq \infty$ or $1 \leq q < p\leq \infty$ satisfying $\frac1q-\frac1p<\frac1d$, there exists a constant $C(d,\ep,q,p)<\infty$ (independent of $\lambda$) such that
\begin{align}
\| u\|_{L^p_{uloc}} & \leq C | \lambda|^{-1} \left(1+|\lambda|^{\frac{d}{2}(\frac1q-\frac1p)}\right) \| f \|_{L^q_{uloc}}, \label{est.prop.fresolvent.3}\\
\| \nabla u\|_{L^p_{uloc}} & \leq C |\lambda|^{-\frac12} \left( 1+ |\lambda|^{\frac{d}{2}(\frac1q-\frac1p)}\right) \| f \|_{L^q_{uloc}}.\label{est.prop.fresolvent.4}
\end{align} 
\end{theorem}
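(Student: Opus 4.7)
My plan is to transfer the classical $L^q(\R^d_+)$ resolvent theory of Giga--Sohr--Solonnikov to the uniformly local class by a near/far decomposition that exploits the exponential decay of the half-space Stokes resolvent Green matrix $G_\lambda(x,y)$ at the scale $|\lambda|^{-1/2}$. Fix a reference cube $\cu=\cu_1(x_0)$ with $x_0\in\overline{\R^d_+}$ and a cutoff $\chi$ equal to $1$ on $\cu_\rho(x_0)$ and supported in $\cu_{2\rho}(x_0)$, with $\rho\simeq\max(|\lambda|^{-1/2},1)$, and split $f=\chi f+(1-\chi)f$. The near part $\chi f$ belongs to $L^q(\R^d_+)$ with $\|\chi f\|_{L^q}\lesssim\rho^{d/q}\|f\|_{L^q_{uloc}}$, and the classical Stokes resolvent estimates yield a pair $(u_1,\nabla p_1)$ satisfying the expected $L^q$ bounds, whose restriction to $\cu$ contributes the correct powers of $|\lambda|$ in \eqref{est.prop.fresolvent.1}--\eqref{est.prop.fresolvent.2}.

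For the far part $(1-\chi)f$ I rely on pointwise bounds
\begin{equation*}
 |\nabla_x^k G_\lambda(x,y)|\lesssim \frac{e^{-c|\lambda|^{1/2}|x-y|}}{|x-y|^{d-2+k}},\qquad k=0,1,2,\quad |x-y|\gtrsim|\lambda|^{-1/2},
\end{equation*}
which follow either from the Ukai/Desch--Hieber--Pr\"uss representation of the half-space resolvent or from the Laplace identity $G_\lambda=\int_0^\infty e^{-\lambda t} K_t\,dt$ combined with known Gaussian bounds on the Stokes kernel $K_t$. Writing $u_2(x)=\int G_\lambda(x,y)(1-\chi(y))f(y)\,dy$, applying Minkowski's inequality cube by cube over $\Z^d$ and summing the geometric series in $e^{-c|\lambda|^{1/2}|k|}$ yields \eqref{est.prop.fresolvent.1} and the velocity part of \eqref{est.prop.fresolvent.2}. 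The smoothing bounds \eqref{est.prop.fresolvent.3}--\eqref{est.prop.fresolvent.4} follow from the same kernel estimates via Young's convolution inequality, the gain $|\lambda|^{\frac{d}{2}(\frac{1}{q}-\frac{1}{p})}$ arising from the scaling of $\|G_\lambda\|_{L^r}$ in $|\lambda|$.

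The main obstacle is the pressure, which is genuinely nonlocal in the half-space. Taking divergence in \eqref{e.resol} and using the no-slip condition reduces $p$ to the Neumann problem $\Delta p=\nabla\cdot f$ in $\R^d_+$ with $\partial_d p|_{x_d=0}=(f_d+\Delta' u_d-\lambda u_d)|_{x_d=0}$, so that $\nabla p$ decomposes into a bulk Calder\'on--Zygmund part, which is treated as above, and a Poisson-type boundary contribution whose kernel decays only polynomially in the tangential directions. A tangential dyadic decomposition of this boundary term against $L^q_{uloc}$ data produces, after summation, a $\log|\lambda|$ loss multiplied by the boundary-layer factor $e^{-c|\lambda|^{1/2}x_d}$, which is exactly the shape of the right-hand side of \eqref{est.prop.fresolvent.2}. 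The limit \eqref{e.condpressure} comes from the same representation: the obstruction to decay of $\nabla p$ as $x_d\to\infty$ is carried by vertical modes depending only on $x_d$, which are annihilated by the horizontal derivative $\nabla'$. Existence then follows by approximating $f$ by $L^q\cap L^q_{uloc}$ truncations, solving via the classical theory, and passing to the limit using the uniform bounds above; uniqueness is the content of the Liouville theorem proved in Appendix \ref{sec.liouville}, applied to the difference of two candidate solutions. The endpoint $q=\infty$ is recovered by a final approximation argument from $q<\infty$.
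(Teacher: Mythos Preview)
Your scheme relies on the pointwise bound
\[
|\nabla_x^k G_\lambda(x,y)|\lesssim \frac{e^{-c|\lambda|^{1/2}|x-y|}}{|x-y|^{d-2+k}},
\]
and this is false for the Stokes resolvent Green matrix in $\R^d_+$. Only the Dirichlet--Laplace piece of the resolvent has this exponential decay; the nonlocal part coming from the pressure does not. In the paper's kernel analysis (Proposition~\ref{prop.estkernel}) the kernel $r_\lambda(y',y_d,z_d)$ decays exponentially only in the variable $z_d$, while in $y_d$ and in the tangential variable $|y'|$ the decay is merely polynomial, of the form $(y_d+z_d+|y'|)^{-(d-1)}(1+|\lambda|^{1/2}(y_d+z_d+|y'|))^{-1}$. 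Equivalently, the time-dependent Stokes kernel in the half-space is known \emph{not} to satisfy Gaussian bounds, so your Laplace-transform argument cannot produce exponential decay either. As a consequence, the far-field contribution $u_2$ is not controlled by a geometric series over $\Z^d$; the tangential sum is a borderline polynomial series that has to be handled directly.

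This is precisely the difficulty the paper is built around. The actual proof (Section~\ref{sec.resolest}) splits $u=v+w$ into the Dirichlet--Laplace part and the nonlocal part and estimates each in $L^q_{uloc}$ from the explicit kernel bounds of Section~\ref{sec.resolhp}. The nonlocal part requires genuinely different arguments: for instance, the operator $(T_{1,1}h)(y_d)=\int_0^1 \frac{e^{-c|\lambda|^{1/2}z_d}}{|\lambda|^{1/2}(1+|\lambda|^{1/2}(y_d+z_d))}h(z_d)\,dz_d$ is only weak-type $(1,1)$, and Marcinkiewicz interpolation is needed to get $L^q$-boundedness for $1<q<\infty$; this is also why $q=1$ is excluded in \eqref{est.prop.fresolvent.1}. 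The $\log|\lambda|$ loss for $\nabla^2 u$ and $\nabla p$ then arises from the vertical integral $\int_0^\infty e^{-c|\lambda|^{1/2}z_d}(1+z_d)^{-1}\,dz_d$ against this polynomially decaying kernel, not from a ``boundary Poisson contribution'' layered on top of a CZ estimate. Finally, the decay \eqref{e.condpressure} is obtained from the specific representation $\nabla' p(y_d)=R'\nabla' P(y_d)\cdot\partial_{y_d}u'(0)$ together with $\nabla' P_{y_d}\in\mathcal H^1(\R^{d-1})$, not from a structural cancellation of vertical modes. Your near/far outline, and in particular the geometric-series summation, does not survive once the correct (anisotropic, non-exponential) kernel bounds are used.
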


Theorem \ref{prop.fresolvent} is proved in Section \ref{sec.resolest}.

Uniqueness comes from the condition \eqref{e.condpressure} which eliminates the parasitic solutions of our Liouville-type result, Theorem \ref{thm.unique} given in the Appendix. 
Condition \eqref{e.condpressure} is easily verified for the pressure represented via the integral formulas of Section \ref{sec.intrepr}. With Theorem \ref{prop.fresolvent}, one can define the resolvent operator $R(\lambda)=(\lambda+\bf A)^{-1}$ on the sector $S_{\pi-\ep}$ for given $\ep>0$. As is classical, the bounds on the solution to the resolvent problem are crucial to estimate the semigroup. The mixed $p,\, q$ bounds \eqref{est.prop.fresolvent.3} and \eqref{est.prop.fresolvent.4} are particularly important in view of the study of the nonlinear term in the Navier-Stokes equations. Let us comment on two points. First, we are not able to remove the $|\log(\lambda)|$ loss for small $\lambda$ in \eqref{est.prop.fresolvent.2}. We are ignorant whether there is a real obstruction or if this is just a technical issue. Second, the estimate \eqref{est.prop.fresolvent.1} fails for $q=1$. This is a fundamental point, which was already observed in the case of $L^1(\R^d_+)$ by Desch, Hieber and Pr\"uss in \cite{DHP01}. It is specific to the case of $\R^d_+$ as opposed to $\R^d$. We comment more on this below.

\subsubsection*{Stokes semigroup}

Let $\bf A$ be the Stokes operator realized in $L^q_{uloc,\sigma}(\R^d_+)$ (for precise definitions, see Section \ref{sec.semigroup}). Time dependent estimates for the semigroup are derived from the resolvent bounds using classical techniques from complex analysis. 

\begin{theorem}\label{theo.analyticity}
Let  $1<q \leq \infty$. Then $-{\bf A}$ generates a bounded analytic semigroup in $L^q_{uloc,\sigma}(\R^d_+)$.
\end{theorem}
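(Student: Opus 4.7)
Given the resolvent estimate~\eqref{est.prop.fresolvent.1}, the plan is to apply the classical Dunford--Riesz functional calculus to construct the semigroup as a contour integral, and then to read off analyticity and boundedness directly from that representation. No new PDE input beyond Theorem~\ref{prop.fresolvent} is expected to be needed.

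\textbf{Step 1: resolvent operator and Stokes operator.} First I would promote Theorem~\ref{prop.fresolvent} to a functional statement: for each $\lambda \in S_{\pi-\epsilon}$, associate to $f$ the unique solution $u$ selected by~\eqref{e.condpressure}; this defines a bounded linear operator $R(\lambda) : L^q_{uloc,\sigma}(\R^d_+) \to L^q_{uloc,\sigma}(\R^d_+)$ with $\|R(\lambda)\| \leq C/|\lambda|$. Applying the uniqueness statement of Theorem~\ref{prop.fresolvent} to the difference of two resolvent equations at $\lambda, \mu$ yields the resolvent identity $R(\lambda)-R(\mu)=(\mu-\lambda)R(\lambda)R(\mu)$. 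In particular $R(\lambda)$ is injective, its range $D({\bf A}) := R(\lambda_0)\, L^q_{uloc,\sigma}(\R^d_+)$ is independent of $\lambda_0$, and setting ${\bf A}u := \lambda_0 u - R(\lambda_0)^{-1} u$ defines a closed operator on that domain with $(\lambda + {\bf A})^{-1} = R(\lambda)$ for all $\lambda \in S_{\pi-\epsilon}$.

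\textbf{Step 2: Dunford representation.} Fix $\epsilon \in (0,\pi/2)$ and let $\Gamma$ be the boundary of $S_{\pi-\epsilon/2}$ oriented so that $\mathrm{Im}\,\lambda$ decreases along $\Gamma$. For $t$ in the sector $S_{\pi/2-\epsilon}$ I would set
\[
e^{-t{\bf A}} f := \frac{1}{2\pi i} \int_\Gamma e^{-t\lambda} R(\lambda) f \, d\lambda.
\]
Parameterising $\Gamma$ by $\lambda = r e^{\pm i(\pi-\epsilon/2)}$, the integrand is bounded in operator norm by $C e^{-c r |t|}\, r^{-1}$ for some $c=c(\epsilon) > 0$ and all $t$ in any closed subsector of $S_{\pi/2-\epsilon}$. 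This yields absolute convergence in operator norm, analyticity of $t \mapsto e^{-t{\bf A}}$ on $S_{\pi/2-\epsilon}$, and a uniform bound $\|e^{-t{\bf A}}\|_{L^q_{uloc,\sigma}\to L^q_{uloc,\sigma}} \leq M(\epsilon)$. The semigroup identity $e^{-(t+s){\bf A}} = e^{-t{\bf A}} e^{-s{\bf A}}$ then follows from the resolvent identity and Fubini by deforming one of two nested contours in the standard way, and differentiating the contour integral in $t$ identifies the generator as $-{\bf A}$.

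\textbf{Main obstacle.} The genuinely delicate point is the case $q=\infty$: the domain $D({\bf A})$ is not dense in $L^\infty_{uloc,\sigma}(\R^d_+)$, so strong continuity at $t=0^+$ is lost and the semigroup is not of class $C_0$. The statement "bounded analytic semigroup" therefore has to be read in the weaker sense provided by the Dunford integral, namely operator-norm analyticity on the open sector with a uniform bound, together with the algebraic semigroup property. For $1<q<\infty$, density of $D({\bf A})$ can be recovered by approximating solenoidal $L^q_{uloc}$ data by smooth, compactly supported, divergence-free fields (where the standard Helmholtz decomposition on bounded pieces is well behaved), and the usual strong continuity at $t=0^+$ then follows from the uniform bound and a density argument. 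Apart from this $q=\infty$ subtlety, the theorem is essentially a corollary of Theorem~\ref{prop.fresolvent}.
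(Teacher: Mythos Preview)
Your approach is essentially the paper's: define $\mathbf{A}$ via the resolvent family $R(\lambda)$ from Theorem~\ref{prop.fresolvent}, verify injectivity and the resolvent identity through the uniqueness statement, and then invoke the sectorial resolvent bound and the Dunford integral (this is exactly the content of Section~\ref{sec.semigroup}, in particular Proposition~\ref{prop.analyticity} and the proof of Proposition~\ref{prop.L^p_u-L^q_u.semigroup}).

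Two corrections are needed. First, the Dunford integral carries the wrong sign: with $R(\lambda)=(\lambda+\mathbf{A})^{-1}$ and $\Gamma\subset \partial S_{\pi-\epsilon/2}$ parameterised by $\lambda=re^{\pm i(\pi-\epsilon/2)}$, one has $\mathrm{Re}(-t\lambda)=tr\cos(\epsilon/2)>0$ for $t>0$, so $|e^{-t\lambda}|$ blows up and your integral diverges. The correct formula is $e^{-t\mathbf{A}}f=\frac{1}{2\pi i}\int_\Gamma e^{t\lambda}(\lambda+\mathbf{A})^{-1}f\,d\lambda$, as in \eqref{proof.prop.L^p_u-L^q_u.semigroup.1}. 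Second, your claim that $D(\mathbf{A})$ is dense in $L^q_{uloc,\sigma}(\R^d_+)$ for $1<q<\infty$ is false: compactly supported functions are never dense in $L^q_{uloc}$ (a nonzero constant vector field has positive $L^q_{uloc}$ distance from any compactly supported field), and the paper explicitly notes that $D(\mathbf{A})$ fails to be dense for every $q$. This does not affect the theorem, which concerns a bounded analytic semigroup in precisely the weak sense you describe, but the side remark on recovering strong continuity for finite $q$ should be dropped.
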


More precise statements (and their proofs) along with global in time estimates for the linear Stokes dynamic are given in Section \ref{sec.semigroup}, see in particular propositions \ref{prop.analyticity} and \ref{prop.L^p_u-L^q_u.semigroup}. Again, because of the failure of \eqref{est.prop.fresolvent.1} when $q=1$, $-{\bf A}$ fails to generate an analytic semigroup in $L^1_{uloc,\sigma}(\R^d_+)$. This is due to the presence of the boundary.

\subsubsection*{Bilinear estimates, mild solutions and concentration for blow-up solutions}

Our main result is the following bilinear estimate, from which the 
short time existence of mild solutions follows as a corollary.

\begin{theorem}\label{prop.L^p_u-L^q_u.semigroup.inhomo}  
Let  $1<q \leq p \leq \infty$ or $1\leq q<p\leq\infty$ and let ${\bf A}$ be the Stokes operator realized in $L^q_{uloc,\sigma}(\R^d_+)$.  Then for $\alpha=0,1$, and $t>0$,
\begin{align}\label{est.prop.L^p_u-L^q_u.semigroup.inhomo.1} 
\| \nabla^\alpha  e^{-t {\bf A}} \mathbb{P} \nabla \cdot (u\otimes v) \|_{L^p_{uloc}}  \leq Ct^{-\frac{1+\alpha}{2}}  \big (t^{-\frac{d}{2}(\frac1q-\frac1p)} +1  \big ) \| u\otimes v \|_{L^q_{uloc}}, 
\end{align}
and also 
\begin{align}\label{est.prop.L^p_u-L^q_u.semigroup.inhomo.2} 
\| \nabla  e^{-t {\bf A}} \mathbb{P} \nabla \cdot (u\otimes v) \|_{L^q_{uloc}}  \leq Ct^{-\frac{1}{2}}  \big ( \| u \nabla v \|_{L^q_{uloc}} + \| v\nabla u \|_{L^q_{uloc}} \big ).
\end{align}
Estimates \eqref{est.prop.L^p_u-L^q_u.semigroup.inhomo.1} and \eqref{est.prop.L^p_u-L^q_u.semigroup.inhomo.2} are valid also for $q=1$.
\end{theorem}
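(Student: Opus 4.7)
The plan is to deduce both bilinear estimates from Theorem~\ref{prop.fresolvent} combined with the Dunford integral representation of the analytic semigroup,
\begin{equation*}
e^{-t{\bf A}}g = \frac{1}{2\pi i}\int_{\Gamma} e^{\lambda t}(\lambda + {\bf A})^{-1} g\, d\lambda,
\end{equation*}
with $\Gamma\subset S_{\pi-\ep}$ a contour such that $|\lambda|\gtrsim 1/t$ on its bounded part. The principal difficulty is that $\mathbb{P}\nabla\cdot(u\otimes v)$ lies one derivative below $L^q_{uloc}$, so Theorem~\ref{prop.fresolvent} cannot be applied as a black box; the first task is to upgrade the resolvent bounds to divergence-form data.

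For Step~1 (divergence-form resolvent bounds), I would show that for $F\in L^q_{uloc}(\R^d_+)^{d\times d}$ ($1<q<\infty$) and $\lambda\in S_{\pi-\ep}$, the resolvent system with right-hand side $\mathbb{P}\nabla\cdot F$ admits a unique solution obeying
\begin{equation*}
|\lambda|^{\frac12}\|u\|_{L^q_{uloc}} + \|\nabla u\|_{L^q_{uloc}} \leq C\|F\|_{L^q_{uloc}},
\end{equation*}
together with the mixed bound $|\lambda|^{1/2}\|u\|_{L^p_{uloc}} \leq C(1+|\lambda|^{(d/2)(1/q-1/p)})\|F\|_{L^q_{uloc}}$. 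Two viable routes are: (a) a duality argument in which one tests against the adjoint resolvent with data $\varphi$ and invokes Theorem~\ref{prop.fresolvent} on the adjoint solution, controlling $\|\nabla \varphi\|_{L^{q'}_{uloc}}$ via \eqref{est.prop.fresolvent.1}; (b) integration by parts on the integral representation of Section~\ref{sec.intrepr}, $u(x)=\int \nabla_y G_\lambda(x,y):F(y)\,dy$, using the pointwise kernel bounds $\sup_x\int|\nabla_y G_\lambda(x,y)|\,dy \lesssim |\lambda|^{-1/2}$. Uniqueness is supplied by Theorem~\ref{thm.unique}, once the pressure decay \eqref{e.condpressure} is verified for the solution built via the integral representation.

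For Step~2 (contour integration), substituting the Step~1 bound into the Dunford formula and using $\int_\Gamma|e^{\lambda t}||\lambda|^{-s}|d\lambda|\lesssim t^{s-1}$ for $s>0$ yields
\begin{equation*}
\|\nabla^\alpha e^{-t{\bf A}}\mathbb{P}\nabla\cdot F\|_{L^p_{uloc}} \leq Ct^{-\frac{1+\alpha}{2}}\bigl(1+t^{-\frac{d}{2}(\frac1q-\frac1p)}\bigr)\|F\|_{L^q_{uloc}}
\end{equation*}
for $1<q\leq p\leq\infty$ and $\alpha\in\{0,1\}$. Specializing to $F=u\otimes v$ gives \eqref{est.prop.L^p_u-L^q_u.semigroup.inhomo.1}. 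The endpoint $q=1$ is handled by the semigroup factorization $e^{-t{\bf A}} = e^{-(t/2){\bf A}} e^{-(t/2){\bf A}}$: apply the above estimate from $L^r_{uloc}$ to $L^p_{uloc}$ for some $r\in(1,p]$ on the outer factor, and combine with the $L^1_{uloc}$--$L^r_{uloc}$ smoothing of Proposition~\ref{prop.L^p_u-L^q_u.semigroup} on the inner factor. For \eqref{est.prop.L^p_u-L^q_u.semigroup.inhomo.2} one uses $\nabla\cdot(u\otimes v) = v\cdot\nabla u + (\nabla\cdot v)u$, so that $\|\nabla\cdot(u\otimes v)\|_{L^q_{uloc}} \leq \|u\nabla v\|_{L^q_{uloc}}+\|v\nabla u\|_{L^q_{uloc}}$, and then the claim reduces to the linear smoothing $\|\nabla e^{-t{\bf A}}\mathbb{P}h\|_{L^q_{uloc}}\leq Ct^{-1/2}\|h\|_{L^q_{uloc}}$, which follows from Theorem~\ref{prop.fresolvent} via Dunford using only \eqref{est.prop.fresolvent.1}. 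At $q=1$ the Dunford route fails, but the pointwise kernel bound $\sup_x\int|\nabla_x K(x,y,t)|\,dy\lesssim t^{-1/2}$ for the Oseen kernel $K$ from Section~\ref{sec.intrepr} gives the same conclusion by Young's inequality applied locally on unit cubes.

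The main obstacle is Step~1: while the homogeneous resolvent estimates are in hand, extending them to divergence data in locally uniform spaces requires carefully handling (i) the boundedness of the Helmholtz projection $\mathbb{P}$ on $L^q_{uloc}(\R^d_+)$, which is non-trivial because of the non-local nature of $\mathbb{P}$ on the half-space; (ii) boundary terms in integrations by parts, for which the no-slip condition $u|_{\partial\R^d_+}=0$ is crucial; and (iii) avoiding the logarithmic loss in \eqref{est.prop.fresolvent.2} by working exclusively with $\nabla u$ bounds rather than with $\nabla^2 u$ or pressure bounds. Once Step~1 is settled, the remaining steps are standard contour computations and semigroup factorizations.
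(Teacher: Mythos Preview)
Your overall architecture (divergence-form resolvent estimate, then Dunford integral) matches the paper, but Step~1 and your argument for \eqref{est.prop.L^p_u-L^q_u.semigroup.inhomo.2} both hide the central obstruction of the whole paper: the Helmholtz--Leray projection $\mathbb{P}$ is \emph{not} bounded on $L^q_{uloc}(\R^d_+)$. Your route (a) collapses immediately because $L^q_{uloc}$ is non-reflexive and its dual is the weighted space $WL^{q'}$, not an $L^{q'}_{uloc}$-type space, so there is no adjoint resolvent theory of the kind you need. Your route (b) is closer in spirit but is not a matter of simply writing $u=\int \nabla_y G_\lambda\!:\!F$ and citing a bound $\int|\nabla_y G_\lambda|\lesssim|\lambda|^{-1/2}$: the Green function $G_\lambda$ already encodes $\mathbb{P}$, and its nonlocal pressure part does \emph{not} satisfy such an $L^1$ bound uniformly in $\lambda$ (cf.\ the $\log|\lambda|$ loss in \eqref{est.prop.fresolvent.2}). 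The paper instead computes $\mathbb{P}\nabla\cdot$ explicitly (formulas \eqref{e.tancomp}--\eqref{e.vertcomp}), integrates by parts in the vertical variable to trade $\partial_d$ for tangential multipliers, and then introduces a $\lambda$-dependent frequency cut-off at scale $|\lambda|^{1/2}$ (Lemma~\ref{lem.kernelsecdivleray'}): the high-frequency piece is written as $(-\Delta')^{(2-\theta)/2}$ acting on a term with the good $|\lambda|^{-(1-\theta)/2}$ gain, handled by the fractional-order resolvent bounds \eqref{est:v2'}, \eqref{est:w2'}; the low-frequency piece lands directly in $L^q_{uloc}$ with a $|\lambda|^{1/2}$ factor. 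This decomposition (Proposition~\ref{prop.summarizebilin}) is the missing idea in your Step~1.

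Your argument for \eqref{est.prop.L^p_u-L^q_u.semigroup.inhomo.2} has the same flaw in sharper form. You write $\nabla\cdot(u\otimes v)=v\cdot\nabla u+(\nabla\cdot v)u\in L^q_{uloc}$ and then invoke $\|\nabla e^{-t{\bf A}}\mathbb{P}h\|_{L^q_{uloc}}\lesssim t^{-1/2}\|h\|_{L^q_{uloc}}$; but this last estimate is precisely what fails, since $\mathbb{P}h$ need not lie in $L^q_{uloc}$ and $e^{-t{\bf A}}$ is only defined on $L^q_{uloc,\sigma}$. The paper's proof of the corresponding resolvent bound \eqref{e.estbilinearlosslambda.2} is considerably more delicate: tangential derivatives commute with $(\lambda+{\bf A})^{-1}\mathbb{P}\nabla\cdot$, the vertical derivative of the $d$-th component is recovered from the divergence-free condition, and $\partial_d U'$ is obtained by rewriting $U'$ via the Dirichlet-Laplace resolvent plus the pressure, then controlling the pressure term through the Neumann-Laplace resolvent and the fractional tangential estimate \eqref{proof.thm.estbilinop.2}. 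None of this is accessible from the black-box reduction you propose.
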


The proof of Theorem \ref{prop.L^p_u-L^q_u.semigroup.inhomo} is given in Section \ref{sec.bilinse}. 
The function $e^{-t {\bf A}} \mathbb{P} \nabla \cdot (u\otimes v)$ is expressed in terms of the integral over $\R^3_+$ with some kernels satisfying suitable pointwise estimates, and such a representation itself is well-defined and satisfies \eqref{est.prop.L^p_u-L^q_u.semigroup.inhomo.1} and \eqref{est.prop.L^p_u-L^q_u.semigroup.inhomo.2} even for the case $q=1$, though we do not have the analyticity of the semigroup $\{e^{-t{\bf A}}\}_{t\geq 0}$ in $L^1_{uloc,\sigma} (\R^3_+)$.

There is quite some work to go from the semigroup bounds on $e^{-t{\bf A}}$ to the bounds on $e^{-t{\bf A}}\mathbb P\nabla\cdot$ of Theorem \ref{prop.L^p_u-L^q_u.semigroup.inhomo}, which are needed to solve the Navier-Stokes equations. 
Two of the key difficulties are: (i) making sense of the action of $e^{-t{\bf A}}\mathbb P\nabla\cdot$ on $L^q_{uloc,\sigma}$ functions, since 
the Helmholtz-Leray projection does not act as a bounded operator on $L^q_{uloc}$, (ii) the non commutativity of vertical derivatives and the symbol (in tangential Fourier) for the  projection $\mathbb P$.

The existence of mild solutions for initial data $u_0\in L^q_{uloc,\sigma}$, $q\geq d$, is stated in Proposition \ref{prop.mild.ns} and Proposition \ref{thm.mild.ns} in Section \ref{sec.nse}. Once the bilinear estimate of Theorem \ref{prop.L^p_u-L^q_u.semigroup.inhomo} is established, the local in time existence of mild solutions
can be shown by the standard arguments. 
It is not our objective to include a deeper discussion of the mild solutions here. 
We only note that the pressure $p$ associated to the mild solution $u$ can be constructed 
so that the pair $(u, p)$ satisfies  \eqref{eq.ns.intro} in the sense of distributions;
see \cite{MMP17b} for the details. 
Many other issues, such as the convergence to the initial data, are similar in the half-space and the whole space. Hence we simply refer to \cite{MT06} where a thorough discussion is carried out.

As an application of the well-posedness in $L^q_{uloc,\sigma}$, $q\geq d$, 
we study the behavior of the blow-up solution.
Here a solution $u \in C((0,T_*);L^\infty_\sigma(\R^d_+))$ 
blows up at $t = T_*$
if $\displaystyle{\limsup_{t\uparrow T_*} \|u(t)\|_{L^\infty}}=\infty$. 
Leray \cite{Leray} proved a lower bound  
for the blow-up solutions in $\R^d$ of the form
\begin{equation}
\label{est:rate}
\|u(t)\|_{L^\infty} \ge \frac{\varepsilon}{\sqrt{T_*-t}}  \qquad {\rm for}
\ t <T_*. 
\end{equation}  
More recently lower bounds for scale critical norms of the 
blow-up solution have been also studied extensively.
It is shown in \cite{BS15} that 
$\lim_{t\rightarrow T_*} \|u(t)\|_{L^3(\R^3_+)}=\infty$.
On the other hand, the authors in the recent work \cite{LOW16} showed a lower bound for 
the local $L^d$ norm along the parabolic cone for the blow-up solutions. 
More precisely, it is proved that there exists 
a sequence $(x_n,t_n) \in \R^3 \times (0,T_*)$ such that 
$\|u(t_n)\|_{L^d(x_n+(0,c_0\sqrt{T_*-t_n})^d)} \ge \varepsilon$. 
This can be seen as a concentration phenomenon of the 
critical norm at the blow-up time. In the same work, concentration results along the parabolic cone are also proved for $L^q$ norms, $q\geq d$, along a discrete sequence of times. To the best of our knowledge, this type of concentration results is new for the Navier-Stokes equations, even in the whole space $\R^d$. While in \cite{LOW16} the concentration results are proved via a clever use of frequency decomposition techniques, it occurred to us that they are simple consequences of the existence of mild solutions in $L^q_{uloc}$. As a direct consequence of the well-posedness result, 
we thus have the following 
corollary.

\begin{corollary}
\label{cor.concentration}
For all $q\geq d$, there exists a positive constant $\gamma<\infty$ such that for all $0<T_*<\infty$, for all $u \in C((0,T_*);L^\infty_\sigma(\R^d_+))$ mild solution to \eqref{eq.ns.intro}, if $u$ blows up at $t = T_*$, then for all $t\in(0,T_*)$, there exists $x(t)\in\R^d_+$ with the following estimate
\begin{equation*}
\|u(t)\|_{L^q(|x(t)-\cdot|\leq \sqrt{T_*-t})} \ge \frac{\gamma}{(T_*-t)^{\frac12(1-\frac dq)}}.\\
\end{equation*}
\end{corollary}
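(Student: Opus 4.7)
The plan is to argue by contradiction via a scale-invariant rescaling and the local well-posedness of mild solutions in $L^q_{uloc,\sigma}$. Fix $q\ge d$ and a constant $\gamma>0$ to be chosen. Assume, towards a contradiction, that there exists $s\in(0,T_*)$ at which the lower bound fails for every $x\in\R^d_+$:
\[
\sup_{x\in\R^d_+}\|u(s)\|_{L^q(|x-\cdot|\le\sqrt{T_*-s})}<\frac{\gamma}{(T_*-s)^{\frac12(1-d/q)}}.
\]
Setting $\tau_*:=T_*-s$, I would then consider the Navier--Stokes rescaling
\[
v(y,\tau):=\sqrt{\tau_*}\,u\bigl(\sqrt{\tau_*}\,y,\,s+\tau_*\tau\bigr),\qquad (y,\tau)\in\R^d_+\times[0,1).
\]
Since $\R^d_+$ is invariant under positive dilations, $v$ is again a mild solution of \eqref{eq.ns.intro}, belongs to $C([0,1);L^\infty_\sigma(\R^d_+))$, and blows up precisely at $\tau=1$. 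A direct change of variables translates the standing assumption into
\[
\|v(\cdot,0)\|_{L^q_{uloc}(\R^d_+)}=(T_*-s)^{\frac12(1-d/q)}\sup_{x\in\R^d_+}\|u(s)\|_{L^q(|x-\cdot|\le\sqrt{\tau_*})}<\gamma.
\]

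\textbf{Applying local existence.} Next, I would invoke the mild-solution theory in $L^q_{uloc,\sigma}$ from Section~\ref{sec.nse} (Propositions~\ref{prop.mild.ns} and~\ref{thm.mild.ns}), built via a Picard iteration driven by the bilinear estimates of Theorem~\ref{prop.L^p_u-L^q_u.semigroup.inhomo}. For data $w_0\in L^q_{uloc,\sigma}$ this yields: when $q>d$, a mild solution on $[0,T]$ with $T\gtrsim \|w_0\|_{L^q_{uloc}}^{-2/(1-d/q)}$; when $q=d$, a mild solution on $[0,1]$ (indeed globally) provided $\|w_0\|_{L^d_{uloc}}$ lies below an absolute threshold. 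Choosing $\gamma=\gamma(d,q)$ small enough, the bound $\|v(\cdot,0)\|_{L^q_{uloc}}<\gamma$ therefore produces a mild solution $\tilde v$ on some interval $[0,1+\delta]$ with $\delta>0$; the smoothing built into \eqref{est.prop.L^p_u-L^q_u.semigroup.inhomo.1} (with $p=\infty$) ensures $\tilde v(\tau)\in L^\infty_\sigma$ for $\tau>0$.

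\textbf{Conclusion and main obstacle.} Standard uniqueness of mild solutions (both $v$ and $\tilde v$ share the same $L^\infty$ initial data and become $L^\infty$-bounded for $\tau>0$) forces $\tilde v\equiv v$ on $[0,1)$, so $v$ extends as a bounded mild solution past $\tau=1$, contradicting its blow-up there. The entire non-trivial content of the argument sits in the local well-posedness with a quantitative, \emph{scale-matching} existence time, and in the critical case $q=d$ in the existence of a small-data threshold; these are provided precisely by Theorem~\ref{prop.L^p_u-L^q_u.semigroup.inhomo}, whose mixed-exponent form $t^{-(1+\alpha)/2}\bigl(t^{-\frac{d}{2}(\frac1q-\frac1p)}+1\bigr)$ is exactly what permits closing the fixed point simultaneously in the subcritical space $L^q_{uloc}$ and the critical space $L^\infty_{uloc}$. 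Once that well-posedness is granted, the rescaling reduction is essentially automatic; the one remaining care is that both solutions live in a common uniqueness class, which is guaranteed by the smoothing into $L^\infty_{uloc}$ at positive times.
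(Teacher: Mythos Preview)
Your proposal is correct and follows essentially the same strategy as the paper: the paper introduces the scaled space $L^q_{uloc,(\rho)}$ and records the rescaled well-posedness as Proposition~\ref{prop.mild.ns.rho} (proved, as stated there, ``by a simple rescaling argument'' from Propositions~\ref{prop.mild.ns} and~\ref{thm.mild.ns}), whereas you perform the rescaling $v(y,\tau)=\sqrt{\tau_*}\,u(\sqrt{\tau_*}\,y,s+\tau_*\tau)$ explicitly; the contradiction via extension past the blow-up time and the $L^\infty$-based uniqueness are then the same. One small slip: for $q=d$ the well-posedness in Proposition~\ref{thm.mild.ns} is \emph{not} global (the smallness threshold depends on $T$), but you only need existence past $\tau=1$, so taking $T=2$ there suffices.
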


This corollary is proved in Section \ref{sec.concentration}. 
Our result also holds for $\R^d$ instead of $\R^d_+$ and seems to be new even in that case. It obviously includes the concentration of the scale-critical norm $L^d$. It improves Theorem 1.2 in \cite{LOW16} in the sense that the concentration holds not only along a sequence of times $t_n\rightarrow T_*$, but for all times $t\in(0,T_*)$. Moreover, our method gives a new and simple proof of this type of results, which appear to be nice applications of the existence of mild solutions in the $L^q_{uloc}$ setting.

\subsubsection*{Liouville theorems}

Here we give a uniqueness result to \eqref{e.resol} in our functional framework. This Liouville theorem is the counterpart for the resolvent system to the Liouville theorem for the unsteady Stokes system in the half-space proved in \cite{JSS12} and crucial for the uniqueness part of Theorem \ref{prop.fresolvent}.

\begin{theorem}\label{thm.unique} {\rm (i)} Let $\lambda\in S_{\pi-\ep}$ with $\ep \in (0,\pi)$. Let $(u,\nabla p)\in  L^1_{uloc}(\R^d_+)^d \times L^1_{uloc} (\R^d_+)^d$ with $p\in L^1_{loc}(\R^d_+)$ be a solution to \eqref{e.resol} with $f=0$ in the sense of distributions. Then $(u,\nabla p)$ is a parasitic solution, i.e., $u=(a'(x_d),0)^\top$ and $p=D \cdot x'+c$. 
Here $a'(x_d)=(a_1(x_d),\cdots, a_{d-1}(x_d))^\top$ is smooth and bounded and its trace at $x_d=0$ is zero, while $D\in \C^{d-1}$ is a constant vector and $c\in \C$ is a constant.
If  either $\displaystyle \lim_{R\rightarrow \infty} \| \nabla' p  \|_{L^1 (|x'|<1, R<x_d<R+1)}=0$ or $\displaystyle \lim_{|y'| \rightarrow \infty} \| u  \|_{L^1 (|x'-y'|<1, 1<x_d<2)}=0$ in addition, then $p$ is a constant and $u=0$. 

\noindent {\rm (ii)}  Let $(u,\nabla p)\in  L^1_{uloc}(\R^d_+)^d \times L^1_{uloc} (\R^d_+)^d$ with $p\in L^1_{loc}(\R^d_+)$ be a solution to \eqref{e.resol} with $\lambda=0$ and $f=0$ in the sense of distributions. Then $u=0$ and $p$ is a constant. 
\end{theorem}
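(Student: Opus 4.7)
The plan is to reduce the Liouville problem to an ODE analysis in the normal variable $x_d$ via tangential Fourier transform, then exploit the $L^1_{uloc}$ structure to analyse the residual support at $\xi'=0$.

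First I would invoke interior regularity for the Stokes resolvent system (on slabs of the form $(-R,R)^{d-1}\times(R_0,R_0+1)$) to conclude that $(u,p)$ is $C^\infty$ in $\R^d_+$; taking the divergence of the momentum equation then gives $\Delta p=0$. Next, I treat $u(\cdot,x_d)$ and $p(\cdot,x_d)$ as tempered distributions in $x'$ (the $L^1_{uloc}$ bound provides polynomial growth at infinity) and perform the partial Fourier transform in the tangential variables. For test functions $\varphi\in\mathcal{S}(\R^{d-1})$ supported away from the origin, the pairings $\langle \hat u(\cdot,x_d),\varphi\rangle$ and $\langle \hat p(\cdot,x_d),\varphi\rangle$ are smooth in $x_d$ and satisfy the Fourier-transformed Stokes ODE in $x_d$. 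Harmonicity of $p$ yields $\hat p=A(\xi')e^{-|\xi'|x_d}$ on the support of $\varphi$ (the growing exponential is excluded by polynomial bounds in $\xi'$), and solving the velocity ODE with the Dirichlet condition $\hat u(\xi',0)=0$, decay as $x_d\to\infty$, and the divergence-free constraint produces a linear system whose consistency, when $\omega:=\sqrt{\lambda+|\xi'|^2}\ne|\xi'|$, forces $A(\xi')=0$. Thus the tangential Fourier transforms of $u$ and $p$ are supported at $\xi'=0$.

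From this support condition, $u(\cdot,x_d)$ and $p(\cdot,x_d)$ are polynomials in $x'$ for each $x_d>0$. The $L^1_{uloc}$ bound on $u$ forces degree $0$, while $\nabla p\in L^1_{uloc}$ forces $p$ to be affine in $x'$ with a constant slope: $u=b(x_d)$ and $p=D\cdot x'+c(x_d)$ with $D\in\C^{d-1}$ constant. Substituting into the system, divergence-free together with $u(x',0)=0$ gives $b_d\equiv 0$; the normal momentum equation yields $c'(x_d)=0$, so $c$ is a constant; and the tangential momentum equation becomes $\lambda a_j-a_j''+D_j=0$ which, combined with $a_j(0)=0$ and the $L^1_{uloc}$ constraint excluding the exponentially growing mode, determines $a'$ uniquely in terms of $D$ and $\lambda$. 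This is precisely the parasitic family claimed in (i).

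Under either additional hypothesis, the relevant norm of the parasitic solution is translation invariant: $\|\nabla' p\|_{L^1(|x'|<1,\,R<x_d<R+1)}$ is a positive multiple of $|D|$ independent of $R$, and $\|u\|_{L^1(|x'-y'|<1,\,1<x_d<2)}$ is independent of $y'$. Vanishing in the limit therefore forces $D=0$, hence $a'\equiv 0$, so $u=0$ and $p$ is constant. Part (ii) with $\lambda=0$ proceeds identically in the Fourier step (the resonance $\omega=|\xi'|$ is handled with the ansatz $x_d e^{-|\xi'|x_d}$ and still yields $A(\xi')=0$), and at $\xi'=0$ the ODE becomes $a_j''=D_j$ with $a_j$ bounded and $a_j(0)=0$, which forces $D=0$ and $a'\equiv 0$. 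The main technical obstacle is making the Fourier step rigorous: since $\hat u(\cdot,x_d)$ is only a tempered distribution in $\xi'$, the pointwise-in-$\xi'$ ODE argument must be recast through duality with Schwartz functions whose support avoids the origin, and one must check that the $x_d$-decay and the divergence-free constraint transport cleanly through this pairing using uniform local integrability and interior regularity.
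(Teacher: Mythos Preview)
Your Fourier-transform strategy is conceptually sound and genuinely different from the paper's proof, but the technical obstacle you flag at the end is real and not minor. The paper avoids it entirely.

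\textbf{What the paper does.} After mollifying $(u,p)$ in the tangential variables to $(u^\kappa,p^\kappa)$, the paper does \emph{not} Fourier transform. Instead, for an arbitrary $g\in C_0^\infty(\R^d_+)^d$ it solves the adjoint resolvent problem with source $\Delta' g$, obtaining a smooth decaying test field $(v,\nabla q)$, and then integrates by parts in the weak formulation to show
\[
\int_{\R^d_+} u^\kappa\cdot(\Delta')^2 g\,dx=0,
\]
i.e.\ $(\Delta')^2 u^\kappa=0$ in the sense of distributions. Setting $U^\kappa(x',x_d)=\int_0^{x_d} u^\kappa(x',y_d)\,dy_d$, one gets a function bounded in $x'$ (for each $x_d$) satisfying $(\Delta')^2 U^\kappa(\cdot,x_d)=0$, and the biharmonic Liouville theorem in $\R^{d-1}$ forces $U^\kappa$ to be constant in $x'$. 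Differentiating in $x_d$ gives $u^\kappa=a^\kappa(x_d)$, and the rest proceeds as in your last paragraph. For part (ii) the paper runs the same duality with an auxiliary parameter $\mu>0$ and sends $\mu\to 0$. The point is that the duality step is done entirely in physical space, so the distributional Fourier analysis never enters.

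\textbf{The gap in your approach.} The pairing $\langle \hat u(\cdot,x_d),\varphi\rangle$ with $\varphi$ supported away from $\xi'=0$ does \emph{not} satisfy a scalar ODE in $x_d$: after transforming, the term $|\xi'|^2\hat u$ becomes $\langle \hat u,|\xi'|^2\varphi\rangle$, which is a different pairing. You would need to argue that $\hat u|_{\{|\xi'|>\delta\}}$ lies in the finite-dimensional solution space of the ODE with \emph{distributional} coefficients in $\xi'$, and then propagate the $x_d$-growth bound through that representation; this is doable but requires a genuine argument you have not supplied. Two smaller issues: (a) the growing exponentials are excluded by the \emph{$x_d$}-bounds coming from $L^1_{uloc}$ and interior regularity, not by ``polynomial bounds in $\xi'$'' as you write; (b) the Dirichlet condition $u'|_{x_d=0}=0$ is not given directly---only $u_d|_{x_d=0}=0$ and divergence-free are encoded in the weak formulation---so $a_j(0)=0$ must be extracted from the ODE and the class of test functions, as the paper does via the resolvent of the Dirichlet Laplacian on $\R_+$.
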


The proof of this theorem is given in Appendix \ref{sec.liouville}. By using a similar argument  we shall show a uniqueness result for the time-dependent Stokes problem:
\begin{equation}\label{eq.s.appendix}
  \left\{
\begin{aligned}
 \partial_t u - \Delta u + \nabla p & = f , \quad \nabla \cdot u = 0  \qquad \mbox{in}~ (0,T)\times \R^d_+\,, \\
u & = 0\quad \mbox{on}~ (0,T)\times \partial\R^d_+,  \qquad u|_{t=0}=u_0 \quad \mbox{in}~\partial \R^d_+,
\end{aligned}\right.
\end{equation}
Compared with the known Liouville theorem by \cite{JSS12} for bounded solutions, our result imposes the condition on the pressure, while the regularity condition on the velocity  is weaker than in \cite{JSS12}.
This framework will be useful in the study of the local energy weak solutions.

\begin{theorem}\label{thm.unique.time} Let $(u, \nabla p)$ be a solution to \eqref{eq.s.appendix} in the sense of distributions with $u_0=f=0$. Then $(u,\nabla p)$ is a parastic solution, i.e., $u (t,x) =(a'(t,x_d),0)^\top$ and $p (t,x) =D (t) \cdot x' + c(t)$. Here $a'(t,x_d) = (a_1(t,x_d), \cdots,a_{d-1}(t,x_d))^\top$ with $a_j\in L^1_{loc}([0,T)\times \overline{\R^d_+})$, while $D\in L^1_{loc} (0,T)^{d-1}$ and $c\in L^1_{loc} (0,T)$. If either 
$$\displaystyle \lim_{R\rightarrow \infty} \int_\delta^T \| \nabla'p (t) \|_{L^1(|x'|<1,R<x_d<R+1)} d t =0\quad \text{for all} ~~\delta \in (0,T)$$
or 
$$\displaystyle \lim_{|y'|\rightarrow \infty} \int_0^T \| u(t) \|_{L^1 (|x'-y'|<1,\, 1<x_d<2)} d t =0$$
in addition, then $p$ is a constant and $u=0$.  
\end{theorem}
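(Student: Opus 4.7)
My plan is to adapt the argument used for the steady Liouville theorem (Theorem \ref{thm.unique}) by treating $t$ as a parameter and handling $\partial_t u$ as a distribution. Taking the spatial divergence of the momentum equation and using $\nabla\cdot u=0$ with $f=0$ gives $\Delta p=0$ in $\mathcal{D}'(\R^d_+)$ for a.e.\ $t\in(0,T)$. Restricting the normal component of the equation to $\partial\R^d_+$, using $u|_{x_d=0}=0$ and incompressibility, one derives the Neumann trace identity $\partial_d p|_{x_d=0}=-\nabla'\cdot\partial_d u'|_{x_d=0}$ for a.e.\ $t\in(0,T)$. Structurally this matches the steady situation, only with time-dependent right-hand sides.

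I would then mirror the tangential localization step of the proof of Theorem \ref{thm.unique}. After suitable tangential and temporal mollifications (which preserve the system in the interior by translation invariance in $x'$ and $t$) to restore the regularity needed, the $L^1_{uloc}$ integrability of $u$ and $\nabla p$, combined with the spatial harmonicity of $p$ and the Neumann trace identity, should force $\nabla'p(t,\cdot)=D(t)$ to be constant in $x$ and $u(t,\cdot)$ to be independent of $x'$. With $\Delta p=0$ and $u|_{x_d=0}=0$, this then unwinds as follows: since $u'$ is independent of $x'$, $\nabla'\cdot u'=0$, so $\partial_d u_d=0$ by incompressibility and $u_d$ is constant in $x_d$; the boundary condition $u_d|_{x_d=0}=0$ gives $u_d\equiv 0$; the normal momentum equation then yields $\partial_d p\equiv 0$, so $p(t,x)=D(t)\cdot x'+c(t)$ with $D\in L^1_{loc}(0,T)^{d-1}$, $c\in L^1_{loc}(0,T)$, and $u=(a'(t,x_d),0)^\top$ with $a_j\in L^1_{loc}([0,T)\times\overline{\R^d_+})$. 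This is the claimed parasitic structure.

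The optional decay conditions conclude the argument. If $\nabla'p=D(t)$ decays as $x_d\to\infty$, then $D\equiv 0$ (since $D$ does not depend on $x$), and the 1D heat equation $\partial_t a'-\partial_d^2 a'=0$ with zero initial and boundary data gives $a'\equiv 0$, hence $u\equiv 0$ and $p$ is constant. If instead $u$ decays as $|y'|\to\infty$, then $u=(a'(t,x_d),0)^\top$ being independent of $x'$ forces $a'\equiv 0$, and the tangential momentum equation $\partial_t a'-\partial_d^2 a'=-D(t)$ then gives $D\equiv 0$, so again $p$ is constant.

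The main obstacle is implementing the tangential localization step at the regularity level $L^1_{loc,t}L^1_{uloc,x}$. In the steady case the resolvent parameter $\lambda$ supplies quantitative control; here one has only parabolic smoothing at positive times, so one must carefully justify the mollifications, control the commutators with the boundary trace $\partial_d p|_{x_d=0}$, and pass to the limit in a controlled way, in the spirit of the more delicate parts of Appendix \ref{sec.liouville}.
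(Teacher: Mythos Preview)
Your outline diverges from the paper's proof and has a genuine gap at the key step. You propose to first obtain $\Delta p=0$ with the Neumann trace $\partial_d p|_{x_d=0}=\nabla'\cdot(\partial_d u')|_{x_d=0}$, and then claim that these facts, together with $\nabla p\in L^1_{uloc}$, ``should force $\nabla' p(t,\cdot)=D(t)$ to be constant in $x$ and $u(t,\cdot)$ to be independent of $x'$.'' But the Neumann data still depends on $u$, so the pressure problem is not decoupled; and harmonicity of $p$ in $\R^d_+$ with $\nabla p\in L^1_{uloc}$ does not by itself force $\nabla' p$ to be constant. You have not explained what replaces the resolvent parameter $\lambda$ that, in the steady proof, provides the quantitative control allowing the duality computation \eqref{eq.appendix.ipp} to close. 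Without that, the step you label ``tangential localization'' is a restatement of the problem, not a mechanism for solving it.

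The paper's proof is a genuine duality argument, and the replacement for $\lambda$ is the Stokes semigroup run backward in time. One mollifies in $x'$ and in $t$, then tests the weak formulation against $\chi_{R}\Delta' v$, where $v(s)=e^{-(\tilde t-s)\mathbf{A}}\Delta'\mathbb{P}g$ solves the adjoint Stokes problem on $[t',\tilde t)$ with $g\in C_0^\infty(\R^d_+)^d$. The extra tangential derivatives in the source give enough spatial decay (via Proposition \ref{prop.smooth.appendix} and Remark \ref{rem.prop.smooth.appendix}) that the cut-off $\chi_R$ can be removed, and the computation collapses to
\[
\langle u(t),(\Delta')^2 g\rangle=\langle u(t'),\Delta' e^{-(t-t')\mathbf{A}}\Delta'\mathbb{P}g\rangle.
\]
Only now does the zero initial condition enter: letting $t'\to 0$ gives $\langle u(t),(\Delta')^2 g\rangle=0$, hence $(\Delta')^2 u(t,\cdot)=0$, and the bi-Laplacian Liouville theorem yields $u$ independent of $x'$. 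The pressure structure and the endgame under the two decay assumptions then follow exactly as in the steady case. Your outline never brings in the initial condition $u_0=0$ in the reduction step, which is another sign that the mechanism is missing: in the paper's argument it is precisely the device that kills the right-hand side above.
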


This theorem is proved in Appendix \ref{sec.liouville.nonsteady}. There we state precisely the notion of solutions to \eqref{eq.s.appendix}. To our knowledge, these two Liouville-type results, Theorem \ref{thm.unique} and Theorem \ref{thm.unique.time}, are new under these assumptions. 

\subsection{Comparison to other works}

We give an overview of some works related to our result. Although we try to give a faithful account of the state of the art of the study of fluid equations (mainly Stokes and Navier-Stokes systems) with infinite energy or non localized data, we are very far from being exhaustive in our description. We divide the description into three parts: first we deal with the class of bounded functions, second we handle the class of locally uniform Lebesgue spaces and finally we describe some differences between the whole space and the half-space.

A common feature of the analysis in $L^\infty_\sigma$ and $L^q_{uloc,\sigma}$ is the failure of classical techniques used in works on Stokes and Navier-Stokes equations. This appears at several levels. Firstly, there is of course no global energy inequality. The substitute is a local energy inequality, which involves the pressure. Hence one has to obtain precise information on the pressure. Secondly, there is obviously no uniqueness for flows with infinite energy. This is due to flows driven by the pressure (solving the Stokes and Navier-Stokes equations), such as in the whole space $\R^d$
\begin{equation*}
u(x,t):=f(t)\quad\mbox{and}\quad p(x,t):=-f'(t)\cdot x, 
\end{equation*}
or in the half-space $\R^d_+$
\begin{equation*}
u(x,t):=(v_1(x_d,t),\ldots\, v_{d-1}(x_d,t), 0)\quad\mbox{and}\quad p(x,t):=-f(t)\cdot x', 
\end{equation*}
where $f\in C^{\infty}_0((0,\infty);\R^{d-1})$ and $v(x_d,t)$ solves the heat equation $\partial_tv-\partial_d^2v=f$ with $v(0,t)=0$. Hence, one has to handle or eliminate these parasitic solutions. Thirdly, even in the instance where flows driven by the pressure are ruled out, one needs to make sense of a representation formula for the pressure. Indeed the source term in the elliptic equation for the pressure
\begin{equation*}
-\Delta p=\nabla\cdot\big(\nabla\cdot (u\otimes u)\big),
\end{equation*}
is non localized, thus a priori non decaying at infinity. Fourthly, the Helmholtz-Leray projection is not bounded on $L^\infty$ nor on $L^q_{uloc}$, basically because the Riesz transform does not map $L^\infty$ into itself. This makes the study of the mapping properties of the Stokes operator ${\bf A}$, which is usually defined as ${\bf A} = - \mathbb{P} \Delta_D$, where $\mathbb P$ is the Helmholtz-Leray projection and $\Delta_D$ the Dirichlet Laplacian, particularly delicate.

\subsubsection*{Bounded functions}
From the point of view of both the results and the techniques, the main source of inspiration of the linear Stokes estimates is the paper by Desch, Hieber and Pr{\"u}ss \cite{DHP01}. This paper is concerned with the study of the Stokes semigroup in the half-space in particular in the class of bounded functions. The authors prove the analyticity of the Stokes semigroup in $L^q_\sigma(\R^d_+)$ for $1<q\leq\infty$. The case of $1<q<\infty$ was previously known. Their approach is based on the study of the Stokes resolvent problem \eqref{e.resol}. In order to circumvent the use of the Helmholtz-Leray projection, one of the key ideas is to decompose the resolvent operator into a part corresponding to the Dirichlet-Laplace part and another part associated with the nonlocal pressure term
\begin{align*}
(\lambda+{\bf A})^{-1} = {\bf R}_{D.L.}(\lambda) + {\bf R}_{n.l.} (\lambda).
\end{align*}
Our work uses the same idea, but we need more precise estimates on the kernels than the mere $L^1$ bounds proved in \cite[Section 3]{DHP01}, which are not enough for our purposes.

The $L^\infty$ theory for the Stokes equations has recently been advanced thanks to a series of works by Abe, Giga and Hieber. In \cite{AG13}, the Stokes semigroup is proved to be analytic via an original (in this context) compactness (or blow-up) method in admissible domains, which include bounded domains and the half-space. In these domains a bound on the pressure holds, which excludes the parasitic solutions previously mentioned. However, an intrinsic drawback of the compactness argument, is that it only gives an $L^\infty$ bound on the solution for times $0<t<T_0$, with $T_0$ depending only on the domain. The papers \cite{Abe16,AG14} build on the same method. Concerning the resolvent problem, it was considered in \cite{AGH15} by a localization argument, which boils down to applying locally the $L^p$ theory and interpolating to get a control in $L^\infty$. These developments enable to investigate blow-up rates \eqref{est:rate}
in $L^\infty$ for potential singularities in the solutions of the Navier-Stokes equations \cite{Abe15}.

\subsubsection*{Locally uniform Lebesgue spaces}

The locally uniform Lebesgue spaces $L^q_{uloc}$ form a wider class of functions than $L^\infty$. They include a richer spectrum of behaviors, obviously allowing for some singular behavior (homogeneous functions slowly decaying at $\infty$) or non decaying functions such as locally $L^p$ periodic or almost-periodic functions. The main advantage of this class is that it is easy to define and visualize, while it includes various class of functions as mentioned above. In the operator theoretical point of view, another advantage is that we can characterize the domain of the Stokes operator in the $L^q_{uloc}$ spaces if $1<q<\infty$ (see Section \ref{sec.semigroup}), which is hard to expect in the $L^\infty$ framework even for the Laplace operator. 
On the other hand, the main and major drawback is that the $L^q_{uloc}$ functions are difficult to handle in the context of pseudodifferential calculus. Indeed, there is no obvious characterization in Fourier space, which makes it difficult to straightly analyze the action of Fourier symbols. Most of the time, one has to first derive kernel bounds for the symbols in physical space, before estimating the $L^q_{uloc}$ norms. Many equations have been studied in the framework of loc-uniform spaces. Without aiming at exhaustivity, let us mention some works parabolic-type equations: on linear parabolic equations \cite{ARCD04}, on Ginzburg-Landau equations \cite{MS95,GV97} and on reaction-diffusion equations \cite{CD04}. We refer to these works for basic properties of the space $L^q_{uloc}$.

The study of the Stokes semigroup and the application to the existence of mild solutions to the Navier-Stokes equations with initial data $u_0\in L^q_{uloc,\sigma}(\R^d)$, $q\geq d$, was carried out in \cite{MT06}. 
As regards the existence of weak solutions for initial data in $L^2_{uloc,\sigma}(\R^d)$ satisfying the local energy inequality, the so-called suitable weak solutions 
it was handled by Lemari\'e-Rieusset \cite[Chapter 32 and 33]{lemariebook}. This result was also worked out in \cite{KS07}. Existence of these local energy solutions is the key to the blow-up of the $L^3$ norm criteria at the blow-up time proved by Seregin in \cite{Ser12} for the three-dimensional Navier-Stokes equations and also to the construction of 
the forward self-similar solutions by Jia and Sverak in \cite{JS}.

\subsubsection*{Whole space vs. half-space}

Fewer results are proved for the half-space and more generally for unbounded domains with (unbounded) boundaries. Let us emphasize some phenomena related to the presence of boundaries.

A striking feature of the half-space case is the failure of $L^1(\R^d_+)$ estimates, for the resolvent problem as well as the semigroup. This fact was proved in \cite[Section 5]{DHP01}. In the whole space, the Stokes semigroup is known to be analytic even for $q=1$ (see \cite{MT06}). As underlined in \cite{DHP01}, one should relate this lack of analyticity in $L^1$ to the non existence of local mild solutions to the Navier-Stokes equations in $L^1$ for an exterior domain \cite{Koz98}. Existence of such solutions would imply that the total force acting on the boundary is zero.

On a different note, the Helmholtz decomposition may fail even in $L^q$, for some $1<q<\infty$ for smooth sector-like domains with sufficiently large opening (see \cite[Remark III.1.3]{Galdi_book}). On the contrary, the decomposition is known to hold for any $1<q<\infty$, for any smooth domain with compact boundary, for the half-space and the whole space (see \cite[Theorem III.1.2]{Galdi_book}). The definition of the Stokes semigroup in $L^q$ spaces for finite $q$ in non Helmholtz sector-like domains was recently addressed in \cite{AGSS15}.

The works of Abe and Giga, notably \cite{AG13}, aim at extending results known for the Stokes semigroup in $L^\infty_\sigma(\R^d_+)$ to more general domains with boundaries. They introduce a class of admissible domains (which includes smooth bounded domains and $\R^d_+$) in which the analyticity of the Stokes semigroup holds in $L^\infty_\sigma$. This work however says nothing in general about the longtime behavior of the linear Stokes dynamics. Indeed the $L^\infty$ bound for the Stokes dynamics is true only on a time interval $(0,T_0)$, with $T_0$ depending only on the domain. Notice that $T_0=\infty$ for smooth bounded domains and for the half-space. As regards the existence of mild solutions to the Navier-Stokes equations in the half-space for bounded data, let us mention the work of Solonnikov \cite{Sol03} (initial data bounded and continuous) and of Bae and Jin \cite{BJ12} (initial data in just bounded). These works are based on direct estimates on the kernels of the non stationary Stokes system. Our approach for the solvability of the Navier-Stokes equations in the non localized class $L^q_{uloc,\sigma}(\R^d_+)$ is based on the analysis of the bilinear operator 
\begin{equation}\label{e.bilintro}
(u,v)\mapsto(\lambda+{\bf A})^{-1}\mathbb P\nabla\cdot(u\otimes v).
\end{equation}
from which we derive bounds for the unsteady problem. A key issue of the half-space as opposed to the whole space is the non commutativity of $\mathbb P$ and vertical derivatives $\partial_d$ (the commutation with tangential derivatives does work). This prompts the need to integrate by parts in the vertical direction so as to analyze \eqref{e.bilintro} (see Section \ref{sec.leraydiv} below).

To finish this overview, let us mention that stationary Stokes, Stokes-Coriolis and Navier-Stokes-Coriolis systems with infinite energy Dirichlet boundary condition were also considered in the context of boundary layer theory. The domain is usually a perturbed half-space with a highly oscillating boundary $x_d\geq \omega(x')$. The results of \cite{DP14,DGV17} are well-posedness results in the class of Sobolev functions with locally uniform $L^2$ integrability in the tangential variable and $L^2$ integrability in the vertical variable. The main challenges are first the bumpiness of the boundary, which prevents from using the Fourier transform close to the boundary and second the lack of a priori bounds on the function itself, which requires to rely on Poincar\'e type inequalities. 
The reader is also referred to Geissert and Giga \cite{GG08}, where the Stokes resolvent equations in the exterior domain are analyzed in the $L^p_{uloc}$ space. In \cite{GG08} the compactness of the boundary is essentially used.

\subsection{Overview of the paper}

In Section \ref{sec.prelim}, the reader can find standard notations used throughout the paper, the definitions of the functional spaces as well as the computation of the Fourier symbols for the resolvent problem. As stated above, we rely on the decomposition of the solution to the resovent problem into a part corresponding to the solution of the Dirchlet-Laplace problem and a part associated with the nonlocal pressure. Section \ref{sec.resolhp} is devoted to getting pointwise bounds on the kernels for the resolvent problem defined in the physical space. In this regard, Lemma \ref{lem.optdecay} is the basic tool so as to get the optimal pointwise estimates. These bounds on the kernels stated in Lemma \ref{l.kernel} (local Dirichlet-Laplace part), Proposition \ref{prop.estkernel} (nonlocal pressure part) and Proposition \ref{prop.kernpressure} (pressure) form an essential part of our work. They are indispensable for the estimates in $L^q_{uloc}$ obtained in Section \ref{sec.resolest}. In this section, Theorem \ref{prop.fresolvent} is proved. The next section, Section \ref{sec.semigroup} establishes the analyticity of the Stokes semigroup (Theorem \ref{theo.analyticity}) along with the bounds on the longtime dynamic of the linear Stokes equation stated in Proposition \ref{prop.L^p_u-L^q_u.semigroup}. Section \ref{sec.bilinse} contains the crucial bilinear estimates (Proposition \ref{thm.estbilinop} and Theorem \ref{prop.L^p_u-L^q_u.semigroup.inhomo}) needed to prove the existence of mild solutions to the Navier-Stokes equations \eqref{eq.ns.intro}. 
The proofs of Proposition \ref{prop.mild.ns}, Proposition \ref{thm.mild.ns} and Corollary 
\ref{cor.concentration} are given in Section \ref{sec.nse}. 
Appendix \ref{sec.liouville} is concerned with the proof of the Liouville-type result of Theorem \ref{thm.unique} for the resolvent problem \eqref{e.resol}. The Liouville theorem for the non steady Stokes system, Theorem \ref{thm.unique.time}, is proved in Appendix \ref{sec.liouville.nonsteady}.

\section{Preliminaries}
\label{sec.prelim}

\subsection{Notations}
\label{sec.notations}

Throughout the paper (unless stated otherwise), the small greek letters $\alpha,\, \beta,\, \gamma,\, \iota,\, \eta$ usually denote integers or multi-indices, and $\ep,\, \delta,\, \kappa>0$ denote small positive real numbers. When it is clear from the context, we also sometimes use Einstein's summation convention for repeated indices. For $(x',x_d)\in\R^d_+$, $x'\in\R^{d-1}$ is the tangential component, while $x_d$ is the vertical one. The complex scalar number $\lambda\in\mathbb C$ belongs to the sector
\begin{equation*}
S_{\pi-\ep}:=\{\rho e^{i \theta}:\ \rho>0,\, \theta\in[-\pi+\ep,\pi-\ep]\}\subset\mathbb C,
\end{equation*}
with $\ep$ fixed in $(0,\pi)$. For $\xi\in\R^{d-1}$, we define 
\begin{align}\label{def.omega.lamxi}
\omega_\lambda(\xi) :=\sqrt{\lambda+|\xi|^2}.
\end{align}
The following inequality is used repeatedly in the paper: there exists a constant $C(\ep)<\infty$, such that for all $\lambda\in S_{\pi-\ep}$, for all $\xi\in\R^{d-1}$, 
\begin{equation*}
|\omega_\lambda(\xi)|\geq \Rel(\omega_\lambda(\xi))\geq C(|\lambda|^{\frac12}+|\xi|).
\end{equation*}
Finally, let us fix our convention for Fourier transform: for $\xi\in\mathbb R^{d-1}$
\begin{equation*}
\widehat{u}(\xi):=\int_{\mathbb R^{d-1}}e^{-i\xi\cdot x'}u(x')dx'
\end{equation*}
for $u\in \mathcal S(\R^{d-1})$. The inverse Fourier transform is defined by
\begin{equation*}
u(x'):=\frac{1}{(2\pi)^{d-1}}\int_{\mathbb R^{d-1}}e^{i\xi\cdot x'}\widehat{u}(\xi)d\xi
\end{equation*}
for $\widehat{u}\in \mathcal S(\R^{d-1})$. Both the Fourier transform and its inverse are naturally extended to $\mathcal S'(\R^{d-1})$ by duality. The definitions of the functional spaces are given in the next paragraph.

\subsection{Functional setting and notion of solutions}
\label{sec.setting}

The results of the paper take place in the class $L^p_{uloc}(\R^d_+)$ of uniformly locally $L^p$ functions. More precisely,
\begin{equation*}
L^p_{uloc}(\R^d_+):=\left\{f\in L^1_{loc}(\R^d_+)~|~ \sup_{\eta\in\mathbb Z^{d-1}\times\mathbb Z_{\geq 0}}\|f\|_{L^p(\eta+(0,1)^d)}<\infty\right\}.
\end{equation*}
Let us define the space $L^p_{uloc,\sigma} (\R^d_+)$ of solenoidal vector fields in $L^p_{uloc}$ as follows:
\begin{align}
L^p_{uloc,\sigma} (\R^d_+):= \left\{ f\in L^p_{uloc}(\R^d_+)^d ~|~ \int_{\R^d_+} f \cdot \nabla \varphi \,  d x =0~~{\rm for~any}~\varphi \in C_0^\infty (\overline{\R^d_+})\right\}.
\end{align} 
Notice that this definition encodes both the fact that $f$ is divergence-free in the sense of distributions (take test functions $\varphi\in C_0^\infty (\R^d_+)$) and the fact that $f_d$ vanishes on $\partial\R^d_+$. As usual, $WL^{p'}_{uloc}(\R^d_+)$ for $1<p'\leq\infty$ denotes the dual space of $L^{p}_{uloc}(\R^d_+)$, where $p'$ is the H\"older conjugate of $1\leq p<\infty$, $1=\frac 1p+\frac1{p'}$. It is defined in the following way:
\begin{equation*}
WL^{p'}_{uloc}(\R^d_+):=\bigg\{g\in L^1_{loc}(\R^d_+)~|~ \sum_{\eta\in\mathbb Z^{d-1}\times\mathbb Z_{\geq 0}}\|g\|_{L^{p'}(\eta+(0,1)^d)}<\infty\bigg\}.
\end{equation*}
As is usual $BUC(\R^d_+)$ denotes the space of bounded uniformly continuous functions, and
\begin{align*}
BUC_\sigma (\R^d_+) =\Big \{ f\in BUC(\R^d_+)^d~|~{\rm div}\, f =0\,, \quad f|_{x_d=0} =0\Big \}\,.
\end{align*}
Note that any function in $BUC(\R^d_+)$ is uniquely extended as a bounded uniformly continuous function on $\overline{\R^d_+}$, and thus the trace is defined as a restriction on $\partial \R^d_+$ of this extended continuous function on $\overline{\R^d_+}$.

Let us also fix the notion of solutions to \eqref{e.resol}. Let $f\in L^1_{uloc} (\R^d_+)^d$. 
We say that $(u,\nabla p)\in L^1_{uloc}(\R^d_+)^d \times L^1_{uloc}(\R^d_+)^d$ with $p\in L^1_{loc}(\R^d_+)$ is a solution to \eqref{e.resol} in the sense of distributions if 
\begin{align}\label{def.weak.u1}
\int_{\R^d_+} u \cdot (\lambda \varphi - \Delta \varphi) + \nabla p \cdot \varphi \, d x = \int_{\R^d_+} f \cdot \varphi \, d x, \quad \varphi \in C_0^\infty (\overline{\R^d_+})^d ~{\rm with}~ \varphi|_{x_d=0} =0,
\end{align}
and 
\begin{align}\label{def.weak.u2}
\int_{\R^d_+} u \cdot \nabla \phi \, d x=0,\qquad \phi \in C_0^\infty (\overline{\R^d_+}).
\end{align} 
Let us notice that the notion of solutions defined by \eqref{def.weak.u1} and \eqref{def.weak.u2} is enough to apply our uniqueness result, Theorem \ref{thm.unique}. Moreover, we emphasize that the solution $u$ of the resolvent problem \eqref{e.resol} given by Theorem \ref{prop.fresolvent} is a strong solution, thanks to the estimates \eqref{est.prop.fresolvent.1} and \eqref{est.prop.fresolvent.2}. Hence the trace of $u$ is well defined in the sense of the trace of $W^{1,p}_{loc}(\R^3_+)$ functions.

\begin{remark}\label{rem.unique}{\rm  In the definition of the solution in the sense of distributions, since $(u,\nabla p)\in L^1_{uloc}(\R^d_+)^d\times L^1_{uloc} (\R^d_+)^d$, the class of test functions is easily relaxed as follows:
$\varphi \in C^2 (\overline{\R^d_+})^d$ with $\varphi|_{x_d=0}=0$ and $\phi \in C^1 (\overline{\R^d_+})$ such that, for $\alpha=0,\, 1,\, 2$ and $\beta=0,\, 1$,
\begin{align*}
\nabla^\alpha \varphi (x), \quad \nabla^\beta \phi (x)  ~\sim ~ \mathcal{O} (|x|^{-d-\kappa}),  \qquad |x|\gg 1,
\end{align*}
for some $\kappa>0$.
}
\end{remark}

\subsection{Integral representation formulas for the resolvent system}
\label{sec.intrepr} 

The solution to the resolvent problem \eqref{e.resol} can be computed in Fourier space. We build on the formulas for the symbols, which were derived in \cite{DHP01}. In particular in this paper, the authors show that the symbol associated with the Dirichlet Stokes resolvent problem can be decomposed into: one part corresponding to the symbol of the Dirichlet-Laplace resolvent problem and a remainder term due to the pressure. Hence, the solution in the Fourier side about the tangential variables can be decomposed into $\widehat{u}=\widehat{v} + \widehat{w}$, with for all $\xi\in\R^{d-1}$, $y_d>0$,
\begin{subequations}\label{e.vw}
\begin{equation}
\widehat{v}(\xi,y_d)
=\frac{1}{2\omega_\lambda(\xi)} \int^{\infty}_0 \left(e^{-\omega_\lambda(\xi)|y_d-z_d|} -  e^{-\omega_\lambda(\xi)(y_d+z_d)}\right)
\widehat{f}(\xi,z_d)dz_d,
\end{equation}
\begin{equation}
\widehat{w'}(\xi,y_d)
=\int^{\infty}_0 \frac{\xi}{|\xi |}
\frac{e^{-|\xi|y_d} -  e^{-\omega_\lambda(\xi)y_d}} {\omega_\lambda(\xi)(\omega_\lambda(\xi)-|\xi|)}
\,e^{-\omega_\lambda(\xi)z_d} \xi \cdot \widehat{f}'(\xi,z_d)dz_d
\end{equation}
and
\begin{equation}
\widehat{w_d}(\xi,y_d)=i\int^{\infty}_0
\frac{e^{-|\xi|y_d} -  e^{-\omega_\lambda(\xi)y_d}} {\omega_\lambda(\xi)(\omega_\lambda(\xi)-|\xi|)} e^{-\omega_\lambda(\xi)z_d}\xi \cdot \widehat{f}'(\xi,z_d)dz_d
\end{equation}
The solution of the form $u=v+w$ is then obtained by taking the inverse Fourier transform.
Notice that $v$ is the solution to the Dirichlet-Laplace resolvent problem, while 
the remainder term $w$ comes from the contribution of the nonlocal pressure term. The above formulas are derived for $f\in C_0^\infty (\R^d_+)^d$ satisfying $\nabla\cdot f =0$ in $\R^d_+$ and $f_d=0$ on $\partial\R^d_+$. But as is seen below, these formulas are also well-defined for any $L^p_{uloc}$ function $f$. If moreover $f$ is solenoidal, i.e. $\nabla\cdot f=0$ in the sense of distributions and $f_d=0$ on $\partial \R^d_+$ in the sense of the generalized trace (see \cite[(III.2.14) p. 159]{Galdi_book}), the velocity $u$, together with the pressure $p$ defined below, is a solution to \eqref{e.resol}. 

The formula for the pressure in the Fourier variables is written as follows: for all $\xi\in\R^{d-1}$, $y_d>0$,
\begin{align}\label{pressure.velocity.1}
\widehat{p}(\xi,y_d) & =-\int_0^\infty e^{-|\xi|y_d}e^{-\omega_\lambda(\xi)z_d}\frac{\omega_\lambda(\xi)+|\xi|}{|\xi|}\widehat{f}_d(\xi,z_d)dz_d\\
& =\int_0^\infty e^{-|\xi|y_d}e^{-\omega_\lambda(\xi)z_d}\left(\frac{1}{|\xi|}+\frac{1}{\omega_\lambda(\xi)}\right)i\xi\cdot\widehat{f}'(\xi,z_d)dz_d.\nonumber
\end{align}
\end{subequations}
Another useful representation of $\widehat{p}$ is 
\begin{subequations}\label{pressure.velocity.2}
\begin{align}
\widehat{p}(\xi, y_d) = -\frac{i \xi}{|\xi|} e^{-|\xi| y_d} \cdot \partial_{y_d} \widehat{u}' (\xi,0),
\end{align}
which in particular leads to 
\begin{align}\label{pressure.velocity.3'}
i\xi_j \widehat{p}(\xi,y_d) & = \frac{\xi_j \xi}{|\xi|} e^{-|\xi|y_d} \cdot \partial_{y_d} \widehat{u'}(\xi,0),\\
\label{pressure.velocity.3''}
\partial_{y_d} \widehat{p} (\xi,y_d) & = i\xi e^{-|\xi| y_d} \cdot \partial_{y_d} \widehat{u'} (\xi,0),
\end{align} 
\end{subequations}
for all $y_d>0$. Formula \eqref{pressure.velocity.2} is important when one deals with the nondecaying solutions.
Indeed, it excludes the flow driven by the pressure, that is, the pressure is completely determined by the velocity. Notice that such a formula rules out the parasitic linearly growing solutions to the pressure equation (see the Liouville theorem, Theorem \ref{thm.unique} in the Appendix).
By using the integration by parts the formula  \eqref{pressure.velocity.3'} is also written as 
\begin{subequations}
\begin{align}
\label{pressure.velocity.4'}
i\xi_j \widehat{p}(\xi,y_d) & = \frac{\xi_j \xi}{|\xi|} \cdot \int_0^\infty e^{-|\xi|y_d} e^{-\omega_\lambda (\xi) z_d} \bigg ( \omega_\lambda (\xi) \partial_{z_d} \widehat{u'}(\xi,z_d) - \partial_{z_d}^2 \widehat{u'}(\xi,z_d) \bigg ) d z_d \nonumber \\
& = \frac{\xi_j \xi}{|\xi|} \cdot \int_0^\infty e^{-|\xi|y_d} e^{-\omega_\lambda (\xi) z_d} \bigg ( \omega_\lambda (\xi)^2 \widehat{u'}(\xi,z_d) - \partial_{z_d}^2 \widehat{u'}(\xi,z_d) \bigg ) d z_d,
\\
\label{pressure.velocity.5'}
\partial_{y_d} \widehat{p} (\xi,y_d) & = i\xi \cdot \int_0^\infty e^{-|\xi| y_d} e^{-\omega_\lambda (\xi) z_d} \bigg ( \omega_\lambda (\xi)^2  \widehat{u'} (\xi,0) - \partial_{z_d}^2 \widehat{u'}(\xi,z_d) \bigg ) d z_d.
\end{align}
\end{subequations} 
The expression \eqref{pressure.velocity.4'} is useful in obtaining the characterization of the domain of the Stokes operator in $L^q_{uloc}$ spaces; see Proposition \ref{prop.domain}.

We now define the kernels $k_{1,\lambda}$:\, $\R^d \rightarrow \mathbb{C}$ 
and $k_{2,\lambda}$:\, $\R^d \times \R_+ \rightarrow \mathbb{C}$ associated with the Dirichlet-Laplace part by: for all $y'\in\R^{d-1}$ and $y_d\in\R$,
\begin{subequations}
\begin{equation}\label{def:k_1}
k_{1,\lambda}(y',y_d):=\int_{\mathbb R^{d-1}} e^{iy'\cdot \xi} \frac{1}{2\omega_\lambda(\xi)} 
e^{-\omega_\lambda(\xi)|y_d|}d\xi,
\end{equation}
and for all $y'\in\R^{d-1}$ and $y_d,\ z_d>0$,
\begin{equation}\label{def:k_2}
k_{2,\lambda}(y',y_d,z_d):=\int_{\mathbb R^{d-1}} e^{iy'\cdot \xi} \frac{1}{2\omega_\lambda(\xi)} 
e^{-\omega_\lambda(\xi)(y_d+z_d)}d\xi.
\end{equation}
We also define the kernels $r'_\lambda$: 
$\R^{d}_+ \times \R_+ \rightarrow \mathbb{C}^{d-1}$
and $r_{d,\lambda}$: 
$\R^{d}_+ \times \R_+ \rightarrow \mathbb{C}$
associated with the nonlocal part as follows: for all $y'\in\R^{d-1}$ and $y_d,\ z_d>0$ 
\begin{align}
r'_\lambda(y',y_d,z_d) &:= 
\int_{\R^{d-1}} e^{iy'\cdot \xi} 
 \frac{e^{-|\xi| y_d} -  e^{-\omega_\lambda(\xi)y_d}} {\omega_\lambda(\xi)(\omega_\lambda(\xi)-|\xi|)}
\,e^{-\omega_\lambda(\xi)z_d} \frac{\xi\otimes\xi}{|\xi|}d\xi,\label{e.defgreenkernels}
\\
r_{d,\lambda}(y',y_d,z_d) &:=
\int_{\R^{d-1}} ie^{iy'\cdot \xi} 
 \frac{e^{-|\xi|y_d} -  e^{-\omega_\lambda(\xi)y_d}} {\omega_\lambda(\xi)(\omega_\lambda(\xi)-|\xi|)}
\,e^{-\omega_\lambda(\xi)z_d} \xi d\xi.\label{e.defgreenkernels'}
\end{align}
Moreover, the kernel associated with the pressure is defined as follows: for all $y'\in\R^{d-1}$ and $y_d,\ z_d>0$
\begin{align}\label{e.defqlambda}
q_\lambda(y',y_d,z_d):=i\int_{\R^{d-1}}e^{iy'\cdot\xi}e^{-|\xi|y_d}e^{-\omega_\lambda(\xi)z_d}\left(\frac{\xi}{|\xi|}+\frac{\xi}{\omega_\lambda(\xi)}\right) d\xi.
\end{align}
\end{subequations}
Notice that  for all $y'\in\R^{d-1}$, for all $y_d>0$,
\begin{subequations}\label{def:vw}
\begin{multline}
v(y',y_d)=\int_{\R^{d-1}}\int^{\infty}_0 k_{1,\lambda} (y'-z', y_d-z_d)f(z',z_d)dz_ddz'\\+\int_{\R^{d-1}}\int^{\infty}_0 k_{2,\lambda} (y'-z', y_d, z_d)f(z',z_d)dz_d dz',
\end{multline}
\begin{align}
w'(y',y_d)&=\int_{\R^{d-1}}\int^{\infty}_0 r'_\lambda(y'-z',y_d,z_d)f'(z',z_d)dz_ddz',\\
w_d(y',y_d)&=\int_{\R^{d-1}}\int^{\infty}_0 r_{d,\lambda}(y'-z',y_d,z_d)\cdot f'(z',z_d)dz_ddz'.
\end{align}
and
\begin{equation}
p(y',y_d)=\int_{\R^{d-1}}\int^{\infty}_0 q_{\lambda}(y'-z',y_d,z_d)\cdot f'(z',z_d)dz_ddz'.
\end{equation}
\end{subequations}
These integral representation formulas, together with pointwise estimates on the kernels, are the bases for estimate in  $L^p_{uloc}$ spaces.

Note that, in view of \eqref{pressure.velocity.2}, the pressure $\nabla p$ is also written as 
\begin{subequations}\label{pressure.velocity.3}
\begin{align}
\nabla' p (\cdot,y_d) & = - (\nabla' \nabla' (-\Delta')^{-\frac12}) \cdot P (y_d) \gamma \partial_{y_d} u' , \qquad y_d>0,\\
\partial_{y_d} p (\cdot,y_d) & = \nabla' \cdot P (y_d) \gamma \partial_{y_d} u' , \qquad y_d>0.
\end{align}
\end{subequations}
Here $\gamma$ is the trace operator on $\partial\R^d_+$ and $P(t)$ is the Poisson semigroup whose kernel is the Poisson kernel defined by $\mathcal{F}^{-1} [e^{-|\xi| t} ]$.
We note that, when $\gamma \partial_{y_d} u'$ belongs to $L^q_{uloc}(\R^{d-1})^{d-1}$ for some $q\in [1,\infty]$, the function $P(y_d)\gamma \partial_{y_d} u'$ is smooth and bounded including their derivatives in $\R^d_{+,\delta} = \{ (y',y_d)\in \R^d~|~y_d >\delta\}$ for each $\delta>0$. This can be proved from the pointwise estimate of the Poisson kernel (and its derivatives) and we omit the details here. 
Then, the action of $\nabla' \nabla' (-\Delta')^{-\frac12}$ or $\nabla'$ on $P (y_d) \gamma \partial_{y_d} u'$ makes sense for each $y_d>0$, when $\gamma \partial_{y_d} u' \in L^q_{uloc}(\R^{d-1})^{d-1}$.
Indeed, one natural way to realize the action of $\nabla' \nabla' (-\Delta')^{-\frac12}$ is to define it as 
\begin{align*}
\nabla' \nabla' (-\Delta')^{-\frac12} f = \int_0^\infty \nabla' \nabla' P (t) f \, d t,
\end{align*}
which is well-defined for any function $f\in BC^{2}(\R^{d-1})$ (or more sharply, for any $f\in C^{1+\ep}(\R^{d-1})$, $\ep>0$). The formula \eqref{pressure.velocity.3} will be used in Section \ref{sec.semigroup}.

We end this section with the following scaling properties of the kernels, which will be used to work with $|\lambda|=1$: for all $y'\in\R^{d-1}$, $y_d\in\R$,
\begin{subequations}\label{eq.scaling}
\begin{equation}
k_{1,\lambda}(y',y_d)=|\lambda|^{\frac{d}{2}-1}k_{1,\frac{\lambda}{|\lambda|}}(|\lambda|^{\frac{1}{2}}y',|\lambda|^{\frac{1}{2}}y_d)
\end{equation}
and for all $y'\in\R^{d-1}$, $y_d,\, z_d\in\R_+$,
\begin{align}
k_{2,\lambda}(y',y_d,z_d)&=|\lambda|^{\frac{d}{2}-1}k_{2,\frac{\lambda}{|\lambda|}}(|\lambda|^{\frac{1}{2}}y',|\lambda|^{\frac{1}{2}}y_d,|\lambda|^{\frac{1}{2}}z_d),\\
r'_\lambda(y', y_d,z_d)&
=|\lambda|^{\frac{d}{2}-1}r'_{\frac{\lambda}{|\lambda|}}(|\lambda|^{\frac{1}{2}}y',|\lambda|^{\frac{1}{2}}y_d,|\lambda|^{\frac{1}{2}}z_d),\\
r_{d,\lambda}(y', y_d,z_d)&=|\lambda|^{\frac{d}{2}-1}r_{d,\frac{\lambda}{|\lambda|}}(|\lambda|^{\frac{1}{2}}y',|\lambda|^{\frac{1}{2}}y_d,|\lambda|^{\frac{1}{2}}z_d),\\
q_\lambda(y',y_d,z_d)&=|\lambda|^{\frac{d-1}{2}}q_{\frac\lambda{|\lambda|}}(|\lambda|^{\frac{1}{2}}y',|\lambda|^{\frac{1}{2}}y_d,|\lambda|^{\frac{1}{2}}z_d).
\end{align}
\end{subequations}
There is no evident characterization of $L^p_{uloc}$ spaces in Fourier space. These spaces are easily defined in physical space. Therefore, a prominent task is to derive pointwise estimates on the kernels. The goal of the next section is to address this task.

\section{Pointwise kernel estimates}
\label{sec.resolhp}

Deriving pointwise bounds for the Dirichlet-Laplace part is rather classical. The nonlocal part requires a more refined analysis. 

\subsection{General ideas for the estimates}
\label{sec.galideas}

Before starting the estimates of the kernels derived in Section \ref{sec.prelim}, we give some general remarks, which serve as guidelines for this section. First, we always start by using the formulas \eqref{eq.scaling} in order to make $|\lambda|=1$. Second, integrability of derivatives of the Fourier multipliers are traded in decay of the kernels in physical space in the tangential direction. This is the role of Lemma \ref{lem.optdecay} below, which is central in our approach. Third, the analysis of the integrability of derivatives of the Fourier multipliers requires sometimes to analyze separately the low frequencies from the high frequencies, or small $y_d$ from large $y_d$. More heuristic explanations are given in \cite{P18Xedp}.

The following lemma is standard. Since we use it repeatedly, we state and prove it here.

\begin{lemma}\label{lem.optdecay}
Let $m\in C^\infty(\R^{d-1}\setminus\{0\})$ be a smooth Fourier multiplier. Let $K$ be the kernel associated with $m$: for all $y'\in\R^{d-1}$,
\begin{equation*}
K(y'):=\int_{\R^{d-1}}e^{iy'\cdot\xi}m(\xi)d\xi.
\end{equation*}
Assume that there exists $n>-d+1$, and positive constants $c_0(d,n,m),\ C_0(d,n,m)<\infty$ such that for all $\alpha\in\mathbb N$, $0\leq \alpha\leq n+d$, for all $\xi\in \R^{d-1}\setminus\{0\}$
\begin{equation}\label{e.boundm}
|\nabla^\alpha m(\xi)|\leq C_0|\xi|^{n-\alpha}e^{-c_0|\xi|}\quad \mbox{if}\ n>1-d,\\
\end{equation}
Then, there exists a constant $C(d,n,c_0,C_0)<\infty$ such that for all $y'\in\R^{d-1}\setminus\{0\}$,
\begin{equation}\label{e.optdecay}
|K(y')|\leq \frac{C}{|y'|^{n+d-1}}.
\end{equation}
\end{lemma}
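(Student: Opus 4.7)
My plan is to employ the classical trade-off between frequency-space regularity of $m$ and physical-space decay of its inverse Fourier transform, via a split-and-integrate-by-parts argument at the scale $|\xi| \sim 1/|y'|$. First I would fix $y' \in \R^{d-1}\setminus\{0\}$, introduce a smooth nonincreasing cutoff $\chi \in C^\infty([0,\infty))$ with $\chi \equiv 1$ on $[0,1/2]$ and $\chi \equiv 0$ on $[1,\infty)$, and split $K(y') = K_\ell(y') + K_h(y')$, where the low-frequency part carries the factor $\chi(|y'||\xi|)$ (supported in $|\xi| \leq 1/|y'|$) and the high-frequency part carries $1-\chi(|y'||\xi|)$ (vanishing on $|\xi| \leq 1/(2|y'|)$).

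For the low-frequency part, I would apply the hypothesis with $\alpha=0$ to get $|m(\xi)| \leq C_0 |\xi|^n$. Since $n > -(d-1)$, the weight $|\xi|^n$ is locally integrable on $\R^{d-1}$, and a direct computation in polar coordinates would yield
\begin{equation*}
|K_\ell(y')| \leq C_0 \int_{|\xi| \leq 1/|y'|} |\xi|^n \, d\xi \leq \frac{C}{|y'|^{n+d-1}}.
\end{equation*}

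For the high-frequency part, I would pick an integer $M$ with $n+d-1 < M \leq n+d$ and integrate by parts $M$ times using the identity $e^{iy'\cdot\xi} = (i|y'|^2)^{-M}(y'\cdot\nabla_\xi)^M e^{iy'\cdot\xi}$. No boundary terms arise, since $(1-\chi(|y'||\xi|))m(\xi)$ and all its $\xi$-derivatives vanish smoothly near $\xi=0$ and decay exponentially at infinity by hypothesis. Expanding via Leibniz produces a sum over $k=0,\ldots,M$ of terms of the form $(y'\cdot\nabla)^k m(\xi)\cdot(y'\cdot\nabla)^{M-k}(1-\chi(|y'||\xi|))$. Using the bounds $|\nabla^k m(\xi)| \leq C_0|\xi|^{n-k}e^{-c_0|\xi|}$ and $|\nabla^j\chi(|y'|\cdot)(\xi)| \leq C|y'|^j$, with derivatives of the cutoff supported in the annulus $1/(2|y'|) \leq |\xi| \leq 1/|y'|$, each such term should give exactly the scaling $|y'|^{-(n+d-1)}$ after integration. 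The borderline term is $k=M$ (all derivatives on $m$), where I must control $\int_{|\xi|\geq 1/(2|y'|)} |\xi|^{n-M} e^{-c_0|\xi|}\,d\xi$; this is precisely where the lower bound $M > n+d-1$ is used to ensure convergence at the inner threshold, while the exponential factor handles infinity.

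The main obstacle is really the bookkeeping in the Leibniz expansion: verifying that every one of the $M+1$ pieces produces the same homogeneity $|y'|^{-(n+d-1)}$, and that an admissible integer $M$ exists in the half-open interval $(n+d-1, n+d]$ compatible with the derivative bound $\alpha \leq n+d$ of the hypothesis. The compatibility of these two constraints is exactly what the range $\alpha \leq n+d$ is designed to guarantee: the hypothesis provides derivative bounds up to the order needed for the integration by parts to recover the sharp homogeneous decay rate.
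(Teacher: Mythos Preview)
Your proposal is correct and follows essentially the same approach as the paper's proof: both split the integral at the scale $|\xi|\sim|y'|^{-1}$, bound the low-frequency part directly using $|m(\xi)|\leq C_0|\xi|^n$ and local integrability of $|\xi|^n$, and integrate by parts enough times on the high-frequency part so that the resulting power $|\xi|^{n-M}$ is integrable down to $|\xi|\sim|y'|^{-1}$. The paper organizes the steps in a slightly different order---it first integrates by parts $[n+d-2]$ times in a single coordinate direction $\xi_j$ with $|y_j|\gtrsim|y'|$, then introduces the cutoff $\chi_R$, integrates twice more on the part $|\xi|\geq R$, and finally optimizes $R=|y'|^{-1}$---whereas you cut off first and then integrate by parts $M$ times with $M$ the unique integer in $(n+d-1,n+d]$; these are the same total derivative count and the same balance of scales, so the difference is purely cosmetic.
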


In the definition of $K$ the integral is, as usual, considered as the oscillatory integral.
Note that $n$ is not needed to be an integer.
The lemma will be typically used to get bounds on the kernel when $y'$ is large, say $|y'|\geq1$. Let us now give the proof of the lemma.

\begin{proof}
The proof is by integration by parts. There are two steps. For the first step, due to the singularity of the multiplier $m$ at $0$, we can integrate by parts $[n+d-2]$ times, which yields the decay $|K(y')|\leq \frac{C}{|y'|^{n+d-2}}$. Here $[a]$ denotes the Gauss symbol, i.e., $[a]$ is the integer such that $a=[a]+\delta$ with $\delta\in [0,1)$. In the second step, we cut-off the singularity around $0$ at an ad hoc frequency $R$, and integrate by parts two more times in high frequencies. This makes it possible to get the optimal decay stated in display \eqref{e.optdecay}. Let $y'\in\R^{d-1}\setminus\{0\}$ be fixed.

\noindent{\emph{\underline{Step 1.}}} There is $j\in \{1,\cdots\, d-1\}$ such that $|y_j|\geq |y'|/(d-1)$. For such $j$ we have from the integration by parts, 
\begin{align*}
&(-iy_j)^{[n+d-2]} K(y') = \int_{\R^{d-1}}  e^{iy'\cdot \xi} 
\partial_{\xi_j}^{[n+d-2]} m(\xi) d\xi\\
& = \int_{\R^{d-1}} \chi_R  e^{iy'\cdot \xi} 
\partial_{\xi_j}^{[n+d-2]} m(\xi) d\xi + \int_{\R^{d-1}} (1-\chi_R) e^{iy'\cdot \xi} 
\partial_{\xi_j}^{[n+d-2]} m(\xi) d\xi \\
& =: I_R + II_R.
\end{align*}
Here $R\in (0,\infty)$ is fixed (and will be chosen below) and $\chi_R\in C_0^\infty (\R^{d-1})$ is a smooth radial cut-off function such that $\chi_R =1$ for $|\xi|\leq R$ and $\chi_R =0$ for $|\xi|\geq 2R$. Notice that using the bound \eqref{e.boundm} we get
\begin{equation*}
\left|(-i y_j)^{[n+d-2]} K(y')\right|\leq C_0\int_{\R^{d-1}}|\xi|^{n-[n+d-2]}e^{-c_0|\xi|}d\xi\leq C,
\end{equation*}
which is not optimal.

\noindent{\emph{\underline{Step 2.}}} We have 
\begin{align*}
|I_R |\leq C_0\int_{|\xi|\leq 2R} |\xi|^{n-[n+d-2]} d \xi \leq C_0R^{1+\delta},\qquad n+d-2=[n+d-2]+\delta,
\end{align*}
while
\begin{align*}
|(-i y_j)^2 II_R | & = \left|\int_{\R^{d-1}} e^{iy'\cdot \xi}  \partial_{\xi_j}^2 \bigg ( (1-\chi_R) \partial_{\xi_j}^{[n+d-2]} m(\xi) \bigg ) d\xi \right|\\
& \leq C_0 \int_{|\xi|\geq R} |\xi|^{n-[n+d-2]-2} e^{-c_0|\xi|} d\xi \leq C_0 R^{-1+\delta}.
\end{align*} 
Now we take $R=|y'|^{-1}$, which yields from $|y_j|\geq |y'|/(d-1)$,
\begin{align*}
|K(y')| \leq | y_j|^{-[n+d-2]} ( |I_R| + |II_R| ) \leq C |y'|^{-n-d+1}.
\end{align*}
This concludes the proof.
\end{proof}

\subsection{Kernel estimates for the Dirichlet-Laplace part}
\label{sec.dirlap}

The Dirichlet Laplace part of the operator is nothing but the part corresponding to the resolvent problem for the scalar Laplace equation in $\R^d_+$ with Dirichlet boundary conditions. The kernels 
$k_{1,\lambda}$ and $k_{2,\lambda}$ are given by the expressions \eqref{def:k_1} and \eqref{def:k_2} respectively. 
We recall and prove the following classical pointwise bounds.

\begin{proposition}
\label{l.kernel}
Let $\lambda\in S_{\pi-\ep}$. There exist constants $c(d,\varepsilon)$, $C(d,\varepsilon)<\infty$ such that 
 for  $y' \in \R^{d-1}$, \ $y_d \in\R$,
\begin{align} 
&|k_{1,\lambda}(y',y_d)| \nonumber\\
&\le 
\left\{\begin{array}{ll}
C e^{-c|\lambda|^{\frac{1}{2}}|y_d|}\min \!\left\{ 
\log \left(e+\displaystyle{\frac{1}{|\lambda|^{\frac12}(|y_d|+|y'|)}}\right), \,
 \displaystyle{\frac{1}{|\lambda|(|y_d|+|y'|)^{2}}}  \right\}, & d=2, 
\\
\displaystyle{\frac{C e^{-c|\lambda|^{\frac{1}{2}}|y_d|}}{(|y_d|+|y'|)^{d-2}(1+|\lambda|^{\frac{1}{2}}(|y_d|+|y'|))^{2}}},& d\ge 3,
\end{array}\right.
\label{est:k_1}
\end{align}
and for $\alpha \in \N$,
\begin{align}
|\nabla k_{1,\lambda}(y',y_d)| &\le 
\frac{Ce^{-c|\lambda|^{\frac{1}{2}}|y_d|}}{
(|y'|+|y_d|)^{d-1}(1+|\lambda|^{\frac{1}{2}}(|y_d|+|y'|))^{2}},
\label{est:dk_1}
\\
|\nabla^{\alpha} k_{1,\lambda}(y',y_d)| &\le 
\frac{Ce^{-c|\lambda|^{\frac{1}{2}}|y_d|}}{ 
(|y_d|+|y'|)^{d-2+\alpha}(1+|\lambda|^{\frac{1}{2}}(|y_d|+|y'|))}.
\label{est:dk_12}
\end{align}
Moreover we have for  $y' \in \R^{d-1}$, \ $y_d,\, z_d\in\R_+$,  $\alpha \in \N$, 
\begin{align} 
&|k_{2,\lambda}(y',y_d,z_d)|  \nonumber\\
&\le 
\left\{\begin{array}{l}
C e^{-c|\lambda|^{\frac{1}{2}}(y_d+z_d)}\min \!\left\{ 
\log \left(e+\displaystyle{\frac{1}{|\lambda|^{\frac12}(y_d+z_d+|y'|)}}\right), \,
 \displaystyle{\frac{1}{|\lambda|(y_d+z_d+|y'|)^{2}}}  \right\}, \\
 \qquad \qquad \qquad  \qquad \qquad \qquad \qquad \qquad \qquad \qquad \qquad \qquad\qquad\qquad d=2, \\
\displaystyle{\frac{C e^{-c|\lambda|^{\frac{1}{2}}(y_d+z_d)}}{(y_d+z_d+|y'|)^{d-2}(1+|\lambda|^{\frac{1}{2}}(y_d+z_d+|y'|))^{2}}}, \qquad d\geq 3, \\
\end{array}\right.
\label{est:k_2}
\end{align}
and for $\alpha \in \N$,
\begin{align}
|\nabla k_{2,\lambda}(y',y_d,z_d)| &\le 
\frac{Ce^{-c|\lambda|^{\frac{1}{2}}(y_d+z_d)}}{
(y_d+z_d+|y'|)^{d-1}(1+|\lambda|^{\frac{1}{2}}(y_d+z_d+|y'|))^{2}},
\label{est:dk_2}
\\
|\nabla^{\alpha} k_{2,\lambda}(y',y_d,z_d)| &\le 
\frac{Ce^{-c|\lambda|^{\frac{1}{2}}(y_d+z_d)} }{
(y_d+z_d+|y'|)^{d-2+\alpha}(1+|\lambda|^{\frac{1}{2}}(y_d+z_d+|y'|))}.\label{est:d2k_2}
\end{align}
\end{proposition}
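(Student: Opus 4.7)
My plan is to reduce to $|\lambda|=1$ using the scaling identities \eqref{eq.scaling}, tracking the polynomial factors in $|\lambda|$ that reappear at the end. Moreover, since $k_{2,\lambda}(y',y_d,z_d) = k_{1,\lambda}(y', y_d+z_d)$ when $y_d, z_d>0$, the estimates \eqref{est:k_2}--\eqref{est:d2k_2} on $k_{2,\lambda}$ follow immediately from \eqref{est:k_1}--\eqref{est:dk_12} on $k_{1,\lambda}$ applied with $y_d+z_d$ in place of $|y_d|$. It therefore suffices to prove \eqref{est:k_1}, \eqref{est:dk_1}, \eqref{est:dk_12} for $|\lambda|=1$. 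The exponential factor $e^{-c|\lambda|^{\frac12}|y_d|}$ comes for free from $\Rel(\omega_\lambda(\xi)) \geq c(|\lambda|^{\frac12}+|\xi|)$, which yields $|e^{-\omega_\lambda(\xi)|y_d|}| \leq e^{-c|y_d|}e^{-c|\xi||y_d|}$; the first factor is pulled out of the integral and the second is kept either to improve the $|\xi|$-integrability or to trade for polynomial decay in $|y_d|$ via $t e^{-t} \lesssim 1$.

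\textbf{Tangential decay via Lemma \ref{lem.optdecay}.} For the polynomial decay in $|y'|$, I would apply Lemma \ref{lem.optdecay} to the symbol $m_{y_d}(\xi) := e^{-\omega_\lambda(\xi)|y_d|}/(2\omega_\lambda(\xi))$, treating $|y_d|$ as a parameter. Using $\nabla_\xi \omega_\lambda(\xi) = \xi/\omega_\lambda(\xi)$ and the bound $|\omega_\lambda(\xi)| \geq c(1+|\xi|)$ valid for $|\lambda|=1$, one checks inductively symbol-type estimates on $\nabla_\xi^\beta m_{y_d}$ of the form required by the hypothesis of the lemma, absorbing any factors of $|y_d|$ that appear from differentiating the exponential via $t^k e^{-t}\lesssim 1$. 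For $\nabla^\alpha k_{1,\lambda}$, each tangential derivative produces an $(i\xi)^{\alpha'}$ factor and each vertical derivative a $(-\omega_\lambda(\xi))^{\alpha''}$ factor inside the integral; these raise the order of the symbol at infinity by $\alpha = \alpha' + \alpha''$, and through Lemma \ref{lem.optdecay} they strengthen the $|y'|$-decay by the same number of powers, which matches the exponents in \eqref{est:dk_1} and \eqref{est:dk_12}.

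\textbf{Near region and gluing.} In the complementary regime $|y'|+|y_d|\lesssim 1$, I would bound $|\nabla^\alpha k_{1,\lambda}(y',y_d)|$ by $\int |\xi|^{|\alpha|}|m_{y_d}(\xi)|\, d\xi$ directly in radial coordinates, which recovers the Newtonian singularity $(|y'|+|y_d|)^{-(d-2+\alpha)}$ for $d\geq 3$ and the logarithmic profile for $d=2$. Indeed $k_{1,\lambda}$ is, up to constants, the fundamental solution of $(-\Delta+\lambda)$ on $\R^d$ evaluated at $(y',y_d)$, and its singular part near the origin agrees with the standard Newtonian kernel of $-\Delta$. Gluing the near and far estimates by a cut-off in $|y'|+|y_d|$ yields the min-formulation of \eqref{est:k_1} and the correct shape of \eqref{est:dk_1}--\eqref{est:dk_12}; undoing the scaling of the first step reinstates the $|\lambda|^{\frac12}$ factors.

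\textbf{Main difficulty.} The delicate point is the discrepancy between the $(1+|\lambda|^{\frac12}(|y_d|+|y'|))^{-2}$ tail in \eqref{est:dk_1} (for a single derivative) and the $(1+|\lambda|^{\frac12}(|y_d|+|y'|))^{-1}$ tail in \eqref{est:dk_12} (valid for every $\alpha\in\N$). The extra power in \eqref{est:dk_1} comes from the $i\xi$-factor produced by a tangential derivative, which vanishes at $\xi=0$ and thereby permits one additional integration by parts in the argument behind Lemma \ref{lem.optdecay}. Making this sharp gain visible uniformly in $\lambda$, while simultaneously keeping the logarithmic precision in $d=2$ and the correct scaling in the mixed regime $|\lambda|^{\frac12}(|y_d|+|y'|)\sim 1$, is where most of the bookkeeping will lie.
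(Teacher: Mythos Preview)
Your proposal is correct and follows essentially the same route as the paper: scaling to $|\lambda|=1$, the symbol bound \eqref{est:dkernel}, Lemma~\ref{lem.optdecay} for the $|y'|$-decay, a direct radial estimate for the near-origin behavior, and the identification $k_{2,\lambda}(y',y_d,z_d)=k_{1,\lambda}(y',y_d+z_d)$ to reduce the second kernel to the first. One small refinement of your ``main difficulty'' paragraph: the extra power of decay in \eqref{est:dk_1} is not specific to tangential derivatives; for $\partial_{y_d}$ the symbol becomes $\tfrac12 e^{-\omega_\lambda(\xi)|y_d|}$, which is smooth at $\xi=0$ (the $\omega_\lambda$ cancels the $1/\omega_\lambda$), and it is this smoothness---rather than vanishing---that permits the additional integration by parts in both cases.
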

\begin{remark}
\label{rem:k_2}
{\rm From \eqref{est:k_2}, it is clear that
\begin{align} 
&|k_{2,\lambda}(y',y_d,z_d)|  \nonumber\\
&\le 
\left\{\begin{array}{l}
C e^{-c|\lambda|^{\frac{1}{2}}|y_d-z_d|}\min \!\left\{ 
\log \left(e+\displaystyle{\frac{1}{|\lambda|^{\frac12}(|y_d-z_d|+|y'|)}}\right), \,
 \displaystyle{\frac{1}{|\lambda|(|y_d-z_d|+|y'|)^{2}}}  \right\}, \\
 \qquad \qquad \qquad  \qquad \qquad \qquad \qquad \qquad \qquad \qquad \qquad \qquad\qquad\qquad d=2, \\
\displaystyle{\frac{C e^{-c|\lambda|^{\frac{1}{2}}|y_d-z_d|}}{(|y_d-z_d|+|y'|)^{d-2}\left(1+|\lambda|^{\frac{1}{2}}(|y_d-z_d|+|y'|)\right)^{2}}}, \qquad d\geq 3, \\
\end{array}\right.
\end{align}
and similar estimates hold for the derivatives. Hence the integral operator associated with $k_{2, \lambda}$ 
can be estimated 
as a convolution kernel in $\R^d$ as  $k_{1, \lambda}$.}
\end{remark}

\begin{proof} Since these bounds can be estimated in the similar way, we only deal with \eqref{est:k_1}. 
The scaling property \eqref{eq.scaling} allows us to 
assume $|\lambda|=1$ in the following argument. 

We begin with the case $d\ge 3$.  First observe that 
\begin{align}
|k_{1,\lambda} (y',y_d)| 
 &\le C\int_{\R^{d-1}} \frac{e^{-c(1+|\xi|)y_d}}{|\xi|}d\xi
\label{est:y_dlarge}
 \\
 &\le Ce^{-cy_d}y_d^{-(d-2)}.
\notag
\end{align}
Secondly, for all $\alpha\in\N$ we have the following pointwise bound: 
\begin{equation}
\left| \nabla_\xi^\alpha
\left( \frac{1}{\omega_\lambda(\xi)} e^{-\omega_\lambda(\xi)|y_d|}\right)
\right|
\le
\frac{Ce^{-cy_d}e^{-c_0|\xi|y_d}}{(1+|\xi|)^{\alpha+1}}.
\label{est:dkernel}
\end{equation}
Therefore, applying Lemma \ref{lem.optdecay} with 
\begin{equation*}
m(\xi):=\frac{1}{\omega_\lambda(\xi)} e^{-\omega_\lambda(\xi)|y_d|},
\end{equation*}
$n=-1$ and $C_0:=Ce^{-cy_d}$ (remember that in this computation $y_d$ is a parameter),  we get 
\begin{equation*}
|k_{1,\lambda}(y',y_d)| \leq Ce^{-cy_d} |y'|^{-(d-2)}.
\end{equation*}
Combining the previous estimate with \eqref{est:y_dlarge} yields
\begin{align}
|k_{1,\lambda}(y',y_d)| 
\le  Ce^{-cy_d}(|y'|+y_d)^{-(d-2)}.
\label{est:k_11}
\end{align}
In the same way as above, the integration by parts and \eqref{est:dkernel} give 
\begin{align*}
|(-iy_j)^{d}k_{1,\lambda}(y',y_d)| 
&=\left|\int_{\mathbb R^{d-1}} e^{iy'\cdot \xi} \partial_{\xi_j}^{d}
\left( \frac{1}{2\omega_\lambda(\xi)} e^{-\omega_\lambda(\xi)|y_d|}\right)d\xi\right|
\\
&\le Ce^{-cy_d} \int_{\mathbb R^{d-1}} 
\frac{1}{(1+|\xi|)^{d+1}}d\xi
\\
&\le
Ce^{-cy_d}, 
\end{align*}
which together with \eqref{est:y_dlarge} implies
\begin{align} 
|k_{1,\lambda}(y',y_d)| 
&\le 
Ce^{-cy_d} (|y'|+y_d)^{-d}.
\label{est:ylarge}
\end{align} 
Combining this with \eqref{est:k_11}, 
we obtain the desired estimate \eqref{est:k_1} for $d\ge 3$.

For the case $d=2$,  it suffices to show  
\begin{align}
\label{est:d=2}
|k_{1,\lambda}(y',y_d)| &\le
C e^{-c|\lambda|^{\frac{1}{2}}|y_d|} 
\log (e+e|\lambda|^{-\frac12}(|y'|+|y_d|)^{-1}), 
\end{align}
since the other case in \eqref{est:k_1} for $d=2$
can be shown in the same manner as in \eqref{est:ylarge}.
Splitting the integral as in the proof of Lemma \ref{lem.optdecay}, we have 
\begin{align*}
k_{1,\lambda}(y',y_d) 
&= \int_{\mathbb R} \frac{1}{2\omega_\lambda(\xi)} e^{-\omega_\lambda(\xi)|y_d|}d\xi
\\
&= 
 \int_{\mathbb R} e^{iy'\cdot \xi}
  \frac{1}{2\omega_\lambda(\xi)} e^{-\omega_\lambda(\xi)|y_d|}
  \chi_R(\xi)d\xi
\\
& \quad +
 \int_{\mathbb R} e^{iy'\cdot \xi} 
 \frac{1}{2\omega_\lambda(\xi)} e^{-\omega_\lambda(\xi)|y_d|}(1-\chi_R(\xi))d\xi
\\
&=I+II.
\end{align*}
On the one hand \eqref{est:dkernel} with $\alpha=0$ gives
\begin{align*}
|I| &\le C\int_{|\xi'| \le 2R} 
  \frac{1}{1+|\xi|} e^{-(1+|\xi|)y_d} d\xi
\\
&\le 
Ce^{-cy_d}\int_{|\xi'| \le 2R} 
  \frac{1}{1+|\xi|} d\xi
\\
&\le 
Ce^{-cy_d}\log (1+R),
\end{align*}
and on the other hand, using the identity $i y_j e^{iy'\cdot \xi}=\partial_{\xi_j} e^{iy'\cdot \xi}$, an integration by parts and \eqref{est:dkernel} we obtain
\begin{align*}
|II| 
&\le
\frac{C}{|y'|} 
\left|\int_{\mathbb R} e^{iy'\cdot \xi} 
\partial_{\xi'}
\left( \frac{1}{2\omega_\lambda(\xi)} e^{-\omega_\lambda(\xi)|y_d|}(1-\chi_R(\xi))\right)
d\xi\right|
\\
&\le 
Ce^{-cy_d}(R|y'|)^{-1},
\end{align*}
from which we have
\begin{align*}
|k_{1,\lambda}(y',y_d)| \le Ce^{-cy_d} (\log (1+R)+(|y'|R)^{-1}).
\end{align*}
Hence taking  $R=|y'|^{-1}$ we obtain
\begin{align*}
|k_{1,\lambda}(y',y_d)| 
\le Ce^{-cy_d}(\log (1+|y'|^{-1})+1).
\end{align*}
Moreover, we have
\begin{align*}
|k_{1,\lambda}(y',y_d)|
&\le
 \left|\int_0^{y_d^{-1}} e^{iy'\cdot \xi}
  \frac{1}{2\omega_\lambda(\xi)} e^{-\omega_\lambda(\xi)|y_d|}
  d\xi\right|
\\
& \quad +
\left| \int_{y_d^{-1}}^{\infty} e^{iy'\cdot \xi} 
 \frac{1}{2\omega_\lambda(\xi)} e^{-\omega_\lambda(\xi)|y_d|}d\xi\right|
\\
&\le 
C
e^{-cy_d}\left( \int_0^{y_d^{-1}} 
  \frac{1}{1+|\xi|}d\xi
+ \int_{y_d^{-1}}^{\infty}  
  \frac{e^{-|\xi|y_d}}{|\xi|}   d\xi
  \right)
 \\
 &\le C e^{-cy_d} (\log (1+y_d^{-1}) +1).
 \end{align*}
Combining both cases, we obtain 
\begin{align}
|k_{1,\lambda}(y',y_d)| 
\le Ce^{-cy_d}(\log (1+(|y'|+y_d)^{-1})+1),
\end{align}
which immediately implies the desired estimate \eqref{est:d=2}.
This completes the proof of \eqref{est:k_1}.
\end{proof}

\begin{remark}[On the estimate of the tangential derivatives]\label{rem.tanderiv}
{\rm The tangential derivatives of $k_1$ or $k_2$ should a priori be better behaved than the vertical derivatives in $y_d$ or $z_d$, since differentiating in $y'$ brings a $\xi$ in the symbol. We were however unable to get an estimate of the type
\begin{equation*}
|\nabla^{2}_{y'} k_{1,\lambda}(y',y_d)| \le 
\frac{C|y_d|e^{-c|\lambda|^{\frac{1}{2}}|y_d|}}{ 
(|y_d|+|y'|)^{d+1}},
\end{equation*}
on the contrary of $\nabla_{y'}^2r'_\lambda$ for which this is true (see below \eqref{r.bound.2'}). A pointwise bound such as \eqref{r.bound.2'} makes it possible to prove uniform bounds in $\lambda$ on second order tangential derivatives in $L^q_{uloc}$, without loss of a factor $\log |\lambda|$ for small $|\lambda|$ (compare \eqref{e.unif2y'} to \eqref{est.nonlocal.18'}).
On a different note, we also note that the argument above also provide the estimate for the fractional derivative in the tangential variables. Indeed, if $m_\alpha (D')$ is any Fourier multiplier homogeneous of order $\alpha>0$, then we have for $\beta=0,1$,
\begin{align}\label{k.bound.frac}
\begin{split}
| m_\alpha (D') \nabla^\beta k_{1,\lambda}(y',y_d)|  &\le 
\frac{Ce^{-c|\lambda|^{\frac{1}{2}}|y_d|}}{ 
(|y_d|+|y'|)^{d-2+\alpha+\beta}(1+|\lambda|^{\frac{1}{2}}(|y_d|+|y'|))},\\
 | m_\alpha (D') \nabla^\beta k_{2,\lambda} (y',y_d,z_d)| &\le 
\frac{Ce^{-c|\lambda|^{\frac{1}{2}}(y_d+z_d)}}{ 
(y_d+z_d+|y'|)^{d-2+\alpha+\beta}(1+|\lambda|^{\frac{1}{2}}(y_d+z_d + |y'|))}.
\end{split}
\end{align}
}
\end{remark}

\subsection{Kernel estimates for the nonlocal part}
\label{sec.kernelnonlocal}

We now consider the nonlocal part $w$. We estimate the kernels 
$r'_{\lambda}$ and $r_{d,\lambda}$
defined by \eqref{e.defgreenkernels} and \eqref{e.defgreenkernels'} respectively. The nonlocal effects are due to the pressure of the Stokes equations. This part is the most difficult one. As above, our aim is to get pointwise estimates on the kernels following the general guidelines of Section \ref{sec.galideas}. We summarize our results in the following proposition.

\begin{proposition}\label{prop.estkernel}
Let $\lambda\in S_{\pi-\ep}$. There exist positive constants $c(d,\ep)$, $C(d,\ep)<\infty$  such that for all $y'\in\R^{d-1}$, $y_d,\, z_d>0$,
\begin{equation}\label{r.bound.1'}
\begin{split}
& \left|r'_\lambda(y',y_d,z_d)\right|+\left|r_{d,\lambda}(y',y_d,z_d)\right|\\
& \qquad \leq \frac{C y_d}{(y_d + z_d +|y'|)^{d-1}}\frac{e^{-c |\lambda|^{\frac 12}z_d}}{\big (1+|\lambda|^{\frac 12}(y_d+z_d + |y'|)\big ) \big (1+|\lambda|^{\frac 12}(y_d+z_d)\big ) }.
\end{split}
\end{equation}
Moreover, for $\alpha=1,\, 2$, 
\begin{align}\label{r.bound.2'}
\begin{split}
& \left| \nabla_{y'}^\alpha  r'_\lambda(y',y_d,z_d)\right|+\left|  \nabla_{y'}^\alpha r_{d,\lambda}(y',y_d,z_d)\right| \\
&\qquad  \leq \frac{Cy_d}{(y_d + z_d +|y'|)^{d-1+\alpha}}\frac{e^{-c|\lambda|^{\frac 12}z_d}}{\big (1+|\lambda|^{\frac 12}(y_d+z_d + |y'|)\big )\big (1+|\lambda|^{\frac 12}(y_d+z_d)\big )},
\end{split}
\end{align}
and for $\beta=0,\, 1$,
\begin{align}\label{r.bound.3'}
\begin{split}
& \left|  \nabla_{y'}^\beta \partial_{y_d} r'_\lambda(y',y_d,z_d)\right|+\left| \nabla_{y'}^\beta \partial_{y_d} r_{d,\lambda}(y',y_d,z_d)\right| \\
&\qquad  \leq \frac{C}{(y_d + z_d +|y'|)^{d-1+\beta}}\frac{e^{-c|\lambda|^{\frac 12}z_d}}{\big (1+|\lambda|^{\frac 12}(y_d+z_d + |y'|)  \big ) \big ( 1+ |\lambda|^\frac12 ( y_d +z_d) \big )},
\end{split}
\end{align}
and 
\begin{align}\label{r.bound.4'}
\begin{split}
& \left| \partial_{y_d}^2 r'_\lambda(y',y_d,z_d)\right|+\left| \partial_{y_d}^2 r_{d,\lambda}(y',y_d,z_d)\right|  \leq \frac{C e^{-c|\lambda|^{\frac 12}z_d}}{(y_d + z_d +|y'|)^{d} \big (1+|\lambda|^\frac12 (y_d +z_d)\big )}.
\end{split}
\end{align}
Finally, for $\beta=0,\, 1$,
\begin{align}\label{r.bound.5'}
\begin{split}
& \left|\nabla_{y'}^\beta \partial_{z_d}  r'_\lambda(y',y_d,z_d)\right|+\left|\nabla_{y'}^\beta \partial_{z_d} r_{d,\lambda}(y',y_d,z_d)\right| \\
&\qquad  \leq \frac{Cy_d}{(y_d + z_d +|y'|)^{d+\beta}}\frac{e^{-c|\lambda|^{\frac 12 z_d}}}{\big (1+|\lambda|^{\frac 12}(y_d+z_d)\big )},
\end{split}
\end{align}
and 
\begin{align}\label{r.bound.6'}
\begin{split}
& \left|\partial_{y_d }\partial_{z_d}  r'_\lambda(y',y_d,z_d)\right|+\left|\partial_{y_d} \partial_{z_d} r_{d,\lambda}(y',y_d,z_d)\right|   \leq \frac{C e^{-c |\lambda|^\frac12 z_d}}{(y_d + z_d +|y'|)^{d} \big ( 1+|\lambda|^\frac12 (y_d +z_d)\big )}.
\end{split}
\end{align}
\end{proposition}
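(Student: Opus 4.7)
The strategy is to derive every bound from a single unified symbol analysis combined with Lemma \ref{lem.optdecay}. By the scaling identities \eqref{eq.scaling} it suffices to treat the case $|\lambda|=1$ and then rescale. The crucial preliminary step, from which the factor $y_d$ appearing in \eqref{r.bound.1'}, \eqref{r.bound.2'}, \eqref{r.bound.5'} originates, is the elementary identity obtained from the fundamental theorem of calculus,
\begin{equation*}
\frac{e^{-|\xi|y_d}-e^{-\omega_\lambda(\xi)y_d}}{\omega_\lambda(\xi)\bigl(\omega_\lambda(\xi)-|\xi|\bigr)} = \frac{y_d}{\omega_\lambda(\xi)}\int_0^1 e^{-((1-s)|\xi|+s\omega_\lambda(\xi))y_d}\,ds,
\end{equation*}
which simultaneously removes the apparent singularity at $\omega_\lambda(\xi)=|\xi|$ (i.e.\ at $\lambda=0$) and exposes the $y_d$ prefactor. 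Substituting into \eqref{e.defgreenkernels}--\eqref{e.defgreenkernels'} expresses $r'_\lambda$ and $r_{d,\lambda}$ as Fourier integrals of symbols $m(\xi;y_d,z_d)$ that are smooth on $\R^{d-1}\setminus\{0\}$ and satisfy, for $|\lambda|=1$, pointwise estimates of the schematic form
\begin{equation*}
|\nabla_\xi^\alpha m(\xi)| \le C\,y_d\,|\xi|^{1-\alpha}(1+|\xi|)^{-1}\,e^{-c|\xi|(y_d+z_d)}\,e^{-c(y_d+z_d)},\qquad \alpha\in\N,
\end{equation*}
the homogeneity $|\xi|^{1-\alpha}$ at the origin coming from the prefactors $\xi\otimes\xi/|\xi|$ and $\xi$, and the $(1+|\xi|)^{-1}$ at infinity from $\omega_\lambda^{-1}$.

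With this symbol bound in hand, Lemma \ref{lem.optdecay} applied with $n=1$ delivers the decay $|y'|^{-d}$ for large $|y'|$, while direct integration in $\xi$, exploiting the Gaussian-type cut-off $e^{-c|\xi|(y_d+z_d)}$, handles the regime $|y'|\lesssim y_d+z_d$. Combining the two regimes produces the first, symmetric factor $(y_d+z_d+|y'|)^{-(d-1)}(1+|\lambda|^{1/2}(y_d+z_d+|y'|))^{-1}$ of \eqref{r.bound.1'}; the second factor $(1+|\lambda|^{1/2}(y_d+z_d))^{-1}$, which depends only on $y_d+z_d$, is recovered by using the extra integrability at high frequency provided by the $(1+|\xi|)^{-1}$ from $\omega_\lambda^{-1}$, while the exponential $e^{-c|\lambda|^{1/2}z_d}$ follows from the lower bound $\Rel\,\omega_\lambda(\xi)\ge c(|\lambda|^{1/2}+|\xi|)$. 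Undoing the scaling yields \eqref{r.bound.1'} for arbitrary $\lambda\in S_{\pi-\ep}$.

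The derivative estimates \eqref{r.bound.2'}--\eqref{r.bound.6'} are obtained by the same procedure after updating the symbol case by case. Tangential derivatives $\nabla_{y'}^\alpha$ multiply $m$ by $(i\xi)^\alpha$, raising the order of vanishing at the origin and improving the $|y'|$-decay of the kernel by $\alpha$ powers, which accounts for the factor $(y_d+z_d+|y'|)^{-\alpha}$ in \eqref{r.bound.2'}. The vertical derivative $\partial_{y_d}$ eliminates the $y_d$ prefactor but introduces a factor comparable to $\omega_\lambda+|\xi|\sim 1+|\xi|$; this consumes the $(1+|\xi|)^{-1}$ integrability gain and exchanges the $y_d$ numerator for the corresponding denominator factor, yielding \eqref{r.bound.3'}. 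A second $\partial_{y_d}$, or a $\partial_{z_d}$, brings down yet another factor $\sim 1+|\xi|$, cancelling another integrability gain and hence losing one of the $(1+|\lambda|^{1/2}(y_d+z_d+|y'|))^{-1}$-factors, which explains why \eqref{r.bound.4'}, \eqref{r.bound.5'}, \eqref{r.bound.6'} retain only the single factor $(1+|\lambda|^{1/2}(y_d+z_d))^{-1}$. The real obstacle is not any individual estimate but the bookkeeping: at each differentiation one must carefully track how the $y_d$ prefactor, the $(1+|\xi|)^{-1}$ integrability gain, and the exponential decays in $y_d$ and $z_d$ are created or consumed, and verify that the resulting bound distributes the two asymmetric $(1+|\lambda|^{1/2}\cdot)$-factors (one containing $|y'|$, the other not) in exactly the form stated.
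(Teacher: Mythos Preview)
Your starting identity
\[
\frac{e^{-|\xi|y_d}-e^{-\omega_\lambda(\xi)y_d}}{\omega_\lambda(\xi)\bigl(\omega_\lambda(\xi)-|\xi|\bigr)}=\frac{y_d}{\omega_\lambda(\xi)}\int_0^1 e^{-((1-s)|\xi|+s\omega_\lambda(\xi))y_d}\,ds
\]
is a legitimate alternative to the paper's route, which instead writes $\tfrac{1}{\omega_\lambda-|\xi|}=\tfrac{\omega_\lambda+|\xi|}{\lambda}$, reduces to the auxiliary kernel $s_\lambda$ in \eqref{est.s_lambda.y.0}, and then performs a case split on $y_d\ge1$ versus $y_d\le1$ together with a low/high frequency decomposition. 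Your representation has the advantage of extracting the $y_d$ prefactor once and for all rather than separately in each regime.

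However, the claimed symbol bound is wrong: you assert a factor $e^{-c(y_d+z_d)}$, but at $s=0$ the integrand is $e^{-|\xi|y_d}$, which carries no exponential decay in $y_d$ for small $|\xi|$. Only $e^{-cz_d}$ survives (from $e^{-\omega_\lambda z_d}$ and $\Rel\,\omega_\lambda\ge c$). This is not cosmetic: the absence of exponential $y_d$-decay is precisely what distinguishes the nonlocal kernel from the Dirichlet--Laplace kernels $k_{i,\lambda}$. Consequently your explanation that the second factor $(1+|\lambda|^{1/2}(y_d+z_d))^{-1}$ comes from the ``$(1+|\xi|)^{-1}$ from $\omega_\lambda^{-1}$'' is not the right mechanism. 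What actually produces it is the bound
\[
\Bigl|\int_0^1 e^{-((1-s)|\xi|+s\omega_\lambda)y_d}\,ds\Bigr|\le e^{-c|\xi|y_d}\int_0^1 e^{-scy_d}\,ds\le \frac{C\,e^{-c|\xi|y_d}}{1+y_d},
\]
and then $\tfrac{e^{-cz_d}}{1+y_d}\le \tfrac{Ce^{-c'z_d}}{1+y_d+z_d}$; the $(1+|\xi|)^{-1}$ from $\omega_\lambda^{-1}$ is instead responsible (together with the frequency cut-off $e^{-c|\xi|(y_d+z_d)}$) for the first factor $(1+|\lambda|^{1/2}(y_d+z_d+|y'|))^{-1}$.

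Even with the corrected symbol bound, the resulting multiplier behaves like $|\xi|$ near the origin and like $|\xi|^0$ at infinity, so it does not fit the hypothesis of Lemma~\ref{lem.optdecay} with a single $n$; you still need a low/high frequency split (or an equivalent argument) to capture both regimes, which is essentially what the paper does explicitly. Finally, the treatment of $\partial_{y_d}$ is too heuristic: differentiating your representation gives $\tfrac{1}{\omega_\lambda}\int_0^1 e^{-\theta_s y_d}ds-\tfrac{y_d}{\omega_\lambda}\int_0^1\theta_s e^{-\theta_s y_d}ds$, not a clean ``lose $y_d$, gain $1+|\xi|$''. The paper handles this via the algebraic splitting $\partial_{y_d}s_\lambda= -|\xi|(\cdots)+(\omega_\lambda-|\xi|)e^{-\omega_\lambda(y_d+z_d)}(\cdots)$, and something comparable is needed here.
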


\begin{remark}\label{rem.prop.estkernel} Related to \eqref{r.bound.2'}, as in case of the Dirichlet-Laplace kernel, we also have the estimate for the fractional derivative in the tangential variables.
Let $m_\alpha (D')$ be any Fourier multiplier of homogeneous order $\alpha>0$. Then we have 
\begin{align}\label{r.bound.2''}
\begin{split}
& \left| m_\alpha (D')   r'_\lambda(y',y_d,z_d)\right|+\left|  m_\alpha (D')  r_{d,\lambda}(y',y_d,z_d)\right| \\
&\qquad  \leq \frac{Cy_d}{(y_d + z_d +|y'|)^{d-1+\alpha}}\frac{e^{-c|\lambda|^{\frac 12}z_d}}{\big (1+|\lambda|^{\frac 12}(y_d+z_d + |y'|)\big ) \big ( 1+ |\lambda|^\frac12 (y_d +z_d)\big )},\\
& \mbox{}\\
& \left| m_\alpha (D')  \nabla  r'_\lambda(y',y_d,z_d)\right|+\left|  m_\alpha (D') \nabla r_{d,\lambda}(y',y_d,z_d)\right| \\
&\qquad  \leq \frac{C}{(y_d + z_d +|y'|)^{d-1+\alpha}}\frac{e^{-c|\lambda|^{\frac 12}z_d}}{\big (1+|\lambda|^{\frac 12}(y_d+z_d + |y'|)\big )}.
\end{split}
\end{align}
Estimate \eqref{r.bound.2''} is proved similarly as in \eqref{r.bound.2'}, and thus the proof of \eqref{r.bound.2''} is omitted in this paper.
\end{remark}

\begin{proof}[Proof of Proposition \ref{prop.estkernel}]
Using the scaling \eqref{eq.scaling}, we assume $|\lambda|=1$ for the remainder of this section. We give the proof only for $r'_\lambda$. Indeed from the representations \eqref{e.defgreenkernels} and \eqref{e.defgreenkernels'} it is clear that the estimate of $r_{d,\lambda}$ is obtained in the similar manner. By using the identity 
\begin{align*}
\frac{1}{\omega_\lambda (\xi) - |\xi|}  = \frac{\omega_\lambda (\xi) + |\xi|}{\lambda},
\end{align*}
we rewrite $r'_\lambda$ as 
\begin{align}\label{est.r'_lambda.y}
\begin{split}
r'_\lambda (y',y_d,z_d) & = \frac{1}{\lambda} \int_{\R^{d-1}} e^{iy'\cdot \xi} 
 \big ( e^{-|\xi| y_d} -  e^{-\omega_\lambda(\xi)y_d} \big )
\,e^{-\omega_\lambda(\xi)z_d} \frac{\xi\otimes\xi}{|\xi|}d\xi \\
& \quad + \frac{1}{\lambda} \int_{\R^{d-1}} e^{iy'\cdot \xi} 
 \big ( e^{-|\xi| y_d} -  e^{-\omega_\lambda(\xi)y_d} \big )
\,e^{-\omega_\lambda(\xi)z_d} \frac{\xi\otimes\xi}{\omega_\lambda (\xi)}d\xi\\
& = r'_{\lambda,1} (y',y_d,z_d)  + r'_{\lambda,2} (y',y_d,z_d).
\end{split}
\end{align}
Since $\lambda\in S_{\pi-\ep}$ and $|\lambda|=1$ the factor $\frac{1}{\omega_\lambda(\xi)}$ is more regular than $\frac{1}{|\xi|}$. Therefore we focus on the pointwise estimate of $r'_{\lambda,1}$, that is automatically satisfied by $r'_{\lambda,2}$ as well.
Again from $|\lambda|=1$ it suffices to consider the estimate of 
\begin{align}\label{est.s_lambda.y.0}
s_\lambda (y',y_d,z_d) = \int_{\R^{d-1}} e^{iy'\cdot \xi} 
 \big ( e^{-|\xi| y_d} -  e^{-\omega_\lambda(\xi)y_d} \big )
\,e^{-\omega_\lambda(\xi)z_d} \frac{\xi\otimes\xi}{|\xi|}d\xi .
\end{align}

\noindent \emph{\underline{Step 1.}} Case $y_d\geq 1$. In this case by virtue of the factor $e^{-|\xi|y_d}$ and $e^{-\omega_\lambda (\xi)y_d}$ the kernel $s_\lambda$ becomes smooth. Moreover, the factor $e^{-\omega_\lambda (\xi)z_d}$ gives the exponential decay like $e^{-c z_d}$ since $\lambda\in S_{\pi-\ep}$ and $|\lambda|=1$. Thus the main issue is the decay in $y'$ and $y_d$. By the change of the variables $\eta = \xi y_d$ we see
\begin{align*}
s_\lambda (y',y_d,z_d) & = y_d^{-d} \int_{\R^{d-1}} e^{i\tilde y'\cdot \eta} 
 \big ( e^{-|\eta|} -  e^{-\sqrt{\lambda y_d^2 + |\eta|^2}} \big )
\,e^{-\omega_\lambda (\frac{\eta}{y_d}) z_d} \frac{\eta\otimes\eta}{|\eta|}d\eta\\
& =: y_d^{-d} \, \tilde s_\lambda (\tilde y',y_d,z_d), 
\end{align*}
where $\tilde y'  = \frac{y'}{y_d}$.
We will show that 
\begin{align}\label{est.tilde_s.y}
|\tilde s_\lambda (\tilde y',y_d,z_d)| \leq \frac{Ce^{-c z_d}}{(1+|\tilde y'|)^d},
\end{align}
from which we can derive the desired bound of $s_\lambda$ for $y_d\geq 1$, since 
\begin{align}\label{est.s_lambda.y.1}
|s_\lambda(y',y_d,z_d)| & \leq \frac{C e^{-c z_d}}{(y_d + |y'|)^d} \nonumber \\
& \leq  \frac{C y_d \, e^{-c z_d}}{(1+y_d + z_d + |y'|)^d (1+y_d+z_d)}
\end{align}
by changing the constants $C$ and $c$ suitably.
To show \eqref{est.tilde_s.y} we first observe that 
\begin{align*}
|\tilde s_\lambda (\tilde y,y_d,z_d) | \leq  \int_{\R^{d-1}} \big ( e^{-|\eta|} + e^{-c|\eta|} \big ) e^{-c z_d} |\eta| d \eta \leq C e^{-c z_d},
\end{align*}
which gives the estimate \eqref{est.tilde_s.y} for the case $|\tilde y'|\leq 1$.
Next we consider the case $|\tilde y'|\geq 1$. In this case, we notice that for $y_d\geq 1$ and $\alpha\in\N$, $0\leq \alpha\leq d+1$, for all $\eta\in\R^{d-1}\setminus\{0\}$,
\begin{equation*}
\left|\nabla_\eta^{\alpha} \bigg \{ \left ( e^{-|\eta|} -  e^{-\sqrt{\lambda y_d^2 + |\eta|^2}} \right )
\,e^{-\omega_\lambda (\frac{\eta}{y_d}) z_d} \frac{\eta\otimes\eta}{|\eta|} \bigg \} \right|\leq C e^{-c z_d}e^{-c_0|\eta|} |\eta|^{-\alpha+1}.
\end{equation*}
Therefore, we apply Lemma \ref{lem.optdecay} with 
\begin{equation*}
m(\eta):=\left( e^{-|\eta|} -  e^{-\sqrt{\lambda y_d^2 + |\eta|^2}} \right )
\,e^{-\omega_\lambda (\frac{\eta}{y_d}) z_d} \frac{\eta\otimes\eta}{|\eta|}
\end{equation*}
for all $\eta\in\R^{d-1}\setminus\{0\}$ and $K(\tilde y'):=\tilde s_\lambda (\tilde y',y_d,z_d)$, where $\lambda,\ y_d$ and $z_d$ are parameters, $n=1$ and $C_0:=C e^{-c z_d}$. This gives the bound
\begin{align*}
|\tilde s_\lambda(\tilde y',y_d,z_d)| \leq C |\tilde y'|^{-d} e^{-c z_d}.
\end{align*}
Hence, estimate \eqref{est.tilde_s.y} holds also for $|\tilde y'|\geq 1$.

\noindent \emph{\underline{Step 2.}} Case $0<y_d\leq 1$. In this case we have to be careful about both the decay in $y'$ and the singularity in $y'$ near $y'=0$. Set $R_0=2$ and we decompose $s_\lambda$ by using the cut-off $\chi_{R_0}$ as
\begin{align*}
s_\lambda & = \int_{\R^{d-1}} \chi_{R_0}(\xi) \cdots d\xi + \int_{\R^{d-1}} (1-\chi_{R_0}(\xi)) \cdots d\xi\\
& = : s_{\lambda,low} + s_{\lambda,high}.
\end{align*}
As for the term $s_{\lambda,low}$, we have from $|e^{-|\xi|y_d} - e^{-\omega_\lambda(\xi)y_d}|\leq C y_d$ for $|\xi|\leq 3$,
\begin{equation*}
|s_{\lambda,low}(y',y_d,z_d)|\leq C \int_{|\xi|\leq 3} y_d e^{-c z_d} |\xi| d \xi \leq C y_d e^{-c z_d} \leq \frac{C y_d\, e^{-c z_d}}{(1+y_d + z_d)^{d+1}}.  
\end{equation*}
Here the condition $0<y_d\leq 1$ is used. This estimate gives the desired bound of $s_{\lambda,low}$ for the case $|y'|\leq 1$. Next we consider the case $|y'|\geq 1$. A direct computation implies that for $0<y_d\leq 1$, $\alpha\in N$, $0\leq \alpha\leq d+1$, for all $\xi\in\R^{d-1}\setminus\{0\}$,
\begin{multline*}
\left|\nabla_\xi^{\alpha} \bigg \{ \chi_{R_0}(\xi) \left ( e^{-|\xi| y_d} -  e^{-\omega_\lambda(\xi)y_d} \right )
\,e^{-\omega_\lambda(\xi)z_d} \frac{\xi\otimes\xi}{|\xi|}\bigg \}\right|\\
\leq Cy_d e^{-c_0 z_d} |\xi|^{-\alpha+1}\chi_{R_0}(\xi)\leq Cy_d e^{-c z_d}e^{-c_0|\xi|}|\xi|^{-\alpha+1}.
\end{multline*}
Hence, we can apply Lemma \ref{lem.optdecay} with 
\begin{equation*}
m(\xi):=\chi_{R_0}(\xi) \left ( e^{-|\xi| y_d} -  e^{-\omega_\lambda(\xi)y_d} \right )
\,e^{-\omega_\lambda(\xi)z_d} \frac{\xi\otimes\xi}{|\xi|}
\end{equation*}
for all $\eta\in\R^{d-1}\setminus\{0\}$ and $K(y'):=s_{\lambda,low} (y',y_d,z_d)$, where $\lambda,\ y_d$ and $z_d$ are parameters, and $n=1$. This yields
\begin{equation*}
|s_{\lambda,low}|\leq C y_d |y'|^{-d} e^{-c z_d},
\end{equation*}
for $|y'|\geq 1$. Combining this with the estimate in the case $|y'|\leq 1$, we have 
\begin{align}
|s_{\lambda,low}(y',y_d,z_d)| \leq \frac{C y_d \, e^{-c z_d}}{(1+y_d +z_d + |y'|)^d (1+ y_d + z_d)},
\end{align}
for $0<y_d\leq 1$, $z_d\geq 0$, and $y'\in \R^{d-1}$.

Finally, let us estimate $s_{\lambda,high}$. Since the associated symbol is smooth, the singularity around $y'=0$ is the main issue.
The key point is to use the smoothing effect from the symbol 
\begin{equation*}
e^{-|\xi| y_d} - e^{-\omega_\lambda (\xi) y_d}= \big (1- e^{(|\xi|-\omega_\lambda (\xi)) y_d} \big ) e^{-|\xi| y_d}.
\end{equation*}
Indeed,  
\begin{equation*}
|\xi| - \omega_\lambda (\xi) = | \xi | \left( 1-\sqrt{1+\frac{\lambda}{|\xi|^2}} \right) = - \frac{\lambda}{2|\xi|}\int_0^1\frac{1}{\sqrt{1+\frac{\lambda}{|\xi|^2}t}}dt=-\frac{\lambda}{2|\xi|}+\mathcal{O}\left(\frac{\lambda^2}{|\xi|^3}\right)
\end{equation*}
for $|\xi|\geq 2$ and $|\lambda|=1$. Hence, we have for $|\xi|\geq 2$, $0<y_d\leq 1$, and for all $\alpha\in\N$, $0\leq \alpha\leq d+1$,
\begin{equation}\label{est.s_lambda.y.2}
 \left| \nabla_\xi^\alpha \bigg ( \big ( e^{-|\xi| y_d} -  e^{-\omega_\lambda(\xi)y_d} \big ) 
\,e^{-\omega_\lambda(\xi)z_d} \frac{\xi\otimes\xi}{|\xi|} \bigg ) \right | \leq C y_d e^{-c z_d} e^{-c |\xi| (y_d + z_d)} |\xi|^{-\alpha}.
\end{equation}
If $|y'|\geq \frac14$ then \eqref{est.s_lambda.y.2} implies for $j=1,\ldots d-1$,
\begin{align*}
&| (-i y_j)^{d+1} s_{\lambda,high}(y',y_d,z_d)|\\
&=\left|\int_{\R^{d-1}} e^{iy'\cdot \xi} \partial_{\xi_j}^{d+1} \bigg ( (1-\chi_{R_0} ) \big ( e^{-|\xi| y_d} -  e^{-\omega_\lambda(\xi)y_d} \big ) \,e^{-\omega_\lambda(\xi)z_d} \frac{\xi\otimes\xi}{|\xi|} \bigg )  d\xi \right|\\
& \leq C y_d e^{-c z_d} \int_{|\xi|\geq R_0} |\xi |^{-d-1} d\xi \leq C y_d e^{-c z_d}, 
\end{align*}
which gives 
\begin{align*}
|s_{\lambda,high}(y',y_d,z_d)| \leq C y_d e^{-c z_d} |y'|^{-d-1} \leq \frac{C y_d \, e^{-c z_d}}{(1+y_d + z_d + |y'|)^{d+1}},
\end{align*}
since $0<y_d\leq 1$ and $|y'|\geq \frac14$.
It remains to consider the case $|y'|\leq \frac14$.
If $|y'|\leq y_d+z_d$ and $|y'|\leq \frac14$, then estimate \eqref{est.s_lambda.y.2} with $\alpha=0$ yields 
\begin{align*}
|s_{\lambda,high}(y',y_d,z_d)| &\leq C \int_{|\xi|\geq R_0} y_d e^{-c z_d} e^{-c |\xi| (y_d + z_d)} d \xi  \\
& \leq C y_d\, e^{-c z_d}\, e^{-c' (y_d+z_d)} \\
&\leq \frac{C y_d \, e^{-c z_d}}{(y_d + z_d + |y'|)^{d-1} (1+y_d + z_d +|y'|)^2}.
\end{align*} 
On the other hand, if $0<y_d + z_d \leq |y'|\leq \frac14$ then we take $R\geq 4$ and the cut-off $\chi_R$,
and decompose $s_{\lambda,high}$ into 
\begin{align*}
s_{\lambda,high} = \int_{\R^{d-1}} \chi_R(\xi) (1-\chi_{R_0}(\xi)) \cdots d\xi  + \int_{\R^{d-1}} (1-\chi_R(\xi))  \cdots d\xi =: I_R + II_R.
\end{align*}
The term $I_R$, on the one hand, is estimated from \eqref{est.s_lambda.y.2} with $\alpha=0$ as 
\begin{align*}
|I_R | \leq C \int_{R_0\leq |\xi|\leq 2 R} y_d e^{-c z_d} d \xi \leq C y_d e^{-c z_d} R^{d-1},
\end{align*}
and the term $II_R$, on the other hand, is estimated by integration by parts,
\begin{align*}
|(-i y_j)^d II_R| & = \left|\int_{\R^{d-1}} e^{iy'\cdot \xi} \partial_{\xi_j}^d \bigg ( (1-\chi_R) \cdots \bigg ) d \xi \right|\\
& \leq C \int_{|\xi|\geq R} y_d e^{-c z_d} |\xi|^{-d} d\xi  \leq C y_d e^{-c z_d} R^{-1}
\end{align*}
for each $j=1,\cdots\, d-1$. Therefore, by taking $R=|y'|^{-1}$ we have 
\begin{align*}
|s_{\lambda,high}(y',y_d,z_d)|\leq \frac{C y_d \, e^{-c z_d}}{|y'|^{d-1}} \leq \frac{C y_d \, e^{-c z_d}}{(y_d + z_d + |y'|)^{d-1} (1+ y_d + z_d+|y'|)^2}
\end{align*}
for $0<y_d + z_d \leq |y'|\leq \frac14$. Thus, we have arrived at the following estimate for $s_\lambda$ when $0<y_d\leq 1$:
\begin{equation}
\begin{aligned}\label{est.s_lambda.y.3}
|s_\lambda(y',y_d,z_d)|&\leq |s_{\lambda,low}(y',y_d,z_d)| + |s_{\lambda,high}(y',y_d,z_d)| \\
&\leq \frac{C y_d \, e^{-c z_d}}{(y_d + z_d +|y'|)^{d-1} (1+y_d + z_d+|y'|) (1+y_d+ z_d)}.
\end{aligned}
\end{equation}
From \eqref{est.s_lambda.y.1} for $y_d\geq 1$ and \eqref{est.s_lambda.y.3} for $0<y_d\leq 1$ we conclude that \eqref{est.s_lambda.y.3} holds for all $y_d>0$. The same bound is also valid for $r'_\lambda$ by the identity \eqref{est.r'_lambda.y} and $|\lambda|=1$. By scaling back to general $\lambda$, we complete the proof of \eqref{r.bound.1'}.

\noindent \emph{\underline{Step 3.}}  Next we consider the estimates for derivatives of the kernel. Again we assume that $\lambda\in S_{\pi-\ep}$ and $|\lambda|=1$, and it suffices to focus on the estimate of $s_\lambda$ in view of \eqref{est.r'_lambda.y} and \eqref{est.s_lambda.y.0}. The estimate for the derivative in $y'$ is obtained from the same argument as above for $s_\lambda$ itself, for the symbol of $\partial_{y'}^\alpha s_\lambda$ is just the multiplication by $(i\xi)^\alpha$ of the symbol of $s_\lambda$. Hence, the argument for the proof of \eqref{r.bound.1'} gives the bound 
\begin{align}\label{est.s_lambda.y.4}
|\nabla_{y'}^\alpha s_\lambda (y',y_d,z_d) | \leq \frac{C y_d \, e^{-c z_d}}{(y_d + z_d + |y'|)^{d-1+\alpha} (1+y_d + z_d + |y'|) (1+y_d + z_d )}
\end{align}
for $|\lambda|=1$ and $\alpha=1,\, 2$. Thus, estimate \eqref{r.bound.2'} holds.

As for the derivative in $y_d$, we observe the identity 
\begin{align*}
\partial_{y_d} s_\lambda (y',y_d,z_d) & = - \int_{\R^{d-1}} e^{-i y'\cdot \xi} |\xi| \big ( e^{-|\xi| y_d} - e^{-\omega_\lambda (\xi) y_d} \big ) e^{-\omega_\lambda(\xi) z_d} \frac{\xi\otimes \xi}{|\xi|} d\xi\\
& \quad +  \int_{\R^{d-1}} e^{-i y'\cdot \xi}  \big ( \omega_\lambda (\xi) - |\xi| \big ) e^{-\omega_\lambda (\xi) (y_d +z_d)}  \frac{\xi\otimes \xi}{|\xi|} d\xi.
\end{align*}
Then the first term of this right-hand side satisfies the estimate \eqref{est.s_lambda.y.4} with $\alpha=1$.
As for the second term,  we see that the symbol $(\omega_\lambda (\xi) - |\xi| ) \frac{\xi\otimes\xi}{|\xi|}$ behaves like 
\begin{align}
(\omega_\lambda (\xi) - |\xi| ) \frac{\xi\otimes\xi}{|\xi|} \sim
\left\{\begin{array}{ll}
\mathcal{O}(|\xi|),&{\rm for}~|\xi|\ll 1, \\
\mathcal{O}(1),&{\rm for}~|\xi|\gg 1.
\end{array}\right.
\end{align}
Thus, we decompose the integral into the low frequency part $|\xi|\ll 1$
and the high frequency part $|\xi|\gg 1$ using the cut-off $\chi_{R_0}$ as in the proof for $s_\lambda$. We can show that the contribution from the low frequency part is bounded by
\begin{align*} 
\frac{C e^{-c (y_d+z_d)}}{(1+y_d + z_d + |y'|)^d} \leq \frac{Ce^{-c(y_d+z_d)}}{(y_d+z_d +|y'|)^{d-1} (1+y_d+z_d+|y'|) (1+y_d+z_d)}
\end{align*}
while the contribution from the high frequency part is bounded by
\begin{align*}
&\frac{C e^{-c (y_d + z_d)}}{(y_d + z_d + |y'|)^{d-1} (1+y_d + z_d + |y'|)}\\
\leq\ & \frac{Ce^{-c(y_d+z_d)}}{(y_d+z_d +|y'|)^{d-1} (1+y_d+z_d+|y'|) (1+y_d+z_d)}.
\end{align*}
Here are have replaced the constant $c>0$ suitably.
Collecting these bounds, we conclude that 
\begin{align}
|\partial_{y_d} s_\lambda(y',y_d,z_d)| \leq \frac{C e^{-c z_d}}{(y_d+z_d+|y'|)^{d-1} (1+ y_d + z_d + |y'|) (1+y_d + z_d)}
\end{align}
for $|\lambda|=1$, which implies \eqref{r.bound.3'} with $\beta=0$.
The similar observation yields the estimate \eqref{r.bound.3'} with $\beta=1$ and also \eqref{r.bound.4'}. The details are omitted here.
Finally we consider the estimate for the derivative in $z_d$.
Again it suffices to consider the estimate of $s_\lambda$ with $|\lambda|=1$.
We observe from \eqref{est.s_lambda.y.0} that 
\begin{align*}
\partial_{z_d} s_\lambda(y',y_d,z_d) & = - \int_{\R^{d-1}} e^{iy'\cdot \xi} 
 \big ( e^{-|\xi| y_d} -  e^{-\omega_\lambda(\xi)y_d} \big )
\,e^{-\omega_\lambda(\xi)z_d} \omega_\lambda (\xi) \frac{\xi\otimes\xi}{|\xi|}d\xi \\
& = - \int_{\R^{d-1}} e^{iy'\cdot \xi} 
 \big ( e^{-|\xi| y_d} -  e^{-\omega_\lambda(\xi)y_d} \big )
\,e^{-\omega_\lambda(\xi)z_d} |\xi|  \frac{\xi\otimes\xi}{|\xi|}d\xi \\
& \quad + \int_{\R^{d-1}} e^{iy'\cdot \xi} 
 \big ( e^{-|\xi| y_d} -  e^{-\omega_\lambda(\xi)y_d} \big )
\,e^{-\omega_\lambda(\xi)z_d} \big ( |\xi|-\omega_\lambda (\xi)\big ) \frac{\xi\otimes\xi}{|\xi|}d\xi .
\end{align*}
Then the first term of the right-hand side has the similar pointwise estimate as $\nabla_{y'} s_\lambda$ which is already obtained, while the symbol of the second term has the similar behavior as the one of $s_\lambda$ for $|\xi|\ll 1$ and also decays faster for $|\xi|\gg 1$. Hence the second term satisfies at least the same estimate as $s_\lambda$. From these observations we conclude that 
\begin{align*}
|\partial_{z_d} s_\lambda(y',y_d,z_d)| \leq \frac{C y_d \, e^{-c z_d}}{(y_d + z_d + |y'|)^d (1+y_d + z_d)},\qquad |\lambda|=1.
\end{align*}
This proves \eqref{r.bound.5'} with $\beta=0$. Estimate \eqref{r.bound.5'} with $\beta=1$ and estimate \eqref{r.bound.6'} are proved in the same manner, and the details are omitted here.
The proof of estimates \eqref{r.bound.1'} to \eqref{r.bound.6'} is complete.
\end{proof}

\subsection{Kernel bounds for the pressure}

The goal of this section is to prove the following bounds on the pressure kernel $q_\lambda$ defined by \eqref{e.defqlambda}. These bounds are crucial to the estimate of the pressure in \cite[Section 2-5]{MMP17b}.

\begin{proposition}\label{prop.kernpressure}
Let $\lambda\in S_{\pi-\ep}$. There exist positive constants $c(d,\ep)$, $C(d,\ep)<\infty$  such that for all $y'\in\R^{d-1}$, $y_d,\, z_d>0$,
\begin{equation}\label{q.bound.1}
\left|q_\lambda(y',y_d,z_d)\right|\\
\leq \frac{Ce^{-c |\lambda|^{\frac 12}z_d}}{(y_d + z_d +|y'|)^{d-1}}.
\end{equation}
Moreover, for $\alpha=1,\ldots\, 3$,
\begin{align}\label{q.bound.2}
\begin{split}
\left| \nabla_{y'}^\alpha  q_\lambda(y',y_d,z_d)\right|+\left|\partial_{y_d}^\alpha q_\lambda(y',y_d,z_d)\right| \leq  \frac{Ce^{-c|\lambda|^{\frac 12}z_d}}{(y_d + z_d +|y'|)^{d-1+\alpha}},
\end{split}
\end{align}
\begin{align}\label{q.bound.3}
\begin{split}
\left|\nabla_{y'}\partial_{y_d}q_\lambda(y',y_d,z_d)\right| \leq \frac{Ce^{-c|\lambda|^{\frac 12}z_d}}{(y_d + z_d +|y'|)^{d+1}},
\end{split}
\end{align}
and for $\beta=0,1,2$, 
\begin{align}\label{q.bound.2'}
\begin{split}
\left| \nabla _y^\beta \partial_{z_d} q_\lambda(y',y_d,z_d)\right| \leq  \frac{Ce^{-c|\lambda|^{\frac12}z_d}}{(y_d+z_d+|y'|)^{d-1+\beta}} \Big ( |\lambda|^\frac12 +  \frac{1}{y_d+z_d+|y'|} \Big ).
\end{split}
\end{align}
\end{proposition}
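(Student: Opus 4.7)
The plan is to follow closely the strategy of the proof of Proposition \ref{prop.estkernel}. By the scaling identity \eqref{eq.scaling} for $q_\lambda$, it suffices to prove all the estimates in the normalized regime $|\lambda|=1$, after which one scales back. I would split the symbol as
\begin{equation*}
\frac{\xi}{|\xi|}+\frac{\xi}{\omega_\lambda(\xi)}
\end{equation*}
and write accordingly $q_\lambda = q_{\lambda,1}+q_{\lambda,2}$. On the sector $S_{\pi-\ep}\cap\{|\lambda|=1\}$, the function $\xi/\omega_\lambda(\xi)$ is smooth everywhere (including the origin) and behaves like $\xi/|\xi|$ only at infinity, so $q_{\lambda,2}$ automatically admits the same (indeed slightly better) pointwise bounds as $q_{\lambda,1}$; I therefore focus on the more singular piece $q_{\lambda,1}$.

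For \eqref{q.bound.1}, the key pointwise input is the symbol estimate
\begin{equation*}
\left|\nabla_\xi^\alpha\!\left(\frac{\xi}{|\xi|}e^{-|\xi|y_d}e^{-\omega_\lambda(\xi)z_d}\right)\right|\leq C e^{-cz_d}e^{-c|\xi|(y_d+z_d)}|\xi|^{-\alpha},
\end{equation*}
valid for $\xi\neq 0$ and any $\alpha\in\N$. When $y_d+z_d\geq 1$ the exponential factor already tames the singularity at the origin, and a rescaling $\eta=(y_d+z_d)\xi$ reduces the estimate to an auxiliary kernel that is bounded directly for $|y'|\leq y_d+z_d$ and handled by Lemma \ref{lem.optdecay} (with $n=0$) for $|y'|\geq y_d+z_d$. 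When $0<y_d+z_d\leq 1$, I would introduce a fixed cut-off $\chi_{R_0}$ to split $q_{\lambda,1}$ into low- and high-frequency parts: the low part is bounded in absolute value using that $\xi/|\xi|$ has modulus one, and the high part is controlled by a further cut-off at frequency $R\sim 1/(y_d+z_d+|y'|)$ combined with integrations by parts in $\xi$, exactly as in Step 2 of the proof of Proposition \ref{prop.estkernel}. Both contributions combine to give \eqref{q.bound.1}.

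The derivative bounds \eqref{q.bound.2} and \eqref{q.bound.3} follow by the same two-regime mechanism, with one modification: each tangential derivative multiplies the symbol by $i\xi_j$, while each $\partial_{y_d}$ multiplies it by $-|\xi|$. Applying $\alpha$ such derivatives therefore produces a symbol that is homogeneous of degree $\alpha$ at infinity, and Lemma \ref{lem.optdecay} invoked with $n=\alpha$ in place of $n=0$ gains $\alpha$ extra powers of decay in $y'$. Bound \eqref{q.bound.3} is just the case of one tangential and one normal derivative, i.e.\ $\alpha=2$.

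The main subtlety, and what I expect to be the real obstacle, is \eqref{q.bound.2'}: differentiating in $z_d$ brings down a factor $-\omega_\lambda(\xi)$, whose behavior splits sharply. At low frequencies $|\xi|\lesssim 1$ one has $\omega_\lambda(\xi)\sim |\lambda|^{1/2}$, contributing, after scaling back, the $|\lambda|^{1/2}$ term on the right-hand side of \eqref{q.bound.2'}, whereas at high frequencies $|\xi|\gg 1$ one has $\omega_\lambda(\xi)\sim |\xi|$, adding one power of $\xi$ to the symbol and hence one extra power of decay in $(y_d+z_d+|y'|)$. To isolate the two contributions cleanly I would decompose $\omega_\lambda(\xi)=\omega_\lambda(\xi)\chi_1(\xi)+\omega_\lambda(\xi)(1-\chi_1(\xi))$ with a fixed smooth cut-off $\chi_1$ near the origin, and apply the two-regime argument separately to each piece. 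Additional $y$-derivatives for $\beta=1,2$ are then handled exactly as in \eqref{q.bound.2}--\eqref{q.bound.3}, multiplying the symbol by $i\xi$ or $-|\xi|$ and gaining $\beta$ extra powers of $y'$ decay via Lemma \ref{lem.optdecay}.
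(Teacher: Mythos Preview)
Your proposal is correct and follows essentially the same approach as the paper: reduce to $|\lambda|=1$ by scaling, then combine direct symbol bounds with Lemma~\ref{lem.optdecay} and frequency cut-offs, exactly as in the proof of Proposition~\ref{prop.estkernel}. The only cosmetic difference is that the paper splits into the two regimes $y_d\geq 1$ and $y_d\leq 1$ and rescales by $y_d$ alone, whereas you split at $y_d+z_d=1$ and rescale by $y_d+z_d$; both work. For the derivative bounds \eqref{q.bound.2}--\eqref{q.bound.2'} the paper simply writes that they are ``obtained in a rigorously similar way'' without further detail, so your explicit handling of the $\partial_{z_d}$ case via the low/high frequency splitting of $\omega_\lambda(\xi)$ is in fact more than the paper provides.
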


The general scheme of the proof is the same as for the kernels corresponding to the nonlocal part (see Section \ref{sec.kernelnonlocal}). Again, using the scaling \eqref{eq.scaling}, we assume without loss of generality that $|\lambda|=1$.

\begin{proof}
\noindent \emph{\underline{Step 1.}} We assume $y_d\geq 1$. By the change of variable $\eta=\xi y_d$, we get
\begin{align*}
q_\lambda(y',y_d,z_d)=\ &\frac{i}{y_d^{d-1}}\int_{\R^{d-1}}e^{i\tilde{y}'\cdot\eta}e^{-|\eta|}e^{-\omega_{\lambda}(\frac{\eta}{y_d})z_d}\left(\frac{\eta}{|\eta|}+\frac{\eta}{\omega_{\lambda y_d^2}(\eta)}\right) d\eta\\
=\ &\frac{1}{y_d^{d-1}}\tilde{q}_\lambda(\tilde{y}',y_d,z_d),
\end{align*}
with $\tilde{y}'=\frac{y'}{y_d}$. We aim at proving 
\begin{align}\label{est.tilde_q.y}
|\tilde{q}_\lambda(\tilde{y}',y_d,z_d)| \leq \frac{Ce^{-c z_d}}{(1+|\tilde y'|)^{d-1}},
\end{align}
from which we can derive the desired bound of $q_\lambda$ for $y_d\geq 1$: 
\begin{align}\label{est.q_lambda.y.1}
|q_\lambda(y',y_d,z_d)| \leq \frac{C e^{-c z_d}}{(y_d + |y'|)^{d-1}} \leq  \frac{Ce^{-c z_d}}{(y_d + z_d + |y'|)^{d-1}}
\end{align}
by changing the constants $C$ and $c$ suitably. For $|\tilde{y}|\leq 1$, we simply bound the integrand by its modulus, and get
\begin{equation*}
\left|\tilde{q}_\lambda(\tilde{y}',y_d,z_d)\right|\leq e^{-cz_d}\int_{\R^{d-1}}e^{-|\eta|}d\eta\leq e^{-cz_d}, 
\end{equation*}
hence \eqref{est.tilde_q.y}. For $|\tilde{y}|\geq 1$, we rely on Lemma \ref{lem.optdecay}. It follows from the bound
\begin{equation*}
\left|\nabla^\alpha_\eta\left\{e^{-|\eta|}e^{-\omega_{\lambda}\left(\frac{\eta}{y_d}\right)z_d}\left(\frac{\eta}{|\eta|}+\frac{\eta}{\omega_{\lambda y_d^2}(\eta)}\right)\right\}\right|\leq Ce^{-cz_d}e^{-c_0|\eta|}|\eta|^{-\alpha}
\end{equation*}
valid for all $\eta\in\R^{d-1}\setminus\{0\}$ and the lemma, that there exists $C>0$ such that for all $\tilde y'\in\R^{d-1}$, $y_d,\ z_d>0$,
\begin{equation*}
|\tilde{q}_\lambda(\tilde{y}',y_d,z_d)|\leq \frac{Ce^{-c z_d}}{|\tilde y'|^{d-1}}.
\end{equation*}
This implies \eqref{est.tilde_q.y}.

\noindent \emph{\underline{Step 2.}} We now deal with the case $y_d\leq 1$. We split the kernel between low and high frequencies:
\begin{align*}
q_\lambda & = \int_{\R^{d-1}} \chi_{R_0}(\xi) \cdots d\xi + \int_{\R^{d-1}} (1-\chi_{R_0}(\xi)) \cdots d\xi\\
& = : q_{\lambda,low} + q_{\lambda,high}.
\end{align*}
We first deal with $q_{\lambda,low}$. Our goal is to show that
\begin{equation}\label{e.goallow}
|q_{\lambda,low}(y',y_d,z_d)|\leq \frac{Ce^{-cz_d}}{(y_d+z_d+|y'|)^{d-1}}.
\end{equation}
If $|y'|\leq 1$, we bound straightforwardly and get
\begin{equation*}
|q_{\lambda,low}(y',y_d,z_d)|\leq Ce^{-cz_d}\leq \frac{Ce^{-cz_d}}{(y_d+|y'|)^{d-1}},
\end{equation*}
from which \eqref{e.goallow} follows up to changing the constants $c$ and $C$. If $|y'|\geq 1$, we apply Lemma \ref{lem.optdecay} and get
\begin{equation*}
|q_{\lambda,low}(y',y_d,z_d)|\leq \frac{Ce^{-cz_d}}{|y'|^{d-1}}\leq \frac{Ce^{-cz_d}}{(y_d+|y'|)^{d-1}},
\end{equation*} 
from which \eqref{e.goallow} follows up to changing the constants $c$ and $C$. We now handle $q_{\lambda,high}$. We aim at proving that
\begin{equation}\label{e.goalhigh}
|q_{\lambda,high}(y',y_d,z_d)|\leq \frac{Ce^{-cz_d}}{(y_d+z_d+|y'|)^{d-1}}.
\end{equation}
If $|y'|\geq \frac{1}{4}$, we integrate by parts $d$ times and obtain for $j=1,\ldots d-1$,
\begin{align*}
&| (-i y_j)^{d} q_{\lambda,high}(y',y_d,z_d)|\\
&=\left|\int_{\R^{d-1}} e^{iy'\cdot \xi} \partial_{\xi_j}^{d} \left( ( (1-\chi_{R_0} )e^{-|\xi| y_d}e^{-\omega_\lambda(\xi)z_d}\left(\frac{\xi}{|\xi|}+\frac{\xi}{\omega(\xi)}\right)\right )  d\xi \right|\\
& \leq Ce^{-c z_d} \int_{|\xi|\geq R_0} |\xi |^{-d} d\xi \leq C e^{-c z_d}, 
\end{align*}
which gives 
\begin{align*}
|q_{\lambda,high}(y',y_d,z_d)| \leq \frac{Ce^{-c z_d}}{|y'|^{d}}\leq \frac{Ce^{-c z_d}}{|y'|^{d-1}},
\end{align*}
from which \eqref{e.goalhigh} follows. If $|y'|\leq \frac{1}{4}$, we directly bound the kernel as follows,
\begin{equation*}
|q_{\lambda,high}(y',y_d,z_d)|\leq Ce^{-c(y_d+z_d)},
\end{equation*}
which implies \eqref{e.goalhigh} in the case when $y_d+z_d\geq |y'|$. If $|y'|\leq \frac{1}{4}$ and $y_d+z_d\leq |y'|$, then we take $R\geq 4$ and the cut-off $\chi_R$,
and decompose $q_{\lambda,high}$ into 
\begin{align*}
q_{\lambda,high} = \int_{\R^{d-1}} \chi_R(\xi) (1-\chi_{R_0}(\xi)) \cdots d\xi  + \int_{\R^{d-1}} (1-\chi_R(\xi))  \cdots d\xi =: I_R + II_R.
\end{align*}
The term $I_R$, on the one hand, is estimated directly
\begin{align*}
|I_R | \leq C \int_{R_0\leq |\xi|\leq 2 R}e^{-c z_d} d \xi \leq Ce^{-c z_d} R^{d-1},
\end{align*}
and the term $II_R$, on the other hand, is estimated by integration by parts,
\begin{equation*}
|(-i y_j)^{d} II_R| = \leq Ce^{-c z_d}\int_{|\xi|\geq R}|\xi|^{-d} d\xi  \leq Ce^{-c z_d} R^{-1}
\end{equation*}
for each $j=1,\cdots\, d-1$. Therefore, by taking $R=|y'|^{-1}$ we have 
\begin{align*}
|q_{\lambda,high}(y',y_d,z_d)|\leq \frac{Ce^{-c z_d}}{|y'|^{d-1}}
\end{align*}
for $0<y_d + z_d \leq |y'|\leq \frac14$, which yields \eqref{e.goalhigh}. Consequently, we have proved \eqref{q.bound.1}.

The bounds for the derivatives \eqref{q.bound.2}-\eqref{q.bound.2'} are obtained in a rigorously similar way. Therefore, we do not repeat the argument.
\end{proof}

\section{Resolvent estimates}
\label{sec.resolest}

This section is devoted to the proof of Theorem \ref{prop.fresolvent}. In particular, the resolvent estimates 
\eqref{est.prop.fresolvent.1}-\eqref{est.prop.fresolvent.4}
for the Dirichlet-Lapalace part and the nonlocal part are shown in
 subsections \ref{sec.estDL} and \ref{sec.estnonlocal}, respectively. 
Note that since we work on the space including the nondecaying functions, 
 an assumption on the behavior of the pressure $p$ itself, rather than $\nabla p$ is needed
to ensure the uniqueness; see Theorem \ref{thm.unique}. Indeed, if one allows the linear growth for $p$ the uniqueness 
is proved only ``modulo shear flows'' in general. 
The proof of Theorem \ref{prop.fresolvent} including the uniqueness part  
is completed in the end of this section.
 
The general principles to estimate the integral formulas \eqref{def:vw} are: (i) to localize the integrals on small cubes and (ii) to use convolution estimates in the tangential direction. Integrals in the vertical direction on $z_d\in(0,1)$ may require relying on singular integral estimates. Further insights are given in the proceedings \cite{P18Xedp}.

\subsection{Estimates for the Dirichlet-Laplace part}\label{sec.dirichlet-laplace}
\label{sec.estDL}

In this subsection, we prove the $L^p_{uloc}$-$L^q_{uloc}$ estimate for the resolvent problem
for the Laplacian.
The following lemma plays a crucial role for our purpose. 
\begin{lemma}
\label{prop:v}
Assume that 
\begin{align}
1\leq q\leq p \leq \infty,\qquad 0\leq \frac1q-\frac1p<\frac1d.
\label{est:pq}
\end{align} 
Define the funtions $K=K_{\lambda}(y',y_d)$ and $K'=K'_{\lambda}(y',y_d)$ by 
 \begin{align} 
K_{\lambda}(y',y_d) &= 
\begin{cases}
C e^{-c|\lambda|^{\frac{1}{2}}|y_d|} \min \! \left\{ 
\displaystyle{\log \left(e+\frac{1}{|\lambda|^{\frac12}(|y'|+|y_d|)}\right), \,
 \frac{1}{|\lambda| (|y'| +|y_d|)^{2}}}  \right\},
\\
\qquad \qquad \qquad  \qquad \qquad \qquad \qquad \qquad \qquad \qquad \qquad \qquad d=2,
\\
\displaystyle{\frac{Ce^{-c|\lambda|^{\frac{1}{2}}|y_d|} }{
(|y'|+|y_d|)^{d-2}(1+|\lambda|^{\frac{1}{2}}(|y'| +|y_d|))^{2}}}, \qquad d\ge 3,
\end{cases}
\label{def:k}
\end{align} 
\begin{align} 
K'_{\lambda}(y',y_d) &= 
\frac{Ce^{-c|\lambda|^{\frac{1}{2}}|y_d|}}{
(|y'|+|y_d|)^{d-1}(1+|\lambda|^{\frac{1}{2}}(|y'| +|y_d|))^{2}}
\label{def:k'}
\end{align}
for $\lambda \in S_{\pi-\varepsilon}$.
Then there exists a constant $C=C(d,\ep,q,p)>0$ (independent of $\lambda$) such that
\begin{align}\label{est:k}
\| K_{\lambda}*_{y}f \|_{L^p_{uloc}} \leq \frac{C}{|\lambda|} \left ( 1+ |\lambda|^{\frac{d}{2}(\frac1q-\frac1p)} \right ) \| f \|_{L^q_{uloc}},
\\
\label{est:k'}
\|K'_{\lambda}*_{y}f\|_{L^p_{uloc}} \leq \frac{C}{|\lambda|^{\frac12}}
\left( 1+ |\lambda|^{\frac{d}{2}(\frac1q-\frac1p)} \right ) \| f \|_{L^q_{uloc}},
\end{align} 
where $*_y$ denotes the convolution in $\R^d$.
\end{lemma}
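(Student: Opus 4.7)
My plan is to reduce the bound to a Young-type inequality on the locally uniform spaces and then control the kernels in the appropriate summed $L^r$ norm. First I would establish the general estimate: for any measurable $K\colon\R^d\to\C$ and exponents $1\leq q\leq p\leq \infty$ with $1+\frac{1}{p}=\frac{1}{q}+\frac{1}{r}$,
\begin{equation*}
\|K*_y f\|_{L^p_{uloc}} \leq C_d \, [K]_r\, \|f\|_{L^q_{uloc}},\qquad [K]_r := \sum_{\xi\in\Z^d} \|K\|_{L^r(\xi+[-1,1]^d)},
\end{equation*}
which follows by slicing $f=\sum_{\eta'}f\chi_{\eta'+[0,1]^d}$ on unit cubes and applying the classical Young inequality on $\R^d$ piece by piece, then using translation invariance. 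It is thus enough to prove $[K_\lambda]_r \leq C|\lambda|^{-1}(1+|\lambda|^{\frac{d}{2}(\frac{1}{q}-\frac{1}{p})})$ and $[K'_\lambda]_r\leq C|\lambda|^{-\frac{1}{2}}(1+|\lambda|^{\frac{d}{2}(\frac{1}{q}-\frac{1}{p})})$.

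The key step is to split each kernel at the natural transition scale $|\lambda|^{-1/2}$ into a Riesz-type piece $K_\lambda^{\textrm{near}}:=K_\lambda\chi_{\{|y|\leq|\lambda|^{-1/2}\}}$ and a rapidly decaying tail $K_\lambda^{\textrm{far}}$. On the support of $K_\lambda^{\textrm{near}}$ the factor $(1+|\lambda|^{1/2}|y|)^{-2}$ is of order $1$, hence $|K_\lambda^{\textrm{near}}(y)|\lesssim |y|^{-(d-2)}$ when $d\geq 3$ (and a logarithm in $d=2$). The hypothesis $\frac{1}{q}-\frac{1}{p}<\frac{1}{d}$ forces $r(d-2)<d$, so this singularity lies locally in $L^r$, and the rescaling $\tilde y=|\lambda|^{1/2}y$ yields $\|K_\lambda^{\textrm{near}}\|_{L^r(\R^d)}\lesssim |\lambda|^{-1+\frac{d}{2}(\frac{1}{q}-\frac{1}{p})}$. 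When $|\lambda|\geq 1$ the support meets only $O(1)$ unit cubes, so this $L^r$ norm directly controls $[K_\lambda^{\textrm{near}}]_r$; when $|\lambda|\leq 1$ the support contains $O(|\lambda|^{-d/2})$ cubes, and the lattice sum of the pointwise bound produces the competing estimate $[K_\lambda^{\textrm{near}}]_r \lesssim |\lambda|^{-1}$ via $\sum_{1\leq|\xi|\leq|\lambda|^{-1/2}} |\xi|^{-(d-2)} \lesssim |\lambda|^{-1}$. For the far piece I use the sharper bound $|K_\lambda^{\textrm{far}}(y)|\lesssim |\lambda|^{-1}|y|^{-d}e^{-c|\lambda|^{1/2}|y_d|}$, dominate $\|K_\lambda^{\textrm{far}}\|_{L^r(\xi+[-1,1]^d)}$ by its sup on the cube, sum tangentially via $\sum_{\xi'\in\Z^{d-1}}(\xi_d^2+|\xi'|^2)^{-d/2}\lesssim 1/|\xi_d|$, and control the vertical variable using either the exponential decay (when $|\xi_d|$ is not too small) or the support constraint $|\xi|\gtrsim|\lambda|^{-1/2}$ (when $|\xi_d|$ is small); this yields $[K_\lambda^{\textrm{far}}]_r \lesssim |\lambda|^{-1}$. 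Summing the two contributions produces \eqref{est:k}. The bound \eqref{est:k'} for $K'_\lambda$ follows the same scheme applied to the stronger singularity $|y|^{-(d-1)}$: here $\frac{1}{q}-\frac{1}{p}<\frac{1}{d}$ is precisely the constraint needed for $r(d-1)<d$, and the scaling computation yields the sharper prefactor $|\lambda|^{-\frac{1}{2}+\frac{d}{2}(\frac{1}{q}-\frac{1}{p})}$.

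The main technical obstacle I anticipate is the bookkeeping across the scale $|\lambda|^{-1/2}$ in the regime $|\lambda|\leq 1$, where the Riesz region extends over many unit cubes and one must reconcile the intrinsic Riesz scaling with the lattice sum defining $[\,\cdot\,]_r$. The two-dimensional case for $K_\lambda$ also needs a separate check because the local singularity is logarithmic, but $(\log 1/|y|)^r$ is locally integrable for every finite $r$ and the same exponents are recovered.
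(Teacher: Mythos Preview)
Your approach is correct and follows the same core strategy as the paper: localize to unit cubes, apply Young's inequality, and split the kernel contribution into a piece controlled by its global $L^r$ norm (yielding the $|\lambda|^{-1+\frac d2(\frac1q-\frac1p)}$ term) and a piece estimated by a lattice sum of pointwise bounds (yielding the $|\lambda|^{-1}$ term). Your abstraction to the summed norm $[K]_r$ is exactly the quantity the paper is implicitly computing.

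The one organizational difference worth noting is where the split is placed. You cut the kernel at the $\lambda$-dependent radius $|\lambda|^{-1/2}$, which forces the case analysis $|\lambda|\gtrless 1$ that you yourself flag as the main bookkeeping obstacle. The paper instead fixes the split at the cube level, separating cubes with index $|\beta|\le 2$ (treated by the full $\|K_\lambda\|_{L^s(\R^d)}$, which already encodes the correct $\lambda$-scaling) from cubes with $|\beta|\ge 3$ (where the kernel is nonsingular and one sums the pointwise bound directly). Because this cutoff does not move with $\lambda$, the paper never needs to distinguish small and large $|\lambda|$, and the reconciliation of the Riesz region with the lattice that you anticipate simply does not arise. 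Both routes give the result; the paper's is a little more economical.
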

\begin{proof}
Since both estimates can be proved in the same way, we will only deal with \eqref{est:k}.
For $\eta=(\eta',\eta_d)\in \Z^{d-1}\times \Z_{\geq 0}$ 
 we estimate the $L^p$ norm of $K_{\lambda}*_y f$
in the cube of the form $B_\eta= B'_{\eta'} \times [\eta_d, \eta_d+1]$,
where $B'_{\eta'}=\eta' +[0,1]^{d-1}$. We first consider the case when $d \ge 3$.
Without loss of generality we may assume that $\eta=0$.
Let $\chi_{\alpha}$ be the characteristic function on the cube $B_{\alpha}$, for $\alpha\in\Z^{d}$.
Then we have
\begin{align*}
(K_{\lambda})  *_{y} f 
 =&
 \big (\sum_{\beta\in \Z^d} \chi_{\beta} K_{\lambda}  \big )*_y  \big ( \sum_{\alpha\in \Z^{d}} \chi_{\alpha} f  \big ) 
 \\
= &\sum_{\alpha,\beta\in \Z^{d},~\max |\alpha_i+\beta_i|\leq 2} 
\big (\chi_{\beta} K_{\lambda} \big ) *_y   \big ( \chi_{\alpha} f  \big ), 
\end{align*}
due to the support of $\chi_{\beta}$ and $\chi_{\alpha}$. 
Thus, the Young inequality for convolution yields for $\frac1p=\frac1s+\frac1q-1$,
\begin{align*}
\| K_{\lambda}*_yf \|_{L^p (B_{0})} 
 \leq &\sum_{\alpha,\beta\in \Z^{d},~\max |\alpha_i+\beta_i| \leq 2}
\| \chi_{\beta} K_{\lambda} \|_{L^s(\R^{d})} \| \chi_{\alpha} f  \|_{L^q(\R^{d})}  
\\
 =&
 \sum_{\max |\beta_i| \le 2,~\max |\alpha_i+\beta_i| \leq 2}
\| \chi_{\beta} K_{\lambda}  \|_{L^s(\R^{d})} \| \chi_{\alpha} f  \|_{L^q(\R^{d})}  
\\
&+
\sum_{\max |\beta_i| \ge 3,~\max |\alpha_i+\beta_i| \leq 2}
\| \chi_{\beta} K_{\lambda}  \|_{L^s(\R^{d})} \| \chi_{\alpha} f  \|_{L^q(\R^{d})}  
 \\ 
 =: &I_1 + I_2.
\end{align*}
For the estimate of $I_1$ we have 
\begin{align*}
\| K_{\lambda}  \|_{L^s(\R^{d})}^s 
\le& 
C\int_{\R^d} |y|^{-(d-2)s} (1+|\lambda|^{\frac12}|y|)^{-2s} dy
\\
=&
C\int_{\R^d} |\lambda|^{\frac{(d-2)s}{2}} |z|^{-(d-2)s} (1+|z|)^{-2s} dz |\lambda|^{-\frac{d}{2}}
\\
\le &C|\lambda|^{\frac{(d-2)s}{2}-\frac{d}{2}}\left(
\int_{|z| \le 1}  |z|^{-(d-2)s} dz
+
\int_{|z| \ge 1}  |z|^{-ds} dz
\right)
\\
\le& C|\lambda|^{\frac{(d-2)s}{2}-\frac{d}{2}},
\end{align*}
where we have used the assumption \eqref{est:pq} in the last line.
Therefore
\begin{align*}
I_1
\le& 
C \sum_{\max |\beta_i| \le 2}\| \chi_{\beta} K_{\lambda}  \|_{L^s(\R^{d})} 
\| f  \|_{L^q_{uloc}(\R^{d})}
\\
\le&
C |\lambda|^{\frac{(d-2)}{2}-\frac{d}{2s}}\| f  \|_{L^q_{uloc}(\R^{d})}
\\
\le &
C |\lambda|^{-1+\frac{d}{2}(\frac{1}{q}-\frac{1}{p}) } \| f  \|_{L^q_{uloc}(\R^{d})}.
\end{align*}
In order to estimate $I_2$ 
we further decompose the sum in $\beta$ as  
\begin{align*}
I_2 \le& 
\sum_{\max |\beta'_i| \ge  3,  ~\beta_d \in \Z}
\| \chi_{\beta} K_{\lambda}  \|_{L^s(\R^{d})} \| f  \|_{L^q_{uloc}(\R^{d})} 
\\
&+  \sum_{\max |\beta'_i| \le  3, ~|\beta_d| \ge 3}
\| \chi_{\beta} K_{\lambda}  \|_{L^s(\R^{d})} \| f  \|_{L^q_{uloc}(\R^{d})} 
\end{align*}
Using \eqref{def:k}, we have 
\begin{align*}
&\sum_{\max |\beta'_i| \ge  3,  ~\beta_d \in \Z}
\| \chi_{\beta} K_{\lambda}  \|_{L^s(\R^{d})}\\
\le &
C\sum_{\beta_d \in \Z,\, \max |\beta'_i| \ge  3} 
\left(\int^{\beta_d+1}_{\beta_d}e^{-cs|\lambda|^{\frac12}|y_d|} 
\int_{B'_{\beta'}} |\lambda|^{-\frac{3s}4}(|y'|+|y_d|)^{-(d-2)s-\frac{3s}{2}}dy' dy_d \right)^{\frac{1}{s}}
\\
\le &
C|\lambda|^{-\frac34} \sum_{\beta_d \in \Z}e^{-c|\lambda|^{\frac12} |\beta_d|} 
\sum_{\max |\beta'_i| \ge  3} (|\beta'|+|\beta_d|)^{-(d-\frac12)}
\\
\le &
C|\lambda|^{-\frac34} \sum_{\beta_d \in \Z}e^{-c|\lambda|^{\frac12} |\beta_d|} 
 (3+|\beta_d|)^{-\frac12}
\\
\le & 
C|\lambda|^{-\frac34} \int_{\R} e^{-c|\lambda|^{\frac12} t}t^{-\frac12}dt
\leq C|\lambda|^{-1}.
\end{align*}
On the other hand, from \eqref{def:k} 
we also have
\begin{align*}
\sum_{|\beta_d| \ge 3, ~\max |\beta'_i| \le  3}
\| \chi_{\beta} K_{\lambda}  \|_{L^s(\R^{d})}
\le &
\sum_{|\beta_d| \ge 3} \sum_{\max |\beta'_i| \le  3}
 \left(\int^{\beta_d+1}_{\beta_d} \int_{B'_{\beta'}} |y_d|^{-ds}
|\lambda|^{-s} dy' dy_d \right)^{\frac{1}{s}}
\\
\le & 
C|\lambda|^{-1} \sum_{|\beta_d| \ge 3} \beta_d^{-d}\le C|\lambda|^{-1}.
\end{align*}
Therefore we obtain 
$$
I_2 \le C|\lambda|^{-1} \|f\|_{L^q_{uloc}}.
$$
Thus we obtain \eqref{est:k} for $d\ge 3$.

For the case when $d=2$, from \eqref{def:k} we easily see that 
$$
|K_{\lambda}(y',y_d)| \le |\lambda|^{\frac14} (|y'|+|y_d|)^{-\frac12}(1+|\lambda|^{\frac12} (|y'|+|y_d|))^{-1}.
$$
By using this bound, the same argument as for the case $d\ge 3$ applies to prove \eqref{est:k} for $d=2$. So we omit the details.
\end{proof}

\begin{proposition}\label{cor.dirichlet-laplace} Let $\lambda\in S_{\pi-\epsilon}$ and let $m_\alpha (D')$ be any Fourier multiplier (in the tangential variables) homogeneous order $\alpha>0$. Assume that $p,q\in [1,\infty]$ fulfill the condition \eqref{est:pq}.
Then for the function $v$ defined in \eqref{def:vw}, i.e.,
\begin{multline*}
v(y',y_d)=\int_{\R^{d-1}}\int^{\infty}_0 k_{1,\lambda} (y'-z', y_d-z_d)f(z',z_d)dz_ddz'\\+\int_{\R^{d-1}}\int^{\infty}_0 k_{2,\lambda} (y'-z', y_d, z_d)f(z',z_d)dz_d dz',
\end{multline*}
with the kernels $k_{1,\lambda}$ and $k_{2,\lambda}$
given in \eqref{def:k_1} and \eqref{def:k_2} respectively, 
the following estimates hold: there exist positive constants $C(d,\epsilon,q,p)<\infty$ and $C_\alpha =C(\alpha,m_\alpha, d,\epsilon,q)<\infty$ (independent of $\lambda$) such that
\begin{align}
\| v \|_{L^p_{uloc}} &\leq \frac{C}{|\lambda|} \big ( 1+ |\lambda|^{\frac{d}{2}(\frac1q-\frac1p)} \big ) \| f \|_{L^q_{uloc}},
\label{est:v1}
\\
\|\nabla v  \|_{L^p_{uloc}}
& \leq \frac{C}{|\lambda|^{\frac12}}
\big ( 1+ |\lambda|^{\frac{d}{2}(\frac1q-\frac1p)} \big ) \| f \|_{L^q_{uloc}}, 
\label{est:v2}\\
\begin{split}\label{est:v2'}
\| m_\alpha (D') v \|_{L^q_{uloc}} & \leq \frac{C_\alpha}{|\lambda|^{\frac{2-\alpha}{2}}} \| f \|_{L^q_{uloc}}, \qquad \alpha\in (0,2),\\
\| m_\alpha (D') \nabla v \|_{L^q_{uloc}} & \leq \frac{C_\alpha}{|\lambda|^{\frac{1-\alpha}{2}}} \| f \|_{L^q_{uloc}}, \qquad \alpha\in (0,1).
\end{split}
\end{align}
Moreover we have for $1<q<\infty$: 
\begin{align}
\| \nabla^2 v \|_{L^q_{uloc}} 
&\leq C\left(1+e^{-c|\lambda|^\frac12} \log |\lambda| \right)\| f \|_{L^q_{uloc}}.
\label{est:v3}
\end{align} 
\end{proposition}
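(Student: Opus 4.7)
The plan is to derive \eqref{est:v1}--\eqref{est:v2'} directly from the pointwise kernel bounds established in Proposition \ref{l.kernel} and Remark \ref{rem.tanderiv}, combined with the convolution lemma (Lemma \ref{prop:v}), while \eqref{est:v3} requires an additional singular-integral argument which is the main obstacle.

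For \eqref{est:v1} and \eqref{est:v2}, observe that by \eqref{est:k_1} the kernel $k_{1,\lambda}$ is pointwise dominated by the function $K_\lambda$ of \eqref{def:k}, while by \eqref{est:dk_1} $\nabla k_{1,\lambda}$ is dominated by $K'_\lambda$ of \eqref{def:k'}. Hence the first term in the definition of $v$ is controlled by Lemma \ref{prop:v}, which yields the claimed $L^p_{uloc}$--$L^q_{uloc}$ bounds under \eqref{est:pq}. For the second term involving $k_{2,\lambda}$, Remark \ref{rem:k_2} shows that, after extending $f$ by zero to $\R^d$, the operator is bounded by a genuine convolution on $\R^d$ with kernel of the same pointwise size, so Lemma \ref{prop:v} applies again.

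For the fractional tangential derivative estimate \eqref{est:v2'}, I would use the bounds \eqref{k.bound.frac} from Remark \ref{rem.tanderiv}. For $\alpha \in (0,2)$ with $\beta = 0$, the kernel $m_\alpha(D')k_{1,\lambda}$ has a locally integrable singularity of order $|y|^{-(d-2+\alpha)}$ together with extra decay $(1+|\lambda|^{1/2}|y|)^{-1}$ at infinity. Following the scheme of Lemma \ref{prop:v} (decomposition of $f$ via $\chi_\alpha$ on unit cubes, then Young's inequality with $\frac{1}{p} = \frac{1}{s} + \frac{1}{q} - 1$ with $p=q$), the contribution of the near cubes is controlled after the change of variables $z = |\lambda|^{1/2}y$ by $|\lambda|^{-(2-\alpha)/2}$, while the contribution of far cubes is a summable series thanks to the exponential decay factor in $y_d$ and the algebraic decay in $|y'|$. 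The case $\beta = 1$, $\alpha \in (0,1)$ is analogous with loss $|\lambda|^{-(1-\alpha)/2}$.

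The main obstacle is \eqref{est:v3}. By \eqref{est:dk_12} with $\alpha = 2$, $\nabla^2 k_{1,\lambda}$ has the Calder\'on--Zygmund singularity $|y|^{-d}$, which is not locally integrable, so a direct convolution estimate fails. My plan is to split the kernel with a smooth cutoff at scale $\rho := \min(1,|\lambda|^{-1/2})$ into a near-field piece and a far-field piece. The near-field piece is (up to a smooth bounded remainder with mass controlled by $\int (1+|\lambda|^{1/2}|y|)^{-1}|y|^{-d}\chi_{|y|<\rho}dy \lesssim 1$) precisely the second derivative of the whole-space heat resolvent kernel, which is a standard Calder\'on--Zygmund operator on $\R^d$. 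Classical $L^q$-theory for $1 < q < \infty$ gives a uniform $L^q$ bound, which transfers to $L^q_{uloc}$ because the kernel is supported in a ball of radius $\rho \leq 1$ (so localization only involves finitely many neighboring cubes of $f$). For the far-field piece, the pointwise bound $|\nabla^2 k_{1,\lambda}(y)| \lesssim |y|^{-d}(1+|\lambda|^{1/2}|y|)^{-1}e^{-c|\lambda|^{1/2}|y_d|}$ together with Young's inequality on unit cubes (as in Lemma \ref{prop:v}) requires controlling the $L^1$ norm of the kernel over $|y|\geq \rho$: when $|\lambda|\geq 1$ the cutoff is already at the decay scale and the exponential factor yields a uniform bound, while for $|\lambda|\leq 1$ the integral $\int_1^{|\lambda|^{-1/2}} r^{-1}\,dr \sim |\log|\lambda||$ produces exactly the logarithmic factor $e^{-c|\lambda|^{1/2}}\log|\lambda|$ in \eqref{est:v3}. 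The contribution from $k_{2,\lambda}$ is treated in the same way via Remark \ref{rem:k_2}.
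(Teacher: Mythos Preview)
Your treatment of \eqref{est:v1}, \eqref{est:v2}, and \eqref{est:v2'} is exactly what the paper does: dominate $k_{1,\lambda}$, $k_{2,\lambda}$ and their derivatives pointwise by the model kernels $K_\lambda$, $K'_\lambda$ via Proposition~\ref{l.kernel} and Remark~\ref{rem:k_2}, then invoke Lemma~\ref{prop:v} (or rerun its computation with \eqref{k.bound.frac} for the fractional case).

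For \eqref{est:v3} the overall strategy also matches the paper, but your implementation of the near-field has a slip. The integral you write for the ``smooth bounded remainder'', $\int_{|y|<\rho}(1+|\lambda|^{1/2}|y|)^{-1}|y|^{-d}\,dy$, diverges at the origin, so that sentence does not justify anything. More substantively, once you cut the kernel at radius $\rho$ you must show that the \emph{truncated} kernel $\chi_\rho\nabla^2 k_{1,\lambda}$ is bounded on $L^q$ uniformly in $\lambda$; writing near $=$ full $-$ far only yields $C_{\mathrm{HM}}+C|\log|\lambda||$, which puts the logarithm back into the piece you wanted uniform. This can be repaired (uniform boundedness of truncated singular integrals, Cotlar's inequality), but you do not invoke it.

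The paper avoids this by not cutting the kernel at all. It decomposes $f$ over unit cubes $B_\beta$ and separates $\max_i|\beta_i|\le 2$ from $\max_i|\beta_i|\ge 3$. For the near cubes, $\chi_\beta f$ has compact support, so the H\"ormander--Mikhlin theorem is applied directly to the \emph{full} multiplier $\xi_i\xi_j/(\lambda+|\xi|^2)$, whose H--M constants are uniform over $\lambda\in S_{\pi-\varepsilon}$; this gives $\|(\nabla^2 k_{1,\lambda})*(\chi_\beta f)\|_{L^q}\le C\|f\|_{L^q_{uloc}}$ with no truncation issue. For the far cubes one uses \eqref{est:dk_12} and, after summing in $\beta'$, is left with $\sum_{\beta_d}e^{-c|\lambda|^{1/2}|\beta_d|}(1+|\beta_d|)^{-1}$, which produces the factor $1+e^{-c|\lambda|^{1/2}}\log|\lambda|$. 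Your far-field computation via the global $L^1$ norm over $|y|\ge\rho$ gives the same bound and is in fact a legitimate shortcut, since $\|K*f\|_{L^q_{uloc}}\le C\|K\|_{L^1}\|f\|_{L^q_{uloc}}$ for any $K\in L^1(\R^d)$.
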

\begin{proof}
We extend $f$ by zero in $\R^d$ and still denote the extension by $f$. 
By Proposition \ref{l.kernel} we have $|k_{1,\lambda}(y',y_d)| \le CK_{\lambda}(y',y_d)$ for $y'\in \R^{d-1}$ and $y_d \in R$, 
and $|k_{2,\lambda}(y',y_d, z_d)|\le CK_{\lambda}(y',y_d-z_d)$ for $y'\in \R^{d-1}$ and $y_d$, $z_d \ge 0$, 
where $K \ge 0$ is the function defined in \eqref{def:k}. 
This shows that 
\begin{align*}
\| v \|_{L^p_{uloc}(\R^d_+)} 
&=
C\|K* |f| \|_{L^p_{uloc}(\R^d_+)}
\\
&\le 
C\|K* |f| \|_{L^p_{uloc}(\R^d)}
\\
&\le
 \frac{C}{|\lambda|} \big ( 1+ |\lambda|^{\frac{d}{2}(\frac1q-\frac1p)} \big ) \|f\|_{L^p_{uloc}(\R^d)}
\\
&=\frac{C}{|\lambda|} \big ( 1+ |\lambda|^{\frac{d}{2}(\frac1q-\frac1p)} \big ) \|f \|_{L^p_{uloc}(\R^d_+)},
\end{align*}
which yields the desired estimate \eqref{est:v1}.
Since the estimates \eqref{est:v2} and \eqref{est:v2'} 
can be proved in the same way (for \eqref{est:v2'} with $\alpha\in (0,2)$ we use the pointwise bound \eqref{k.bound.frac} and then apply the calculation as in Lemma \ref{prop:v}), the details will be omitted.

In order to prove \eqref{est:v3}, 
we focus on the estimate for  $(\nabla ^2 k_1)*_y f$, 
since the term associated with the kernel $k_2$ is easier to handle.
As in the proof of Lemma \ref{prop:v}, it 
suffice to consider the $L^p$ norm in $B_0$.
We decompose
\begin{align*}
\|(\nabla ^2k_{1,\lambda})  *_{y} f \|_{L^q(B_0)}
 =&
\Big\| \big (\sum_{\beta\in \Z^d} \chi_{\beta} \nabla^2 k_{1, \lambda}  \big )*_y  \big ( \sum_{\alpha\in \Z^{d}} \chi_{\alpha} f  \big ) 
\Big\|_{L^q(\R^d)}
 \\
 \le&  \sum_{\alpha,\beta\in \Z^{d},~\max |\alpha_i+\beta_i|\leq 2} 
\| \big (\chi_{\beta} \nabla ^2k_{1,\lambda}\big ) *_y   \big ( \chi_{\alpha} f \big )
\|_{L^q(\R^d)}
\\
 =&
 \sum_{\max |\beta_i| \le 2,~\max |\alpha_i+\beta_i| \leq 2}
\|\big (\chi_{\beta} \nabla ^2k_{1,\lambda} \big ) *_y   \big ( \chi_{\alpha} f \big ) 
\|_{L^q(\R^d)}
\\
&+
\sum_{\max |\beta_i| \ge 3,~\max |\alpha_i+\beta_i| \leq 2}
\|
\big (\chi_{\beta} \nabla ^2k_{1,\lambda} \big ) *_y   \big ( \chi_{\alpha} f  \big ) 
\|_{L^q(\R^d)}
 \\ 
 =:& I_1 + I_2.
\end{align*}
By \eqref{est:dk_12}, the H\"{o}rmander-Mihlin theorem applies for  $\nabla^2 k_{1, \lambda}$ and therefore: 
\begin{align*}
I_1 \le C \sum_{\max |\alpha_i| \le 4} \| \chi_{\alpha} f\|_{L^q(\R^d)} 
\le C\|f \|_{L^q_{uloc}(\R^d_+)}.
\end{align*}
We further decompose the sum in $\beta$ as  
\begin{align*}
I_2 \le& 
\sum_{\max |\beta'_i| \ge  3,  ~\beta_d \in \Z}
\| \chi_{\beta} \nabla^2 k_{1,\lambda}  \|_{L^1(\R^{d})} 
\| f  \|_{L^q_{uloc}(\R^{d})} 
\\
&+  \sum_{\max |\beta'_i| \le  3, ~|\beta_d| \ge 3}
\| \chi_{\beta} \nabla^2 k_{1,\lambda}  \|_{L^1(\R^{d})} \| f  \|_{L^q_{uloc}(\R^{d})}
\\
=:&I_{2,1} + I_{2,2}. 
\end{align*}
Using \eqref{est:dk_12}, we have 
\begin{align*}
&\sum_{\max |\beta'_i| \ge  3,  ~\beta_d \in \Z}
\| \chi_{\beta} \nabla^2 k_{1, \lambda}  \|_{L^1(\R^{d})}
\\
\le &
C\sum_{\beta_d \in \Z} \sum_{\max |\beta'_i| \ge  3} 
\int^{\beta_d+1}_{\beta_d}e^{-c|\lambda|^{\frac12}|y_d|} \int_{B'_{\beta'}} (|y'|+y_d)^{-d}dy' dy_d
\\
\le &
C\sum_{\beta_d \in \Z}e^{-c|\lambda|^{\frac12} |\beta_d|} 
\sum_{\max |\beta'_i| \ge  3} (|\beta'|+|\beta_d|)^{-d}
\\
\le &
C\sum_{\beta_d \in \Z}e^{-c|\lambda|^{\frac12} |\beta_d|} 
 (1+|\beta_d|)^{-1}
\\
\le & 
C\int_{\R} e^{-c|\lambda|^{\frac12} t}(1+t)^{-1}dt
\\
\leq& C\left(1+e^{-c|\lambda|^\frac12} \log |\lambda| \right).
\end{align*}
On the other hand, from \eqref{def:k} 
we also have
\begin{align*}
\sum_{|\beta_d| \ge 3, ~\max |\beta'_i| \le  3}
\| \chi_{\beta} \nabla^2 k_{1, \lambda}  \|_{L^1(\R^{d})}
\le &
\sum_{|\beta_d| \ge 3} \sum_{\max |\beta'_i| \le  3}
\int^{\beta_d+1}_{\beta_d} \int_{B'_{\beta'}} y_d^{-d}
 dy' dy_d 
\\
\le & 
C \sum_{|\beta_d| \ge 3} \beta_d^{-d}
\le C.
\end{align*}
Therefore we obtain 
$$
I_2 \le C\left(1+e^{-c|\lambda|^\frac12} \log |\lambda| \right)\|f\|_{L^q_{uloc}}.
$$
This completes the proof.
\end{proof}

\subsection{Estimates for the nonlocal part}
\label{sec.estnonlocal}

In this subsection we give the $L^p_{uloc}\mathchar`- L^q_{uloc}$ estimates of 
\begin{align}\label{est.nonlocal.1'}
\begin{split}
w' (y',y_d)  & = I'[f'] (y',y_d)  = \int_{\R^{d-1}} \int_0^\infty r'_\lambda(y'-z',y_d,z_d)f'(z',z_d)dz_d dz',\\
w_d (y',y_d) & = I_d[f'] (y',y_d)  = \int_{\R^{d-1}} \int_0^\infty r_{d,\lambda} (y'-z',y_d,z_d) \cdot f'(z',z_d)dz_d dz',
\end{split}
\end{align}
where the kernels are defined by \eqref{e.defgreenkernels} and \eqref{e.defgreenkernels'}.

\begin{proposition}\label{prop.estnonlocal} Let $\lambda\in S_{\pi-\ep}$ and let $m_\alpha (D')$ be any Fourier multiplier of homogeneous order $\alpha>0$. 
Assume that 
\begin{align}\label{est.nonlocal.3'}
1< q = p \leq \infty \quad {\rm or} \quad 1\leq q<p\leq \infty \quad {\rm with}\quad  0\leq \frac1q-\frac1p<\frac1d.
\end{align} 
Then for the function $w$ defined in \eqref{est.nonlocal.1'} 
the following estimates hold: there exist positive constants $C(d,\ep,q,p)$ and $C_\alpha=C (\alpha,m_\alpha,d,\ep,q)$ (independent of $\lambda$) such that
\begin{align}
\| w \|_{L^p_{uloc}} &\leq \frac{C}{|\lambda|} \big ( 1+ |\lambda|^{\frac{d}{2}(\frac1q-\frac1p)} \big ) \| f \|_{L^q_{uloc}},
\label{est:w1}
\\
\|\nabla w  \|_{L^p_{uloc}} & \leq \frac{C}{|\lambda|^{\frac12}}
\big ( 1+ |\lambda|^{\frac{d}{2}(\frac1q-\frac1p)} \big ) \| f \|_{L^q_{uloc}}, \label{est:w2} \\
\begin{split}\label{est:w2'}
\| m_\alpha (D') w\|_{L^q_{uloc}} & \leq \frac{C_\alpha}{|\lambda|^{\frac{2-\alpha}{2}}} \| f \|_{L^q_{uloc}},\qquad \alpha\in (0,2),\\
\| m_\alpha (D') \nabla w\|_{L^q_{uloc}} & \leq \frac{C_\alpha}{|\lambda|^{\frac{1-\alpha}{2}}} \| f \|_{L^q_{uloc}},\qquad \alpha\in (0,1).
\end{split}
\end{align}
Moreover we have for $1<q<\infty$: 
\begin{align}
\| \nabla^2 w \|_{L^q_{uloc}} 
&\leq C\left(1+e^{-c|\lambda|^\frac12} \log |\lambda| \right)\| f \|_{L^q_{uloc}}.\label{est:w3}
\end{align} 
\end{proposition}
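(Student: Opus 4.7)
The plan is to imitate the argument used for the Dirichlet--Laplace part in Lemma \ref{prop:v} and Proposition \ref{cor.dirichlet-laplace}, substituting the pointwise bounds of Proposition \ref{prop.estkernel} for those of Proposition \ref{l.kernel}. As a first reduction, by the scaling \eqref{eq.scaling} applied to $r'_\lambda$ and $r_{d,\lambda}$, it is enough to prove the estimates for $|\lambda|=1$; the $|\lambda|$-dependent factors on the right-hand sides then appear by undoing the rescaling. A notable feature of the kernels $r'_\lambda,r_{d,\lambda}$ is that they are convolution kernels in the tangential variable $y'-z'$ but \emph{not} in the vertical variables $(y_d,z_d)$; this is the main structural difference with the Dirichlet--Laplace case.

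To prove \eqref{est:w1}, \eqref{est:w2}, and \eqref{est:w2'}, fix a cube $B_\eta=B'_{\eta'}\times[\eta_d,\eta_d+1]$ and, as in Lemma \ref{prop:v}, decompose $f'=\sum_{\alpha}\chi_{\alpha}f'$ and split the tangential convolution into a diagonal piece $\max_i|\alpha_i'+\beta_i'|\le 2$ and a non-diagonal piece. On each cube, for fixed $(y_d,z_d)$ use Young's inequality in $y'$ with exponent $s$ defined by $\tfrac1p=\tfrac1s+\tfrac1q-1$. The crucial input is the bound \eqref{r.bound.1'}, which after integration in $z_d>0$ behaves like the kernel $K_\lambda$ of Lemma \ref{prop:v}: indeed, using $y_d\le y_d+z_d+|y'|$ and the exponential decay $e^{-c|\lambda|^{1/2}z_d}$ (which with $|\lambda|=1$ is just $e^{-cz_d}$), one gets after integrating over $z_d$
\begin{equation*}
\int_0^\infty|r'_\lambda(y'-z',y_d,z_d)|\,dz_d\ \lesssim\ \frac{1}{(y_d+|y'-z'|)^{d-2}(1+y_d+|y'-z'|)^2},
\end{equation*}
which is controlled by $K_{\lambda}$. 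The same computation as in Lemma \ref{prop:v}, summing the contributions in $(\beta',\beta_d)$, yields \eqref{est:w1}. Estimate \eqref{est:w2} follows identically from \eqref{r.bound.3'} (for $\partial_{y_d} w$) and \eqref{r.bound.2'} with $\alpha=1$ (for $\nabla' w$). For the fractional bounds \eqref{est:w2'}, one replaces \eqref{r.bound.2'} by its counterpart \eqref{r.bound.2''} for $m_\alpha(D')$ and runs the same argument.

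The harder point is \eqref{est:w3}, which is a genuinely $L^q$-type estimate for second derivatives. We split $\nabla^2=(\nabla'\nabla',\,\nabla'\partial_{y_d},\,\partial_{y_d}^2)$. For the tangential--tangential block, the pointwise bound \eqref{r.bound.2'} with $\alpha=2$ shows that the associated kernel decays like $y_d(y_d+z_d+|y'|)^{-(d+1)}$, so after integration in $z_d$ one recovers a Calder\'on--Zygmund-type kernel in $(y',y_d)$; H\"ormander--Mihlin applied to the corresponding tangential Fourier symbol (which is smooth with the required bounds uniformly in $y_d,z_d$) controls the diagonal sum by $C\|f\|_{L^q_{uloc}}$, and the non-diagonal sum is handled by direct $L^1$ estimates on $\chi_\beta\nabla'^2 r'_\lambda$. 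The mixed block $\nabla'\partial_{y_d}$ is treated similarly using \eqref{r.bound.3'} with $\beta=1$. The vertical--vertical block $\partial_{y_d}^2$ is estimated via \eqref{r.bound.4'}, whose integrand has exactly the structure of $\nabla^2 k_{1,\lambda}$ in Proposition \ref{l.kernel}. The analysis of $I_{2,1}+I_{2,2}$ carried out in the proof of Proposition \ref{cor.dirichlet-laplace} then transfers verbatim and produces the logarithmic factor $1+e^{-c|\lambda|^{1/2}}\log|\lambda|$.

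The step I expect to be the main obstacle is the $\nabla'^2$ piece of \eqref{est:w3}: the kernel $\nabla'^2 r'_\lambda$ is not a pure tangential convolution because of its dependence on the two vertical parameters $y_d,z_d$, so the H\"ormander--Mihlin step has to be carried out symbol-by-symbol uniformly in $(y_d,z_d)$, and then assembled via an $L^1(dz_d)$ integration against the vertical decay of the symbol. One must also verify that the $z_d$-integral of the tangential $L^1$ norm of the far-field piece of $\nabla'^2r'_\lambda$ produces the same $\log|\lambda|$ loss as in the Dirichlet--Laplace case and not a worse one; the bound \eqref{r.bound.2'} is precisely designed so that this works. Once these ingredients are in place, the proof follows the same scheme as Proposition \ref{cor.dirichlet-laplace}.
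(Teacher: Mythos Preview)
Your reduction to Lemma \ref{prop:v} by ``integrating over $z_d$'' is not valid, and this is exactly the structural difference you yourself flag in the first paragraph. The operator $f'\mapsto w'$ is \emph{not} of the form $K*_y f'$ for any kernel $K(y',y_d)$: the vertical variables $y_d$ and $z_d$ enter the kernel $r'_\lambda(y',y_d,z_d)$ separately, so you cannot pull the $z_d$-integration through $f'(\cdot,z_d)$ to produce a convolution kernel in $(y',y_d)$. Concretely, your displayed inequality
\[
\int_0^\infty |r'_\lambda(y'-z',y_d,z_d)|\,dz_d \ \lesssim\ K_\lambda(y'-z',y_d)
\]
is (a) not the right object (what you need is a bound on $\int r'_\lambda\, f'\,dz_d$, not on $\int r'_\lambda\,dz_d$), and (b) false as stated: after integrating out $z_d$ there is \emph{no} exponential factor $e^{-c|\lambda|^{1/2}y_d}$, only polynomial decay in $y_d$, whereas $K_\lambda$ carries that exponential. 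The paper says precisely this right after \eqref{est.nonlocal.2'}: ``the trick used in the previous section, which transforms the action of the kernel into a convolution, does not work here.''

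The missing idea is that, after localizing in tangential cubes and applying Young in $y'$ as you describe, the remaining vertical piece is a genuine (non-convolution) integral operator in one variable that must be bounded directly. For \eqref{est:w1} with $p=q$ the paper isolates the near piece $z_d\in(0,1)$ and shows that
\[
(T_{1,1}h)(y_d)=\int_0^1 \frac{e^{-c|\lambda|^{1/2}z_d}}{|\lambda|^{1/2}\big(1+|\lambda|^{1/2}(y_d+z_d)\big)}\,h(z_d)\,dz_d
\]
maps $L^\infty\to L^\infty$ and $L^1\to L^{1,\infty}$ with norm $O(|\lambda|^{-1})$, hence $L^q\to L^q$ by Marcinkiewicz. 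The same mechanism drives \eqref{est:w3}: from $|\nabla^2 r'_\lambda|\le C e^{-c|\lambda|^{1/2}z_d}(y_d+z_d+|y'|)^{-d}$ one gets $\|\nabla^2 r'_\lambda(\cdot,y_d,z_d)\|_{L^1_{y'}}\le C e^{-c|\lambda|^{1/2}z_d}(y_d+z_d)^{-1}$, and the operator
\[
(Th)(y_d)=\int_0^1 \frac{e^{-c|\lambda|^{1/2}z_d}}{y_d+z_d}\,h(z_d)\,dz_d
\]
is handled by the same $L^q\to L^{q,\infty}$ plus interpolation argument; the $\log|\lambda|$ comes from the far vertical pieces $\int_1^\infty e^{-c|\lambda|^{1/2}z_d}z_d^{-1}\,dz_d$, not from a H\"ormander--Mihlin step. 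Your proposed ``H\"ormander--Mihlin uniformly in $(y_d,z_d)$ then assemble via $L^1(dz_d)$'' does not make sense as written: H--M controls a convolution operator on $\R^{d-1}$, but you still face a two-parameter vertical kernel with no convolution structure, and it is precisely this vertical step that requires the Marcinkiewicz argument above.
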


\begin{remark}\label{rem.prop.estnonlocal}{\rm Estimate \eqref{est:w2} holds even for the case $p=q=1$. Similarly, if $\alpha\in (0,2)$ then \eqref{est:w2'} holds also for the case $p=q=1$. 
It is not difficult to check these facts from the proof below, and we do not give the details here.
}
\end{remark}
\begin{proof}[Proof of Proposition \ref{prop.estnonlocal}] We focus on the estimate of $w'=I'[f']$, for the estimate of $w_d = I_d[f]$ is obtained in the same manner. 

\noindent\emph{\underline{Step 1.}} We first focus on the estimate of $I'[f']$ itself. The next steps will be devoted to derivative estimates. Our estimate is based on the pointwise estimate \eqref{r.bound.1'} of the kernel $r_\lambda$.
In particular, we often use the estimate 
\begin{align}\label{est.nonlocal.2'}
|r'_\lambda (y',y_d,z_d)| \leq \frac{Cy_d\, e^{-c |\lambda|^\frac12 z_d}}{|\lambda|^\frac12 (y_d + z_d + |y'|)^d (1+ |\lambda|^\frac12 (y_d + z_d ))}
\end{align}
which easily follows from \eqref{r.bound.1'}. Notice that the variables $y_d$ and $z_d$ are not interchangeable with each other. In particular, we do not have exponential decay in $y_d$. Hence the trick used in the previous section, which transforms the action of the kernel into a convolution, does not work here.

Let $\eta=(\eta',\eta_d)\in \Z^{d-1}\times \Z_{\geq 0}$. Let us estimate the $L^p$ norm of $I'$ in the cube of the form $B_\eta= B'_{\eta'} \times [\eta_d, \eta_d+1]$,
where $B'_{\eta'}=\eta' +[0,1]^{d-1}$. Without loss of generality we may assume that $\eta'=0$.
Let $\chi_{\eta'}$ be the characteristic function on the cube $B'_{\eta'}$.
Then we have for $y=(y',y_d)\in B'_{0'} \times [\eta_d,\eta_d+1]$,
\begin{align*}
r'_\lambda (\cdot, y_d,z_d) *_{y'} f' (\cdot, z_d) & = \left(\sum_{\alpha'\in \Z^{d-1}} \chi_{\alpha'} r'_\lambda (\cdot, y_d,z_d) \right)*_{y'}  \left ( \sum_{\beta'\in \Z^{d-1}} \chi_{\beta'} f' (\cdot, z_d) \right ) \\
& = \sum_{\alpha',\beta'\in \Z^{d-1},~\max |\alpha_i^{'}+ \beta_i^{'}|\leq 2} \big (\chi_{\alpha'} r'_\lambda (\cdot, y_d,z_d) \big ) *_{y'}   \big ( \chi_{\beta'} f' (\cdot, z_d) \big ), 
\end{align*}
due to the support of $\chi_{\alpha'}$ and $\chi_{\beta'}$. Thus, the Young inequality for convolution yields for $\frac1p=\frac1s+\frac1q-1$,
\begin{align*}
& \| I'[f'] (\cdot ,y_d)\|_{L^p (B'_{0'})} \\
& \leq \sum_{\alpha',\beta'\in \Z^{d-1},~\max |\alpha_i^{'}+ \beta_i^{'}|\leq 2}\int_0^\infty \| \chi_{\alpha'} r'_\lambda (\cdot, y_d,z_d) \|_{L^s(\R^{d-1})} \| \chi_{\beta'} f' (\cdot, z_d) \|_{L^q(\R^{d-1})} d z_d \\
& \leq \sum_{\max |\alpha'_i|\leq 2,~\max |\alpha_i^{'}+ \beta_i^{'}|\leq 2}\int_0^\infty \| \chi_{\alpha'} r'_\lambda (\cdot, y_d,z_d) \|_{L^s(\R^{d-1})} \| \chi_{\beta'} f' (\cdot, z_d) \|_{L^q(\R^{d-1})} d z_d \\
& + \sum_{\max |\alpha'_i|\geq 3,~\max |\alpha_i^{'}+ \beta_i^{'}|\leq 2}\int_0^\infty \| \chi_{\alpha'} r'_\lambda (\cdot, y_d,z_d) \|_{L^s(\R^{d-1})} \| \chi_{\beta'} f' (\cdot, z_d) \|_{L^q(\R^{d-1})} d z_d \\ 
& =: I_1 + I_2.
\end{align*}
For the term $I_1$ the data is localized and $\max |\beta'|\leq 4$ holds, and therefore,
\begin{align*}
I _1 & \leq C \sum_{n=0}^\infty \int_n^{n+1} \| r'_\lambda (\cdot, y_d, z_d) \|_{L^s(\R^{d-1})} \| f' (\cdot, z_d)\|_{L^q(\{|z'|\leq 8\})}  d z_d\\
& \leq C \int_0^{1} \| r'_\lambda (\cdot, y_d, z_d) \|_{L^s(\R^{d-1})} \| f' (\cdot, z_d)\|_{L^q(\{|z'|\leq 8\})}  d z_d\\
&  \quad + C \sum_{n=1}^\infty \left ( \int_n^{n+1} \| r'_\lambda (\cdot, y_d, z_d) \|_{L^s(\R^{d-1})}^{q'} d z_d \right )^\frac{1}{q'} \| f' \|_{L^q_{uloc}}\\
& =: I_{1,1} + I_{1,2}. 
\end{align*}
From $\frac1p = \frac1s+\frac1q-1$ the pointwise estimate \eqref{est.nonlocal.2'}  implies 
\begin{align}\label{est.nonlocal.4'}
\begin{split}
\| r'_\lambda (\cdot,y_d,z_d) \|_{L^s(\R^{d-1})}& \leq \frac{C y_d\, e^{-c |\lambda|^\frac12 z_d}}{|\lambda|^\frac12 \big (1+|\lambda|^\frac12 (y_d+z_d)\big ) (y_d+z_d)^{1+(d-1)(\frac1q-\frac1p)}}\\
& \leq  \frac{C \, e^{-c |\lambda|^\frac12 z_d}}{|\lambda|^\frac12 \big (1+|\lambda|^\frac12 (y_d+z_d)\big ) (y_d+z_d)^{(d-1)(\frac1q-\frac1p)}}.
\end{split}
\end{align} 
To estimate $I_{1,1}$ for the case $p=q$ we introduce the operator $T_{1,1}$ given by 
\begin{align*}
(T_{1,1} h )(y_d)  = \int_0^1 \frac{e^{-c |\lambda|^\frac12 z_d}}{|\lambda|^\frac12(1+|\lambda|^\frac12 (y_d+z_d))} h (z_d) d z_d.
\end{align*}
It is straightforward to see 
\begin{align*}
\|T_{1,1} h \|_{L^\infty_{y_d}} \leq \frac{C}{|\lambda|} \| h \|_{L^\infty_{z_d}(0,1)}.
\end{align*}
Moreover, we have 
\begin{align*}
\left|T_{1,1} h (y_d)\right|  \leq \frac{1}{|\lambda| y_d} \| h\|_{L^1_{z_d}(0,1)},
\end{align*}
which implies 
\begin{align*}
\| T_{1,1} h \|_{L^{1,\infty}_{y_d}}\leq \frac{C}{|\lambda|} \| h \|_{L^1_{z_d}(0,1)}.
\end{align*}
Thus, $T_{1,1}$ is bounded from $L^1(0,1)$ to $L^{1,\infty}(\R_+)$, where $L^{1,\infty}(\R_+)$ is the weak $L^1$ space on $\R_+$. By the Marcinkiewicz interpolation theorem, $T_{1,1}$ is bounded from $L^q(0,1)$ to $L^q(\R_+)$ for any $1<q<\infty$, and we have 
\begin{align}\label{est.nonlocal.5'}
\| I_{1,1} \|_{L^q(\R_+)} \leq \frac{C}{|\lambda|} \| f' \|_{L^q_{uloc}},\qquad 1<q\leq \infty.
\end{align}
Next we estimate $I_{1,1}$ for the case $q<p$. Note that \eqref{est.nonlocal.4'} implies for $y_d,z_d>0$,
\begin{align*}
\| r'_\lambda (\cdot,y_d,z_d) \|_{L^s(\R^{d-1})}
& \leq  \frac{C}{|\lambda|^\frac12 \big (1+|\lambda|^\frac12 |y_d-z_d|)\big ) |y_d-z_d|^{(d-1)(\frac1q-\frac1p)}}.
\end{align*} 
Then, recall that $0<\frac1q-\frac1p<\frac1d$, which implies $0< s(d-1)(\frac1q-\frac1p)<1$ for $\frac1p=\frac1s+ \frac1q-1$. By the Young inequality for convolution, the term $I_{1,1}$ is estimated as 
\begin{align}\label{est.nonlocal.6'}
\| I_{1,1}\|_{L^p_{y_d}} & \leq \frac{C}{|\lambda|^\frac12} \bigg ( \int_\R  \frac{1}{(1+|\lambda|^\frac12 |y_d|)^s |y_d|^{s(d-1)(\frac1q-\frac1p)}} d y_d \bigg )^\frac1s  \| f' \|_{L^q_{uloc}} \nonumber \\
& \leq \frac{C}{|\lambda|^{1-\frac{d}{2}(\frac1q-\frac1p)}}\| f' \|_{L^q_{uloc}} .
\end{align}
Here we have used the fact $s>1$ since $q<p$. It is also not difficult to see $I_{1,1}\in L^1_{loc}(\R_+; L^q_{uloc}(\R^{d-1}))$ when $f'\in L^q_{uloc}(\R^d_+)$ for $q\in [1,\infty]$ (e.g. it is shown from the expression of $T_{1,1}$), and the details are omitted here. To estimate $I_{1,2}$ we observe from \eqref{est.nonlocal.2'},
\begin{align*}
& \sum_{n=1}^\infty \left ( \int_n^{n+1} \| r'_\lambda (\cdot, y_d, z_d) \|_{L^s(\R^{d-1})}^{q'} d z_d \right )^\frac{1}{q'}\\
& \leq \frac{C}{|\lambda|^\frac12} \sum_{n=1}^\infty \frac{e^{-c |\lambda|^\frac12 n}}{\big (1+|\lambda|^\frac12 (y_d+n)\big ) (y_d+n)^{(d-1)(\frac1q-\frac1p)}}\\
& \leq \frac{C}{|\lambda|^\frac12} \int_1^\infty \frac{e^{-c |\lambda|^\frac12 z_d}}{ \big (1+|\lambda|^\frac12 (y_d+z_d)\big ) (y_d+z_d)^{(d-1)(\frac1q-\frac1p)}} d z_d\\
& \leq \frac{C}{|\lambda|},
\end{align*}
which shows $\|I_{1,2}\|_{L^\infty_{y_d}}\leq \frac{C}{|\lambda|} \| f' \|_{L^q_{uloc}}$. 
Hence we have 
\begin{align}\label{est.nonlocal.7'}
\| I_1 \|_{L^p_{y_d}} \leq \frac{C}{|\lambda|} \big ( 1 + |\lambda|^{\frac{d}{2}(\frac1q-\frac1p)} \big ) \| f'\|_{L^q_{uloc}}, \qquad p,q~{\rm satisfy}~\eqref{est.nonlocal.3'}.
\end{align}
Next we estimate $I_2$. First we observe that, when $\max |\alpha_i'|\geq 3$, 
\begin{align*}
& \| \chi_{\alpha'} r'_\lambda (\cdot, y_d, z_d) \|_{L^s(\R^{d-1})} \\
& \leq \frac{Cy_d \, e^{-c |\lambda|^\frac12 z_d} }{(1+y_d + z_d + |\alpha'|)^{d-1} (1+|\lambda|^\frac12 (1+y_d + z_d + |\alpha'|)) (1+|\lambda|^\frac12 (y_d+z_d))}\\
& \leq  \frac{Cy_d \, e^{-c |\lambda|^\frac12 z_d} }{|\lambda|^\frac12 (1+y_d + z_d + |\alpha'|)^{d} (1+|\lambda|^\frac12 (y_d + z_d))}.
\end{align*}
Thus we have
\begin{align*}
I_2&\leq \sum_{\alpha'\in\mathbb Z^{d-1}}\sum_{n=0}^\infty\left(\int_n^{n+1}\frac{Cy_d^{q'} \, e^{-c q'|\lambda|^\frac12 z_d} }{|\lambda|^\frac{q'}2 (1+y_d + z_d + |\alpha'|)^{dq'} (1+|\lambda|^\frac12 (y_d + z_d))^{q'}}dz_d\right)^{\frac{1}{q'}}\| f' \|_{L^q_{uloc}}\\
&\leq \sum_{\alpha'\in\mathbb Z^{d-1}}\sum_{n=0}^\infty\frac{Cy_d \, e^{-c |\lambda|^\frac12 n} }{|\lambda|^\frac{1}2 (1+y_d + n + |\alpha'|)^{d} (1+|\lambda|^\frac12 (y_d + n))}\| f' \|_{L^q_{uloc}}\\
&\leq \sum_{\alpha'\in\mathbb Z^{d-1}}\int_0^\infty\frac{Cy_d\, e^{-c |\lambda|^\frac12 z_d} }{|\lambda|^\frac{1}2 (1+y_d + z_d + |\alpha'|)^{d} (1+|\lambda|^\frac12 (y_d + z_d))}dz_d\| f' \|_{L^q_{uloc}}.
\end{align*}
Then
\begin{align}\label{est.nonlocal.8'}
I_2  & \leq \sum_{\alpha'\in \Z^{d-1}} \frac{Cy_d}{|\lambda|^\frac12} \int_0^\infty \frac{e^{-c |\lambda|^\frac12 z_d}}{(1+y_d + z_d + |\alpha'|)^{d} (1+|\lambda|^\frac12 (y_d + z_d))} d z_d \| f' \|_{L^q_{uloc}}\nonumber \\
& \leq \frac{C y_d}{|\lambda|^\frac12} \int_0^\infty \frac{e^{-c|\lambda|^\frac12 z_d}}{(1+y_d+z_d) (1+|\lambda|^\frac12 (y_d+z_d))} d z_d \| f' \|_{L^q_{uloc}} \nonumber \\
& \leq \frac{C}{|\lambda|^\frac12} \int_0^\infty  \frac{e^{-c|\lambda|^\frac12 z_d}}{(1+|\lambda|^\frac12 z_d)} d z_d \| f' \|_{L^q_{uloc}},
\end{align}
which implies
\begin{align}\label{est.nonlocal.9'}
\| I_2 \|_{L^\infty_{y_d}}\leq \frac{C}{|\lambda|} \| f'\|_{L^q_{uloc}}.
\end{align}
Combining \eqref{est.nonlocal.7'} with \eqref{est.nonlocal.9'}, we obtain for $p,q$ satisfying \eqref{est.nonlocal.3'},
\begin{align*}
\| I'[f'] \|_{L^p_{uloc}} \leq \frac{C}{|\lambda|} \big ( 1+ |\lambda|^{\frac{d}{2}(\frac1q-\frac1p)} \big ) \| f' \|_{L^q_{uloc}}.
\end{align*} 
Note that the above proof also shows that $I'[f']\in L^1_{loc} (\R_+; L^q_{uloc}(\R^{d-1}))$ if $f'\in L^q_{uloc}(\R^d_+)$ for some $q\in [1,\infty]$.

\noindent\emph{\underline{Step 2.}} Next we consider the estimate for the derivatives.
We will use
\begin{align}\label{est.nonlocal.11'}
| \nabla^{1+\alpha} r'_\lambda (y',y_d,z_d) | &\leq \frac{C e^{-c |\lambda|^{\frac12}z_d}}{(y_d + z_d + |y'|)^{d-1+\alpha} (1+ \delta_{0\alpha}|\lambda|^\frac12 (y_d + z_d + |y'|))},
\end{align}
for $\alpha=0,\, 1$, which follows from \eqref{r.bound.2'}, \eqref{r.bound.3'}, and \eqref{r.bound.4'}. Here $\delta_{0\alpha}$ is the Kronecker delta. From \eqref{est.nonlocal.11'} we observe that for $\delta\in (0,1)$,
\begin{align}\label{est.nonlocal.delta}
| \nabla r'_\lambda (y',y_d,z_d) | &\leq \frac{C e^{-c |\lambda|^{\frac12}z_d}}{|\lambda|^\frac{\delta}{2} (y_d + z_d + |y'|)^{d-1+\delta} (1+|\lambda|^\frac12 (y_d+z_d))^{1-\delta}}.
\end{align}
By arguing as above, we see
\begin{align*}
& \| \nabla I'[f'] (\cdot,y_d) \|_{L^p (B'_{0'})}  \nonumber \\
& \leq C \int_0^{1} \| \nabla r'_\lambda (\cdot, y_d, z_d) \|_{L^s(\R^{d-1})} \| f' (\cdot, z_d)\|_{L^q(\{|z'|\leq 8\})}  d z_d \nonumber \\
&  \quad + C \sum_{n=1}^\infty \left ( \int_n^{n+1} \| \nabla r'_\lambda (\cdot, y_d, z_d) \|_{L^s(\R^{d-1})}^{q'} d z_d \right )^\frac{1}{q'} \| f' \|_{L^q_{uloc}} \nonumber \\
& \quad + \sum_{\max |\alpha'_i|\geq 3,~\max |\alpha_i^{'}+ \beta_i^{'}|\leq 2}\int_0^\infty \| \chi_{\alpha'} \nabla r'_\lambda (\cdot, y_d,z_d) \|_{L^s(\R^{d-1})} \| \chi_{\beta'} f' (\cdot, z_d) \|_{L^q(\R^{d-1})} d z_d \nonumber \\ 
& =: II_{1,1} + II_{1,2} + II_2.
\end{align*}
The last term $II_2$ is computed as in the derivation of \eqref{est.nonlocal.8'} and \eqref{est.nonlocal.9'}, and one can show
\begin{align}\label{est.nonlocal.13'}
II_2 & \leq \sum_{\alpha'\in \Z^{d-1}} \frac{C}{|\lambda|^\frac14} \int_0^\infty \frac{e^{-c |\lambda|^\frac12 z_d}}{(1+y_d+z_d+|y'|)^{d-\frac12} (1+|\lambda|^\frac12 (y_d + z_d))^\frac12} d z_d \| f' \|_{L^q_{uloc}} \nonumber \\
& \leq \frac{C}{|\lambda|^\frac14} \int_0^\infty \frac{e^{-c |\lambda|^\frac12 z_d}}{(1+ z_d)^\frac12} d z_d \| f'\|_{L^q_{uloc}} \nonumber \\
& \leq \frac{C}{|\lambda|^\frac12} \| f' \|_{L^q_{uloc}}.
\end{align}
As for $II_{1,1}$ and $II_{1,2}$, it follows from estimate \eqref{est.nonlocal.delta} that
\begin{align}\label{est.nonlocal.14'}
\| \nabla r'_\lambda (\cdot,y_d,z_d) \|_{L^s (\R^{d-1})} \leq \frac{C e^{-c|\lambda|^\frac12 z_d}}{|\lambda|^\frac{\delta}{2} (1+|\lambda|^\frac12 (y_d+z_d))^{1-\delta} (y_d+z_d)^{\delta+(d-1)(\frac1q-\frac1p)} } .
\end{align}
Take $\delta\in (0,1)$ small so that $s\left(\delta+(d-1)(\frac1q-\frac1p)\right)<1$. 
Then, the Young inequality as in the derivation of \eqref{est.nonlocal.6'} implies 
\begin{align}\label{est.nonlocal.15'}
\| II_{1,1} \|_{L^p_{y_d}} & \leq \frac{C}{|\lambda|^\frac{\delta}{2}} \left ( \int_\R \frac{1}{(1+|\lambda|^\frac12 |y_d|)^{s(1-\delta)} |y_d|^{s\left(\delta+(d-1)(\frac1q-\frac1p)\right)}} d y_d \right)^\frac1s \| f' \|_{L^q_{uloc}} \nonumber \\
& \leq \frac{C}{|\lambda|^{\frac12-\frac{d}{2}(\frac1q-\frac1p)}} \| f' \|_{L^q_{uloc}}.
\end{align}
On the other hand, the term $II_{1,2}$ is estimated as in the proof for $I_{1,2}$ by using \eqref{est.nonlocal.14'}, and we have 
\begin{align}\label{est.nonlocal.16'}
\| II_{1,2} \|_{L^\infty_{y_d}} & \leq \sup_{y_d} \frac{C}{|\lambda|^\frac{\delta}{2}} \int_1^\infty \frac{e^{-c |\lambda|^\frac12 z_d}}{(1+|\lambda|^\frac12 (y_d + z_d))^{1-\delta}(y_d + z_d)^{\delta+ (d-1)(\frac1q-\frac1p)}} d z_d \| f' \|_{L^q_{uloc}} \nonumber \\
&\leq \sup_{y_d} \frac{C}{|\lambda|^\frac{\delta}{2}} \int_1^\infty \frac{e^{-c |\lambda|^\frac12 z_d}}{(1+|\lambda|^\frac12 (y_d + z_d))^{1-\delta}(y_d + z_d)^{\delta} }d z_d \| f' \|_{L^q_{uloc}} \nonumber \\
& \leq \frac{C}{|\lambda|^\frac12} \| f' \|_{L^q_{uloc}}.
\end{align}
Thus, we have from \eqref{est.nonlocal.13'}, \eqref{est.nonlocal.15'}, and \eqref{est.nonlocal.16'},
\begin{align}\label{est.nonlocal.17'}
\| \nabla I'[f'] \|_{L^p_{uloc}}\leq \frac{C}{|\lambda|^\frac12} \left(1 + |\lambda|^{\frac{d}{2} (\frac1q-\frac1p)} \right) \| f' \|_{L^q_{uloc}}, \qquad 0\leq \frac1q-\frac1p<\frac1d.
\end{align}
Note that the case $p=q=1$ is allowed in \eqref{est.nonlocal.17'}. The proof of \eqref{est:w2'} is the same as above (it suffices to use the bound \eqref{r.bound.2''}), and we omit the details.

\noindent{\emph{\underline{Step 3.}}} Finally we give the estimate for $\nabla^2 I'[f']$. Our aim is to show 
\begin{align}\label{est.nonlocal.18'}
\| \nabla^2 I'[f'] \|_{L^q_{uloc}}\leq C\left( 1+ e^{-c|\lambda|^\frac12} \log |\lambda|\right) \| f' \|_{L^q_{uloc}}, \qquad 1 < q<\infty.
\end{align}
The key pointwise estimate reads 
\begin{align*}
|\nabla^2 r'_\lambda (y',y_d,z_d)|\leq \frac{C e^{-c |\lambda|^\frac12 z_d}}{(y_d + z_d + |y'|)^d},
\end{align*}
which follows from \eqref{est.nonlocal.11'}. This bound implies
\begin{equation}\label{e.estnabla2r'L1}
\| \nabla^2 r'_\lambda (\cdot,y_d,z_d) \|_{L^1(\R^{d-1})}\leq \frac{C e^{-c |\lambda|^\frac12 z_d}}{y_d + z_d}.
\end{equation}
As in the proof for $I'[f']$ and $\nabla I'[f']$ above, we start from 
\begin{align*}
& \| \nabla^2 I'[f'] (\cdot,y_d) \|_{L^q (B'_{0'})}  \nonumber \\
& \leq C \int_0^{1} \| \nabla^2 r'_\lambda (\cdot, y_d, z_d) \|_{L^1(\R^{d-1})} \| f' (\cdot, z_d)\|_{L^q(\{|z'|\leq 8\})}  d z_d \nonumber \\
&  \quad + C \sum_{n=1}^\infty \left ( \int_n^{n+1} \| \nabla^2 r'_\lambda (\cdot, y_d, z_d) \|_{L^1(\R^{d-1})}^{q'} d z_d \right )^\frac{1}{q'} \| f' \|_{L^q_{uloc}} \nonumber \\
& \quad + \sum_{\max |\alpha'_i|\geq 3,~\max |\alpha_i^{'}+ \beta_i^{'}|\leq 2}\int_0^\infty \| \chi_{\alpha'} \nabla^2 r'_\lambda (\cdot, y_d,z_d) \|_{L^1(\R^{d-1})} \| \chi_{\beta'} f' (\cdot, z_d) \|_{L^q(\R^{d-1})} d z_d \nonumber \\ 
& =: III_{1,1} + III_{1,2} + III_2.
\end{align*}
To estimate $III_{1,1}$ we introduce the operator $T$ given by 
\begin{align*}
(T h )(y_d)  = \int_0^1 \frac{e^{-c |\lambda|^\frac12 z_d}}{y_d+z_d} h (z_d) d z_d.
\end{align*}
It is straightforward to see 
\begin{align*}
|(T h)(y_d) |\leq \frac{C}{y_d} \| h \|_{L^1_{z_d}},\qquad | (Th) (y_d)| \leq \frac{C}{y_d^{\frac1q}} \| h \|_{L^q_{y_d}}
\end{align*}
for any $1<q<\infty$. Thus, $T$ is bounded from $L^q(\R_+)$ to $L^{q,\infty}(\R_+)$ for any $1\leq q<\infty$, where $L^{q,\infty}(\R_+)$ is the weak $L^q$ space on $\R_+$.
By the Marcinkiewicz interpolation theorem, $T$ is bounded from $L^q(\R_+)$ to $L^q(\R_+)$ for any $1<q<\infty$.
This implies 
\begin{align*}
\| III_{1,1} \|_{L^q(\R_+)} \leq C \| f' \|_{L^q_{uloc}},\qquad 1<q<\infty.
\end{align*}
The terms $III_{1,2}$ and $III_2$ are estimated in the similar manner as for $I_{1,2}$ and $I_2$ above and we see 
\begin{align*}
\|III_{1,2}\|_{L^\infty_{y_d}} + \|III_2\|_{L^\infty_{y_d}}  & \leq C \int_1^\infty \frac{e^{-c |\lambda|^\frac12 z_d}}{ z_d} d z_d \| f' \|_{L^q_{uloc}}\\
& \quad + C \int_0^\infty \frac{e^{-c |\lambda|^\frac12 z_d}}{1+z_d} d z_d \| f' \|_{L^q_{uloc}}\\
& \leq C\left(1+ e^{-c|\lambda|^\frac12} \log |\lambda| \right) \| f' \|_{L^q_{uloc}}.
\end{align*}
The proof is complete.
\end{proof}
To conclude, let us notice that it is easy to get the uniform estimate
\begin{equation}\label{e.unif2y'}
\| \nabla^2_{y'} I'[f'] \|_{L^q_{uloc}}\leq C\| f' \|_{L^q_{uloc}}, \ {\rm and ~thus} \  \| \nabla^2_{y'} w \|_{L^q_{uloc}} \leq C \| f' \|_{L^q_{uloc}}, \ 1 < q<\infty.
\end{equation}
Indeed, for these tangential derivatives, we can rely on the kernel bound \eqref{r.bound.2'}, which yields
\begin{equation*}
\|\nabla_{y'}^2r'_\lambda(\cdot,y_d,z_d)\|_{L^1(\R^{d-1})}\leq \frac{y_d}{(y_d+z_d)^2},
\end{equation*}
instead of \eqref{e.estnabla2r'L1}. This enables to get estimates uniform in $\lambda$ following the same strategy as above. 
We do not know whether the difficulty we encounter here to show a similar uniform estimate on $\partial_{y_d}^2 I'[f']$ or $\partial_{z_d}^2I'[f']$ is a technical one or reveals an essential obstruction.

\begin{remark}[Estimates for the pressure]\label{rem.pressure.estimate}
{\rm
We are only concerned with gradient estimates on the pressure. From the pointwise estimate \eqref{q.bound.2}, it is clear that $\nabla p$ is estimated in $L^q_{uloc}(\R^d_+)$ in the exact same way as we estimated $\nabla^2 I'[f']$ stated in \eqref{est.nonlocal.18'}, i.e., 
\begin{align}\label{est.rem.pressure.estimate.1}
\| \nabla p\|_{L^q_{uloc}} \leq C \bigg ( 1+ e^{-c|\lambda|^\frac12} \log |\lambda| \bigg ) \| f' \|_{L^q_{uloc}}\,, \qquad 1<q<\infty.
\end{align}
On the other hand, recalling the identity $\omega_\lambda (\xi)^2 = \lambda+ |\xi|^2$, we also have from the formula \eqref{pressure.velocity.4'},
\begin{align}\label{est.rem.pressure.estimate.2}
\| \nabla p \|_{L^q_{uloc}} \leq C\bigg ( 1+ e^{-c|\lambda|^\frac12} \log |\lambda| \bigg ) \| (\lambda - \Delta ) u' \|_{L^q_{uloc}},\qquad 1<q<\infty.
\end{align}
The estimate \eqref{est.rem.pressure.estimate.2} is crucial in obtaining the characterization of the domain of the Stokes operator in the $L^q_{uloc}$ spaces with $1<q<\infty$.}
\end{remark}

\noindent
\begin{proof}[Proof of Theorem \ref{prop.fresolvent}.]\ The estimates 
\eqref{est.prop.fresolvent.1}, \eqref{est.prop.fresolvent.2},  \eqref{est.prop.fresolvent.3} and \eqref{est.prop.fresolvent.4} 
are proved in Proposition \ref{cor.dirichlet-laplace}, Proposition 
\ref{prop.estnonlocal}
and \eqref{est.rem.pressure.estimate.1}.
Hence taking account of Theorem \ref{thm.unique} in the Appendix, 
 it suffices to show the pressure gradient given by the formula \eqref{pressure.velocity.2} satisfies \eqref{e.condpressure}. 
Consider only the tangential gradient $\nabla'p$, for the normal gradient
can be estimated in the same manner.
By the formula \eqref{pressure.velocity.3'}, the tangential gradient is
written as follows:
$$
\nabla' p(y_d)
= 
R' \nabla' P(y_d) \cdot \partial_{y_d} u'(0),
$$
where $R'=(R_1,R_2,\cdots, R_{d-1})$ is 
the vector valued Riesz transform in $\R^{d-1}$ and $\partial_{y_d} u' (0)$ makes sense in $L^q_{uloc} (\R^{d-1})$ by the trace theorem and the regularity $\nabla^\alpha  u'\in L^q_{uloc}(\R^d_+)$ for $\alpha=0,1,2$ and $1<q<\infty$.
By the property of the Poisson kernel $P_{y_d}(y')$, 
it is easy to see that $\nabla' P_{y_d}$ belongs to the Hardy space 
$\mathcal{H}^1(\R^{d-1})$ for $y_d>0$. 
Therefore from the boundedness of the Riesz transform from 
$\mathcal{H}^1(\R^{d-1})$ to $L^1(\R^{d-1})$   
we have 
\begin{align*} 
\|\nabla' p(y_d)\|_{L^1_{uloc}(\R^{d-1})} 
& = 
\|R' \nabla' P(y_d)*\partial_{y_d} u'(0)\|_{L^1_{uloc}(\R^{d-1})}
\\
&\le 
\|R' \nabla' P_{y_d}\|_{L^1(\R^{d-1})}
 \|\partial_{y_d} u'(0)\|_{L^1_{uloc}(\R^{d-1})}
\\
&\le
\|\nabla' P_{y_d}\|_{\mathcal{H}^1(\R^{d-1})}
\|\partial_{y_d} u'(0)\|_{L^1_{uloc}(\R^{d-1})}
\\
&\le Cy_d^{-1} \|\partial_{y_d} u'(0)\|_{L^1_{uloc}(\R^{d-1})},
\end{align*}
which proves the desired bound \eqref{e.condpressure}.
\end{proof}

\section{The Stokes semigroup in $L^p_{uloc}$ spaces} \label{sec.semigroup}

In this section we construct the Stokes operator in the $L^q_{uloc}$ spaces and the associated semigroup. Usually the Stokes operator ${\bf A}$ is written as ${\bf A} = - \mathbb{P} \Delta_D$, where $\mathbb{P}$ is the Helmholtz-Leray projection and $\Delta_D$ is the realization of the Laplace operator under the Dirichlet boundary condition. However, the action of $\mathbb{P}$ does not make sense in general for nondecaying data, so we need to define the Stokes operator in a different way. In principle, we follow the argument of \cite{DHP01} to define the Stokes operator but with a slight change of some technical details. 

Notice that several works \cite{U87,CPS00,DZ14} provide representations formulas for the solution of the unsteady Stokes problem. However, these formulas involve singular integral operators, which are unbounded on spaces of nonintegrable functions. Our approach which relies on the Dunford formula and the Stokes resolvent problem takes advantage of the fundamental insight of Desch, Hieber and Pr\"uss, which circumvents the unboundedness of the Helmholtz-Leray transform.

Let $\lambda\in S_{\pi-\ep}$ with $\ep \in (0,\pi)$. 
Let $1<q\leq \infty$ and $f\in L^q_{uloc,\sigma}(\R^d_+)$. Then there exists a unique solution $(u,\nabla p)$ to \eqref{e.resol} in the class stated in Theorem \ref{prop.fresolvent}.  
We denote this linear map from $L^q_{uloc,\sigma}(\R^d_+)$ to $L^q_{uloc,\sigma}(\R^d_+)$ as $R(\lambda)$.
For convenience we also write the associated pressure $\nabla p$ as $\nabla p_u$ to emphasize that $\nabla p_u$ is determined from $u$ by the formula \eqref{pressure.velocity.3}.
Note that $\gamma \partial_{y_d} u' $ makes sense in $L^q_{uloc} (\R^{d-1})$ 
as is stated in the proof of  Theorem \ref{prop.fresolvent}. 
What we need to show is that (i) the null space of $R(\lambda)$ is trivial, and (ii) the resolvent identity
$R(\lambda) - R(\mu) = - (\lambda-\mu) R(\lambda) R(\mu)$ holds for any $\lambda,\mu\in S_{\pi-\ep}$. Remark that (ii) implies in particular that $R(\lambda)$ commutes with $R(\mu)$.
To prove (i), we assume that $u:=R(\lambda)f=0$ for some $f\in L^q_{uloc,\sigma}(\R^d_+)$. 
Then the associated pressure $\nabla p_u$ is zero by the formula \eqref{pressure.velocity.3}.
Hence, we must have $f=0$ since $(u,\nabla p_u)$ solves \eqref{e.resol}. Thus, $R(\lambda)$ is injective.
Next we prove the resolvent identity. Fix any $f\in L^q_{uloc,\sigma}(\R^d_+)$ and set $u=R(\lambda) f$ and $v=R(\mu)f$. Then $(u-v, \nabla p_u - \nabla p_v)$ solves \eqref{e.resol} with $f=-(\lambda -\mu) v$.
By Theorem \ref{prop.fresolvent} there exists a solution $(w, \nabla p_w)$ to \eqref{e.resol}  with $f=-(\lambda-\mu) v$, which is unique in the class stated in Theorem  \ref{prop.fresolvent}, and $w=R(\lambda) (\mu-\lambda) v=-(\lambda-\mu) R(\lambda) v$ by the definition of  $R(\lambda)$. 
Since $(w, \nabla p_w)$ and $(u-v, \nabla p_u - \nabla p_v)$ belong to the same class as stated in Theorem \ref{prop.fresolvent} (in particular, both satisfy the decay condition on the derivative of the pressure as $y_d\rightarrow \infty$), by the uniqueness result of Theorem \ref{prop.fresolvent}, we have $w=u-v$.
This implies $R(\lambda) f - R(\mu) f = - (\lambda-\mu) R(\lambda) R(\mu) f$ for any $f\in L^q_{uloc,\sigma}(\R^d_+)$, and hence, the resolvent identity is proved.

From (i) and (ii) we conclude that there exists a closed linear operator ${\bf A}: D({\bf A})\subset L^q_{uloc,\sigma}(\R^d_+)\rightarrow L^q_{uloc,\sigma} (\R^d_+)$, such that the domain $D({\bf A})$ of ${\bf A}$ is the range of $R(\lambda)$ which is independent of $\lambda$, and the resolvent set of $-{\bf A}$ includes $S_{\pi-\ep}$ for any $\ep\in (0,\pi)$, and $(\lambda + {\bf A})^{-1}=R(\lambda)$ for any $\lambda\in S_{\pi-\ep}$. We say that ${\bf A}$ is the Stokes operator realized in $L^q_{uloc,\sigma}(\R^d_+)$.

\begin{proposition}\label{prop.domain} 
Let $1<q<\infty$ and let ${\bf A}$ be the Stokes operator realized in $L^q_{uloc,\sigma}(\R^d_+)$. Then 
\begin{align}\label{eq.prop.domain.1}
D ({\bf A}) = \{ u \in L^q_{uloc,\sigma} (\R^d_+)~|~\nabla^\alpha u \in L^q_{uloc}(\R^d_+), ~\alpha=0,1,2, ~~~u =0 ~ {\rm on} ~ \partial\R^d_+\}
\end{align}
\end{proposition}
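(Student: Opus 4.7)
The plan is to show the two inclusions separately. The inclusion $D({\bf A}) \subseteq \{\cdots\}$ is immediate from Theorem \ref{prop.fresolvent}: for any $u \in D({\bf A})$, writing $u = R(\lambda) f$ with $f \in L^q_{uloc,\sigma}(\R^d_+)$ for some fixed $\lambda \in S_{\pi-\ep}$, the bounds \eqref{est.prop.fresolvent.1}--\eqref{est.prop.fresolvent.2} yield $\nabla^\alpha u \in L^q_{uloc}$ for $\alpha = 0, 1, 2$, while $u|_{\partial\R^d_+} = 0$ in the trace sense since the solution is strong in $W^{2,q}_{loc}$.

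For the converse, I would take $u$ in the right-hand side and manufacture a source $f \in L^q_{uloc,\sigma}$ for which $u = R(\lambda) f$. The candidate is $f := \lambda u - \Delta u + \nabla p_u$, where $p_u$ is defined by the $u$-intrinsic version \eqref{pressure.velocity.4'} of the Fourier formula for the pressure. Using $\omega_\lambda(\xi)^2 = \lambda + |\xi|^2$, this expresses $\nabla p_u$ as an integral operator acting on $(\lambda - \Delta) u' \in L^q_{uloc}$, and the estimate \eqref{est.rem.pressure.estimate.2} of Remark \ref{rem.pressure.estimate} then gives the control $\|\nabla p_u\|_{L^q_{uloc}} \leq C\bigl(1 + e^{-c|\lambda|^{1/2}}\log|\lambda|\bigr)\|(\lambda-\Delta)u'\|_{L^q_{uloc}}$. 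In particular $f \in L^q_{uloc}$, and by construction $p_u$ is harmonic in $\R^d_+$.

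It then remains to check $f \in L^q_{uloc,\sigma}$, namely $\int_{\R^d_+} f \cdot \nabla \varphi \, dx = 0$ for every $\varphi \in C_0^\infty(\overline{\R^d_+})$. The $\lambda u$ contribution vanishes because $u \in L^q_{uloc,\sigma}$; harmonicity of $p_u$ together with $\nabla \cdot u = 0$ eliminate the interior part of the two remaining terms after integration by parts, and what survives is a boundary integral of the shape $-\int_{\partial\R^d_+} \varphi \bigl(\partial_{y_d} p_u|_\partial + \nabla' \cdot \gamma \partial_{y_d} u'\bigr) \, dS'$. The formula \eqref{pressure.velocity.4'} is precisely engineered so that the Neumann trace $\partial_{y_d} p_u|_\partial$ cancels $\nabla' \cdot \gamma \partial_{y_d} u'$; this cancellation uses the divergence-free identity $\nabla' \cdot \partial_{y_d} u' = -\partial_{y_d}^2 u_d$.

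Once $f \in L^q_{uloc,\sigma}$ is confirmed, $(u, \nabla p_u)$ is a distributional solution of \eqref{e.resol} with data $f$; the explicit integral form of $\nabla p_u$ automatically delivers the decay \eqref{e.condpressure}, so the uniqueness part of Theorem \ref{prop.fresolvent} forces $u = R(\lambda) f \in D({\bf A})$. The most delicate step is the boundary cancellation: the ``second-order'' trace $\partial_{y_d}^2 u_d|_\partial$ is not classical but must be given distributional meaning through the divergence-free relation, and one must verify that the $u$-intrinsic pressure formula delivers exactly this Neumann datum — this is exactly the point where the gap between the definition $-\mathbb{P}\Delta_D$ (which would require $\mathbb P$ on $L^q_{uloc}$) and our resolvent-based construction is bridged.
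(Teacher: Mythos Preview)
Your proposal is correct and follows essentially the same route as the paper's proof. The paper is terser: it defines $\nabla p_u$ via the trace formula \eqref{pressure.velocity.3} (equivalent to your choice \eqref{pressure.velocity.4'}), sets $f=\lambda u-\Delta u+\nabla p_u$, and simply asserts $f\in L^q_{uloc,\sigma}(\R^d_+)$ ``by the definition of $\nabla p_u$ and the estimate \eqref{est.rem.pressure.estimate.2}'' without writing out the boundary cancellation or the uniqueness step; you have made both of these explicit, which is a genuine improvement in exposition rather than a different argument.
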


\begin{proof} Theorem \ref{prop.fresolvent} implies that for any $f\in L^q_{uloc,\sigma} (\R^d_+)$, the function $R(\lambda) f$ belongs to $L^q_{uloc,\sigma}(\R^d_+)$, $\nabla^\alpha R(\lambda) f\in L^q_{uloc}(\R^d_+)$ for $\alpha=0,1,2$, and $R(\lambda)f =0$ on $\partial\R^d_+$. Thus, the domain $D({\bf A})$, which is the range of $R(\lambda)$, belongs to the set defined in the right-hand side of \eqref{eq.prop.domain.1}.
Conversely, let $u$ be any function belonging to the right-hand side of \eqref{eq.prop.domain.1}.
Then the couple $(u, \nabla p_u)$ with $\nabla p_u$ defined by \eqref{pressure.velocity.3} solves \eqref{e.resol} with $f=\lambda u - \Delta u + \nabla p_u$, and $f$ belongs to $L^q_{uloc,\sigma} (\R^d_+)$ by the definition of $\nabla p_u$ and the estimate \eqref{est.rem.pressure.estimate.2}. This implies that $u$ belongs to the range of $R(\lambda)$, and thus to $D({\bf A})$.
The proof is complete.
\end{proof}

Note that we do not have the characterization of the domain of ${\bf A}$ in the space $L^\infty_\sigma (\R^d_+)$. Theorem \ref{prop.fresolvent} and the definition of $R(\lambda)$ immediately yield the following
\begin{proposition}\label{prop.analyticity}
Let  $1<q \leq \infty$ and let ${\bf A}$ be the Stokes operator realized in $L^q_{uloc,\sigma}(\R^d_+)$. Then for any $\ep\in (0,\pi)$ the sector $S_{\pi-\ep}$ belongs to the resolvent of  $-{\bf A}$ and 
\begin{align*}
|\lambda| \| (\lambda+{\bf A})^{-1} f\|_{L^q_{uloc}}\leq C_\ep \| f\|_{L^q_{uloc}},\qquad \lambda\in S_{\pi-\ep},\quad f\in L^q_{uloc,\sigma}(\R^d_+).
\end{align*}
Therefore, $-{\bf A}$ generates a bounded analytic semigroup in $L^q_{uloc,\sigma}(\R^d_+)$.
\end{proposition}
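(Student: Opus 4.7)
The plan is to read both assertions of Proposition~\ref{prop.analyticity} essentially directly off the resolvent construction carried out in the paragraphs preceding the statement, combined with the already-established Theorem~\ref{prop.fresolvent}. First I would observe that the discussion immediately before the proposition has already defined $R(\lambda)$ on $L^q_{uloc,\sigma}(\R^d_+)$ as the map $f\mapsto u$, where $(u,\nabla p)$ is the unique solution of \eqref{e.resol} produced by Theorem~\ref{prop.fresolvent}, and has identified $R(\lambda)$ with $(\lambda+\mathbf{A})^{-1}$ via the injectivity and resolvent identity argument given there. In particular, for every $\ep\in(0,\pi)$, the sector $S_{\pi-\ep}$ is contained in the resolvent set of $-\mathbf{A}$. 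The claimed resolvent bound
\begin{equation*}
|\lambda|\,\|(\lambda+\mathbf{A})^{-1}f\|_{L^q_{uloc}}\leq C_\ep\|f\|_{L^q_{uloc}},\qquad \lambda\in S_{\pi-\ep},\ f\in L^q_{uloc,\sigma}(\R^d_+),
\end{equation*}
is then nothing but the first term of \eqref{est.prop.fresolvent.1}, with the independence of $C_\ep$ from $\lambda$ already built into that statement.

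Second, to pass from the resolvent bound to generation of a bounded analytic semigroup I would invoke the classical Dunford--Riesz construction. Fix $\ep\in(0,\pi/2)$ and an intermediate angle $\ep'\in(\ep,\pi/2)$, and set
\begin{equation*}
e^{-t\mathbf{A}}:=\frac{1}{2\pi i}\int_\Gamma e^{-t\lambda}(\lambda+\mathbf{A})^{-1}\,d\lambda,\qquad t>0,
\end{equation*}
where $\Gamma$ is the boundary of the sector $S_{\pi-\ep'}$ traversed counterclockwise. The resolvent bound above gives absolute convergence of this Bochner integral in the operator norm on $L^q_{uloc,\sigma}(\R^d_+)$; rescaling $\lambda\mapsto t\lambda$ together with the $|\lambda|^{-1}$ size of the integrand on $\Gamma$ yields $\sup_{t>0}\|e^{-t\mathbf{A}}\|\leq C$. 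Analyticity of $t\mapsto e^{-t\mathbf{A}}$ in a sector around $(0,\infty)$ follows by contour deformation, and the semigroup property together with the identification of $-\mathbf{A}$ as the generator are routine manipulations of the Dunford integral; I would simply cite the standard reference (Pazy) rather than reprove them.

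The only mildly delicate point, which I do not expect to cause trouble, is the case $q=\infty$: there $D(\mathbf{A})$ need not be dense in $L^\infty_{uloc,\sigma}(\R^d_+)$, so the semigroup produced by the Dunford integral is analytic and uniformly bounded on $(0,\infty)$ but may fail to be strongly continuous at $t=0$. Since Proposition~\ref{prop.analyticity} only asserts that $-\mathbf{A}$ generates a bounded analytic semigroup, and this is precisely the notion delivered by the Dunford integral (without a strong-continuity-at-zero requirement), no adaptation is needed. In summary, the main step is already done in Theorem~\ref{prop.fresolvent}, and the proof of Proposition~\ref{prop.analyticity} reduces to citing \eqref{est.prop.fresolvent.1} and invoking the classical characterization of bounded analytic semigroups by sectorial resolvent bounds.
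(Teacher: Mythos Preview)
Your proposal is correct and matches the paper's approach: the paper does not give an explicit proof but simply states that Theorem~\ref{prop.fresolvent} and the definition of $R(\lambda)$ immediately yield the proposition, which is exactly what you argue. One small remark: the non-density of $D(\mathbf{A})$ (and hence the lack of strong continuity at $t=0$) is noted by the paper for all $1<q\le\infty$, not just $q=\infty$, but as you correctly observe this does not affect the conclusion.
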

 
Notice that $\mathbf A$ is not known to be strongly continuous, because $D(\mathbf A)$ is not dense in $L^q_{uloc,\sigma}$ (this is seen easily, see for instance \cite[Lemma 3.1 (d)]{MS95}). Applying Theorem \ref{prop.fresolvent}, we also have the $L^p_{uloc}-L^q_{uloc}$ estimates for $e^{-t{\bf A}}$ as follows.
\begin{proposition}\label{prop.L^p_u-L^q_u.semigroup}  
Let  $1<q \leq \infty$ and let ${\bf A}$ be the Stokes operator realized in $L^q_{uloc,\sigma}(\R^d_+)$. Then there exists a constant $C(d,q)<\infty$, for $\alpha=0,1$, 
\begin{align}\label{est.prop.L^p_u-L^q_u.semigroup.1} 
t^\frac{\alpha}{2} \| \nabla^\alpha  e^{-t {\bf A}} f \|_{L^q_{uloc}}  + t\left\| \frac{d}{dt} e^{-t {\bf A}} f \right\|_{L^q_{uloc}} \leq C\| f\|_{L^q_{uloc}}, \qquad t>0,\quad f\in L^q_{uloc,\sigma} (\R^d_+),
\end{align}
and when $1<q<\infty$,
\begin{align}\label{est.prop.L^p_u-L^q_u.semigroup.1'} 
\frac{t}{\log (e+t)} \| \nabla^2  e^{-t {\bf A}} f \|_{L^q_{uloc}} \leq C\| f\|_{L^q_{uloc}}, \qquad t>0,\quad f\in L^q_{uloc,\sigma} (\R^d_+).
\end{align}
Moreover, for $1<q \leq p \leq \infty$ or $1\leq q<p\leq\infty$, there exists a constant $C(d,p,q)<\infty$ such that
\begin{align}
\| e^{-t{\bf A}} f \|_{L^p_{uloc}} & \leq C \left( t^{-\frac{d}{2} (\frac1q-\frac1p)} + 1\right) \| f\|_{L^q_{uloc}},\qquad t>0, \quad f\in L^q_{uloc,\sigma}(\R^d_+), \label{est.prop.L^p_u-L^q_u.semigroup.2} \\
\left\| \nabla e^{-t{\bf A}} f \right\|_{L^p_{uloc}} & \leq C t^{-\frac12} \left( t^{-\frac{d}{2} (\frac1q-\frac1p)} + 1\right) \| f\|_{L^q_{uloc}},\qquad t>0, \quad f\in L^q_{uloc,\sigma}(\R^d_+).\label{est.prop.L^p_u-L^q_u.semigroup.3} 
\end{align}
\end{proposition}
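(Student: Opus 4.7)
\textbf{Proof plan for Proposition \ref{prop.L^p_u-L^q_u.semigroup}.}
The proof will rely entirely on the Dunford integral representation of the semigroup, which is available because Proposition \ref{prop.analyticity} has already established that $-\mathbf{A}$ generates a bounded analytic semigroup on $L^q_{uloc,\sigma}(\R^d_+)$ for $1<q\leq\infty$. Writing $R(\lambda):=(\lambda+\mathbf{A})^{-1}$, the plan is to start from
\begin{equation*}
e^{-t\mathbf{A}}f \;=\; \frac{1}{2\pi i}\int_{\Gamma_t} e^{\lambda t}\,R(\lambda)f\, d\lambda,
\end{equation*}
where $\Gamma_t\subset S_{\pi-\ep}$ is the usual keyhole-style contour made of two rays $\{s\, e^{\pm i(\pi-\ep)}: s\geq 1/t\}$ joined by the arc $\{t^{-1}e^{i\theta}:\theta\in[-(\pi-\ep),\pi-\ep]\}$. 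The radius $1/t$ is the natural scale to balance the singular behaviour of the resolvent bounds against the decay $|e^{\lambda t}|\leq e^{-c|\lambda|t}$ along the rays.

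For the derivative estimates \eqref{est.prop.L^p_u-L^q_u.semigroup.1}, I would differentiate under the integral to obtain
\begin{equation*}
\nabla^\alpha e^{-t\mathbf{A}}f=\frac{1}{2\pi i}\int_{\Gamma_t} e^{\lambda t}\,\nabla^\alpha R(\lambda)f\,d\lambda,\qquad
\frac{d}{dt}e^{-t\mathbf{A}}f=\frac{1}{2\pi i}\int_{\Gamma_t}\lambda\, e^{\lambda t}R(\lambda)f\,d\lambda,
\end{equation*}
and plug in the resolvent bounds \eqref{est.prop.fresolvent.1}, namely $\|\nabla^\alpha R(\lambda)f\|_{L^q_{uloc}}\leq C|\lambda|^{-1+\alpha/2}\|f\|_{L^q_{uloc}}$ for $\alpha=0,1$ and $\||\lambda|R(\lambda)f\|_{L^q_{uloc}}\leq C\|f\|_{L^q_{uloc}}$. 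A direct scaling computation then shows that the arc gives a contribution of order $t^{-\alpha/2}$ (respectively $t^{-1}$), and the rays integrate to the same order via the substitution $u=st$ applied to $\int_{1/t}^\infty e^{-cst}s^{-1+\alpha/2}\,ds$. The $\alpha=0$ bound on the left of \eqref{est.prop.L^p_u-L^q_u.semigroup.1} is already covered by the uniform boundedness in Proposition \ref{prop.analyticity}.

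For \eqref{est.prop.L^p_u-L^q_u.semigroup.1'}, the strategy is identical but now uses \eqref{est.prop.fresolvent.2}, so that the integrand is essentially $|e^{\lambda t}|\bigl(1+e^{-c|\lambda|^{1/2}}\log|\lambda|\bigr)$. The arc of radius $1/t$ contributes $O\bigl(t^{-1}(1+\log(e+t))\bigr)$ — which is the source of the logarithmic correction — while the rays integrate to $O(t^{-1})$ because the factor $e^{-c|\lambda|^{1/2}}$ tames the logarithm at the high-frequency end. This is where the statement's $\log(e+t)/t$ loss really comes from, and it is unavoidable given the current form of the resolvent bound; this is the one place where we inherit the loss identified in Remark \ref{rem.tanderiv}.

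For the mixed $L^p_{uloc}$-$L^q_{uloc}$ estimates \eqref{est.prop.L^p_u-L^q_u.semigroup.2}--\eqref{est.prop.L^p_u-L^q_u.semigroup.3}, I would again apply the Dunford formula and substitute \eqref{est.prop.fresolvent.3}--\eqref{est.prop.fresolvent.4}. The arc at $|\lambda|=1/t$ contributes a term proportional to $t^{-\alpha/2}\bigl(1+t^{-\frac{d}{2}(\frac1q-\frac1p)}\bigr)$ (for $\alpha=0,1$), and the rays give the same order after the scaling $u=st$ applied to $\int_{1/t}^\infty e^{-cst}s^{-1+\alpha/2}\bigl(1+s^{\frac{d}{2}(\frac1q-\frac1p)}\bigr)\,ds$; the constraint $\frac1q-\frac1p<\frac1d$ from Theorem \ref{prop.fresolvent} ensures convergence of both pieces. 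The main obstacle throughout is a careful bookkeeping of the arc-plus-rays decomposition and of the dependence on $t$ of the contour $\Gamma_t$, together with the need to justify pulling $\nabla^\alpha$ and $d/dt$ under the integral — both of which follow from the quantitative resolvent bounds and dominated convergence on compact pieces of the contour.
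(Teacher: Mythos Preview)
Your approach to \eqref{est.prop.L^p_u-L^q_u.semigroup.1} and \eqref{est.prop.L^p_u-L^q_u.semigroup.1'} is essentially the same as the paper's: Dunford formula plus the resolvent bounds \eqref{est.prop.fresolvent.1}--\eqref{est.prop.fresolvent.2}. The paper uses a contour of fixed opening angle and radius $\kappa$ and then lets $\kappa\to 0$, while you use a $t$-dependent keyhole; both are standard and give the same outcome.

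There is, however, a genuine gap in your argument for \eqref{est.prop.L^p_u-L^q_u.semigroup.2}--\eqref{est.prop.L^p_u-L^q_u.semigroup.3}. You propose to insert \eqref{est.prop.fresolvent.3}--\eqref{est.prop.fresolvent.4} directly into the Dunford integral, and you explicitly note that this requires the constraint $\frac1q-\frac1p<\frac1d$ from Theorem~\ref{prop.fresolvent}. But the proposition is stated for the \emph{full} range $1<q\le p\le\infty$ (or $1\le q<p\le\infty$), with no restriction on the gap $\frac1q-\frac1p$. Your argument as written therefore proves only a special case.

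The paper resolves this by first integrating by parts $m$ times in the Dunford formula,
\[
e^{-t\mathbf A}f=\frac{m!}{2\pi i\,t^m}\int_\Gamma e^{t\lambda}\,(\lambda+\mathbf A)^{-m-1}f\,d\lambda,
\]
and then, for $m$ large enough, choosing a chain $q=q_0<q_1<\cdots<q_m=p$ with $\frac{1}{q_j}-\frac{1}{q_{j+1}}<\frac1d$ at every step. One applies \eqref{est.prop.fresolvent.3} (respectively \eqref{est.prop.fresolvent.4}) to each factor $(\lambda+\mathbf A)^{-1}$ in the composition, and the product of the resulting factors $\prod_j\bigl(1+|\lambda|^{\frac d2(\frac1{q_j}-\frac1{q_{j+1}})}\bigr)$ is controlled by $C\bigl(1+|\lambda|^{\frac d2(\frac1q-\frac1p)}\bigr)$. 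The extra $|\lambda|^{-m}t^{-m}$ from the integration by parts is exactly what is needed to make the contour integral converge and produce the claimed power of $t$. Without this bootstrapping device you cannot reach exponents with $\frac1q-\frac1p\ge\frac1d$.
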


\begin{remark}
{\rm In \eqref{est.prop.L^p_u-L^q_u.semigroup.2} and \eqref{est.prop.L^p_u-L^q_u.semigroup.3} the estimates are stated, in particular, for the exponents $1=q<p\leq \infty$, while the generation of the analytic semigroup in $L^1_{uloc,\sigma}(\R^d_+)$ seems to fail, by a reason similar to the case of $L^1$ observed in \cite{DHP01}. 
The estimate for the case $p=q=\infty$ is also well known. In \eqref{est.prop.L^p_u-L^q_u.semigroup.1'} the logarithmic growth factor appears due to the logarithmic factor in the resolvent estimate \eqref{est.prop.fresolvent.2}.
This additional growth does not seem to be optimal at least for the semigroup bound, and it is possible to remove it if one obtains the resolvent estimate such as 
\begin{align}\label{rem.est.prop.L^p_u-L^q_u.semigroup.1} 
\| \nabla^2 (\lambda + {\bf A})^{-1} f \|_{L^q_{uloc}} \leq C \big ( \| f \|_{L^q_{uloc}} + |\lambda|^{-\frac12} \| \nabla f \|_{L^q_{uloc}}\big ),
\end{align}
for one can then use the identity $e^{-t{\bf A}} f = \frac{1}{2\pi i} \int_\Gamma e^{t\lambda} (\lambda + {\bf A})^{-1} f d\lambda = \frac{1}{2\pi i t}   \int_\Gamma e^{t\lambda} (\lambda + {\bf A})^{-2} f d\lambda$ in estimating $\nabla^2 e^{-t{\bf A}} f$, where the integration by parts is used.
Estimate \eqref{rem.est.prop.L^p_u-L^q_u.semigroup.1} seems to be valid, though we do not give the detailed proof in this paper.
We also note that the estimates for the higher order derivatives can be shown by our method, but we do not go into the details here.
}
\end{remark}

\begin{proof} The estimate $\| e^{-t{\bf A}} f\|_{L^q_{uloc}}\leq C \| f\|_{L^q_{uloc}}$ for $t>0$ is already shown in Proposition \ref{prop.analyticity},
and we focus on the other estimates.
Let us recall the standard representation formula of $e^{-t{\bf A}}$ in terms of the Dunford integral 
\begin{align}\label{proof.prop.L^p_u-L^q_u.semigroup.1}
e^{-t{\bf A}} f = \frac{1}{2\pi i} \int_\Gamma e^{t\lambda} (\lambda + {\bf A})^{-1} f d\lambda.
\end{align}
Here $\Gamma=\Gamma_\kappa$ with $\kappa\in (0,1)$ is the curve 
$\{\lambda\in \mathbb{C}~|~|{\rm arg}\, \lambda|=\eta, ~|\lambda|\geq \kappa\} \cup \{\lambda\in \mathbb{C}~|~|{\rm arg}\, \lambda|\leq \eta, ~|\lambda|=\kappa\}$ for some $\eta \in (\frac{\pi}{2},\pi)$. Then, estimate \eqref{est.prop.fresolvent.1} with $\alpha=1$ yields
\begin{align*}
\| \nabla e^{-t{\bf A}} f \|_{L^q_{uloc}} & \leq  C\int_\Gamma e^{t\Re (\lambda)} |\lambda|^{-\frac12} |d\lambda| \| f\|_{L^q_{uloc}}.
\end{align*}
Since $\kappa\in (0,1)$ is arbitrary, we may take the limit $\kappa\rightarrow 0$ and obtain 
\begin{align*}
\| \nabla e^{-t{\bf A}} f \|_{L^q_{uloc}} & \leq C \int_0^\infty e^{-t r \cos \eta}  r^{-\frac12}  d r \| f\|_{L^q_{uloc}} \leq C t^{-\frac12} \| f\|_{L^q_{uloc}}, \qquad t>0.
\end{align*}
The estimate of $\frac{d}{dt} e^{-t{\bf A}} f$ and $\nabla^2 e^{-t{\bf A}} f$ are obtained in the same manner.
Note that, as for the estimate of $\nabla^2 e^{-t {\bf A}} f$, we have for $t>0$
\begin{align*}
\| \nabla^2 e^{-t{\bf A}} f \|_{L^q_{uloc}} & \leq C \int_0^\infty  e^{-t r \cos \eta} e^{-r^\frac12} \log r d r \| f\|_{L^q_{uloc}} \leq C \log (e + t)  \| f\|_{L^q_{uloc}}.
\end{align*}
Let $1<q<p\leq \infty$. To prove \eqref{est.prop.L^p_u-L^q_u.semigroup.2} we first observe that the following the formula holds for each $m\in \mathbb{N}$ in virtue of the integration by parts in \eqref{proof.prop.L^p_u-L^q_u.semigroup.1}.
\begin{align}\label{proof.prop.L^p_u-L^q_u.semigroup.2}
e^{-t{\bf A}} f = \frac{m!}{2\pi i t^m} \int_\Gamma e^{t\lambda} (\lambda + {\bf A})^{-m-1} f d\lambda.
\end{align}
By taking $m$ large enough, we can choose $\{q_j\}_{j=0}^m$ such that $q_0=q$, $q_j<q_{j+1}$, $q_m=p$, and $\frac{1}{q_j}-\frac{1}{q_{j+1}}< \frac1d$. Then, estimate \eqref{est.prop.fresolvent.3} is applied for each couple $(q_j, q_{j+1})$,
and we obtain 
\begin{align}\label{proof.prop.L^p_u-L^q_u.semigroup.3}
& \| e^{-t{\bf A}} f \|_{L^p_{uloc}}  \nonumber \\
& \leq Ct^{-m} \int_\Gamma e^{-t \Re (\lambda)} \| (\lambda+{\bf A})^{-m-1} f \|_{L^p_{uloc}} |d\lambda| \nonumber \\
& \leq Ct^{-m} \int_\Gamma e^{-t \Re (\lambda)} |\lambda|^{-m-1} \big ( 1 + |\lambda|^{\frac{d}{2} (\frac{1}{q_{m}}-\frac{1}{q_{m-1}})} \big ) \cdots \big ( 1 + |\lambda|^{\frac{d}{2} (\frac{1}{q_{1}}-\frac{1}{q_{0}})} \big ) \|  f \|_{L^p_{uloc}} |d\lambda| \nonumber \\
& \leq C t^{-m} \int_\Gamma  e^{-t \Re (\lambda)} |\lambda|^{-m-1} \big ( 1 + |\lambda|^{\frac{d}{2} (\frac{1}{p}-\frac{1}{q})} \big ) |d\lambda| \| f\|_{L^q_{uloc}}.
\end{align}
Thus, again by taking the limit $\kappa \rightarrow 0$, we have 
\begin{align}\label{proof.prop.L^p_u-L^q_u.semigroup.4}
\| e^{-t{\bf A}} f \|_{L^p_{uloc}}  
& \leq C t^{-m} \int_0^\infty  e^{-t r \cos \eta } r^{-m-1} \big ( 1 + r^{\frac{d}{2} (\frac{1}{p}-\frac{1}{q})} \big ) d r \| f\|_{L^q_{uloc}} \nonumber \\
& \leq C \big (1+ t^{-\frac{d}{2}(\frac1q-\frac1p)} \big ) \| f\|_{L^q_{uloc}}.
\end{align}
This proves \eqref{est.prop.L^p_u-L^q_u.semigroup.2}.
Estimate \eqref{est.prop.L^p_u-L^q_u.semigroup.3} is shown in the same manner by using the formula \eqref{proof.prop.L^p_u-L^q_u.semigroup.2} and the resolvent estimate \eqref{est.prop.fresolvent.4}, we omit the details. The proof is complete.
\end{proof}

\section{Bilinear estimates for the Navier-Stokes equations}
\label{sec.bilinse}

\subsection{The symbol of the Helmholtz-Leray projector}

The formulas derived for the resolvent problem in Section \ref{sec.intrepr} are valid for a right hand side $f$ in the class $L^p_{uloc,\sigma}$, i.e. soleno\"idal vector fields such that $f_d$ vanishes on $\partial\mathbb R^d_+$. When dealing with the Navier-Stokes system, the nonlinear term $u\cdot\nabla u=\nabla\cdot (u\otimes u)$ is such that for any $z'\in\mathbb R^{d-1}$
\begin{equation*}
(u\cdot\nabla u)_d(z',0)=u(z',0)\cdot\nabla u_d(z',0)=0
\end{equation*}
by the noslip boundary condition, but it is not divergence-free. Hence, for $f\in C^\infty(\mathbb R^d_+)$ and $f_d=0$ on $\partial\mathbb R^d_+$, we have to compute the symbol of the Helmholtz-Leray projector $\mathbb P$ on the divergence-free fields.

In order to compute the Helmholtz-Leray projection we look for a formal decomposition of $f $ into
\begin{equation*}
f=\mathbb Pf+\nabla g,
\end{equation*}
with
\begin{equation*}
\nabla\cdot\mathbb Pf=0,\qquad (\mathbb Pf)_d(z',0)=0\ \mbox{for any}\ z'\in\R^{d-1}.
\end{equation*}
For the moment $f$ is assumed to be smooth and decay fast enough at spatial infinity.
We have to solve the following problem elliptic problem for $g$ with Neumann boundary condition
\begin{subequations}
\label{e.pressureg}
\begin{equation}
\left\{ 
\begin{aligned}
& \Delta g = \nabla\cdot f  & \mbox{in} &\ \R^d_+, \\
& \nabla g\cdot e_d  = f_d  & \mbox{on} & \ \partial\R^d_+,
\end{aligned}
\right.
\end{equation}
and such that 
\begin{equation}
\nabla g(z',z_d)\longrightarrow 0\qquad\mbox{when}\ z_d\rightarrow\infty.
\end{equation}
\end{subequations}
The solution $g$ to \eqref{e.pressureg} is expressed in Fourier space by for all $\xi\in\R^{d-1}\setminus\{0\}$, for all $z_d>0$,
\begin{align*}
& \quad \widehat{g}(\xi,z_d) \\
& =-\frac{e^{-z_d|\xi|}}{|\xi|} f_d(\xi,0)  -\int_0^{\infty}\frac{i\xi\cdot\widehat{f'}(\xi,s)+\partial_d\widehat{f_d}(\xi,s)}{2|\xi|}\left[e^{-|z_d-s||\xi|}+e^{-(z_d+s)|\xi|}\right]ds\\
& = -\int_0^{\infty}\frac{i\xi\cdot\widehat{f'}(\xi,s)}{2|\xi|}\left[e^{-|z_d-s||\xi|}+e^{-(z_d+s)|\xi|}\right]ds\\
& \quad  + \frac{1}{2} \int_0^{z_d} \widehat{f_d}(\xi,s) e^{-(z_d-s)|\xi|} d \xi - \frac12\int_{z_d}^\infty \widehat{f_d}(\xi,s) e^{-(s-z_d)|\xi|} d \xi  \\
& \quad - \frac12 \int_0^\infty \widehat{f_d}(\xi,s) e^{-(z_d+s)|\xi|} d \xi. 
\end{align*}
Here we have used the integration by parts.
As a consequence, we obtain the formulas for the Helmholtz-Leray projection: for all $\xi\in\R^{d-1}\setminus\{0\}$, for all $z_d>0$,
\begin{subequations}\label{e.deflerayproj}
\begin{align}
\begin{split}
\widehat{(\mathbb Pf)'}(\xi,z_d)
& =\widehat{f'}(\xi,z_d)+\frac{i\xi}{2|\xi|}\int_0^{\infty} i\xi\cdot\widehat{f'}(\xi,s)\left[e^{-|z_d-s||\xi|}+e^{-(z_d+s)|\xi|}\right]ds\\
& \quad -\frac{i\xi}{2} \int_0^{z_d} \widehat{f_d}(\xi,s) e^{-(z_d-s)|\xi|} d \xi +\frac{i\xi}{2}\int_{z_d}^\infty \widehat{f_d}(\xi,s) e^{-(s-z_d)|\xi|} d \xi  \\
& \quad + \frac{i\xi}{2} \int_0^\infty \widehat{f_d}(\xi,s) e^{-(z_d+s)|\xi|} d \xi
\end{split}
\end{align}
and
\begin{align}
\begin{split}
\widehat{(\mathbb Pf)_d}(\xi,z_d)
& =-\frac{1}{2}\int_0^{z_d} i\xi\cdot\widehat{f'}(\xi,s) \left[e^{-(z_d-s)|\xi|}+e^{-(z_d+s)|\xi|}\right] ds \\
& \quad +\frac{1}{2}\int_{z_d}^\infty i\xi\cdot\widehat{f'}(\xi,s) \left[e^{(z_d-s)|\xi|}-e^{-(z_d+s)|\xi|}\right] ds \\
& \quad + \frac12 \int_0^\infty \widehat{f_d}(\xi,s) \left[ e^{-|z_d-s||\xi|} -e^{-(z_d+s)|\xi|} \right] d \xi .
\end{split}
\end{align}
\end{subequations}

\subsection{The Helmholtz-Leray projector and the divergence}
\label{sec.leraydiv}

In view of the application to the Navier-Stokes system, we need to analyze the  operator 
\begin{equation*}
F \in L^q_{uloc}\mapsto \mathbb P\nabla\cdot F =\left(\mathbb P_{\beta\gamma}\left(\partial_\alpha F_{\alpha\gamma}\right)\right)_{\beta=1,\ldots\, d},
\end{equation*}
rather than the Helmholtz-Leray projector $\mathbb P$ itself. Here we develop an approach similar to the one of Lemari\'e-Rieusset \cite[Chapter 11]{lemariebook}. An analogous method has also been used in other works concerned with non localized solutions of fluid equations, such as for instance \cite{TTY10} and \cite{AKLN15} reminiscent of Serfati's work \cite{Serf95}. In the setting of the whole space $\R^d$, let $\chi\in C^\infty_c(\R^d)$ be a cut-off in physical space which is supported in $B(0,2)$ and equal to $1$ on $B(0,1)$. The operator $\mathbb P\nabla\cdot$ is equal to $\nabla\cdot +\frac{D\otimes D\nabla\cdot }{-\Delta}$. The kernel $T_{\alpha\beta\gamma}$ of the operator $\frac{D_\alpha D_\beta D_\gamma}{-\Delta}$ is decomposed into
\begin{equation*}
T_{\alpha\beta\gamma}=\partial_\alpha\partial_\beta\left((1-\chi)T_\gamma\right)+\partial_\alpha\partial_\beta\left(\chi T_\gamma\right)=:A_{\alpha\beta\gamma}+\partial_\alpha\partial_\beta B_\gamma,
\end{equation*}
where $T_\gamma$ is the kernel associated with the operator $\frac{D_\gamma}{-\Delta}$. We have $A_{\alpha\beta\gamma}\in WL^\infty(\R^d)$ i.e. 
\begin{equation*}
\sum_{\eta\in\Z^{d-1}\times\mathbb Z_{\geq 0}}\sup_{\eta+(0,1)^d}|A_{\alpha\beta\gamma}|<\infty
\end{equation*}
and $B_\gamma\in L^1_{c}(\R^d)$. Hence, for any $1\leq p<\infty$, for all $f\in L^p_{uloc}(\R^d)$, $A_{\alpha\beta\gamma}*f\in L^p_{uloc}(\R^d)$ and $B_{\gamma}*f\in L^p_{uloc}(\R^d)$.

We now return to the case of $\R^d_+$. We first compute the action of $\mathbb P\nabla\cdot$ on $F\in C^\infty_c(\overline{\R^d_+})^{d^2}$. Assume that for all $\alpha, \gamma\in\{1,\ldots\, d\}$, for all $z'\in\R^{d-1}$,
\begin{equation}\label{e.cancelf}
F_{\alpha d}(z',0)=F_{d\gamma}(z',0)=\partial_d F_{dd}(z',0)=0.
\end{equation}
Notice that we will later take $F$ in product form, i.e. $F=u\otimes u$, but for now we stick to the general $F$ just described. For the tangential component, we have for all $\xi\in\R^{d-1}$, for all $z_d>0$, for all $\beta\in\{1,\ldots\,d-1\}$,
\begin{align*}
& \left\{\reallywidehat{\mathbb P\left(\partial_\alpha(F_{\alpha\cdot})\right)}\right\}_\beta(\xi,z_d)  =\reallywidehat{\mathbb P_{\beta\gamma}\left(\partial_\alpha(F_{\alpha\gamma})\right)}(\xi,z_d)\\
& =i\xi_\alpha\widehat{F_{\alpha\beta}}(\xi,z_d)+\partial_d(\widehat{F_{d\beta}})(\xi,z_d)\\
& \quad +\frac{i\xi_\beta}{2|\xi|}\int_0^\infty\big(-\xi_\gamma \xi_\alpha \widehat{F_{\alpha\gamma}}  + i \xi_\gamma\partial_d \widehat{F_{d \gamma}}  \big ) (\xi,s)  \left[e^{-|z_d-s||\xi|}+e^{-(z_d+s)|\xi|}\right] ds \\
&  \quad -\frac{i\xi_\beta}{2} \int_0^{z_d} \big(i\xi_\alpha \widehat{F_{\alpha d}}+\partial_d \widehat{F_{dd}}\big ) (\xi,s) e^{-(z_d-s)|\xi|} d \xi \\
& \quad +\frac{i\xi_\beta}{2}\int_{z_d}^\infty \big(i\xi_\alpha \widehat{F_{\alpha d}}+\partial_d \widehat{F_{dd}}\big ) (\xi,s) e^{-(s-z_d)|\xi|} d \xi  \\
& \quad + \frac{i\xi_\beta}{2} \int_0^\infty \big(i\xi_\alpha \widehat{F_{\alpha d}}+\partial_d \widehat{F_{dd}}\big ) (\xi,s)  e^{-(z_d+s)|\xi|} d \xi
\end{align*}
Hence, integrating by parts we get
\begin{align}\label{e.tancomp}
\left\{\reallywidehat{\mathbb P\left(\partial_\alpha(F_{\alpha\cdot})\right)}\right\}_\beta(\xi,z_d)
& =i\xi_\alpha\widehat{F_{\alpha\beta}}(\xi,z_d)+\partial_d\widehat{F_{d\beta}}(\xi,z_d)-i\xi_\beta\widehat{F_{dd}}(\xi,z_d)\\
& \quad -\frac{i\xi_\alpha\xi_\beta\xi_\gamma}{2|\xi|}\int_0^\infty\widehat{F_{\alpha\gamma}}(\xi,s)\left[e^{-|z_d-s||\xi|}+e^{-(z_d+s)|\xi|}\right]ds\nonumber\\
& \quad +\frac{i\xi_\beta|\xi|}{2}\int_0^\infty\widehat{F_{dd}}(\xi,s)\left[e^{-|z_d-s||\xi|}+e^{-(z_d+s)|\xi|}\right]ds\nonumber\\
&\quad -\frac{i\xi_\beta}{2}\int_0^{z_d} \big(i\xi_\gamma\widehat{F_{d\gamma}}+i\xi_\alpha\widehat{F_{\alpha d}}\big)(\xi,s)e^{-(z_d-s)|\xi|}ds\nonumber\\
& \quad +\frac{i\xi_\beta}{2}\int_{z_d}^\infty \big(i\xi_\gamma\widehat{F_{d\gamma}}+i\xi_\alpha\widehat{F_{\alpha d}}\big)(\xi,s)e^{-(s-z_d)|\xi|}ds\nonumber\\
&\quad +\frac{i\xi_\beta}{2}\int_{0}^\infty \big(i\xi_\gamma\widehat{F_{d\gamma}}+i\xi_\alpha\widehat{F_{\alpha d}}\big)(\xi,s)e^{-(z_d+s)|\xi|}ds.\nonumber
\end{align}
As for the vertical component, we have for all $\xi\in\R^{d-1}$, for all $z_d>0$,
\begin{align*}
&\left\{\reallywidehat{\mathbb P\left(\partial_\alpha(F_{\alpha\cdot})\right)}\right\}_d(\xi,z_d) =\reallywidehat{\mathbb P_{d\gamma}\left(\partial_\alpha(F_{\alpha\gamma})\right)}(\xi,z_d)\\
&= -\frac{1}{2}\int_0^{z_d} \big (-\xi_\gamma \xi_\alpha \widehat{F_{\alpha\gamma}}(\xi,s)+i\xi_\gamma\partial_d\widehat{F_{d\gamma}} \big ) (\xi,s) \left[e^{-(z_d-s)|\xi|}+e^{-(z_d+s)|\xi|}\right] ds \\
&\quad+\frac12 \int_{z_d}^\infty \big (-\xi_\gamma \xi_\alpha \widehat{F_{\alpha\gamma}}(\xi,s)+i\xi_\gamma\partial_d\widehat{F_{d\gamma}} \big ) (\xi,s) \left[e^{-(z_d-s)|\xi|}- e^{-(z_d+s)|\xi|}\right] ds\\
& \quad + \frac12 \int_0^\infty \big(i\xi_\alpha\widehat{F_{\alpha d}}+ \partial_d \widehat{F_{d d}}\big)(\xi,s) \left[e^{-(z_d-s)|\xi|}- e^{-(z_d+s)|\xi|}\right]  ds.
\end{align*}
Thus, again integrating by parts we have
\begin{align}\label{e.vertcomp}
&\quad \left\{\reallywidehat{\mathbb P\left(\partial_\alpha(F_{\alpha\cdot})\right)}\right\}_d(\xi,z_d)\\
& =-i\xi_\gamma\widehat{F_{d\gamma}}(\xi,z_d)  +\frac{1}{2}\int_0^{z_d}\Big(\xi_\alpha\xi_\gamma\widehat{ F_{\alpha\gamma}}-|\xi|^2\widehat{F_{dd}}\Big)(\xi,s)\left[e^{-(z_d-s)|\xi|}+e^{-(z_d+s)|\xi|}\right]ds\nonumber\\
& \quad -\frac{1}{2}\int_{z_d}^\infty\Big(\xi_\alpha\xi_\gamma\widehat{ F_{\alpha\gamma}}-|\xi|^2\widehat{F_{dd}}\Big)(\xi,s)\left[e^{-(s-z_d)|\xi|}-e^{-(z_d+s)|\xi|}\right]ds\nonumber\\
& \quad +\frac{|\xi|}{2}\int_0^{z_d}\Big(i\xi_\gamma\widehat{F_{d\gamma}}+i\xi_\alpha\widehat{F_{\alpha d}}\Big)(\xi,s)\left[e^{-(z_d-s)|\xi|}-e^{-(z_d+s)|\xi|}\right]ds\nonumber\\
& \quad +\frac{|\xi|}{2}\int_{z_d}^\infty\Big(i\xi_\gamma\widehat{F_{d\gamma}}+i\xi_\alpha\widehat{F_{\alpha d}}\Big)(\xi,s)\left[e^{-(s-z_d)|\xi|}-e^{-(z_d+s)|\xi|}\right]ds.\nonumber
\end{align}
Notice that the integrations by parts carried out above are in the same vein as the decomposition of the multiplier $R(\lambda)$ of the resolvent problem \eqref{e.resol} into a local part associated with the Dirichlet-Laplace operator and a nonlocal part coming from the pressure. This technique was introduced in \cite{DHP01}. In both situations, the goal is to get around the direct use of the Helmholtz-Leray projector $\mathbb P$.

We have to deal with several type of multipliers: for $\alpha,\, \beta,\, \gamma,\, \delta,\, \iota\in\{1,\ldots\,~d-1\}$, for $\xi\in\R^{d-1}$,
\begin{align}
\tag{type A}\label{e.termA}
\frac{\xi_\alpha\xi_\beta\xi_\gamma}{|\xi|} e^{-(z_d-s)|\xi|}\widehat{F_{\delta\iota}},&\\
\xi_\alpha\xi_\beta e^{-(z_d-s)|\xi|}\widehat{F_{\gamma\delta}}\quad \mbox{and}\quad \xi_\alpha|\xi|e^{-(z_d-s)|\xi|}\widehat{F_{\beta\gamma}},&\qquad\forall\xi\in\R^{d-1},\ z_d>s,\nonumber\\
\tag{type B}\label{e.termB}
\frac{\xi_\alpha\xi_\beta\xi_\gamma}{|\xi|} e^{-(s-z_d)|\xi|}\widehat{F_{\delta\iota}},&\\
\xi_\alpha\xi_\beta e^{-(s-z_d)|\xi|}\widehat{F_{\gamma\delta}}\quad \mbox{and}\quad \xi_\alpha|\xi|e^{-(s-z_d)|\xi|}\widehat{F_{\beta\gamma}},&\qquad\forall\xi\in\R^{d-1},\ s>z_d,\nonumber\\
\tag{type C}\label{e.termC}
\frac{\xi_\alpha\xi_\beta\xi_\gamma}{|\xi|} e^{-(s+z_d)|\xi|}\widehat{F_{\delta\iota}},&\\
\xi_\alpha\xi_\beta e^{-(s+z_d)|\xi|}\widehat{F_{\gamma\delta}}\quad \mbox{and}\quad \xi_\alpha|\xi|e^{-(s+z_d)|\xi|}\widehat{F_{\beta\gamma}},&\qquad\forall\xi\in\R^{d-1},\ s,\, z_d>0,\nonumber
\end{align}  
All the terms associated with the multipliers \eqref{e.termA}, \eqref{e.termB} and \eqref{e.termC} can be handled via Lemma \ref{lem.kernelsecdivleray'} below, which will allow us in Section \ref{sec.bilinear} to combine the operator $\mathbb P\nabla\cdot$ with the operator $(\lambda+{\bf A})^{-1}$.

We develop an idea similar to the one of Lemari\'e-Rieusset explained above, except that rather than cutting-off in physical space, we cut-off on the Fourier side. This appears to be more convenient for us, since we will have a decomposition based on the nonlocal operator $(-\Delta')^\frac{2-\theta}{2}$ instead of the local derivatives $\partial_\alpha\partial_\beta$. 

\begin{lemma}\label{lem.kernelsecdivleray'} Let $\chi\in C_0^\infty (\R^{d-1})$ be such that $\chi=1$ for $|\xi|\leq 2$ and $\chi=0$ for $|\xi|\geq 3$.  Let $m_2 \in C^\infty(\R^{d-1}\setminus\{0\})$ a multiplier in Fourier space homogeneous of order $2$ i.e. such that for all $t>0$, $\eta\in\R^{d-1}$, $m_2 (t\eta) =t^2 m_2 (\eta)$, and such that for all $\xi\in\R^{d-1}$, for all $\alpha\in\mathbb N^{d-1}$,
\begin{equation*}
\left|\partial^l m_2(\xi)\right|\leq |\xi|^{2-|\alpha|}.
\end{equation*}
Let $\theta\in [0,2]$ and let $K_2 \in C^\infty(\R^d\setminus\{0\})$ (resp. $K_\theta\in C^\infty(\R^d\setminus\{0\})$) be the kernel associated with $m_2(\xi)e^{-t|\xi|}$ (resp. $\frac{m_2(\xi)}{|\xi|^{2-\theta}}e^{-t|\xi|}$), i.e. for all $y'\in\R^{d-1}$ and $t>0$,
\begin{equation*}
K_2 (y',t):=\int_{\R^{d-1}}e^{iy'\cdot\xi}m_2 (\xi)e^{-t|\xi|}d\xi\quad\mbox{and}\quad K_\theta (y',t):=\int_{\R^{d-1}}e^{iy'\cdot\xi}\frac{m_2 (\xi)}{|\xi|^{2-\theta}}e^{-t|\xi|}d\xi.
\end{equation*}
Then, for each $\lambda\in \mathbb{C}\setminus \{0\}$ we can decompose $K_2$ in the following way:
\begin{equation}\label{e.identityK_2'}
K_2=(-\Delta')^\frac{2-\theta}{2} K_{\theta,\geq |\lambda|^{\frac12}}+K_{2,\leq |\lambda|^{\frac12}},
\end{equation}
where the symbol of $K_{\theta,\geq |\lambda|^{\frac12}}$ is $\big (1-\chi (|\lambda|^{-\frac12} \xi) \big ) \frac{m_2(\xi)}{|\xi|^{2-\theta}}e^{-t|\xi|}$, while the symbol of $K_{2,\leq |\lambda|^{\frac12}}$ is $\chi (|\lambda|^{-\frac12} \xi) m_2 (\xi) e^{-t|\xi|}$.
Moreover, there exists $C=C(d)>0$ such that, for all $y'\in\R^{d-1}$ and $t>0$,
\begin{align}
\left|K_{\theta,\geq |\lambda|^{\frac12}}(y',t)\right|&\leq \frac{Ce^{-t |\lambda|^\frac12}}{(|y'|+t)^{d-1+\theta}},\label{e.decayK_theta'}\\
\left|K_{2,\leq |\lambda|^{\frac12}}(y',t)\right|&\leq \frac{C}{(|\lambda|^{-\frac12} +|y'|+t)^{d+1}}\label{e.decayK_2'}.
\end{align}
\end{lemma}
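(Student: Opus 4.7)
The proof divides into the algebraic identity \eqref{e.identityK_2'} and the two pointwise bounds. The identity is a trivial manipulation on the Fourier side: I partition $1 = \chi(|\lambda|^{-1/2}\xi) + (1-\chi(|\lambda|^{-1/2}\xi))$ in the symbol $m_2(\xi)e^{-t|\xi|}$ of $K_2$, and in the $(1-\chi)$-summand I write $m_2(\xi)=|\xi|^{2-\theta}\cdot m_2(\xi)/|\xi|^{2-\theta}$, recognising the factor $|\xi|^{2-\theta}$ as the symbol of $(-\Delta')^{(2-\theta)/2}$.

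Both pointwise bounds reduce, via the change of variables $\xi = |\lambda|^{1/2}\eta$ and the homogeneity of $m_2$, to the case $|\lambda|=1$ with rescaled variables $\tilde y' = |\lambda|^{1/2}y'$ and $\tilde t = |\lambda|^{1/2}t$. Call the corresponding rescaled kernels $G_\theta$ and $G_2$; then the bounds to prove are $|G_\theta(\tilde y',\tilde t)|\leq C e^{-\tilde t}/(|\tilde y'|+\tilde t)^{d-1+\theta}$ and $|G_2(\tilde y',\tilde t)|\leq C/(1+|\tilde y'|+\tilde t)^{d+1}$, in the same spirit as the rescaling arguments already used in Section~\ref{sec.kernelnonlocal}.

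For $G_\theta$, the rescaled symbol $(1-\chi(\eta))(m_2(\eta)/|\eta|^{2-\theta})e^{-\tilde t|\eta|}$ is smooth (no singularity at $\eta=0$ thanks to $(1-\chi)$) and supported in $|\eta|\geq 2$. On this support I split $e^{-\tilde t|\eta|}= e^{-\tilde t|\eta|/2}\cdot e^{-\tilde t|\eta|/2}\leq e^{-\tilde t}\,e^{-\tilde t|\eta|/2}$ to factor out $e^{-\tilde t}$. For $|\tilde y'|\leq 1$ I bound $G_\theta$ directly in absolute value, the change of variables $\sigma = \tilde t\eta$ providing the $(1+\tilde t)^{-(d-1+\theta)}$ decay when $\tilde t\geq 1$. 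For $|\tilde y'|\geq 1$ I apply Lemma~\ref{lem.optdecay} with $n=\theta$ to the reweighted symbol, whose derivative bounds $|\nabla_\eta^\alpha[\,\cdot\,]|\leq C(1+\tilde t)^\alpha |\eta|^{\theta-\alpha}e^{-\tilde t|\eta|/8}$ yield the $|\tilde y'|^{-(d-1+\theta)}$ decay; the $\tilde t$-powers introduced by differentiating the exponential are compensated once integrated, again via $\sigma = \tilde t\eta$.

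For $G_2$, whose symbol $\chi(\eta)m_2(\eta)e^{-\tilde t|\eta|}$ is compactly supported in $|\eta|\leq 3$, the direct estimate $|G_2(\tilde y',\tilde t)|\leq C\int_{|\eta|\leq 3}|\eta|^2 e^{-\tilde t|\eta|}d\eta$ combined with $\sigma = \tilde t\eta$ for $\tilde t\geq 1$ gives $|G_2|\leq C(1+\tilde t)^{-(d+1)}$. To obtain decay in $|\tilde y'|$, I follow the two-step scheme of Lemma~\ref{lem.optdecay}: I split the integral at a radius $R\sim 1/|\tilde y'|$, control the low-frequency part by $CR^{d+1}$, and integrate by parts $d+1$ times in the high-frequency part, where the derivatives $|\nabla^\alpha m_2|\leq C|\eta|^{2-\alpha}$ are integrable against the volume on $R\leq|\eta|\leq 3$. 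The main obstacle is precisely that $m_2$ is only $C^{1,1}$ at $\eta=0$, so naive $(d+1)$-fold integration by parts through $\eta=0$ is not licit; it is the two-scale cutoff of Lemma~\ref{lem.optdecay}, separating the region where the singularity lives from the region where integration by parts is performed, that resolves this. Gluing the $\tilde y'$-decay and the $\tilde t$-decay then yields \eqref{e.decayK_2'}.
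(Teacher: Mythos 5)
Your proof of the decomposition \eqref{e.identityK_2'} is correct, and the rescaling to $|\lambda|=1$ is a valid (and clean) way to organize the argument — it is equivalent to the paper's direct treatment. However, your pointwise estimates have two genuine gaps.

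\textbf{The split for $G_\theta$.} You split at $|\tilde y'|\lessgtr 1$, using the direct absolute-value estimate for $|\tilde y'|\leq 1$ and Lemma~\ref{lem.optdecay} for $|\tilde y'|\geq 1$. This fails in the regime $\tilde t\ll|\tilde y'|\leq 1$. The direct estimate gives
\[
\int_{|\eta|\geq 2}|\eta|^\theta e^{-\tilde t|\eta|}\,d\eta \sim C\,\tilde t^{-(d-1+\theta)}e^{-\tilde t},
\]
independently of $\tilde y'$; when $\tilde t\ll|\tilde y'|$ this is far larger than the target $C e^{-\tilde t}/(|\tilde y'|+\tilde t)^{d-1+\theta}\sim C e^{-\tilde t}/|\tilde y'|^{d-1+\theta}$. (Symmetrically, in the regime $|\tilde y'|\geq 1$ but $\tilde t\gg|\tilde y'|$, the lemma bound $e^{-\tilde t}/|\tilde y'|^{d-1+\theta}$ is weaker than the desired $e^{-\tilde t}/\tilde t^{d-1+\theta}$, and you do not invoke the direct estimate there.) The paper instead splits at $|y'|\lessgtr t$ (i.e.\ $|\tilde y'|\lessgtr\tilde t$): the direct estimate handles $\tilde t>|\tilde y'|$, where $\tilde t^{-(d-1+\theta)}\lesssim(|\tilde y'|+\tilde t)^{-(d-1+\theta)}$, while the Lemma~\ref{lem.optdecay} argument handles $|\tilde y'|\geq\tilde t$, where $|\tilde y'|^{-(d-1+\theta)}\lesssim(|\tilde y'|+\tilde t)^{-(d-1+\theta)}$. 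Your split at the fixed scale $1$ does not match the correct comparison between $|\tilde y'|$ and $\tilde t$.

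\textbf{The integration-by-parts count for $G_2$.} You say you ``split the integral at $R\sim 1/|\tilde y'|$, control the low-frequency part by $CR^{d+1}$, and integrate by parts $d+1$ times in the high-frequency part.'' This is not the two-step scheme of Lemma~\ref{lem.optdecay}, and it produces a logarithm. After $d+1$ integrations by parts the high-frequency integrand carries $|\partial^{d+1}m_2|\lesssim|\eta|^{1-d}$, and
\[
\int_{R\leq|\eta|\leq 3}|\eta|^{1-d}\,d\eta=\int_R^3 r^{-1}\,dr=\log(3/R),
\]
so you obtain $|G_{2,\mathrm{high}}|\lesssim \log(3|\tilde y'|)/|\tilde y'|^{d+1}$, which is not $\lesssim|\tilde y'|^{-(d+1)}$. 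Lemma~\ref{lem.optdecay}'s actual scheme (with $n=2$: integrate by parts $[n+d-2]=d$ times \emph{before} splitting, then two more times in the high-frequency block) balances $R$ against $R^{-1}$ at $R=|\tilde y'|^{-1}$ and yields the clean $|\tilde y'|^{-(d+1)}$ without the log. The statement that the derivatives ``are integrable against the volume'' is true but not quantitatively sufficient: finiteness is not the issue, the unwanted $\log(3/R)$ factor is.

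Both gaps are repairable by aligning your regime split and IBP count with the ones actually used in Lemma~\ref{lem.optdecay} and in the paper's proof, but as written the argument does not close.
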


\begin{proof} The decomposition \eqref{e.identityK_2'} simply follows from $1= \big (1-\chi (|\lambda|^{-\frac12} \xi) \big ) +\chi (|\lambda|^{-\frac12} \xi)$ and $m_2 (\xi) = |\xi|^{2-\theta} \frac{m_2 (\xi)}{|\xi|^{2-\theta}}$ for the ``high'' frequency part, and thus, we focus on the proof of \eqref{e.decayK_theta'} and \eqref{e.decayK_2'}.
Set 
\begin{align*}
m_{\theta,\geq |\lambda|^{\frac12}} (\xi,t)  & := \big (1-\chi (|\lambda|^{-\frac12} \xi) \big ) \frac{m_2(\xi)}{|\xi|^{2-\theta}}e^{-t|\xi|},\\
m_{2,\leq |\lambda|^\frac12} (\xi,t) & := \chi (|\lambda|^{-\frac12} \xi) m_2 (\xi) e^{-t|\xi|}.
\end{align*}
Then it is straightforward to show 
\begin{align*}
|\partial_\xi^\alpha m_{\theta,\geq |\lambda|^{\frac12}} (\xi,t)| & \leq C |\xi|^{\theta-\alpha} e^{-\frac{t}{2}|\xi|}  e^{-t|\lambda|^\frac12},\\
|\partial_\xi^\alpha m_{2,\leq |\lambda|^\frac12} (\xi,t) | & \leq C |\xi|^{2-\alpha} e^{-\frac{t}{2}|\xi|}.
\end{align*}
Hence, as for \eqref{e.decayK_theta'}, if $|y'|\geq t$, then the argument in Lemma \ref{lem.optdecay}  (i.e., introduce the cut-off in the Fourier side with the radius $R$ and optimize $R$ later as $R=|y'|^{-1}$) gives the bound 
\begin{align*}
\left|K_{\theta,\geq |\lambda|^{\frac12}}(y',t)\right|&\leq \frac{Ce^{-t |\lambda|^\frac12}}{|y'|^{d-1+\theta}},
\end{align*}
While if $t> |y'|$, we simply estimate as follows
\begin{align}\label{proof.lem.kernelsecdivleray'.1}
\left|K_{\theta,\geq |\lambda|^{\frac12}}(y',t)\right| \leq C \int_{|\xi|\geq 2|\lambda|^\frac12} |\xi|^\theta e^{-t|\xi|} d \xi \leq C t^{-d+1-\theta} e^{-t|\lambda|^\frac12}
\end{align}
by changing the variables $t\xi = \eta$. Thus, estimate \eqref{e.decayK_theta'} holds.
As for \eqref{e.decayK_2'}, let us first consider the case $|y'|+t\geq |\lambda|^{-\frac12}$. 
If $t>|y'|$ in addition, then the simple calculation as in \eqref{proof.lem.kernelsecdivleray'.1} gives the bound 
\begin{align*}
\left|K_{2,\leq |\lambda|^{\frac12}}(y',t)\right| & \leq C t^{-d-1}.
\end{align*}
While if $|y'|>t$, then the argument as in Lemma \ref{lem.optdecay} yields
\begin{align*}
\left|K_{2,\leq |\lambda|^{\frac12}}(y',t)\right|&\leq \frac{C}{|y'|^{d+1}}.
\end{align*}
Finally, if $|\lambda|^{-\frac12} >|y'|+t$, then we have 
\begin{align*}
\left|K_{2,\leq |\lambda|^{\frac12}}(y',t)\right| \leq C \int_{|\xi|\leq 3|\lambda|^\frac12} |\xi|^2 d\xi \leq C |\lambda|^{\frac{d+1}{2}}.
\end{align*}
Collecting these, we obtain \eqref{e.decayK_2'}. The proof is complete.
\end{proof}

We now estimate the action of $K_{\theta,\geq |\lambda|^\frac12}$ and $K_{2,\leq |\lambda|^\frac12}$ on functions in $L^q_{uloc}(\R^d_+)$.

\begin{lemma}\label{lem.termsABCscaled}
Let $p,q\in [1,\infty]$ satisfy
\begin{equation*}
1\leq q\leq p \leq \infty,\qquad 0\leq \frac1q-\frac1p<\frac1d.
\end{equation*}
Then there exist $\theta\in (0,1)$ and $C=C(d,p,q,\theta)>0$ such that for all $\lambda\in\mathbb C\setminus\{0\}$, $f\in L^q_{uloc}(\R^d_+)$, and $y_d>0$, we have
\begin{align*}
\left\|\int_0^{y_d}\int_{\R^{d-1}}K_{\theta,\geq |\lambda|^{\frac12}}(y'-z',y_d \pm z_d)f(z',z_d)dz'dz_d\right\|_{L^p_{uloc}}&\\
\leq C|\lambda|^{-\frac{1-\theta}{2}}\left(1+|\lambda|^{\frac d2(\frac1q-\frac1p)}\right)\|f\|_{L^q_{uloc}},&\\
\left\|\int_0^{y_d}\int_{\R^{d-1}}K_{2,\leq |\lambda|^{\frac12}} (y'-z',y_d\pm z_d)f(z',z_d)dz'dz_d\right\|_{L^q_{uloc}}&\leq C|\lambda|^{\frac12}\|f\|_{L^q_{uloc}},
\end{align*}
and 
\begin{align*}
\left\|\int_{y_d}^\infty\int_{\R^{d-1}}K_{\theta,\geq |\lambda|^{\frac12}}(y'-z',y_d\pm z_d)f(z',z_d)dz'dz_d\right\|_{L^p_{uloc}}&\\
\leq C|\lambda|^{-\frac{1-\theta}{2}}\left(1+|\lambda|^{\frac d2(\frac1q-\frac1p)}\right)\|f\|_{L^q_{uloc}},&\\
\left\|\int_{y_d}^\infty\int_{\R^{d-1}}K_{2,\leq |\lambda|^{\frac12}} (y'-z',y_d\pm z_d)f(z',z_d)dz'dz_d\right\|_{L^q_{uloc}} & \leq C|\lambda|^{\frac12}\|f\|_{L^q_{uloc}}.
\end{align*}
\end{lemma}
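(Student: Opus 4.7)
My plan is to adapt the localization--Young scheme developed in Proposition \ref{prop.estnonlocal}, taking as input the sharp pointwise bounds \eqref{e.decayK_theta'} and \eqref{e.decayK_2'} of Lemma \ref{lem.kernelsecdivleray'}. The starting point is a direct computation of the tangential $L^s(\R^{d-1})$ norms of each kernel. For $s\in[1,\infty)$ defined by $\tfrac{1}{p}=\tfrac{1}{s}+\tfrac{1}{q}-1$, integration in polar coordinates in $\R^{d-1}$ yields
\begin{align*}
\|K_{\theta,\geq|\lambda|^{1/2}}(\cdot,t)\|_{L^s(\R^{d-1})} &\leq C\, e^{-ct|\lambda|^{1/2}}\, t^{-(d-1)(1/q-1/p)-\theta},\\
\|K_{2,\leq|\lambda|^{1/2}}(\cdot,t)\|_{L^1(\R^{d-1})} &\leq \frac{C}{(|\lambda|^{-1/2}+t)^{2}},
\end{align*}
valid for $t>0$ (the first bound requires $\theta>0$ so that the radial integral converges near $r=\infty$).

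For the $K_{\theta,\geq|\lambda|^{1/2}}$ estimate I would then mimic Step 1 of Proposition \ref{prop.estnonlocal}: localize $f$ via characteristic functions on unit tangential cubes $\chi_{\beta'}$, apply Young's inequality in the tangential direction (distinguishing between cubes with $|\alpha'|\leq 2$ near the support of $f$ and far cubes with $|\alpha'|\geq 3$), and then perform the vertical integration. Since the kernel depends only on $y_d\pm z_d$, the vertical integration reduces in every one of the four cases---after at most a reflection $z_d\mapsto -z_d$, which preserves $L^q$ norms---to a genuine one-dimensional convolution on $\R$, to which Young's inequality with exponent $r=(1-(\tfrac1q-\tfrac1p))^{-1}$ applies. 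An explicit change of variables $u=crt|\lambda|^{1/2}$ gives
\begin{equation*}
\bigl\|e^{-ct|\lambda|^{1/2}}\,t^{-(d-1)(1/q-1/p)-\theta}\bigr\|_{L^r(\R_+)} \leq C\,|\lambda|^{-(1-\theta)/2+(d/2)(1/q-1/p)},
\end{equation*}
which matches the target bound since $|\lambda|^{(d/2)(1/q-1/p)}\leq 1+|\lambda|^{(d/2)(1/q-1/p)}$. The far-cube contributions are of lower order thanks to the spatial decay of $K_{\theta,\geq|\lambda|^{1/2}}$, and are handled verbatim as $I_2$ in Proposition \ref{prop.estnonlocal}.

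For the $K_{2,\leq|\lambda|^{1/2}}$ estimate the argument is simpler: the tangential kernel is already in $L^1(\R^{d-1})$, so the standard $L^1 \ast L^q_{uloc}\to L^q_{uloc}$ convolution bound in the tangential variable combined with the direct integration
\begin{equation*}
\int_0^\infty \frac{dt}{(|\lambda|^{-1/2}+t)^{2}} = |\lambda|^{1/2}
\end{equation*}
delivers the desired $C|\lambda|^{1/2}\|f\|_{L^q_{uloc}}$ bound (no splitting between $p$ and $q$ is needed here, which is why the lemma only claims an $L^q$ estimate for this piece).

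The main technical obstacle I anticipate is the admissibility check for Young's inequality in the vertical direction for the high-frequency piece: the exponent $r$ forces the requirement $r\cdot((d-1)(\tfrac1q-\tfrac1p)+\theta)<1$, which rearranges to the constraint $\theta<(d-1)\bigl(\tfrac1d-(\tfrac1q-\tfrac1p)\bigr)$. This range of admissible $\theta$ is nonempty precisely because of the strict inequality $\tfrac1q-\tfrac1p<\tfrac1d$ assumed in the hypothesis, so one can always fix $\theta>0$ sufficiently small. Once this admissibility is in place, the remainder of the proof is careful bookkeeping in the spirit of Section \ref{sec.kernelnonlocal}, and the four $y_d\pm z_d$ cases are handled in a uniform manner.
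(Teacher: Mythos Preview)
Your approach is correct and yields the stated bounds, but the paper takes a more direct route. Rather than separating tangential and vertical variables and mimicking the $I_1$/$I_2$ decomposition of Proposition~\ref{prop.estnonlocal}, the paper observes that after the reflection $z_d\mapsto -z_d$ (which you also invoke), all four operators are dominated by a genuine convolution $K_\lambda *_y f$ on $\R^d$ with the majorant kernel
\[
K_\lambda(y',y_d)=\frac{Ce^{-|y_d||\lambda|^{1/2}}}{(|y'|+|y_d|)^{d-1+\theta}}
\quad\text{or}\quad
K_\lambda(y',y_d)=\frac{C}{(|\lambda|^{-1/2}+|y'|+|y_d|)^{d+1}},
\]
and then applies the $\R^d$-cube localization and single Young step of Lemma~\ref{prop:v}. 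This collapses your two-stage (tangential then vertical) Young argument into one, and avoids the near/far splitting in the vertical direction. Both routes rely on the same admissibility condition for $\theta$, which in fact reads $\theta<1-d(\tfrac1q-\tfrac1p)$ (equivalently, $(d-1+\theta)s<d$); your stated constraint $\theta<(d-1)\bigl(\tfrac1d-(\tfrac1q-\tfrac1p)\bigr)$ is a minor arithmetic slip, but since both expressions are positive under the hypothesis $\tfrac1q-\tfrac1p<\tfrac1d$, the conclusion is unaffected.
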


\begin{proof} Let $s\in [1,\infty]$ such that $\frac1p=\frac1q+\frac1s-1$.
By the condition $\frac1q-\frac1p<\frac1d$ we can take $\theta\in (0,1)$ so that $(d-1 + \theta) s<d$.
We fix such a $\theta\in (0,1)$ below.
To show the estimates stated in the lemma it suffices to consider the case with the variable $y_d-z_d$.
Then, in virtue of the bounds \eqref{e.decayK_theta'} and \eqref{e.decayK_2'} all terms are reduced to the estimate for the convolution $K_\lambda * f$ in $\R^d$ with $K_\lambda$ either 
\begin{align}\label{proof.lem.termsABCscaled.1}
K_\lambda (y',y_d) = \frac{C e^{-|y_d||\lambda|^\frac12}}{(|y'| +|y_d|)^{d-1+\theta}} \quad ({\rm for~the ~terms~involving}~~K_{\theta,\geq |\lambda|^{\frac12}}) 
\end{align}
or 
\begin{align}\label{proof.lem.termsABCscaled.2}
K_\lambda (y',y_d) = \frac{C}{(|\lambda|^{-\frac12} + |y'| + |y_d|)^{d+1}} \quad ({\rm for~the ~terms~involving}~~K_{2,\leq |\lambda|^{\frac12}}).
\end{align}
Note that $f$ is extended by zero to $\R^d$.
Then, the proof is parallel to that of Lemma \ref{prop:v}.
Without loss of generality it suffices to estimate the $L^p$ norm on the cube $B_0=(0,1)^d$.
The case when $K_\lambda$ is given by \eqref{proof.lem.termsABCscaled.2} is easily estimated. Indeed,
\begin{align*}
&\left(\int_{0}^{1}\int_{(0,1)^{d-1}}\left|\int_0^{z_d}\int_{\R^{d-1}}K_\lambda (y'-z',y_d-z_d)f(z',z_d)dz'dz_d\right|^qdy'dy_d\right)^{\frac 1q}\\
\leq&C\sum_{\eta\in\mathbb Z^d}\left(\int_{0}^{1}\int_{(0,1)^{d-1}}\left|\frac{1}{\big(|\lambda|^{-\frac12}+|\cdot|\big)^{d+1}}*_y\chi_\eta f\right|^qdy'dy_d\right)^{\frac 1q}\\
\leq&C\sum_{\eta\in\mathbb Z^d}\frac{1}{(|\lambda|^{-\frac12}+|\eta'|+|\eta_d|)^{d+1}}\|f\|_{L^q_{uloc}}\\
\leq& C\int_0^\infty\frac{r^{d-1}}{\big(|\lambda|^{-\frac 12}+r\big)^{d+1}}dr\leq C|\lambda|^{\frac12}\|f\|_{L^q_{uloc}}.
\end{align*}
Next we consider the case when $K_\lambda$ is given by \eqref{proof.lem.termsABCscaled.1}.
Let $\frac1p=\frac1s+\frac1q-1$.
Then, arguing as in the proof of Lemma \ref{prop:v}, we have from the Young inequality for convolution,
\begin{align*}
& \| K_\lambda *_y f \|_{L^p (B_0)} \\
& \leq  C \bigg ( \| K_\lambda \|_{L^s}  +  \sum_{\max |\beta_i'|\geq 3, \beta_d\in \Z} \| \chi_\beta K_\lambda\|_{L^s} + \sum_{\max |\beta_i'|\leq 3, |\beta_d|\geq 3} \| \chi_\beta K_\lambda\|_{L^s} \bigg )  \| f\|_{L^q_{uloc}}.
\end{align*}
Here $\chi_\beta$ is the characteristic function on the cube $B_\beta$ (see the proof of Lemma \ref{prop:v}). In virtue of the choice of $\theta\in (0,1)$ above, we see that $K_\lambda\in L^s(\R^d)$ and 
\begin{align*}
\| K_\lambda\|_{L^s}\leq C |\lambda|^{-\frac{1-\theta}{2} + \frac{d}{2} (\frac1q-\frac1p)}.
\end{align*}
Similarly, the direct computation yields 
\begin{align*}
\sum_{\max |\beta_i'|\geq 3, \beta_d\in \Z} \| \chi_\beta K_\lambda\|_{L^s}  \leq C \int_0^\infty e^{-c  t |\lambda|^\frac12} (1+t)^{-\theta} d t \leq C |\lambda|^{-\frac{1-\theta}{2}}
\end{align*}
and 
\begin{align*}
\sum_{\max |\beta_i'|\leq 3, |\beta_d|\geq 3} \| \chi_\beta K_\lambda\|_{L^s}  \leq C e^{-c |\lambda|^\frac12} \leq C |\lambda|^{-\frac{1-\theta}{2}}.
\end{align*}
The proof is complete.
\end{proof}

\subsection{The bilinear operator}
\label{sec.bilinear}

The goal of this section is to study the bilinear operator 
\begin{equation}\label{e.bilopuv}
(u,v)\mapsto(\lambda+{\bf A})^{-1}\mathbb P\nabla\cdot(u\otimes v).
\end{equation}
The idea is to combine the results on the operator $(\lambda+{\bf A})^{-1}$ obtained in Section \ref{sec.resolest} with the decomposition and estimates for $\mathbb P\nabla\cdot$ obtained in Section \ref{sec.leraydiv}.

Let $F:=u\otimes v$ with $u,\, v\in L^p_{uloc,\sigma} (\R^d_+)$. The outcome of the computations \eqref{e.tancomp} and \eqref{e.vertcomp} as well as Lemmas \ref{lem.kernelsecdivleray'}, \ref{lem.termsABCscaled} is the following proposition.

\begin{proposition}\label{prop.summarizebilin}
Let $\lambda\in S_{\pi-\epsilon}$ and let $u,\, v\in L^p_{uloc,\sigma}(\R^d_+)$.
Assume that $p,q\in [1,\infty]$ satisfy
\begin{equation*}
1< q\leq p \leq \infty\quad\mbox{or}\quad1\leq q<p\leq\infty,\qquad 0\leq \frac1q-\frac1p<\frac1d.
\end{equation*}
Then there exist $\theta\in (0,1)$ and $G_{\theta,\geq|\lambda|^{\frac12}} (u\otimes v),~ G_{\leq|\lambda|^{\frac12}} (u\otimes v)\in L^1_{loc}(\R^d_+;\R^d)$ such that, for all $\beta\in\{1,\ldots\, d-1\}$, 
\begin{subequations}\label{e.formulasPdiv}
\begin{align}
\begin{split}
\left\{\mathbb P\partial_\alpha(u_\alpha v)\right\}_\beta&=\partial_{\gamma} (u_{\gamma} v_\beta)+\partial_d(u_d v_\beta) - \partial_\beta (u_d v_d) \\
& \quad + (-\Delta')^\frac{2-\theta}{2} G^\beta_{\theta,\geq|\lambda|^{\frac12}} (u\otimes v) +G^\beta_{\leq|\lambda|^{\frac12}} (u\otimes v),
\end{split}\\
\begin{split}
\left\{\mathbb P\partial_\alpha(u_\alpha v)\right\}_d &=-\partial_\gamma (u_d v_\gamma)+ (-\Delta')^\frac{2-\theta}{2} G^d_{\theta, \geq|\lambda|^{\frac12}} (u\otimes v) +G^d_{\leq|\lambda|^{\frac12}} (u\otimes v),
\end{split}
\end{align}
\end{subequations}
where Einstein's convention is used (the sums run over $\alpha\in \{1,\ldots,d\}$ and $\gamma \in\{1,\ldots,\, d-1\}$), and that
\begin{align}
&\left\|G_{\theta,\geq|\lambda|^{\frac12}} (u\otimes v) \right\|_{L^p_{uloc}}\leq C|\lambda|^{-\frac{1-\theta}{2}}\left(1+|\lambda|^{\frac d2(\frac1q-\frac1p)}\right)\|u\otimes v\|_{L^q_{uloc}},\label{est.prop.summarizebilin.1}\\
&\left\|G_{\leq|\lambda|^{\frac12}} (u\otimes v) \right\|_{L^q_{uloc}}\leq C|\lambda|^{\frac12}\|u\otimes v\|_{L^q_{uloc}}. \label{est.prop.summarizebilin.2}
\end{align}
Here $C=C(d,\epsilon, p,q)>0$ is independent of $\lambda\in S_{\pi-\epsilon}$.
\end{proposition}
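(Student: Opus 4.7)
The plan is to build the decomposition \eqref{e.formulasPdiv} directly from the explicit Fourier-side formulas \eqref{e.tancomp} and \eqref{e.vertcomp} derived earlier for $\mathbb{P}\partial_\alpha F_{\alpha\cdot}$. First I would verify that the tensor $F := u \otimes v$ meets the boundary cancellation hypotheses \eqref{e.cancelf}: since $u_d|_{x_d=0} = v_d|_{x_d=0} = 0$, one has $F_{\alpha d}(z',0) = F_{d\gamma}(z',0) = 0$ and $\partial_d F_{dd}(z',0) = (\partial_d u_d)\, v_d + u_d\, (\partial_d v_d) = 0$. After a standard tangential-mollification of $u,v$ (justified a posteriori by the pointwise kernel bounds of Lemma \ref{lem.kernelsecdivleray'}), the identities \eqref{e.tancomp}--\eqref{e.vertcomp} apply distributionally. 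Their first lines furnish precisely the ``visible'' divergence terms $\partial_\gamma(u_\gamma v_\beta) + \partial_d(u_d v_\beta) - \partial_\beta(u_d v_d)$ and $-\partial_\gamma(u_d v_\gamma)$ written in \eqref{e.formulasPdiv}.

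Every other term on the right-hand sides of \eqref{e.tancomp} and \eqref{e.vertcomp} is a $z_d$-integral whose integrand, up to a bounded factor, is of one of the types \eqref{e.termA}, \eqref{e.termB}, \eqref{e.termC}, and hence has the generic form $m_2(\xi)\, e^{-t|\xi|}\, \widehat{F_{\alpha\gamma}}(\xi,z_d)$, with $t \in \{y_d - z_d, z_d - y_d, y_d + z_d\}$ and $m_2 \in \bigl\{\xi_\alpha\xi_\beta\xi_\gamma/|\xi|,\ \xi_\alpha\xi_\beta,\ \xi_\alpha|\xi|\bigr\}$. Each such $m_2$ is smooth away from the origin and homogeneous of order $2$, so Lemma \ref{lem.kernelsecdivleray'} applies at the threshold $|\lambda|^{1/2}$ and yields, for any chosen $\theta \in (0,1)$, the kernel decomposition
\begin{equation*}
K_2 \;=\; (-\Delta')^{(2-\theta)/2} K_{\theta,\geq|\lambda|^{1/2}} \,+\, K_{2,\leq|\lambda|^{1/2}}.
\end{equation*}

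I then collect the high-frequency contributions: since $(-\Delta')^{(2-\theta)/2}$ commutes with convolution in $y'$ and with integration in $z_d$, it can be factored out of all the terms, defining the vector field $G_{\theta,\geq|\lambda|^{1/2}}(u\otimes v)$ componentwise as a sum of integrals of $K_{\theta,\geq|\lambda|^{1/2}}$ against components of $F = u\otimes v$. The remaining low-frequency pieces are grouped into $G_{\leq|\lambda|^{1/2}}(u\otimes v)$. The estimates \eqref{est.prop.summarizebilin.1} and \eqref{est.prop.summarizebilin.2} then follow by applying Lemma \ref{lem.termsABCscaled} to each summand, with $\theta \in (0,1)$ picked small enough that $(d-1+\theta)s < d$ for the Young exponent $s$ determined by $\frac1p = \frac1s + \frac1q - 1$ (permissible thanks to the hypothesis $\frac1q - \frac1p < \frac1d$), and using the trivial bound $\|F\|_{L^q_{uloc}} = \|u\otimes v\|_{L^q_{uloc}}$.

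The main obstacle is bookkeeping rather than conceptual: there are roughly a dozen distinct terms in \eqref{e.tancomp}--\eqref{e.vertcomp}, each involving different $z_d$-integration ranges and different multipliers, and one must match each to the correct case of Lemma \ref{lem.termsABCscaled} (tangential convolution in $y'$ combined with a one-dimensional $z_d$-integral on $(0,y_d)$ or $(y_d,\infty)$). A secondary technical point is justifying that the Fourier formulas, originally derived for smooth $F$ with the cancellations \eqref{e.cancelf}, survive for $u,v\in L^p_{uloc,\sigma}$; this is handled by approximating $u,v$ via tangential mollification that preserves both the solenoidal condition and the vanishing of the normal trace, and then passing to the limit in each kernel integral using the uniform pointwise bounds \eqref{e.decayK_theta'}--\eqref{e.decayK_2'}.
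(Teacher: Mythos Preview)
Your proposal is correct and follows essentially the same approach as the paper, which in fact does not spell out a proof of this proposition but simply presents it as the outcome of combining the explicit formulas \eqref{e.tancomp}--\eqref{e.vertcomp} with Lemmas~\ref{lem.kernelsecdivleray'} and~\ref{lem.termsABCscaled}. Your write-up is actually more detailed than the paper's treatment: you make explicit the verification of the boundary cancellations \eqref{e.cancelf} for $F=u\otimes v$, the factoring out of $(-\Delta')^{(2-\theta)/2}$, and the approximation step needed to pass from smooth $F$ to $u,v\in L^p_{uloc,\sigma}$.
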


The only thing which remains to be done so as to estimate the bilinear operator \ref{e.bilopuv} is to combine the result of Proposition \ref{prop.summarizebilin} with the kernel bounds of Section \ref{sec.resolhp} and the estimates of Section \ref{sec.resolest}. Doing so we obtain the important result stated below.

\begin{proposition}\label{thm.estbilinop}
Let  $\lambda\in S_{\pi-\ep}$. For all $p,q\in [1,\infty]$ satisfying
\begin{equation*}
1< q\leq p \leq \infty\quad\mbox{or}\quad1\leq q<p\leq\infty,\qquad 0\leq \frac1q-\frac1p<\frac1d,
\end{equation*}
there exists $C=C(d,\ep,p,q)>0$ (independent of $\lambda$) such that for all $u,\, v\in L^p_{uloc,\sigma}(\R^d_+;\R^d)$,
\begin{align}
& \left\| (\lambda + {\bf A})^{-1} \mathbb P\nabla\cdot(u\otimes v)\right\|_{L^p_{uloc}} \leq C|\lambda|^{-\frac12}\left(1+|\lambda|^{\frac d2(\frac1q-\frac1p)}\right) \|u\otimes v\|_{L^q_{uloc}}.\label{e.estbilinearlosslambda.1}
\end{align}
Moreover, if $\nabla u, \nabla v \in L^q_{uloc}(\R^d)$ in addition, then 
\begin{align}
& \left\| \nabla (\lambda + {\bf A})^{-1} \mathbb P\nabla\cdot(u\otimes v)\right\|_{L^q_{uloc}} 
\leq  C|\lambda|^{-\frac12} \big ( \|u\nabla v\|_{L^q_{uloc}} + \| v\nabla u \|_{L^q_{uloc}}\big ).\label{e.estbilinearlosslambda.2}
\end{align}
\end{proposition}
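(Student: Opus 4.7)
The plan is to combine the decomposition of $\mathbb{P}\nabla\cdot(u\otimes v)$ from Proposition~\ref{prop.summarizebilin} with the resolvent estimates of Theorem~\ref{prop.fresolvent} and the fractional-derivative bounds \eqref{est:v2'}--\eqref{est:w2'}. Writing $F=u\otimes v$, Proposition~\ref{prop.summarizebilin} expresses $\mathbb{P}\nabla\cdot F$ as the sum of divergence-form terms $\partial_\alpha F_{\alpha\cdot}$, fractional terms $(-\Delta')^{(2-\theta)/2}G_{\theta,\geq|\lambda|^{1/2}}$, and low-frequency terms $G_{\leq|\lambda|^{1/2}}$. Although the individual pieces are neither divergence-free nor vanish on the boundary, the resolvent kernels of Section~\ref{sec.intrepr} define a linear integral operator $T_\lambda$ that is well-defined on any $L^q_{uloc}$ datum and coincides with $(\lambda+\mathbf{A})^{-1}$ on solenoidal data, so by linearity it suffices to bound $T_\lambda$ on each piece separately.

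For \eqref{e.estbilinearlosslambda.1}, the three types of pieces each contribute the factor $|\lambda|^{-1/2}(1+|\lambda|^{d(1/q-1/p)/2})$. The divergence-form contributions $T_\lambda[\partial_\alpha F_{\alpha\cdot}]$ are estimated by integrating by parts against the resolvent kernels, producing a bound identical to the gradient bound \eqref{est.prop.fresolvent.4} of Theorem~\ref{prop.fresolvent} applied with argument $F$. For the fractional pieces, the tangential multiplier $(-\Delta')^{(2-\theta)/2}$ commutes with $T_\lambda$, so
\begin{equation*}
T_\lambda\bigl[(-\Delta')^{(2-\theta)/2}G_{\theta,\geq|\lambda|^{1/2}}\bigr]=(-\Delta')^{(2-\theta)/2}T_\lambda G_{\theta,\geq|\lambda|^{1/2}};
\end{equation*}
invoking \eqref{est:v2'}--\eqref{est:w2'} (applied with both exponents equal to $p$, which is permissible) with $\alpha=2-\theta\in(1,2)$ yields the gain $|\lambda|^{-\theta/2}$, which combined with \eqref{est.prop.summarizebilin.1} gives the desired $|\lambda|^{-1/2}(1+|\lambda|^{d(1/q-1/p)/2})$. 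The low-frequency piece is handled by combining \eqref{est.prop.fresolvent.3} with \eqref{est.prop.summarizebilin.2}: the former supplies $|\lambda|^{-1}(1+|\lambda|^{d(1/q-1/p)/2})$ and the latter a matching $|\lambda|^{1/2}$.

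For \eqref{e.estbilinearlosslambda.2}, which lives entirely in $L^q_{uloc}$, we distribute every derivative by Leibniz onto $u$ and $v$. The divergence-form pieces $\partial_\alpha(u_\alpha v_\beta)=(\partial_\alpha u_\alpha)v_\beta+u_\alpha\partial_\alpha v_\beta$ are controlled pointwise by $|v||\nabla u|+|u||\nabla v|$, and the resolvent gradient bound \eqref{est.prop.fresolvent.1} supplies the $|\lambda|^{-1/2}$. For the fractional and low-frequency pieces, we revisit the derivation of Proposition~\ref{prop.summarizebilin}: every order-two symbol $m_2(\xi)$ arising in \eqref{e.tancomp}--\eqref{e.vertcomp} (of types \eqref{e.termA}--\eqref{e.termC}) factors as $\xi_\alpha m_1^\alpha(\xi)$ with $m_1^\alpha$ of order one, and integration by parts transfers the factor $i\xi_\alpha$ onto $F$, producing $\partial_\alpha F=(\partial_\alpha u)\otimes v+u\otimes(\partial_\alpha v)$. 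The resulting surface term at $z_d=0$ vanishes because every $d$-component of $F$ involves $u_d$ or $v_d$, both of which vanish on $\partial\R^d_+$ by the solenoidal condition; what remains is a kernel of order one applied to $u\nabla v+v\nabla u$, estimated by a suitable variant of Lemma~\ref{lem.termsABCscaled} combined with the resolvent gradient bound \eqref{est.prop.fresolvent.1}. The main technical difficulty is precisely this integration by parts in the vertical variable $z_d$, which---unlike its essentially free tangential counterpart---requires careful bookkeeping because of the asymmetry between the regions $y_d>z_d$ and $y_d<z_d$ in the kernels and the need to verify the vanishing of every surface contribution componentwise.
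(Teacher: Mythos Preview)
Your treatment of \eqref{e.estbilinearlosslambda.1} is essentially the paper's argument: decompose $\mathbb{P}\nabla\cdot F$ via Proposition~\ref{prop.summarizebilin}, integrate by parts to handle the divergence-form pieces through kernel derivative bounds (this is exactly the paper's \eqref{proof.thm.estbilinop.1}), commute the fractional multiplier $(-\Delta')^{(2-\theta)/2}$ past $T_\lambda$ and invoke \eqref{est:v2'}--\eqref{est:w2'}, and use the plain resolvent bound on the low-frequency piece. This is correct.

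Your argument for \eqref{e.estbilinearlosslambda.2}, however, has a genuine gap. You propose to factor each order-two symbol in \eqref{e.tancomp}--\eqref{e.vertcomp} as $\xi_\alpha m_1^\alpha(\xi)$, transfer $i\xi_\alpha$ onto $F$ as a tangential derivative, and then apply the gradient resolvent bound \eqref{est.prop.fresolvent.1} to the resulting order-one nonlocal operator acting on $\nabla'F$. The problem is that this order-one nonlocal operator does \emph{not} map $L^q_{uloc}$ to itself with a $\lambda$-independent bound. Concretely, a symbol of the form $m_1(\xi)e^{-|z_d\pm s||\xi|}$ has kernel $\sim (|y'|+|z_d\pm s|)^{-d}$, whose $L^1(\R^{d-1}_{y'})$ norm is $\sim |z_d\pm s|^{-1}$; integrating in $s$ against $\|\nabla'F(\cdot,s)\|_{L^q_{uloc}}$, which has no decay in $s$, diverges. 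If you instead attempt a high/low decomposition of $m_1$ analogous to Lemma~\ref{lem.kernelsecdivleray'}, the low-frequency kernel scales like $(|\lambda|^{-1/2}+|y'|+t)^{-d}$, and the analogue of the second estimate in Lemma~\ref{lem.termsABCscaled} becomes $\sum_{\eta}(|\lambda|^{-1/2}+|\eta|)^{-d}$, which is logarithmically divergent. So there is no ``suitable variant of Lemma~\ref{lem.termsABCscaled}'' that closes the argument as you describe it.

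The paper proceeds quite differently for \eqref{e.estbilinearlosslambda.2}. Setting $U=(\lambda+\mathbf{A})^{-1}\mathbb{P}\nabla\cdot F$, the tangential derivatives $\nabla'U$ are handled by commutation with part~(i), and $\partial_d U_d=-\nabla'\cdot U'$ by the solenoidal condition. The hard term is $\partial_d U'$: the paper writes $U_\beta=(\lambda+\Delta_D)^{-1}(\partial_\beta p+\nabla\cdot(uv_\beta))$, then converts $\partial_d(\lambda+\Delta_D)^{-1}\partial_\beta p$ into $(\lambda+\Delta_N)^{-1}\partial_\beta\partial_d p$ and uses the equation $\partial_d p=-\lambda U_d+\Delta U_d+\nabla\cdot(uv_d)$ together with the auxiliary fractional bound \eqref{proof.thm.estbilinop.2} to close. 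This structural use of the equations (Dirichlet and Neumann Laplace resolvents, the pressure equation) is precisely what circumvents the non-commutativity of $\partial_d$ with $\mathbb{P}$, and it is missing from your proposal.
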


\begin{remark}{\rm As for \eqref{e.estbilinearlosslambda.2}, we can also show 
\begin{align}
\begin{split}
& \left\| \nabla (\lambda + {\bf A})^{-1} \mathbb P\nabla\cdot(u\otimes v)\right\|_{L^q_{uloc}} \\
\leq &  C|\lambda|^{-\frac12} \big ( 1 + |\lambda|^{\frac{d}{2}(\frac1q-\frac1p)} \big )\big ( \|u\nabla v\|_{L^q_{uloc}} + \| v\nabla u \|_{L^q_{uloc}}\big ) \label{e.estbilinearlosslambda.2'}
\end{split}
\end{align}
for $1< q\leq p \leq \infty$ or $1\leq q<p\leq\infty$ satisfying in addition $0\leq \frac1q-\frac1p<\frac1d$. 
The proof is the same as in the case $p=q$, but we state the proof only for the simplest case \eqref{e.estbilinearlosslambda.2} in this paper.
}
\end{remark}

\begin{proof}[Proof of Proposition \ref{thm.estbilinop}] (i) Proof of \eqref{e.estbilinearlosslambda.1}. All the ingredients are already proved, we just have to indicate how they fit together. The key point are the formulas \eqref{e.formulasPdiv}. The idea is that we integrate by parts in the formulas \eqref{def:vw} and then we use the estimates of Proposition \ref{l.kernel}, Proposition \ref{prop.estkernel}, Proposition \ref{cor.dirichlet-laplace}, Proposition \ref{prop.estnonlocal} and Proposition \ref{prop.summarizebilin}.
The estimates of the non localized Lebesgue norms follow exactly from the bounds in Section \ref{sec.resolest}.
In particular, we recall the the resolvent $(\lambda + {\bf A})^{-1}$ consists of the Dirichlet-Laplace part (Subsection \ref{sec.dirichlet-laplace}),  and the nonlocal part (Subsection \ref{sec.estnonlocal}), that is,
\begin{align*}
(\lambda+{\bf A})^{-1} = {\bf R}_{D.L.}(\lambda) + {\bf R}_{n.l.} (\lambda).
\end{align*}
The operators ${\bf R}_{D.L.}(\lambda)$ and ${\bf R}_{n.l.}(\lambda)$ respectively satisfy the estimates in 
Proposition \ref{cor.dirichlet-laplace} and Proposition \ref{prop.estnonlocal}, that is, for $\alpha=0,1$,
\begin{align*}
\| \nabla^\alpha {\bf R}_{D.L.}(\lambda) f\|_{L^p_{uloc}} + \| \nabla^\alpha {\bf R}_{n.l.} (\lambda) f\|_{L^p_{uloc}} \leq C |\lambda|^{-\frac{2-\alpha}{2}} \big ( 1 + |\lambda|^{\frac{d}{2}(\frac1q-\frac1p)} \big ) \| f\|_{L^q_{uloc}},
\end{align*}
and 
\begin{align*}
\| m_\alpha (D') {\bf R}_{D.L.}(\lambda) f\|_{L^q_{uloc}} + \| m_\alpha (D') {\bf R}_{n.l.} (\lambda) f\|_{L^q_{uloc}}  & \leq C |\lambda|^{-\frac{2-\alpha}{2}} \| f\|_{L^q_{uloc}},\\
& \qquad \alpha\in (0,2),\\
\| m_\alpha (D') \nabla {\bf R}_{D.L.}(\lambda) f\|_{L^q_{uloc}} + \| m_\alpha (D') \nabla {\bf R}_{n.l.} (\lambda) f\|_{L^q_{uloc}}  & \leq C |\lambda|^{-\frac{1-\alpha}{2}} \| f\|_{L^q_{uloc}},\\
& \qquad \alpha\in (0,1).
\end{align*}
Here $m_\alpha (D')$ is any Fourier multiplier of homogeneous order $\alpha$. 
Moreover, we also have 
\begin{align}\label{proof.thm.estbilinop.1}
\| {\bf R}_{D.L.}(\lambda) \partial_d f \|_{L^p_{uloc}} + \| {\bf R}_{n.l.}(\lambda)\partial_d f\|_{L^p_{uloc}} & \leq C |\lambda|^{-\frac12} \big ( 1 + |\lambda|^{\frac{d}{2}(\frac1q-\frac1p)} \big ) \| f\|_{L^q_{uloc}},
\end{align}
and 
\begin{align*}
\| m_\alpha (D') {\bf R}_{D.L.}(\lambda) \partial_d f \|_{L^q_{uloc}} + \| m_\alpha (D') {\bf R}_{n.l.}(\lambda)\partial_d f\|_{L^q_{uloc}} & \leq C |\lambda|^{-\frac{1-\alpha}{2}}  \| f\|_{L^q_{uloc}}, \\
&\qquad  \alpha \in (0,1),
\end{align*}
if $f\in C_0^\infty (\R^d_+;\R^d)$, by the integration by parts in \eqref{def:vw} and applying the derivative estimates of the associated kernels. 
Thus,  ${\bf R}_{D.L.}(\lambda) \partial_d$ and ${\bf R}_{n.l.}(\lambda)\partial_d$ are extended to bounded operators from $L^q_{uloc}(\R^d_+;\R^d)$ to $L^p_{uloc}(\R^d_+; \R^d)$ with $p,q$ satisfying $0\leq \frac1q-\frac1p<\frac1d$ together with the bound \eqref{proof.thm.estbilinop.1}. Indeed any function in $L^q_{uloc}(\R^d_+)$ is approximated by a sequence of functions in $C_0^\infty (\R^d_+)$ in the topology of $L^q_{loc}(\R^d_+)$ with a uniform bound in the norm of $L^q_{uloc}(\R^d_+)$. 
This extension with the estimate \eqref{proof.thm.estbilinop.1} is applied to the term $\partial_d(u_d v_\beta)$ in the formula \eqref{e.formulasPdiv}. This concludes the proof of \eqref{e.estbilinearlosslambda.1}.

\noindent (ii) Proof of \eqref{e.estbilinearlosslambda.2}. We first observe that the proof of \eqref{e.estbilinearlosslambda.1} in fact ensures the existence of the number $\delta_0 \in (0,1)$ such that for any $\delta\in (0,\delta_0]$,
\begin{align}\label{proof.thm.estbilinop.2}
\| m_\delta (D') (\lambda + {\bf A})^{-1} \mathbb P\nabla\cdot(u\otimes v) \|_{L^q_{uloc}} \leq C|\lambda|^{-\frac{1-\delta}{2}}  \|u\otimes v\|_{L^q_{uloc}},
\end{align}
where $m_\delta (D')$ is any  Fourier-multiplier of homogeneous order $\delta$.
Indeed, for $\theta\in (0,1)$ in Proposition \ref{prop.summarizebilin} we can take $\delta_0$ such that $\delta_0\in (0,\theta)$. We will use \eqref{proof.thm.estbilinop.2} later.

Since the tangential derivatives commute with $(\lambda+{\bf A})^{-1}$ and $\mathbb{P}$, estimate \eqref{e.estbilinearlosslambda.1} implies 
\begin{align*}
\| \nabla' (\lambda + {\bf A})^{-1} \mathbb P\nabla\cdot(u\otimes v) \|_{L^q_{uloc}} \leq C|\lambda|^{-\frac{1}{2}}  \|\nabla' (u\otimes v )\|_{L^q_{uloc}},
\end{align*}
and thus, combining with \eqref{proof.thm.estbilinop.2}, we also have 
\begin{align}\label{proof.thm.estbilinop.3'}
\| m_\delta (D') \nabla' (\lambda + {\bf A})^{-1} \mathbb P\nabla\cdot(u\otimes v) \|_{L^q_{uloc}} \leq C|\lambda|^{-\frac{1-\delta}{2}}  \|\nabla' (u\otimes v )\|_{L^q_{uloc}}
\end{align}
for $\delta\in (0,\delta_0]$.
Next we consider the estimate of $\partial_d  (\lambda + {\bf A})^{-1} \mathbb P\nabla\cdot(u\otimes v)$. 
Set $U = (\lambda + {\bf A})^{-1} \mathbb P\nabla\cdot(u\otimes v)$. 
Then the divergence-free condition implies $\partial_d U_d = -\nabla' \cdot U'$, and hence, the estimate of $\partial_d U_d$ follows from the estimate for the tangential derivatives which are already shown.
It remains to estimate $\partial_d U'$. To this end, we note that, by regarding the associated pressure $\nabla' p$ as the source term, the vector $U'$ is also written as 
\begin{align*}
U_\beta = (\lambda + \Delta_D)^{-1} \bigg ( \partial_\beta p + \nabla \cdot (u  v_\beta) \bigg ). 
\end{align*}
for $\beta\in \{1,\ldots,d-1\}$. Here $(\lambda + \Delta_D)^{-1}$ denotes the resolvent for the Dirichlet-Laplace operator (hence, ${\bf R}_{D.L.}(\lambda)$ in the proof of \eqref{e.estbilinearlosslambda.1} above), for which we have already established the estimates in $L^p_{uloc}$ spaces in Section \ref{sec.dirichlet-laplace}.
In particular, we have 
\begin{align*}
\|\partial_d (\lambda + \Delta_D)^{-1}  \nabla \cdot (u  v_\beta) \|_{L^q_{uloc}} \leq \frac{C}{|\lambda|^\frac12} \| \nabla \cdot (u  v_\beta)\|_{L^q_{uloc}}.
\end{align*}
As for the term $\partial_d (\lambda + \Delta_D)^{-1}  \partial_\beta p$ we have from the integration by parts in the kernel representation,
\begin{align}\label{proof.thm.estbilinop.5}
\partial_d (\lambda + \Delta_D)^{-1} \partial_\beta p & = (\lambda+\Delta_N)^{-1} \partial_\beta \partial_d p \nonumber \\
& = \partial_\beta  (\lambda+\Delta_N)^{-1} \bigg ( -\lambda U_d + \Delta U_d + \nabla \cdot ( u v_d) \bigg ).
\end{align}
Here $(\lambda+\Delta_N)^{-1}$ denotes the resolvent of the Neumann-Laplace operator,
for which we have clearly the same estimates as for the resolvent of the Dirichlet-Laplace operator,
since the argument in Section \ref{sec.dirichlet-laplace} is based only on the pointwise estimates of the kernel function. Hence the first term of the right-hand side of \eqref{proof.thm.estbilinop.5} is estimated as 
\begin{align*}
\| \lambda \partial_\beta (\lambda+\Delta_N)^{-1}  U_d\|_{L^q_{uloc}} \leq C\| \partial_\beta U_d \|_{L^q_{uloc}} \leq C|\lambda|^{-\frac12} \| \partial_\beta (u \otimes  v ) \|_{L^q_{uloc}},
\end{align*} 
and the third term is estimated as 
\begin{align*}
\| \partial_\beta (\lambda+\Delta_N)^{-1}  \nabla \cdot (u v_d) \|_{L^q_{uloc}} \leq C|\lambda|^{-\frac12} \| \nabla\cdot  (u  v_d ) \|_{L^q_{uloc}}.
\end{align*} 
Finally, we note that $\Delta U_d = \Delta' U_d - \partial_d \nabla' \cdot U'$.
Then 
\begin{align*}
\| \partial_\beta  (\lambda+\Delta_N)^{-1} \Delta' U_d \|_{L^q_{uloc}} & = \| (-\Delta')^\frac{2-\delta}{2} (\lambda+\Delta_N)^{-1} (-\Delta')^\frac{\delta}{2} \partial_\beta U_d \|_{L^q_{uloc}} \\
& \leq C |\lambda|^{-\frac{\delta}{2}} \| (-\Delta')^\frac{\delta}{2} \partial_\beta U_d \|_{L^q_{uloc}}\\
& \leq C \| \lambda|^{-\frac12} \| \partial_\beta (u\otimes v)\|_{L^q_{uloc}},
\end{align*} 
and similarly,
\begin{align*}
\| \partial_\beta  (\lambda+\Delta_N)^{-1} \partial_d \nabla' \cdot U' \|_{L^q_{uloc}} & = \| (-\Delta')^\frac{1-\delta}{2} \partial_d (\lambda + \Delta_D)^{-1} \partial_\beta (-\Delta')^{-\frac{1-\delta}{2}} \nabla' \cdot U' \|_{L^q_{uloc}} \\
& \leq C |\lambda|^{-\frac{\delta}{2}} \| \partial_\beta (-\Delta')^{-\frac{1-\delta}{2}} \nabla' \cdot U' \|_{L^q_{uloc}}\\
& \leq C |\lambda|^{-\frac12} \| \nabla' (u v ) \|_{L^q_{uloc}}.
\end{align*} 
Here we have regarded $\partial_\beta (-\Delta')^{-\frac{1-\delta}{2}}$ as the Fourier multiplier of homogeneous order $\delta$ and applied the estimate \eqref{proof.thm.estbilinop.3'}. The proof of \eqref{e.estbilinearlosslambda.2} is complete. 
\end{proof}

It remains to transfer the stationary bounds of Proposition \ref{thm.estbilinop} to the non stationary operator $e^{-t {\bf A}} \mathbb{P} \nabla \cdot$. These bounds are stated in Theorem \ref{prop.L^p_u-L^q_u.semigroup.inhomo} in the Introduction. We now prove this theorem.

\begin{proof}[Proof of Theorem \ref{prop.L^p_u-L^q_u.semigroup.inhomo}] (i) Proof of \eqref{est.prop.L^p_u-L^q_u.semigroup.inhomo.1}. By the semigroup property $e^{-t {\bf A}} = e^{-\frac{t}{2} {\bf A}}e^{-\frac{t}{2} {\bf A}}$ and $\| \nabla e^{-\frac{t}{2}{\bf A}} f \|_{L^p_{uloc}}\leq C t^{-\frac12} \| f\|_{L^p_{uloc}}$, it suffices to consider the case $\alpha=0$. 
As in the estimate of $e^{-t{\bf A}}$, we use the formula
\begin{align*}
e^{-t{\bf A}} \mathbb{P}\nabla \cdot (u\otimes v ) = \frac{1}{2\pi i} \int_\Gamma e^{t\lambda} (\lambda+{\bf A})^{-1} \mathbb{P}\nabla \cdot (u\otimes v) d\lambda.
\end{align*}
Here the curve $\Gamma$ is taken as in the proof of Proposition \ref{prop.L^p_u-L^q_u.semigroup}.
Assume that $p,q\in [1,\infty]$ satisfy $0\leq \frac1q-\frac1p <\frac1d$.
Then we have from \eqref{e.estbilinearlosslambda.1},
\begin{align*}
\| e^{-t{\bf A}} \mathbb{P}\nabla \cdot (u\otimes v ) \|_{L^p_{uloc}} & \leq 
C \int_\Gamma |e^{t\lambda}| |\lambda|^{-\frac12} \big ( 1+|\lambda|^{\frac{d}{2}(\frac1q-\frac1p)} \big ) |d\lambda| \, \| u\otimes v\|_{L^q_{uloc}}\\
& \leq C t^{-\frac12} \big ( 1+ t^{-\frac{d}{2}(\frac1q-\frac1p)} \big ) \| u\otimes v\|_{L^q_{uloc}},
\end{align*}
as in the computation of \eqref{proof.prop.L^p_u-L^q_u.semigroup.4}.
For general $p,q$ we use the same trick as in \eqref{proof.prop.L^p_u-L^q_u.semigroup.4},
and then combine the estimate of $(\lambda+{\bf A})^{-1}$ and $(\lambda+{\bf A})^{-1}\mathbb{P}\nabla \cdot$. The details are omitted here. The proof of \eqref{est.prop.L^p_u-L^q_u.semigroup.inhomo.1} is complete.

\noindent (ii) Proof of \eqref{est.prop.L^p_u-L^q_u.semigroup.inhomo.2}. In this case it suffices to use \eqref{e.estbilinearlosslambda.2} instead of \eqref{e.estbilinearlosslambda.1}. Thus we omit the details. The proof is complete.
\end{proof}

\section{Mild solutions in $L^q_{uloc,\sigma}$, $q\geq d$}
\label{sec.nse}

In this section we consider the Navier-Stokes equations in $\R^d_+$
\begin{equation}\label{eq.ns.1}
  \left\{
\begin{aligned}
 \partial_t u - \Delta u + \nabla p & = -\nabla \cdot   (u \otimes u ) , \quad \nabla \cdot u = 0  \qquad \mbox{in}~ (0,T)\times \R^d_+\,, \\
u & = 0\quad \mbox{on}~ (0,T)\times \partial\R^d_+,  \qquad u|_{t=0}=u_0 \quad \mbox{in}~\partial \R^d_+. 
\end{aligned}\right.
\end{equation}
The corresponding integral equation is 
\begin{align}\label{eq.mild.ns}
u(t)  = e^{-t{\bf A}} u_0 - \int_0^t e^{-(t-s){\bf A}} \mathbb{P} \nabla \cdot (u\otimes u ) d s  \,, \qquad t>0\,,
\end{align}
and the solution to this integral equation is called the mild solution. The existence of such solutions was pioneered by Fujita and Kato \cite{FK64}. 
Our objective is to prove the short time existence of the mild solution for non decaying data. In view of the scaling of the equation, a natural class for the initial data is $L^q_{uloc,\sigma}(\R^d_+)$ with $q\geq d$.
In principle, one can prove the existence in a short time for arbitrary size of initial data if $q>d$, and locally in time for small data if $q=d$. We note that, contrary to the $L^d$ space, the global existence for small data in $L^d_{uloc}$ is not expected in this functional framework since $L^d_{uloc}$ contains nondecaying functions.
Another issue in the framework of $L^q_{uloc}$ spaces is the meaning of the initial condition, for the domain of ${\bf A}$ is not dense in $L^q_{uloc,\sigma}(\R^d_+)$. 
The convergence to the initial data as $t\rightarrow 0$ in the topology of $L^q_{uloc}(\R^d_+)$  is achieved  if and only if the initial data is taken from $\overline{D({\bf A})}^{L^q_{uloc}}$, where $D({\bf A})$ denotes the domain of the Stokes operator in $L^q_{uloc,\sigma} (\R^d_+)$. Thanks to the result of \cite[Theorem 4.3]{DHP01} on the $L^\infty$ theory of the Stokes operator in the half space, the embedding $L^\infty (\R^d_+)\subset L^q_{uloc}(\R^d_+)$ implies that 
\begin{align*}
\overline{BUC_\sigma (\R^d_+)}^{L^q_{uloc}} \subset \overline{D({\bf A})}^{L^q_{uloc}}\,, \qquad \overline{C_{0,\sigma}^\infty (\R^d_+)}^{L^q_{uloc}} \subset \overline{D({\bf A})}^{L^q_{uloc}}\,.
\end{align*}
If the initial data is taken from these spaces with $q\geq d$ (the case $q=d$ is allowed), 
then by using the density argument one can show the short time existence of the mild solution which satisfies the initial condition in the topology of $L^q_{uloc}(\R^d_+)$. These facts are now quite standard. In this paper we state the local existence results for \eqref{eq.mild.ns}  
without going into the details on the meaning of the initial condition. 

\subsection{Existence of mild solutions for initial data in $L^q_{uloc,\sigma}(\R^d_+)$, $q>d$}
\label{sec.appB}

\begin{proposition}\label{prop.mild.ns} 
For any $q>d$ and $u_0\in L^q_{uloc,\sigma}(\R^d_+)$ there exists a unique mild solution $u\in L^\infty (0,T; L^q_{uloc,\sigma} (\R^d_+)) \cap C((0,T); W^{1,q}_{uloc,0}(\R^d_+)^d \cap BUC_\sigma (\R^d_+))$ such that 
\begin{align*}
\sup_{0<t<T} \big ( \| u (t) \|_{L^q_{uloc}} + 
t^\frac{d}{2q} \| u(t) \|_{L^\infty} + t^\frac12 \| \nabla u(t) \|_{L^q_{uloc}} \big ) \leq C_* \| u_0 \|_{L^q_{uloc}}\,.
\end{align*}
Moreover there exists a constant $\gamma>0$ depend only on $d$ and $q$
such that $T$ can taken be as 
$$
T^{\frac12+\frac{d}{2q}}+T^{\frac12-\frac{d}{2q}} \ge 
\frac{\gamma}{\|u_0\|_{L^q_{uloc}}}.
$$
\end{proposition}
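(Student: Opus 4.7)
The plan is a standard Fujita--Kato fixed point argument, adapted to the $L^q_{uloc,\sigma}$ setting and driven entirely by the semigroup bounds of Proposition~\ref{prop.L^p_u-L^q_u.semigroup} together with the bilinear bounds of Theorem~\ref{prop.L^p_u-L^q_u.semigroup.inhomo}. I would introduce the Banach space
$$
X_T:=\Big\{\, u:(0,T)\to L^q_{uloc,\sigma}(\R^d_+)\ \text{measurable}\ \Big|\ \|u\|_{X_T}<\infty\,\Big\},
$$
with the norm
$$
\|u\|_{X_T}:=\sup_{0<t<T}\Big(\|u(t)\|_{L^q_{uloc}}+t^{\frac{d}{2q}}\|u(t)\|_{L^\infty}+t^{\frac12}\|\nabla u(t)\|_{L^q_{uloc}}\Big),
$$
and cast \eqref{eq.mild.ns} as the fixed point problem $u=\Phi(u)$, $\Phi(u)(t):=e^{-t{\bf A}}u_0-B(u,u)(t)$, where $B(u,v)(t):=\int_0^t e^{-(t-s){\bf A}}\mathbb P\nabla\cdot(u\otimes v)\,ds$. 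Contracting $\Phi$ on a closed ball of $X_T$ for $T$ small enough immediately produces the solution.

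The linear term is controlled directly by Proposition~\ref{prop.L^p_u-L^q_u.semigroup} (uniform $L^q_{uloc}$ bound, the $L^p_{uloc}$--$L^q_{uloc}$ bound with $p=\infty$, and the gradient bound with $\alpha=1$), yielding
$$
\|e^{-\cdot{\bf A}}u_0\|_{X_T}\le C\big(1+T^{\frac{d}{2q}}\big)\|u_0\|_{L^q_{uloc}}.
$$
For the bilinear term I would treat the three pieces of the $X_T$ norm separately. For the $L^q_{uloc}$ piece I apply \eqref{est.prop.L^p_u-L^q_u.semigroup.inhomo.1} with $\alpha=0$, $p=q$, and estimate $\|u\otimes v\|_{L^q_{uloc}}\le \|u\|_{L^\infty}\|v\|_{L^q_{uloc}}$; the time integral becomes $\int_0^t(t-s)^{-\frac12}s^{-\frac{d}{2q}}ds=C\,t^{\frac12-\frac{d}{2q}}$, convergent because $q>d$. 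For the $L^\infty$ piece I use \eqref{est.prop.L^p_u-L^q_u.semigroup.inhomo.1} with $p=\infty$, $q'=q$, $\alpha=0$, and after multiplying by $t^{d/2q}$ obtain a sum of $t^{\frac12-\frac{d}{2q}}$ and $t^{\frac12}$ (the former comes from the singular factor $(t-s)^{-d/2q}$, the latter from the $+1$ in the semigroup bound). For the gradient piece the direct use of \eqref{est.prop.L^p_u-L^q_u.semigroup.inhomo.1} with $\alpha=1$ fails (nonintegrable $(t-s)^{-1}$), so I invoke instead \eqref{est.prop.L^p_u-L^q_u.semigroup.inhomo.2} together with $\|u\nabla v\|_{L^q_{uloc}}\le\|u\|_{L^\infty}\|\nabla v\|_{L^q_{uloc}}$; the integral $\int_0^t(t-s)^{-\frac12}s^{-\frac12-\frac{d}{2q}}ds=C\,t^{-\frac{d}{2q}}$ requires $q>d$ to converge at $s=0$. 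Collecting the three contributions and using $T^{1/2}\le\tfrac12(T^{\frac12-\frac{d}{2q}}+T^{\frac12+\frac{d}{2q}})$,
$$
\|B(u,v)\|_{X_T}\le C\big(T^{\frac12-\frac{d}{2q}}+T^{\frac12+\frac{d}{2q}}\big)\|u\|_{X_T}\|v\|_{X_T}.
$$
The contraction then closes in the ball of radius $2C(1+T^{d/2q})\|u_0\|_{L^q_{uloc}}$ as soon as $C(T^{\frac12-\frac{d}{2q}}+T^{\frac12+\frac{d}{2q}})\|u_0\|_{L^q_{uloc}}\le\gamma^{-1}$ for a suitable $\gamma=\gamma(d,q)$, which is precisely the smallness condition on $T$ stated in the proposition. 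Banach's theorem provides existence and uniqueness in this ball; uniqueness in the full class $X_T$ follows by the usual argument, noting that the difference of two solutions satisfies a linear Volterra inequality with integrable kernel.

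The remaining point is the continuity $u\in C((0,T);W^{1,q}_{uloc,0}(\R^d_+)^d\cap BUC_\sigma(\R^d_+))$. Analyticity of $\{e^{-t{\bf A}}\}$ (Theorem~\ref{theo.analyticity}) gives smoothness in $t$ of the linear part in any $L^q_{uloc}$--Sobolev norm for $t>0$, and the improved integrability of the bilinear integrand (once $u\in X_T$ is known) together with the time-continuity of the semigroup away from $t=0$ transfers to $B(u,u)$. Since $q>d$, the Sobolev embedding $W^{1,q}_{uloc}\hookrightarrow L^\infty$ yields $u(t)\in BUC_\sigma$, with the noslip condition on $\partial\R^d_+$ inherited via the trace from $\nabla u\in L^q_{uloc}$ and the fact that $e^{-t{\bf A}}$ preserves the Dirichlet condition encoded in $L^q_{uloc,\sigma}$. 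The main subtlety I anticipate is not the contraction itself but precisely this regularity/boundary step: because $D({\bf A})$ is not dense in $L^q_{uloc,\sigma}$, one cannot expect $\Phi(u)(t)\to u_0$ as $t\to0$ in $L^q_{uloc}$ in general, so all the continuity statements have to be phrased strictly on $(0,T)$ rather than on $[0,T)$, and the ``initial condition'' is realized only in the distributional sense unless $u_0$ is further assumed to lie in the closure of $BUC_\sigma$ (or of $C^\infty_{0,\sigma}$) in $L^q_{uloc}$.
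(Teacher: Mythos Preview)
Your proposal is correct and follows essentially the same Fujita--Kato contraction argument as the paper, with the same norm $\|\cdot\|_{X_T}$, the same linear input from Proposition~\ref{prop.L^p_u-L^q_u.semigroup}, and the same bilinear inputs from Theorem~\ref{prop.L^p_u-L^q_u.semigroup.inhomo}. The only noteworthy difference is in the gradient piece of $B(u,v)$: the paper splits the time integral at $t/2$, using \eqref{est.prop.L^p_u-L^q_u.semigroup.inhomo.1} with $\alpha=1$ on $[0,t/2]$ (where $(t-s)^{-1}$ is harmless) and \eqref{est.prop.L^p_u-L^q_u.semigroup.inhomo.2} on $[t/2,t]$, whereas you apply \eqref{est.prop.L^p_u-L^q_u.semigroup.inhomo.2} on the whole interval and exploit the convergence of the Beta integral $\int_0^t(t-s)^{-1/2}s^{-1/2-d/(2q)}\,ds$ for $q>d$ --- both routes lead to the same $t^{-d/(2q)}$ bound and the same smallness condition on $T$.
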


\begin{proof} The proof is  
based on the standard Banach fixed point theorem. 
Set $\|f\|_T$ as
\begin{align*}
\| f\|_T = \sup_{0<t<T} \big ( \| f (t) \|_{L^q_{uloc}} + t^\frac{d}{2q} \| f(t) \|_{L^\infty} + t^\frac12 \| \nabla f(t) \|_{L^q_{uloc}} \big ).
\end{align*}
Let $C_0>0$ be a constant such that 
\begin{align*}
\| e^{-\cdot {\bf A}} f \|_T \leq C_0 ( 1+ T^\frac{d}{2q}) \| f\|_{L^q_{uloc}}\,, \qquad f\in L^q_{uloc,\sigma}(\R^d_+),
\end{align*}
which is well-defined in virtue of Proposition \ref{prop.L^p_u-L^q_u.semigroup}. Then let us introduce the set 
\begin{align*}
\begin{split}
X_T & =\Big \{ f\in L^\infty (0,T; L^q_{uloc,\sigma} (\R^d_+)) \cap C((0,T); W^{1,q}_{uloc,0}(\R^d_+; \R^d) \cap BUC_\sigma (\R^d_+)) ~|~ \\
& \qquad \| f\|_T \leq  2 C_0 (1+T^\frac{d}{2q}) \| u_0 \|_{L^q_{uloc}} \Big \}.
\end{split}
\end{align*}
For each $f\in X_T$ we define the map $\Phi [f] (t) =e^{-t{\bf A}} u_0 + B [f,f] (t)$, where
\begin{align*}
B [f,g] (t) = - \int_0^t e^{-(t-s){\bf A}} \mathbb{P} \nabla \cdot (f\otimes g ) d s,\qquad t>0, ~f,g\in X_T.
\end{align*}
We will show that if $T$ is sufficiently small, then $\Phi$ defines a contraction map in $X_T$. Theorem \ref{prop.L^p_u-L^q_u.semigroup.inhomo} yields for $f,g\in X_T$,
\begin{align}
\label{proof.thm.mild.ns.5}
\| B [f,g] (t) \|_{L^q_{uloc}} & \leq C \int_0^t (t-s)^{-\frac12} \| f\otimes g \|_{L^q_{uloc}} d s \nonumber \\
& \leq C \int_0^t (t-s)^{-\frac12} s^{-\frac{d}{2q}} d s \sup_{0<s<t} s^\frac{d}{2q} \| f(s) \|_{L^\infty} \sup_{0<s<t} \| g(s) \|_{L^q_{uloc}}.
\end{align}
Similarly, we have for $f,g\in X_T$, 
\begin{align}\label{proof.thm.mild.ns.6}
\| B[f,g] (t) \|_{L^\infty}\leq\ & C\int^t_0 (t-s)^{-\frac12} 
\big ( (t-s)^{-\frac{d}{2q}} + 1\big ) \| f\otimes g (s) \|_{L^q_{uloc}} \nonumber \\
 \leq\ &C (t^{\frac12-\frac dq} + t^{\frac12-\frac d{2q}}) 
 \sup_{0<s<t} s^\frac{d}{2q} \| f(s) \|_{L^\infty}  
 \sup_{0<s<t}  \big ( \| g(s) \|_{L^q_{uloc}} + s^\frac{d}{2q} \| g(s) \|_{L^\infty} \big ),
\end{align}
and 
\begin{align}\label{proof.thm.mild.ns.7}
\| \nabla B[f,g] (t)\|_{L^q_{uloc}}
 \leq\ & C\int_0^\frac{t}{2} (t-s)^{-1}  \| f\otimes g (s) \|_{L^q_{uloc}}ds \nonumber \\
 &\quad + C \int_\frac{t}{2}^t (t-s)^{-\frac12} \big ( \| f\nabla g  (s)\|_{L^q_{uloc}} + \| g\nabla f (s)\|_{L^q_{uloc}}\big ) d s\\
\begin{split}
 \leq\ &C t^{-\frac{d}{2q}}\bigg (  \sup_{0<s<t} s^\frac{d}{2q} \| f(s) \|_{L^\infty} \sup_{0<s<t}  \| g(s) \|_{L^q_{uloc}} \\
&\quad +
\sup_{0<s<t} s^\frac{d}{2q} \| f(s) \|_{L^\infty} 
\sup_{0<s<t} s^\frac12 \| \nabla g(s) \|_{L^q_{uloc}}\\
& \quad + 
\sup_{0<s<t} s^\frac12 \| \nabla f(s)\|_{L^q_{uloc}} 
\sup_{0<s<t} s^\frac{d}{2q} \| g(s) \|_{L^\infty} \bigg ).
\end{split}
\end{align}
Thus we obtain 
\begin{align*}
\| B[f,g] \|_T\leq C_1 (T^{\frac12 -\frac{d}{2q}} + T^\frac12) \| f\|_T 
\| g\|_T, \qquad f,g \in X_T.
\end{align*}
The continuity in time for $t\in (0,T)$ also follows from that of $f,g$ in $(0,T)$, and we skip the details here.
If $T$ is small so that 
\begin{align}\label{proof.thm.mild.ns.9}
C_1 (T^{\frac12 -\frac{d}{2q}} + T^\frac12) 2 C_0 (1 + T^\frac{d}{2q})
\|u_0\|_{L^q_{uloc}} \leq \frac14,
\end{align}
then \eqref{proof.thm.mild.ns.9} and the definition of $C_0$ imply that $\Phi$ defines a contraction map from $X_T$ into $X_T$. Hence, there exists a unique fixed point $u$ of $\Phi$ in $X_T$, which is the unique mild solution to \eqref{eq.ns.1} in $X_T$.
\end{proof}

\subsection{Existence of mild solutions for initial data in $L^d_{uloc,\sigma}(\R^d_+)$}

The first result is stated in any dimension $d\geq 2$. Below we define $W^{1,d}_{uloc,0}(\R^d_+)$ as $W^{1,d}_{uloc,0}(\R^d_+) = \{f\in L^d_{uloc}(\R^d_+)~|~\nabla f\in L^d_{uloc}(\R^d_+), \quad f|_{x_d=0} =0\}$. 

\begin{proposition}\label{thm.mild.ns} For any $T>0$ there exist $\epsilon,\, C_*>0$ such that the following statement holds.
For any $u_0\in L^d_{uloc,\sigma}(\R^d_+)$ satisfying $\|u_0 \|_{L^d_{uloc}}\leq \epsilon$ there exists  a unique mild solution $u\in L^\infty (0,T; L^d_{uloc,\sigma} (\R^d_+)) \cap C((0,T); W^{1,d}_{uloc,0}(\R^d_+)^d \cap BUC_\sigma (\R^d_+))$ such that 
\begin{align*}
\sup_{0<t<T} \big ( \| u (t) \|_{L^d_{uloc}} + t^\frac12 \| u(t) \|_{L^\infty} + t^\frac12 \| \nabla u(t) \|_{L^d_{uloc}} \big ) \leq C_* \| u_0 \|_{L^d_{uloc}}\,.
\end{align*}
If $u_0 \in \overline{D({\bf A})}^{L^d_{uloc}}$ in addition, then $\lim_{t\rightarrow 0} u(t)=u_0$ in $L^d_{uloc}(\R^d_+)^d$.
\end{proposition}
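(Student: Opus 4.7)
The plan is to mimic the Picard iteration of Proposition \ref{prop.mild.ns}, but with $q=d$ the subcritical gain $T^{1/2-d/(2q)}$ disappears and must be replaced by smallness of the initial data. I would work in the Banach space $X_T$ of functions $f\in L^\infty(0,T;L^d_{uloc,\sigma})\cap C((0,T);W^{1,d}_{uloc,0}\cap BUC_\sigma)$ equipped with the scale-invariant norm
\[
\|f\|_T := \sup_{0<t<T}\big(\|f(t)\|_{L^d_{uloc}} + t^{1/2}\|f(t)\|_{L^\infty} + t^{1/2}\|\nabla f(t)\|_{L^d_{uloc}}\big).
\]
Proposition \ref{prop.L^p_u-L^q_u.semigroup} applied with $(p,q)\in\{(d,d),(\infty,d)\}$ and with $\alpha=1$ gives at once $\|e^{-\cdot{\bf A}}u_0\|_T \leq C_0(1+T^{1/2})\|u_0\|_{L^d_{uloc}}$, so the iteration reduces to proving a bilinear bound $\|B[f,g]\|_T\le C_1(T)\|f\|_T\|g\|_T$ for the Duhamel operator $B[f,g](t)=-\int_0^t e^{-(t-s){\bf A}}\mathbb P\nabla\cdot(f\otimes g)(s)\,ds$.

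The bilinear estimate is the heart of the proof, and relies on Theorem \ref{prop.L^p_u-L^q_u.semigroup.inhomo}. For the $L^d_{uloc}$ piece I would use $\|f\otimes g(s)\|_{L^d_{uloc}}\le \|f(s)\|_{L^\infty}\|g(s)\|_{L^d_{uloc}}\le s^{-1/2}\|f\|_T\|g\|_T$ together with \eqref{est.prop.L^p_u-L^q_u.semigroup.inhomo.1} with $p=q=d$, $\alpha=0$, yielding $\|B[f,g](t)\|_{L^d_{uloc}}\le C\int_0^t(t-s)^{-1/2}s^{-1/2}ds\,\|f\|_T\|g\|_T=C\pi\|f\|_T\|g\|_T$. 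For the gradient piece I would split the time integral at $t/2$: on $(0,t/2)$, apply \eqref{est.prop.L^p_u-L^q_u.semigroup.inhomo.1} with $p=q=d$, $\alpha=1$, giving integrand $(t-s)^{-1}s^{-1/2}$; on $(t/2,t)$, apply \eqref{est.prop.L^p_u-L^q_u.semigroup.inhomo.2} with the pointwise bound $\|f(s)\|_{L^\infty}\|\nabla g(s)\|_{L^d_{uloc}}+\|g(s)\|_{L^\infty}\|\nabla f(s)\|_{L^d_{uloc}}\le s^{-1}\|f\|_T\|g\|_T$. Both pieces contribute $Ct^{-1/2}\|f\|_T\|g\|_T$. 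The most delicate part is the $L^\infty$ bound, and is where the main obstacle lies: Theorem \ref{prop.L^p_u-L^q_u.semigroup.inhomo} requires $\frac1q-\frac1p<\frac1d$, which rules out the natural endpoint $(q,p)=(d/2,\infty)$. I would instead fix an auxiliary exponent $q_*>d$, use \eqref{est.prop.L^p_u-L^q_u.semigroup.inhomo.1} with $(p,q)=(\infty,q_*)$, and control $\|f\otimes g(s)\|_{L^{q_*}_{uloc}}$ by $\|f(s)\|_{L^\infty}\|g(s)\|_{L^{q_*}_{uloc}}$ combined with the interpolation $\|g(s)\|_{L^{q_*}_{uloc}}\le \|g(s)\|_{L^d_{uloc}}^{d/q_*}\|g(s)\|_{L^\infty}^{1-d/q_*}\le s^{-(1-d/q_*)/2}\|g\|_T$. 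The resulting integrand $(t-s)^{-1/2-d/(2q_*)}s^{-1+d/(2q_*)}$ has exponents in $(-1,0)$ precisely because $q_*>d$, so its time integral is a finite Beta constant times $t^{-1/2}$, yielding $t^{1/2}\|B[f,g](t)\|_{L^\infty}\le C(1+T^{d/(2q_*)})\|f\|_T\|g\|_T$.

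Combining these three bounds gives $C_1(T)<\infty$ for every finite $T$, and the standard contraction argument closes in the ball $\{f\in X_T:\|f\|_T\le 2C_0(1+T^{1/2})\|u_0\|_{L^d_{uloc}}\}$ provided $8C_0(1+T^{1/2})C_1(T)\|u_0\|_{L^d_{uloc}}\le 1$, which determines $\epsilon=\epsilon(T)>0$ and $C_*=2C_0(1+T^{1/2})$. For the convergence $u(t)\to u_0$ in $L^d_{uloc}$ when $u_0\in\overline{D({\bf A})}^{L^d_{uloc}}$, I would decompose $u(t)-u_0=(e^{-t{\bf A}}u_0-u_0)+B[u,u](t)$: the linear piece tends to zero via density and the uniform bound on $e^{-t{\bf A}}$ in $L^d_{uloc}$, by approximating $u_0$ by $u_0^{(n)}\in D({\bf A})$ on which the semigroup is strongly continuous at $t=0$; the bilinear piece is handled by a triangle-inequality argument exploiting the uniform bound $\|u\|_T\le C_*\|u_0\|_{L^d_{uloc}}$ together with the refined decay $s^{1/2}\|u^{(n)}(s)\|_{L^\infty}\to 0$ as $s\to 0$ for the smoother approximants, which gives $B[u^{(n)},u^{(n)}](t)\to 0$ in $L^d_{uloc}$ and transfers to $u$ by the Lipschitz continuity of $B$ on bounded sets of $X_T$.
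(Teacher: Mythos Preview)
Your argument is correct and follows the same fixed-point scheme as the paper, with the same norm $\|\cdot\|_T$, the same linear bound, and the same treatment of the $L^d_{uloc}$ and gradient pieces of $B[f,g]$. The only substantive difference is your handling of the $L^\infty$ piece, and it stems from a misreading: Theorem \ref{prop.L^p_u-L^q_u.semigroup.inhomo} does \emph{not} carry the restriction $\frac1q-\frac1p<\frac1d$. That restriction appears only at the resolvent level (Theorem \ref{prop.fresolvent}, Proposition \ref{thm.estbilinop}); it is removed for the semigroup by the iteration trick of \eqref{proof.prop.L^p_u-L^q_u.semigroup.2}. Accordingly, the paper simply applies \eqref{est.prop.L^p_u-L^q_u.semigroup.inhomo.1} with $(p,q)=(\infty,d)$ on $(0,t/2)$ and with $(p,q)=(\infty,\infty)$ on $(t/2,t)$, avoiding interpolation altogether. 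Your detour through an auxiliary exponent $q_*>d$ is valid, but unnecessary.

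For the convergence $u(t)\to u_0$, the paper takes a cleaner route than your approximation argument: it simply reruns the fixed point in the smaller closed set $\tilde X_T\subset X_T$ carrying the additional condition $\lim_{t\to 0}t^{1/2}\|f(t)\|_{L^\infty}=0$ (together with continuity at $t=0$ in $L^d_{uloc}$), checks that $e^{-\cdot{\bf A}}u_0\in\tilde X_T$ when $u_0\in\overline{D({\bf A})}^{L^d_{uloc}}$, and observes from the bilinear estimates \eqref{proof.thm.mild.ns.5}--\eqref{proof.thm.mild.ns.7} that $B$ maps $\tilde X_T\times\tilde X_T$ into $\tilde X_T$. This gives $\|B[u,u](t)\|_{L^d_{uloc}}\to 0$ directly, without passing through approximants $u^{(n)}$ and a Lipschitz transfer.
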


\begin{remark}{As usual, by using the density argument, we do not need to assume the smallness of $\|u_0\|_{L^d_{uloc}}$ to show the short time existence of the mild solution if  $u_0$ belongs of $\overline{D({\bf A})}^{L^d_{uloc}}$. 
}
\end{remark}

\begin{proof}  The proof is based on the standard Banach fixed point theorem. We fix $T>0$.
Set $\|f\|_T$ as
\begin{align*}
\| f\|_T := \sup_{0<t<T} \big ( \| f (t) \|_{L^d_{uloc}} + t^\frac12 \| f(t) \|_{L^\infty} + t^\frac12 \| \nabla f(t) \|_{L^d_{uloc}} \big ).
\end{align*}
Let $C_0>0$ be a constant such that 
\begin{align*}
\| e^{-\cdot {\bf A}} f \|_T \leq C_0 ( 1+ T^\frac12) \| f\|_{L^d_{uloc}}\,, \qquad f\in L^d_{uloc,\sigma}(\R^d_+),
\end{align*}
which is well-defined in virtue of Proposition \ref{prop.L^p_u-L^q_u.semigroup}.
Then let us introduce the set 
\begin{align*}
\begin{split}
X_T & :=\Big \{ f\in L^\infty (0,T; L^d_{uloc,\sigma} (\R^d_+)) \cap 
C((0,T); W^{1,d}_{uloc,0}(\R^d_+; \R^d) \cap BUC_\sigma (\R^d_+)) ~|~ \\
& \qquad \| f\|_T \leq  2 C_0 (1+T^\frac12) \| u_0 \|_{L^d_{uloc}} \Big \}.
\end{split}
\end{align*}
For each $f\in X_T$ we define the map $\Phi [f] (t) =e^{-t{\bf A}} u_0 + B [f,f] (t)$, where
\begin{align*}
B [f,g] (t) = - \int_0^t e^{-(t-s){\bf A}} \mathbb{P} \nabla \cdot (f\otimes g ) d s,\qquad t>0, ~f,g\in X_T.
\end{align*}
We will show that if $\|u_0\|_{L^d_{uloc}}\leq \epsilon$ and $\epsilon>0$ is sufficiently small then $\Phi$ defines a contraction map in $X_T$. Theorem \ref{prop.L^p_u-L^q_u.semigroup.inhomo} yields for $f,g\in X_T$,
\begin{align}\label{proof.thm.mild.ns.5}
\| B [f,g] (t) \|_{L^d_{uloc}} & \leq C \int_0^t (t-s)^{-\frac12} \| f\otimes g \|_{L^d_{uloc}} d s \nonumber \\
& \leq C \int_0^t (t-s)^{-\frac12} s^{-\frac12} d s \sup_{0<s<t} s^\frac12 \| f(s) \|_{L^\infty} \sup_{0<s<t} \| g(s) \|_{L^d_{uloc}}.
\end{align}
Similarly, we have for $f,g\in X_T$, 
\begin{align}\label{proof.thm.mild.ns.6}
\| B[f,g] (t) \|_{L^\infty}\leq & C\int_0^\frac{t}{2} (t-s)^{-\frac12} \big ( (t-s)^{-\frac12} + 1\big ) \| f\otimes g (s) \|_{L^d_{uloc}} \nonumber \\
& \qquad  + C \int_\frac{t}{2}^t (t-s)^{-\frac12} \| f\otimes g  (s)\|_{L^\infty} d s\nonumber \\
 \leq &C (t^{-\frac12} + 1) \sup_{0<s<t} s^\frac12 \| f(s) \|_{L^\infty}  \sup_{0<s<t}  \big ( \| g(s) \|_{L^d_{uloc}} + s^\frac12 \| g(s) \|_{L^\infty} \big ),
\end{align}
and 
\begin{align}\label{proof.thm.mild.ns.7}
\| \nabla B[f,g] (t)\|_{L^d_{uloc}}
 \leq & C\int_0^\frac{t}{2} (t-s)^{-1}  \| f\otimes g (s) \|_{L^d_{uloc}}ds \nonumber \\
 &\quad + C \int_\frac{t}{2}^t (t-s)^{-\frac12} \big ( \| f\nabla g  (s)\|_{L^d_{uloc}} + \| g\nabla f (s)\|_{L^d_{uloc}}\big ) d s\\
\begin{split}
 \leq &C (t^{-\frac12} + 1)\bigg (  \sup_{0<s<t} s^\frac12 \| f(s) \|_{L^\infty} \sup_{0<s<t}  \| g(s) \|_{L^d_{uloc}} \\
&\quad+\sup_{0<s<t} s^\frac12 \| f(s) \|_{L^\infty} \sup_{0<s<t}s^\frac12 \| \nabla g(s) \|_{L^d_{uloc}}\\
& \quad + \sup_{0<s<t} s^\frac12 \| \nabla f(s)\|_{L^d_{uloc}} \sup_{0<s<t} s^\frac12 \| g(s) \|_{L^\infty} \bigg ).
\end{split}
\end{align}
Thus we obtain 
\begin{align*}
\| B[f,g] \|_T\leq C_1 (1+ T^\frac12) \| f\|_T \| g\|_T, \qquad f,g \in X_T.
\end{align*}
The continuity in time for $t\in (0,T)$ also follows from that of $f,g$ in $(0,T)$, and we skip the details here.
If $\epsilon$ is small so that 
\begin{align}\label{proof.thm.mild.ns.9}
C_1 ( 1+ T^\frac12) 2 C_0 (1 + T^\frac12) \epsilon \leq \frac14,
\end{align}
then \eqref{proof.thm.mild.ns.9} and the definition of $C_0$ imply that $\Phi$ defines a contraction map from $X_T$ into $X_T$. Hence, there exists a unique fixed point $u$ of $\Phi$ in $X_T$, which is the unique mild solution to \eqref{eq.mild.ns} in $X_T$. If $u_0 \in \overline{D({\bf A})}^{L^d_{uloc}}$ then we just modify the set $X_T$ as
\begin{align*}
\tilde X_T & =\big \{ f\in C ([0,T); L^d_{uloc,\sigma} (\R^d_+)) \cap C((0,T); W^{1,d}_{uloc,0}(\R^d_+; \R^d) \cap BUC_\sigma (\R^d_+)) ~|~ \\
& \qquad \| f\|_T \leq  2 C_0 (1+T^\frac12) \| u_0 \|_{L^d_{uloc}}. \quad \lim_{t\rightarrow 0} t^\frac12 \| f(t) \|_{L^\infty} =0 \big \}.
\end{align*}
Then the estimates \eqref{proof.thm.mild.ns.5}-\eqref{proof.thm.mild.ns.7} yield 
\begin{align*}
\lim_{t\rightarrow 0} t^\frac12 \| B[f,g](t) \|_{L^\infty}=\lim_{t\rightarrow 0} \| B[f,g] (t) \|_{L^d_{uloc}} = 0
\end{align*}
when $f,g\in \tilde X_T$, and we can construct the unique mild solution in $\tilde X_T$. The details are omitted here. The proof is complete. 
\end{proof}

For the next result, we specialize to $d=3$.

\begin{proposition}\label{prop.mild.ns2} 
For any $u_0\in \mathcal L^3_{uloc,\sigma}(\R^3_+)$
 there exist $T>0$ and a unique mild solution $u\in C\left([0,T); \mathcal L^3_{uloc}(\R^3_+)\right) \cap C\big((0,T); W^{1,5}_{uloc,0}(\R^3_+) \cap BUC_\sigma (\R^3_+)\big)$ such that 
\begin{align*}
\sup_{0<t<T} \big ( \| u (t) \|_{L^3_{uloc}} + 
t^\frac15 \| u(t) \|_{L^5} + t^\frac{7}{10} \| \nabla u(t) \|_{L^5_{uloc}} \big ) \leq C_* \| u_0 \|_{L^3_{uloc}}\,.
\end{align*}
\end{proposition}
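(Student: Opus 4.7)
\medskip

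\noindent\emph{Proof plan for Proposition \ref{prop.mild.ns2}.}
The plan is to run a Kato-style fixed point argument in a subcritical working norm that incorporates the critical $L^3_{uloc}$ information only as an auxiliary bound. Since $u_0$ is only assumed to lie in a critical space (with no smallness), one cannot hope to get smallness from $T$ in the full norm; instead the nonlinear contraction will be closed using only the $L^5_{uloc}$-type part, which does become small as $T\to 0$ for $u_0\in \mathcal L^3_{uloc,\sigma}$ by density.

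\medskip

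\noindent\textbf{Setup.} Introduce the Banach space $X_T$ of measurable $u:(0,T)\to L^5_{uloc,\sigma}(\R^3_+)$ endowed with
\[
\||u\||_T \,:=\, \sup_{0<t<T}\Bigl(t^{\frac15}\|u(t)\|_{L^5_{uloc}} + t^{\frac{7}{10}}\|\nabla u(t)\|_{L^5_{uloc}}\Bigr),
\qquad
\|u\|_{3,T}\,:=\,\sup_{0<t<T}\|u(t)\|_{L^3_{uloc}}.
\]
Note that by Morrey's embedding $W^{1,5}_{uloc}(\R^3_+)\hookrightarrow L^\infty(\R^3_+)$, control of $\||\cdot\||_T$ automatically yields a bound on $t^{\frac12}\|u(t)\|_{L^\infty}$ (indeed on $BUC$-type regularity), so we will not need to add an $L^\infty$ slot. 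I would then look for $u$ as a fixed point of $\Phi[u]:=e^{-t\mathbf{A}}u_0+B[u,u]$ with $B[u,v]:=-\int_0^t e^{-(t-s)\mathbf{A}}\mathbb P\nabla\cdot(u\otimes v)\,ds$.

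\medskip

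\noindent\textbf{Linear piece.} Using Proposition \ref{prop.L^p_u-L^q_u.semigroup} with $(q,p)=(3,5)$ and $\alpha=0,1$ gives $\|e^{-t\mathbf{A}}u_0\|_{3,T}\leq C\|u_0\|_{L^3_{uloc}}$ and $\||e^{-t\mathbf{A}}u_0\||_T\leq C(1+T^{\frac15})\|u_0\|_{L^3_{uloc}}$. Crucially, for $u_0\in\mathcal L^3_{uloc,\sigma}$ one has the vanishing property
\[
\delta(T)\,:=\,\||e^{-t\mathbf{A}}u_0\||_T \;\longrightarrow\; 0 \qquad \text{as}\quad T\to 0.
\]
This is obtained by a standard density argument: approximating $u_0$ in $L^3_{uloc}$ by data in a subspace (e.g.\ $\overline{BUC_\sigma}^{L^q_{uloc}}$ with $q>3$) for which the $L^5_{uloc}$ norm of $e^{-t\mathbf{A}}v_0$ stays bounded as $t\to 0$, one splits $u_0=v_0+w_0$ with $\|w_0\|_{L^3_{uloc}}\leq\varepsilon$ and gets $\delta(T)\lesssim \varepsilon + o(1)$.

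\medskip

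\noindent\textbf{Bilinear estimates (the heart of the argument).} I would prove
\begin{equation*}
\||B[u,v]\||_T \;\leq\; C\,\||u\||_T\,\||v\||_T,
\qquad
\|B[u,v]\|_{3,T} \;\leq\; C\,\||u\||_T\,\||v\||_T.
\end{equation*}
The $L^5_{uloc}$ bound and the $L^3_{uloc}$ bound both follow from \eqref{est.prop.L^p_u-L^q_u.semigroup.inhomo.1} applied with $(q,p)=(5/2,5)$ and $(q,p)=(5/2,3)$ respectively, together with the Hölder bound $\|u\otimes v\|_{L^{5/2}_{uloc}}\leq s^{-\frac25}\||u\||_T\||v\||_T$; the time integrals reduce to Beta integrals ($\int_0^t(t-s)^{-4/5}s^{-2/5}ds\simeq t^{-1/5}$ and $\int_0^t(t-s)^{-3/5}s^{-2/5}ds\simeq 1$).

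The delicate estimate is the gradient bound $t^{7/10}\|\nabla B[u,v]\|_{L^5_{uloc}}$: applying \eqref{est.prop.L^p_u-L^q_u.semigroup.inhomo.1} with $\alpha=1,(q,p)=(5/2,5)$ yields the factor $(t-s)^{-13/10}$, which is not integrable near $s=t$. I would overcome this by splitting $\int_0^{t/2}+\int_{t/2}^t$. On $[0,t/2]$ the singularity of the kernel is frozen to a factor $t^{-13/10}$ and the estimate closes directly. On $[t/2,t]$ I would use the semigroup identity $\nabla e^{-(t-s)\mathbf A}\mathbb P\nabla\cdot = e^{-(t-s)\mathbf A/2}\circ \nabla e^{-(t-s)\mathbf A/2}\mathbb P\nabla\cdot$, combined with \eqref{est.prop.L^p_u-L^q_u.semigroup.inhomo.2} in $L^{5/2}_{uloc}$ (which gives $\|u\nabla v\|_{L^{5/2}}\leq s^{-9/10}\||u\||_T\||v\||_T$) followed by the $L^{5/2}\to L^5$ smoothing from \eqref{est.prop.L^p_u-L^q_u.semigroup.inhomo.1} with $\alpha=0$. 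The resulting integral $\int_{t/2}^t(t-s)^{-4/5}s^{-9/10}\,ds\lesssim t^{-7/10}$ is convergent.

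\medskip

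\noindent\textbf{Fixed point and conclusion.} With these estimates in hand, a standard iteration in the closed ball $\{\||u\||_T\leq 2\delta(T)\}$ produces a unique fixed point provided $8C\delta(T)<1$, which holds for $T$ small enough thanks to $\delta(T)\to 0$. The $L^3_{uloc}$ part is propagated by $\|u\|_{3,T}\leq C\|u_0\|_{L^3_{uloc}}+C\||u\||_T^2$, giving the quoted estimate $C_*\|u_0\|_{L^3_{uloc}}$. Continuity at $t=0$ in $L^3_{uloc}$ follows from $\|u(t)-u_0\|_{L^3_{uloc}}\leq \|e^{-t\mathbf A}u_0-u_0\|_{L^3_{uloc}}+\|B[u,u](t)\|_{L^3_{uloc}}$, where the first term vanishes by the definition of $\mathcal L^3_{uloc,\sigma}$ and the second is $O(\||u\||_t^2)=o(1)$. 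The continuity statements on $(0,T)$ in $W^{1,5}_{uloc,0}\cap BUC_\sigma$ follow from the continuity properties of $e^{-t\mathbf A}$ and the Duhamel integral, together with Morrey embedding.

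\medskip

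\noindent\textbf{Main obstacle.} The one genuinely non-routine point is the gradient-$L^5_{uloc}$ bilinear estimate: the direct application of \eqref{est.prop.L^p_u-L^q_u.semigroup.inhomo.1} with $\alpha=1$ fails because of the non-integrable kernel singularity $(t-s)^{-13/10}$. The factorization trick described above, which trades one derivative for smoothing from $L^{5/2}$ to $L^5$ via half of the semigroup, is what makes the whole scheme work and is precisely where the subcritical $L^5_{uloc}$ slot (rather than $L^\infty$) pays off in combination with estimate \eqref{est.prop.L^p_u-L^q_u.semigroup.inhomo.2}.
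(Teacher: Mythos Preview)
Your overall strategy coincides with the paper's: a Kato-type contraction in the subcritical norm
\[
\||u\||_T=\sup_{0<t<T}\bigl(t^{1/5}\|u(t)\|_{L^5_{uloc}}+t^{7/10}\|\nabla u(t)\|_{L^5_{uloc}}\bigr),
\]
combined with the density argument to force $\||e^{-\cdot\mathbf A}u_0\||_T\to 0$ as $T\to 0$, and the $L^3_{uloc}$ bound recovered a posteriori. Your treatment of the $L^5_{uloc}$ and $L^3_{uloc}$ bilinear estimates, and of the continuity at $t=0$, is correct and matches the paper.

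There is, however, a genuine gap in the ``main obstacle'' step. The semigroup identity you write,
\[
\nabla e^{-(t-s)\mathbf A}\mathbb P\nabla\cdot \;=\; e^{-(t-s)\mathbf A/2}\circ \nabla e^{-(t-s)\mathbf A/2}\mathbb P\nabla\cdot,
\]
is false: $\nabla$ does not commute with $e^{-\tau\mathbf A}$, and moreover the output of $\nabla e^{-\tau\mathbf A}\mathbb P\nabla\cdot(u\otimes v)$ is not solenoidal, so applying $e^{-\tau\mathbf A}$ to it is not defined. The correct factorization $\nabla e^{-(t-s)\mathbf A}\mathbb P\nabla\cdot=\nabla e^{-(t-s)\mathbf A/2}\circ e^{-(t-s)\mathbf A/2}\mathbb P\nabla\cdot$ does not help either: composing the $L^{5/2}\!\to\! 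L^5$ smoothing of the inner factor with the $(t-s)^{-1/2}$ cost of the outer gradient still yields $(t-s)^{-13/10}$. In short, no semigroup splitting can manufacture the estimate you need from \eqref{est.prop.L^p_u-L^q_u.semigroup.inhomo.1} and \eqref{est.prop.L^p_u-L^q_u.semigroup.inhomo.2} alone.

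What actually closes the gradient estimate on $[t/2,t]$ is the $L^q_{uloc}\!\to\! L^p_{uloc}$ extension of \eqref{est.prop.L^p_u-L^q_u.semigroup.inhomo.2}, namely the time-domain consequence of \eqref{e.estbilinearlosslambda.2'} in the Remark after Proposition~\ref{thm.estbilinop}:
\[
\|\nabla e^{-(t-s)\mathbf A}\mathbb P\nabla\cdot(u\otimes v)\|_{L^5_{uloc}}
\leq C(t-s)^{-1/2}\bigl((t-s)^{-3/10}+1\bigr)\bigl(\|u\nabla v\|_{L^{5/2}_{uloc}}+\|v\nabla u\|_{L^{5/2}_{uloc}}\bigr).
\]
With $\|u\nabla v\|_{L^{5/2}_{uloc}}\leq \|u\|_{L^5_{uloc}}\|\nabla v\|_{L^5_{uloc}}\leq s^{-9/10}\||u\||_T\||v\||_T$ this gives exactly your integral $\int_{t/2}^t(t-s)^{-4/5}s^{-9/10}\,ds\lesssim t^{-7/10}$. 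So your final computation is right, but the mechanism producing the $(t-s)^{-4/5}$ kernel is \eqref{e.estbilinearlosslambda.2'}, not a semigroup factorization. The paper's own proof is terse here (``similar argument as in Proposition~\ref{prop.mild.ns}''), but this is the ingredient it is tacitly invoking.

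A minor side remark: Morrey's embedding $W^{1,5}_{uloc}(\R^3_+)\hookrightarrow L^\infty$ only gives $\|u(t)\|_{L^\infty}\lesssim t^{-1/5}+t^{-7/10}$, hence $t^{7/10}\|u(t)\|_{L^\infty}$ bounded, not $t^{1/2}\|u(t)\|_{L^\infty}$. This does not affect your argument since you never use the $L^\infty$ slot quantitatively.
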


\begin{proof} The proof is  based on the argument by Kato \cite{Kato84}. Set $\|f\|_T$ as
\begin{align*}
\| f\|_T := \sup_{0<t<T} \big ( t^\frac{1}{5} \| f(t) \|_{L^5_{uloc}} + t^\frac{7}{10} \| \nabla f(t) \|_{L^5_{uloc}} \big ).
\end{align*}
For any $\varepsilon>0$ there exists $\tilde{u}_0 \in C^\infty_{0,\sigma}(\R^3_+)$ such that 
$\|u_0 -\tilde{u}_0  \|_{L^3_{uloc}}<\varepsilon$.
Therefore 
\begin{align*}
t^{\frac15}\| e^{-t{\bf A}}u_0 \|_{L^5_{uloc}} 
&\le  
t^\frac15 (\|e^{-t{\bf A}}(u_0- \tilde{u}_0)\|_{L^5_{uloc}}
+\|e^{-t{\bf A}}\tilde{u}_0\|_{L^5_{uloc}}) 
\\
&\le  C(1+t^\frac15) \| u_0- \tilde{u}_0\|_{L^3_{uloc}}
+t^\frac15 \|\tilde{u}_0\|_{L^5_{uloc}}.
\end{align*}
Similarly 
\begin{align*}
t^{\frac{7}{10}}\| \nabla e^{-t{\bf A}}u_0 \|_{L^5_{uloc}} 
&\le  C(1+t^\frac15) \| u_0- \tilde{u}_0\|_{L^3_{uloc}}
+t^\frac15 \|\tilde{u}_0\|_{L^5_{uloc}}.
\end{align*}
Therefore there exist $C_0$ and  $T_0>0$ such that for $T<T_0$
\begin{align*}
\| e^{-\cdot {\bf A}} u_0 \|_T 
\leq C_0\varepsilon.
\end{align*}
Let us introduce the set 
\begin{align*}
\begin{split}
Y_T & =\Big \{ f\in L^\infty (0,T; L^3_{uloc,\sigma} (\R^3_+)) \cap C\big((0,T); W^{1,3}_{uloc,0}(\R^d_+; \R^3) \cap BUC_\sigma (\R^3_+)\big) ~|~ \\
& \qquad \| f\|_T \leq  2C_0 \varepsilon \Big \}.
\end{split}
\end{align*}
The similar argument as in the proof of Proposition \ref{prop.mild.ns} shows 
\begin{align}
\label{est:B[f,g]}
\| B[f,g] \|_T\leq C_1 (1+T^{\frac15}) \| f\|_T 
\| g\|_T, \qquad f,g \in X_T.
\end{align}
Therefore if $T$ and $\varepsilon$ are small so that 
\begin{align*}
C_1 (1 + T^\frac15) 2 C_0 \varepsilon \leq \frac12,
\end{align*}
$\Phi$ defines a contraction map from $Y_T$ into itself. 
Hence, there exists a unique fixed point $u$ of $\Phi$ in $Y_T$, which is the unique mild solution to \eqref{eq.ns.1} in $Y_T$. 
We also easily see $u \in L^\infty(0,T; L^3_{uloc})$ as follows: 
\begin{align*}
\|u(t)\|_{L^3_{uloc}} 
&\le 
\|u_0\|_{L^3_{uloc}} + \|B[u,u](t)\|_{L^3_{uloc}}
\\
&\le 
\|u_0\|_{L^3_{uloc}} + C(1+T^{\frac{1}{10}}) \|u\|^2_T.
\end{align*}
We will show that $u$ belongs to  $C\big([0,T); \mathcal L^3_{uloc,\sigma}(\R^3_+)\big)$.
It is enough to show $u$ is strongly continuous on $[0,T)$ in $L^3_{uloc}$.
Since the continuity away from $t=0$ can be shown as stated 
in the proof of Proposition \ref{prop.mild.ns}, we focus on the case when $t=0$.
We have 
\begin{align*}
\|u(t)-u_0\|_{L^3_{uloc}}
&\le \|e^{-t\mathbf A} u_0 -u_0\|_{L^3_{uloc}} +C\|B[u,u](t)\|_{L^3_{uloc}}
\\
&\le \|e^{-t\mathbf A} u_0 -u_0\|_{L^3_{uloc}} +C\|u\|^2_{T}.
\end{align*}
The standard density argument yields that the first term converges to $0$ as $t\rightarrow 0$, while
in the second term, \eqref{est:B[f,g]} 
implies $\lim_{T  \rightarrow 0} \|u \|_T=0$.  Thus 
the proof is complete. 
\end{proof}

\subsection{Concentration of $L^q$ norm, $q\geq d$, near the blow up time}
\label{sec.concentration}
This subsection is devoted to the proof of Corollary 
\ref{cor.concentration}.
We introduce the space 
$L^q_{uloc,(\rho)}(\R^d_+)$ for $\rho>0$ which is defined as follows:
\begin{equation*}
L^q_{uloc,(\rho)}(\R^d_+):=
\left\{f\in L^1_{loc}(\R^d_+)
~|~
\sup_{\eta\in\mathbb Z^{d-1}\times\mathbb Z_{\geq 0}}
\|f\|_{L^q(\rho\eta+(0,\rho)^d)}<\infty\right\}.
\end{equation*}
The following variant of proposition  \ref{prop.mild.ns} and \ref{thm.mild.ns} plays a crucial 
role in the proof.

\begin{proposition}\label{prop.mild.ns.rho} Let $q\geq d$. There exist constants $\gamma,\, C_*>0$ such that the following statement holds.
For any $u_0\in L^q_{uloc,(\rho),\sigma}(\R^d_+)$ satisfying 
$\|u_0 \|_{L^q_{uloc,(\rho)}}\leq \gamma\rho^{\frac dq-1}$ for some $\rho>0$, 
there exist  $T \ge \rho^2$ and a unique mild solution $u\in L^\infty (0,T; L^q_{uloc,(\rho),\sigma} (\R^d_+))$ such that 
\begin{align}
\label{est.thm.mild.ns.rho}
\sup_{0<t<T} \big ( \| u (t) \|_{L^q_{uloc,(\rho)}} + t^\frac d{2q} \| u(t) \|_{L^\infty} 
 \big ) \leq C_* \| u_0 \|_{L^q_{uloc,(\rho)}}\,.
\end{align}
\end{proposition}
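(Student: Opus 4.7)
The plan is to reduce to the case $\rho=1$ by a scaling argument and then to invoke the unscaled results (Proposition \ref{prop.mild.ns} for $q>d$ and Proposition \ref{thm.mild.ns} for $q=d$) already established. Concretely, set
\[
\tilde u_0(x) := \rho\, u_0(\rho x), \qquad x\in\R^d_+,
\]
and more generally $\tilde u(x,s) := \rho\, u(\rho x,\rho^2 s)$. Both the Stokes semigroup $e^{-t\mathbf A}$ and the bilinear operator $e^{-t\mathbf A}\mathbb P\nabla\cdot(\cdot\otimes\cdot)$ are invariant under this scaling (this follows from the invariance of the resolvent problem \eqref{e.resol} on $\R^d_+$ under $(u,p,f)\mapsto(\rho u(\rho\,\cdot),\rho^2 p(\rho\,\cdot),\rho^3 f(\rho\,\cdot))$ combined with the Dunford formula used in Section \ref{sec.semigroup}), so $u$ solves \eqref{eq.mild.ns} with data $u_0$ if and only if $\tilde u$ solves \eqref{eq.mild.ns} with data $\tilde u_0$.

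The first key point is the norm relation
\[
\|\tilde u_0\|_{L^q_{uloc}(\R^d_+)} \;=\; \rho^{\,1-\frac{d}{q}}\,\|u_0\|_{L^q_{uloc,(\rho)}(\R^d_+)},
\]
obtained by the change of variable $y=\rho x$, which turns the unit cubes defining $L^q_{uloc}$ into cubes of side $\rho$ defining $L^q_{uloc,(\rho)}$. Hence the hypothesis $\|u_0\|_{L^q_{uloc,(\rho)}}\le \gamma\rho^{d/q-1}$ becomes exactly $\|\tilde u_0\|_{L^q_{uloc}}\le\gamma$.

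Next, I would choose $\gamma$ small enough that Proposition \ref{prop.mild.ns} (if $q>d$) or Proposition \ref{thm.mild.ns} (if $q=d$) yields a unique mild solution $\tilde u\in L^\infty(0,\tilde T;L^q_{uloc,\sigma}(\R^d_+))$ with $\tilde T\ge 1$ and
\[
\sup_{0<s<\tilde T}\Bigl(\|\tilde u(s)\|_{L^q_{uloc}}+s^{\frac{d}{2q}}\|\tilde u(s)\|_{L^\infty}\Bigr)\le C_*\|\tilde u_0\|_{L^q_{uloc}}.
\]
For $q>d$ the existence time lower bound from Proposition \ref{prop.mild.ns} reads $\tilde T^{\frac12+\frac{d}{2q}}+\tilde T^{\frac12-\frac{d}{2q}}\ge\gamma_0/\|\tilde u_0\|_{L^q_{uloc}}$, so $\gamma$ can be picked so that $\tilde T\ge 1$; for $q=d$ one applies Proposition \ref{thm.mild.ns} with any fixed $T\ge 1$ and the correspondingly small $\varepsilon=\gamma$.

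Finally, setting $u(x,t):=\rho^{-1}\tilde u(\rho^{-1}x,\rho^{-2}t)$ and $T:=\rho^2\tilde T\ge\rho^2$, I would transfer the estimate for $\tilde u$ back to $u$: the same change of variable gives $\|u(t)\|_{L^q_{uloc,(\rho)}}=\rho^{\frac{d}{q}-1}\|\tilde u(\rho^{-2}t)\|_{L^q_{uloc}}$ and $t^{\frac{d}{2q}}\|u(t)\|_{L^\infty}=\rho^{\frac{d}{q}-1}s^{\frac{d}{2q}}\|\tilde u(s)\|_{L^\infty}$ with $s=\rho^{-2}t$, yielding \eqref{est.thm.mild.ns.rho}. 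Uniqueness in the class \eqref{est.thm.mild.ns.rho} likewise transfers from the uniqueness in the unscaled propositions. There is no real obstacle here since the result is essentially a scaling corollary of what has been proved; the only care needed is to check the scale invariance of $e^{-t\mathbf A}$ and of the bilinear Duhamel term on the half-space, both of which are immediate from the Fourier representations in Section \ref{sec.intrepr} and the symbol formulas \eqref{e.deflerayproj}.
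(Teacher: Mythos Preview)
Your proof is correct and follows exactly the approach indicated in the paper, which states only that the result ``can be proved by a simple rescaling argument from Proposition \ref{prop.mild.ns} and \ref{thm.mild.ns}'' and omits the details. You have supplied precisely those details: the scaling $\tilde u_0(x)=\rho\,u_0(\rho x)$, the norm identity $\|\tilde u_0\|_{L^q_{uloc}}=\rho^{1-d/q}\|u_0\|_{L^q_{uloc,(\rho)}}$, and the back-transfer of the existence time and estimates are all correct.
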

This can be proved by a simple rescaling argument from Proposition \ref{prop.mild.ns} and \ref{thm.mild.ns},
and so we omit the details here. This results enables to control the existence time in terms of the smallness of the initial data in $L^q_{uloc,(\rho)}$.

\begin{proof}[Proof of Corollary \ref{cor.concentration}]
Let $q\geq d$ and $\gamma=\gamma(q)>0$ be given by Proposition \ref{prop.mild.ns.rho}. We define $\rho_*=\rho_*(t)$ for $t\in (0,T_*)$ by
$$
\rho_*(t)=\inf \left\{\rho>0 \ | \ \|u(t)\|_{L^q_{uloc,(\rho)}}\rho^{1-\frac dq} > \gamma \right \}   .
$$
Note that $\rho_*$ is finite since $u\not\equiv 0$, and $\rho_*>0$ since $u(t)$ is bounded for all $t\in(0,T_*)$. 
For $t \in (0,T_*)$ fixed (our new initial time) let $\rho>0$ be a constant such that $ \rho <\rho_*(t)$. It suffices to show that $\rho \le  \sqrt{T_*-t}$. Since (by the definition of $\rho$) $\|u(t)\|_{L^q_{uloc,(\rho)}}\leq\gamma\rho^{\frac dq-1}$, Proposition \ref{prop.mild.ns.rho} shows 
existence of the solution $v$  in $[t,t+T]$ such that at initial time 
$v(t)=u(t)$ and $T\ge \rho^2$. 
Assume for a moment that $u$ agrees with 
$v$ in $[t,T']$ for $T'=\min (t+T,T_*-\varepsilon)$ for 
small $\varepsilon>0$. 
Then by the definition of $T_*$ we must have $t+T<T_*-\varepsilon$.
Since $\varepsilon$ is arbitrary, this yields the desired estimate 
for $\rho$.
As for the uniqueness, we see from the continuity of $u$ on  $(0, T_*)$ 
that there exists a constant $\delta>0$ such that  
\begin{align*}
\sup_{t<s<t+\delta} \big ( \| u (s) \|_{L^q_{uloc,(\rho)}} + 
s^\frac d{2q} \| u(s) \|_{L^\infty} \big ) 
\leq C_* \| u(t) \|_{L^q_{uloc,(\rho)}}.
\end{align*}
Hence the uniqueness in $[t,t+\delta]$ follows from 
Proposition  \ref{prop.mild.ns.rho}. 
To show the uniqueness up to time $T'$, 
notice that $u$ and  $v$ are both bounded in 
$\R^d_+ \times  [t+\delta,T']$. Then the bilinear estimate shows 
that  the difference $w:=u-v$ satisfies  
\begin{align*}
\|w\|_{L^\infty(t_1,t_2; L^\infty)} 
&\le C(t_2-t_1)^{\frac12} 
\left(\|u\|_{L^\infty(t_1,t_2;L^\infty)}
+\|v\|_{L^\infty(t_1,t_2;L^\infty)}\right) 
\|w\|_{L^\infty(t_1, t_2;L^\infty)}\\
&\le C(t_2-t_1)^{\frac12} 
\left(\|u\|_{L^\infty(t+\delta,T';L^\infty)}
+\|v\|_{L^\infty(t+\delta,T';L^\infty)}\right) 
\|w\|_{L^\infty(t_1, t_2;L^\infty)}
\end{align*}
for $t+\delta \le t_1<t_2\le T'$.
Thus taking $t_2-t_1$ sufficiently small shows $w=0$ on $[t_1,t_2]$. 
Using this argument a finite number of times, we have the uniqueness in $[t,T']$. 
\end{proof}

\appendix

\section{A Liouville theorem for the resolvent problem in $L^1_{uloc}$ spaces}
\label{sec.liouville}

This appendix is devoted to the proof of the Liouville theorem, Theorem \ref{thm.unique} for the Stokes resolvent problem.

\begin{proof}[Proof of Theorem \ref{thm.unique}] 
(i) First we introduce the regularization of $(u,p)$ in $x'$ as follows:
\begin{align*}
u^\kappa (x',x_d) & = \int_{\R^{d-1}} \kappa^{-(d-1)} \eta (\frac{x'-y'}{\kappa}) u (y',x_d) \, d y',\\
p^\kappa (x',x_d) & = \int_{\R^{d-1}} \kappa^{-(d-1)} \eta (\frac{x'-y'}{\kappa}) p (y',x_d) \, d y'.
\end{align*}
Here $\kappa\in (0,1)$ and $\eta\in C_0^\infty (\R^{d-1})$ satisfies $\int_{\R^{d-1}} \eta (x') d x' =1$ and ${\rm supp}\, \eta \subset B_1 (0')$. Then, by the symmetry of $\R^d_+$, $(u^\kappa,p^\kappa )$ is also a solution to \eqref{e.resol} with $f=0$ in the sense of distributions. Moreover,  $(u^\kappa (\cdot, x_d) , \nabla p^\kappa (\cdot, x_d))$ is smooth in $x'$ satisfies the estimate 
\begin{align*}
\| \nabla'^\alpha u^\kappa \|_{L^\infty_{x'} (\R^{d-1}; L^1_{uloc} (\R_+))}  & \leq C \kappa^{-d+1-|\alpha|} \| u\|_{L^1_{uloc}},\\
\| \nabla'^\alpha \nabla p^\kappa \|_{L^\infty_{x'} (\R^{d-1}; L^1_{uloc} (\R_+))}  & \leq C \kappa^{-d+1-|\alpha|} \| \nabla p\|_{L^1_{uloc}}
\end{align*}
for any multi-index $\alpha$. We can also check that $p^\kappa  \in L^1_{loc} (\R^d_+)$ without difficulty. 
The divergence-free condition on $u^\kappa$ then implies 
\begin{align*}
\partial_{x_d} u^\kappa_d = - \nabla' \cdot (u^\kappa)' \in L^\infty (\R^{d-1}; L^1_{uloc}(\R_+))
\end{align*}
in the sense of distributions, which implies that  $u^\kappa_d$ is continuous up to the boundary.
Then, again from the divergence-free condition, $\int_{\R^d_+} u^\kappa \cdot \nabla \phi d x=0$ for all $\phi \in C_0^\infty (\overline{\R^d_+})$, we have $\displaystyle \lim_{x_d\rightarrow 0} u^\kappa_d (x',x_d) =0$ for all $x'\in \R^{d-1}$. 
Next we take arbitrary $g\in C_0^\infty (\R^d_+)^d$, and let $(v,\nabla q)$ be the smooth and decaying solution to \eqref{e.resol} with $f=\Delta' g$. By virtue of the presence of $\Delta'$ one can show that $(v, \nabla q)$ is constructed so that  
\begin{align}\label{proof.thm.unique.1} 
\nabla' v (x)\,, \nabla^\alpha \Delta' v (x) = \mathcal{O}(|x|^{-d-\frac12}), \quad \nabla^{\tilde \alpha}  \Delta' q (x) = \mathcal{O}(|x|^{-d-\frac12}), \qquad |x|\gg 1
\end{align}
for any multi-indexes $\alpha$ and $\tilde \alpha$ with $|\alpha|\leq 2$ and $|\tilde \alpha|\leq 1$; see Proposition \ref{prop.smooth.appendix} below.
Then,  by Remark \ref{thm.unique} we see
\begin{equation}\label{eq.appendix.ipp}
\begin{aligned}
\int_{\R^d_+} u^\kappa \cdot (\Delta')^2 g \, d x & = \int_{\R^d_+} u^\kappa \cdot \Delta' \big ( \lambda  v - \Delta  v + \nabla  q \big ) \, d x \\
& = - \int_{\R^d_+} \nabla p^\kappa \cdot \Delta'  v \, d x +   \int_{\R^d_+} u^\kappa \cdot \nabla \Delta'  q \, d x\\
& =  \sum_{j=1}^{d-1} \int_{\R^d_+}  \nabla \partial_j p^\kappa \cdot  \partial_j  v \, d x\\
& = - \sum_{j=1}^{d-1} \int_{\R^d_+} \partial_j p^\kappa \partial_j {\rm div}\,  v \, d x=0
\end{aligned}
\end{equation}
from the definition of the solution in the sense of distributions.
Note that the above integration by parts is justified from \eqref{proof.thm.unique.1} and from the fact that $\nabla p^\kappa$ and $\partial_j \nabla p^\kappa$ with $j=1,\ldots, d-1$ belong to $L^\infty_{x'}(\R^{d-1}; L^1_{uloc}(\R_+))$.
Since $g\in C_0^\infty (\R^d_+)^d$ is arbitrary, this identity implies that 
\begin{align*}
(\Delta')^2 u^\kappa = 0 \qquad a.e.~x\in \R^d_+.
\end{align*}
Set $U^\kappa (x',x_d) = \int_0^{x_d} u^\kappa (x',y_d) d y_d$.  
Recall that ${\nabla'}^\alpha u^\kappa (\cdot, x_d)\in L^\infty_{x'} (\R^{d-1}; L^1_{uloc} (\R_+))$,
which shows that for each fixed $x_d\geq 0$, $U^\kappa (x',x_d)$ is smooth and bounded including its derivatives in $x'$, while it is absolutely continuous in $x_d$ for each fixed $x'$. Moreover, for each $x_d\geq 0$, $U^\kappa (\cdot, x_d)$ satisfies $(\Delta')^2 U^\kappa (\cdot, x_d)=0$.
By the Liouville theorem of the bi-Laplace equation in $\R^{d-1}$, we conclude that $U^\kappa (\cdot, x_d)$ is constant in $x'$ for each $x_d\geq 0$, that is, $U^\kappa (x',x_d) = A(x_d)$. Since the left-hand side is absolutely continuous in $x_d$, so is $A$, and we have 
\begin{align}\label{proof.thm.unique.2} 
u^\kappa (x',x_d) = \partial_{x_d} U^\kappa (x',x_d) = \partial_{x_d} A(x_d) =: a^\kappa (x_d)\in L^1_{uloc} (\R_+)^d, \qquad a^\kappa_d\in C(\overline{\R_+}).
\end{align}
Then, the divergence-free condition implies that $\partial_{x_d} a^\kappa_d =0$, and thus, together with the fact $\displaystyle \lim_{x_d\rightarrow 0} u^\kappa_d (x',x_d)=0$, we have $a^\kappa_d =0$.
Next we take $\phi \in C_0^\infty (\R^d_+)$ and set $\varphi =(0,\cdots,0,\phi)^\top \in C_0^\infty (\R^d_+)^d$ in \eqref{def.weak.u1}, which yields from $u^\kappa_d=0$,
\begin{align*}
\int_{\R^d_+} \partial_{x_d} p^\kappa  \phi \, d x=0.
\end{align*}
Thus, $p^\kappa$ does not depend on $x_d$. On the other hand, by taking $\varphi = \nabla \phi$ with $\phi\in C_0^\infty (\R^d_+)$ in \eqref{def.weak.u1}, it follows that 
\begin{align*}
\int_{\R^d_+} \nabla p^\kappa \cdot \nabla \phi \, d x=0, \qquad \phi \in C_0^\infty (\R^d_+).
\end{align*}
Hence, $p^\kappa$ is harmonic in $\R^d_+$, and moreover, since $p^\kappa$ is independent of $x_d$, we have $\Delta' p^\kappa  (x')=0$ for all $x'\in \R^{d-1}$. The Liouville theorem implies that $p^\kappa$ is a harmonic polynomial. Then, going back to \eqref{def.weak.u1} and using \eqref{proof.thm.unique.2}, we have for each $j=1,\ldots,d-1$,
\begin{align}\label{proof.thm.unique.3'} 
\begin{split}
& \int_{\R_+} a_j ^\kappa (x_d) (\lambda \phi - \partial_{x_d}^2 \phi ) (x_d)  d x_d \int_{\R^{d-1}} \psi (x') \, d x' \\
&   = - \int_{\R_+} \phi (x_d) d x_d  \int_{\R^{d-1}} \partial_j p^\kappa (x') \psi (x') d x\\
&  \qquad \qquad {\rm for~all}~~\psi\in C_0^\infty (\R^{d-1}), \quad \phi \in C_0^\infty (\R_+).\end{split}
\end{align}
We first fix $\phi$ such that $\int_{\R_+}\phi \, d x_d\ne 0$. Since $\psi\in C_0^\infty (\R^{d-1})$ is arbitrary, $\partial_j p^\kappa (x')$ is constant for all $j=1,\ldots,n-1$. Hence, $p^\kappa (x')$ is polynomial at most first order about $x'$. Thus, $(u^\kappa,  p^\kappa)$ is a parasitic solution. Since $(u^\kappa, p^\kappa)$ converges to $(u,p)$ in $L^1_{loc}(\R^d_+)$, the limit $(u,p)$ must be a parasitic solution. Note that the limit $(u,p)$ with $u=(a'(x_d),0)^\top$ and $p=D \cdot x' + c$ satisfies the reduced version of \eqref{proof.thm.unique.3'}:
\begin{align}\label{proof.thm.unique.3''} 
\begin{split}
\int_{\R_+} a_j  (x_d) (\lambda \phi - \partial_{x_d}^2 \phi ) (x_d)  d x_d & = -D_j \int_\R \phi  \, d x_d \\
&  \qquad {\rm for~all}~~\phi \in C_0^\infty (\overline{\R_+}) ~~{\rm with}~\phi|_{x_d=0} =0.
\end{split}
\end{align} 
In particular, each $a_j$ is smooth and bounded and has a zero boundary trace, for $\lambda$ which belongs to the resolvent set of the Dirichlet Laplacian in $L^1 (\R_+)$. 
Moreover,  if 
\begin{equation*}
\displaystyle \lim_{R\rightarrow \infty} \| \nabla' p \|_{L^1(|x'|<1, R<x_d<R+1)}=0
\end{equation*}
then the vector $D$ must be $0$, and thus the pressure $p$ is a constant. Then \eqref{proof.thm.unique.3''} is reduced to  
\begin{align}\label{proof.thm.unique.3} 
\int_{\R_+} a_j  (x_d) (\lambda \phi - \partial_{x_d}^2 \phi ) (x_d)  d x_d =0 \,  \qquad {\rm for~all}~~\phi \in C_0^\infty (\overline{\R_+}) ~~{\rm with}~\phi|_{x_d=0} =0.
\end{align} 
The uniqueness of this very weak solution is standard and also follows from the fact that $\lambda$ belongs to the resolvent set of the Dirichlet Laplacian in $L^1(\R_+)$. Thus we have arrived at $a_j=0$ for each $j=1,\ldots,d-1$, that is $u =0$. 
On the other hand, if  $\displaystyle \lim_{|y'| \rightarrow \infty} \| u  \|_{L^1 (|x'-y'|<1, 1<x_d<2)}=0$ then $u=0$ in $1<x_d<2$ since $u=(a'(x_d),0)^\top$ is independent of $x'$, which also gives $D=0$ by \eqref{proof.thm.unique.3''}. 
Thus $p$ is a constant.  Hence $a_j$ satisfies \eqref{proof.thm.unique.3} also in this case, which gives $u=0$ for $x_d>0$. The proof of (i) is complete.

\noindent (ii) The proof is similar to the one of (i). Again it suffices to consider the mollified solution $(u^\kappa,p^\kappa)$ as in (i). Fix arbitrary $\mu\in (0,1)$ and let $(v_\mu,\nabla q_\mu)$ be the smooth and decaying solution to \eqref{e.resol} with $\lambda=\mu$ and $f=\Delta' g$, where $g\in C_0^\infty (\R^d_+)^d$ is arbitrary taken.
Then \eqref{eq.appendix.ipp} is replaced by 
\begin{align}\label{eq.appendix.ipp'}
\int_{\R^d_+} u^\kappa \cdot (\Delta')^2 g d x = \mu \int_{\R^d_+} u^\kappa \cdot \Delta' v_\mu  d x\,.
\end{align}
We observe from Proposition \ref{prop.smooth.appendix} that $|\Delta' v_\mu (x)|\leq C|\mu|^{-\frac34} (1+|x|)^{-d-\frac12}$ with $C$ independent of $\mu\in (0,1)$, and thus, we can take the limit $\mu \downarrow 0$, which leads to $(\Delta')^2 u^\kappa =0$.
Then the same argument as in (i) shows that $u^\kappa=(a'(x_d), 0)^\top$ and $p^\kappa=p^\kappa(x')$ satisfy \eqref{proof.thm.unique.3'} with $\lambda=0$, which implies that $p^\kappa$ is a first order polynomial, and we have \eqref{proof.thm.unique.3''} with $\lambda=0$. Then each $a_j (x_d)$ is smooth and satisfies $\partial_{x_d}^2 a_j = D_j$ with the Dirichlet boundary condition $a_j(0)=0$. Since $u\in L^1_{uloc}(\R^d_+)^d$ such an $a_j$ must be zero, and thus, $u=0$ and then we also see from the equation that $p$ is a constant.  The proof of (ii) is complete.
\end{proof}

\begin{proposition}\label{prop.smooth.appendix} Let $\lambda\in S_{\pi-\ep}$ with $\ep \in (0,\pi)$. Let $g\in C_0^\infty (\R^d_+)^d$. Then there exists a unique solution $(u, \nabla p)\in \big ( W^{2,2}(\R^d_+)^d \cap W^{1,2}_0 (\R^d_+)^d \cap L^2_\sigma (\R^d_+) \big ) \times L^2(\R^d_+)^d$ to \eqref{e.resol} with $f=\Delta' g$ such that $u$ and $p$ are smooth and satisfy
\begin{align}\label{est.prop.smooth.appendix} 
\begin{split}
|{\nabla'}^\alpha u(x)| + |{\nabla'}^\alpha \nabla u (x)| & \leq C_\ep (\frac{1}{|\lambda|^\frac14} + \frac{1}{|\lambda|^\frac34}) \frac{1}{(1+|x|)^{d+\frac12}},\\
|{\nabla'}^\alpha \nabla^2 u(x) | + | \nabla^{\tilde \alpha} {\nabla'}^2 p (x) | & \leq \frac{C_{\epsilon,\lambda}}{( 1+ |x|)^{d+\frac12}},  
\end{split}
\end{align}
for any multi-indexes $\alpha$ and $\tilde \alpha$ with $|\alpha|\leq 2$ and $|\tilde \alpha|\leq 1$.
Here the constant $C_\ep$ is taken uniformly in $\lambda\in S_{\pi-\ep}$, while $C_{\epsilon,\lambda}$ depends on $\ep$ and $\lambda\in S_{\pi-\ep}$.
\end{proposition}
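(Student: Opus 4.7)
The plan is to proceed in two stages: first establish existence, uniqueness and smoothness via the classical $L^2$ Stokes theory in the half-space; second derive the pointwise decay estimates, which constitute the substantive content of the proposition.

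For the first stage, since $g\in C_0^\infty(\R^d_+)^d$, the source $\Delta'g$ lies in $L^2(\R^d_+)^d$ with compact support. The classical $L^2$ Stokes resolvent theory in the half-space, applicable to arbitrary $L^2$ right-hand sides (the pressure absorbing the non-solenoidal part), yields a unique $(u,\nabla p)\in (W^{2,2}(\R^d_+)^d\cap W^{1,2}_0(\R^d_+)^d\cap L^2_\sigma(\R^d_+))\times L^2(\R^d_+)^d$. Smoothness of $u$ and $p$ up to $\partial\R^d_+$ then follows by iterating interior and boundary elliptic regularity for the Stokes system, given the $C^\infty$ source and the smooth flat boundary.

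For the pointwise decay, I would invoke the integral representations of Section \ref{sec.intrepr}: each component of $u$ is written as a convolution of one of the kernels $k_{1,\lambda}, k_{2,\lambda}, r'_\lambda, r_{d,\lambda}$ against $\Delta'g$. Since each kernel is translation-invariant in $x'$, integration by parts in $y'$ (valid because $g$ is compactly supported) moves $\Delta'$ onto the kernel. The essential technical input is the fractional tangential-derivative estimate of Remark \ref{rem.tanderiv} and Remark \ref{rem.prop.estkernel}, which gives, for any Fourier multiplier $m_\sigma(D')$ of homogeneous order $\sigma>0$ and $\beta\in\{0,1\}$,
\begin{equation*}
|m_\sigma(D')\nabla^\beta K_\lambda(y',y_d,z_d)|\leq \frac{Ce^{-c|\lambda|^{1/2}z_d}}{(|y'|+y_d+z_d)^{d-2+\sigma+\beta}},
\end{equation*}
up to the $(1+|\lambda|^{1/2}(|y'|+y_d+z_d))^{-1}$ factors whose resolution into low- and high-frequency regimes produces the $|\lambda|^{-1/4}$ and $|\lambda|^{-3/4}$ prefactors. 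Choosing $\sigma=5/2+|\alpha|$ (with $|\alpha|\leq 2$ counting the tangential derivatives to be estimated on $u$, and $\beta\leq 1$ the normal ones) yields the required algebraic decay $(1+|x|)^{-(d+1/2)}$. Writing $\Delta'=(-\Delta')^{5/4}(-\Delta')^{-1/4}$ distributionally and transferring $(-\Delta')^{5/4}$ onto the kernel (while interpreting $(-\Delta')^{-1/4}g$ as a smooth function given by a Riesz potential of $g$), the resulting convolution is bounded using $|x-y|\gtrsim 1+|x|$ on $\supp g$ for $|x|$ large. The short-range case $|x|\lesssim 1$ is handled by Sobolev embedding applied to the $W^{2,2}$ estimate of Step 1.

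The principal difficulty lies in making the fractional integration by parts rigorous: $(-\Delta')^{-1/4}g$ is not compactly supported and decays only like $|y'|^{-(d-3/2)}$ in the tangential direction, which is borderline non-integrable. A careful near-field/far-field splitting of the convolution, exploiting the sharp $(|y'|+y_d+z_d)^{-(d+1/2)}$ decay of $(-\Delta')^{5/4}K_\lambda$ and the integrability of $(-\Delta')^{-1/4}g$ on annuli, is required to secure the $(1+|x|)^{-(d+1/2)}$ bound. A secondary challenge is the simultaneous tracking of the $|\lambda|$-dependence, particularly for the nonlocal pressure kernels $r'_\lambda, r_{d,\lambda}$, which have more intricate structure than the Dirichlet-Laplace kernels and require a frequency decomposition analogous to Lemma \ref{lem.kernelsecdivleray'}. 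The pointwise bounds on $\nabla'^\alpha\nabla^2 u$ and $\nabla^{\tilde\alpha}\nabla'^2 p$ with $\lambda$-dependent constants follow by the same scheme, using the pressure kernel estimates of Proposition \ref{prop.kernpressure} for the latter.
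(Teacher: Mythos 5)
Your overall scheme (classical $L^2$ Stokes theory for existence, uniqueness and smoothness; kernel estimates for decay) matches the paper's, but the decay argument diverges in two ways that create genuine gaps. First, you feed $\Delta' g$ directly into the integral representations of Section \ref{sec.intrepr}, but those formulas and the associated kernels were derived for \emph{solenoidal} data, and $\Delta' g$ is not divergence-free. The paper's argument first performs the Helmholtz decomposition $g = h + \nabla p_g$, works with the solenoidal forcing $\Delta' h$, and folds $\Delta' \nabla p_g$ into the pressure. It then shows, via the explicit Newton-potential representation of the Neumann problem, that $\nabla' h$ and $\nabla' \nabla p_g$ decay like $|x|^{-(d+1)}$. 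Without this step you cannot legitimately apply the kernel formulas, and you have no mechanism to produce the pressure $\nabla p = \Delta' \nabla p_g + \nabla p_u$.

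Second, the fractional factorization $\Delta' g = -(-\Delta')^{5/4}\big[(-\Delta')^{-1/4}g\big]$ introduces a function $(-\Delta')^{-1/4}g$ whose tangential decay $|y'|^{-(d-3/2)}$ is non-integrable on $\R^{d-1}$: the contribution of the region $|y'-x'|\lesssim 1$ (where the kernel is $O(1)$ or larger) already produces a bound on the order of $(1+|x'|)^{-(d-3/2)}$, far short of $(1+|x|)^{-(d+1/2)}$, and the annular integrals of $(-\Delta')^{-1/4}g$ grow like $R^{1/2}$, so the near/far splitting you sketch cannot restore the claimed rate. Moreover, the kernel $(-\Delta')^{5/4} k_{1,\lambda}$ is locally of order $|x-y|^{-(d+1/2)}$, hence not locally integrable, so the proposed pointwise convolution is not even well-defined. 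The paper sidesteps both issues by moving only a \emph{single} integer-order tangential derivative $\nabla'$ onto the kernel — yielding a locally integrable kernel of order $|x-y|^{-(d-1/2)}$ carrying the crucial extra decay factor $(1+|\lambda|^{1/2}|x-y|)^{-1}$ — and convolving against $\nabla' h$, which decays like $|y|^{-(d+1)}$ thanks to the Helmholtz step. The remaining derivative estimates are obtained with a physical-space cut-off $\chi(y')$ splitting near from far (not the frequency cut-off of Lemma \ref{lem.kernelsecdivleray'}), and $\partial_{x_d}^2 u$, $\partial_{x_d} p_u$ are read off directly from the equations.
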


Notice that  the uniform estimates in $|\lambda|$ is used  in the proof of (ii) of Theorem \ref{thm.unique}.

\begin{proof} The uniqueness is well known and we focus on the estimate \eqref{est.prop.smooth.appendix}.
The Helmholtz decomposition implies $g=h + \nabla p_g$, where $h\in L^2_\sigma (\R^d_+)$ and $\nabla p_g \in L^2(\R^d_+)^d$ with $p_g\in L^2_{loc}(\R^d_+)$. Since $g\in C_0^\infty (\R^d_+)^d$, $\nabla p_g$ and $h=g-\nabla p_g$ are also smooth and bounded in $\R^d_+$ including their derivatives. 
Then the pressure $\nabla p$ is constructed in the form $\nabla p = \Delta' \nabla p_g + \nabla p_u$, where $(u,\nabla p_u)$ is the unique solution to \eqref{e.resol} with $f=\Delta' h$.
We first show that 
\begin{align}\label{proof.prop.smooth.appendix.1}
\nabla^\beta \nabla' \nabla p_g (x), ~\nabla^\beta \nabla' h(x) = \mathcal{O}(|x|^{-d-1}),\qquad |x|\gg 1
\end{align}
for any multi-index $\beta$.
The estimate of $h$ follows from the one of $\nabla p_g$ by the relation $h=g-\nabla p_g$.
To show \eqref{proof.prop.smooth.appendix.1} for $\nabla p_g$, we recall that $p_g$ is constructed as the solution to the Neumann problem 
$\Delta p_g = {\rm div}\, g$ in $\R^d_+$ and $\partial_{x_d} p_g =g_d =0$ on $\partial\R^d_+$, which is given by the formula
\begin{align*}
p_g (x) = - \int_{\R^d_+} \big ( E(x-y) + E(x-y^*) \big ) {\rm div}\, g (y) \, d y,
\end{align*}
where $y^*=(y',-y_d)$, and $E(x)$ is the Newton potential in $\R^d$.
Then, the integration by parts and the condition $g\in C_0^\infty (\R^d_+)^d$ yield
\begin{align*}
p_g (x) & = - \nabla' \cdot  \int_{\R^d_+} \big ( E(x-y) + E(x-y^*) \big )  g' (y) \, d y\\
& \quad  - \partial_{x_d} \int_{\R^d_+} \big ( E(x-y) - E(x-y^*) \big )  g_d (y) \, d y.
\end{align*}
Hence, we obtain the formula
\begin{align}\label{proof.prop.smooth.appendix.3}
\begin{split}
\nabla' \nabla p_g (x) & = - \nabla' \nabla \nabla' \cdot  \int_{\R^d_+} \big ( E(x-y) + E(x-y^*) \big )  g' (y) \, d y \\
& \quad -\nabla' \nabla \partial_{x_d} \int_{\R^d_+} \big ( E(x-y) - E(x-y^*) \big )  g_d (y) \, d y.
\end{split}
\end{align}
Since $|\nabla^\beta \nabla^3 E(x)| \leq C |x|^{-d-1-|\beta|}$, we verify the estimate \eqref{proof.prop.smooth.appendix.1} for $|x|\gg 1$ when $g\in C_0^\infty (\R^d_+)^d$.
Next we consider the estimate of $(u,\nabla p_u)$. We can now apply the results of Section \ref{sec.resolhp}, in particular the integral representation formulas and the kernel estimates given there. 
That is, $u$ is written as $u(x) = \int_{\R^d_+} K_\lambda (x'-y', x_d,y_d) \Delta' h (y) d y$ with the kernel $K_\lambda$ whose pointwise estimates are given in Proposition \ref{l.kernel} for $k_{i,\lambda}$ with $i=1,2$ and in Proposition \ref{prop.estkernel} for $r_\lambda$.
Since $\partial_j$ with $j=1,\ldots, d-1$ commutes with the Stokes operator in $\R^d_+$, we verify the bound 
\begin{align}\label{proof.prop.smooth.appendix.4}
| u(x)| \leq \frac{C}{|\lambda|^\frac14} \int_{\R^d_+} \frac{1}{|x-y|^{d-1}(1+|\lambda|^\frac12 |x-y|)}  |{\nabla'} h(y)| d y
\end{align}
with $C$ independent of $\lambda\in S_{\pi-\ep}$.
Indeed, in virtue of \eqref{r.bound.2'}, the kernel $|\nabla' r_\lambda  (y,z_d)|$ is bounded from above by 
$$\frac{C}{|\lambda|^\frac14(y_d+z_d + |y'|)^{d-\frac12} (1+|\lambda|^\frac12 (y_d + z_d + |y'|)) (1+|\lambda|^\frac12 (y_d+z_d))^\frac12}.$$
A similar bound is valid also for the kernel $\nabla' k_{i,\lambda}$, $i=1,2$, by Proposition \ref{l.kernel}.
Estimate \eqref{proof.prop.smooth.appendix.4} implies from \eqref{proof.prop.smooth.appendix.1} that 
\begin{align}\label{proof.prop.smooth.appendix.2}
| u(x) | \leq C (\frac{1}{|\lambda|^\frac14} + \frac{1}{|\lambda|^\frac34} ) \frac{1}{(1+|x|)^{d+\frac12}}.
\end{align} 
Since the tangential derivatives commute with the kernel, we have the same estimate for $|{\nabla'}^\alpha u(x)|$ as in \eqref{proof.prop.smooth.appendix.2}. The estimate for the $x_d$ derivative is a bit more complicated.
We decompose the kernel $K_\lambda$ as $K_\lambda (y',y_d,z_d) =\chi(y') K_\lambda (y',y_d,z_d)   + \big (1-\chi (y') \big ) K_\lambda (y',y_d,z_d)$, where $\chi(y')$ is a smooth cut-off such that $\chi (y')=1$ for $|y'|<1$ and $\chi(y')=2$ for $|y'|\geq 2$. 
Then we compute as
\begin{align*}
\partial_{x_d} u (x) & = \int_{\R^d_+}   \chi \partial_{x_d} K_\lambda (x'-y',x_d,y_d) \Delta' h (y) d y \\
& \quad + \int_{\R^d_+}   \nabla_x' \big ( (1-\chi) \partial_{x_d} K_\lambda (x'-y',x_d,y_d) \big ) \cdot \nabla' h (y) d y ,
\end{align*}
and then, the former term is estimated from above by 
\begin{align*}
\frac{C}{|\lambda|^\frac14} \int_{|x'-y'|<1} \frac{1}{|x-y|^{d-\frac12} \big ( 1 + |\lambda|^\frac12 |x_d-y_d|\big )^\frac32 } |\Delta' h(y)| d y,
\end{align*}
and the latter is bounded by 
\begin{align*}
C \int_{|x'-y'|\geq 1} \frac{1}{(1+|x-y|)^d \big ( 1+ |\lambda|^\frac12 (x_d+y_d + |x'-y'|)\big )} |\nabla' h(y)| d y
\end{align*}
These bounds follow again from Propositions \ref{l.kernel} and \ref{prop.estkernel}.
Then it is straightforward to see 
\begin{align}\label{proof.prop.smooth.appendix.3}
|\nabla u(x)| \leq C (\frac{1}{|\lambda|^\frac14} + \frac{1}{|\lambda|^\frac34} ) \frac{1}{(1+|x|)^{d+\frac12}}
\end{align}
with $C$ depending on $h$. The same bound holds also for $|{\nabla'}^\alpha \nabla u(x)|$. 
Since 
\begin{align}\label{proof.prop.smooth.appendix.3'}
\partial_{x_d} p_u = -\lambda u_d + \Delta u_d+ \Delta' h_d  = -\lambda u_d + \Delta' u_d - \partial_{x_d} \nabla' \cdot u' + \Delta' h_d,
\end{align} we obtain $|\partial_{x_d}  p_u (x) | \leq C_{\epsilon,\lambda} (1+|x|)^{-d-\frac12}$ from the above results. Thus, the similar bound is valid also for $|{\nabla'}^\alpha \partial_{x_d} p_u(x)|$ since $\nabla'$ commutes with the kernel.
Next we consider the estimate of $\nabla' p$.
We apply the argument as in the estimate of $\nabla u$, that is, with the cut-off $\chi$, we write $\nabla' p_u$ as 
\begin{align*}
\nabla' p_u  (x) & = \int_{\R^d_+} \big ( \chi q_\lambda (x'-y',x_d,y_d) \big ) \nabla_y' \Delta' h (y) d y\\
& \quad  + \int_{\R^d_+} {\nabla_x'}^2 \big ( (1-\chi)  q_\lambda (x'-y',x_d,y_d) \big ) \cdot \nabla' h (y) d y.l
\end{align*}
Then Proposition \ref{prop.kernpressure} yields 
\begin{align*}
|\nabla' p_u (x) | & \leq C \int_{|x'-y'|<1} \frac{e^{-c|\lambda|^\frac12 y_d}}{(|x'-y'|+ x_d + y_d )^{d-1}} |\Delta' h (y) | d y \\
& \quad + C \int_{|x'-y'|\geq 1} \frac{e^{-c|\lambda|^\frac12 y_d}}{(1+|x'-y'| + x_d + y_d)^{d-1} (1+|x'-y'|)^2} |\nabla' h(y)| d y.
\end{align*}
From this bound it is not difficult to derive the estimate 
\begin{align}\label{proof.prop.smooth.appendix.4}
|\nabla' p_u (x)|\leq C (\frac{1}{|\lambda|^\frac14} + \frac{1}{|\lambda|^\frac34} ) \frac{1}{(1+|x|)^{d+\frac12}}.
\end{align}
The details are omitted. Then we also obtain the same estimate for $|{\nabla'}^\alpha \nabla' p (x)|$.
It remains to estimate $\partial_{x_d}^2 u$, but from the divergence-free condition we have already obtained the estimate for $\partial_{x_d}^2 u_d$, and thus, it suffices to consider $\partial_{x_d}^2 u'$. 
But the decay estimate immediately follows from the equation $\partial_{x_d}^2 u' = \lambda u' - \Delta' u' + \nabla' p_u - \Delta' h'$. The proof is complete.
\end{proof}

\begin{remark}\label{rem.prop.smooth.appendix}{\rm Let $(u,\nabla p_u)$ be the solution to the resolvent problem \eqref{e.resol} with $f=\Delta' h$ as in the proof of Proposition \ref{prop.smooth.appendix}. 
From \eqref{proof.prop.smooth.appendix.4} we have shown that
\begin{align}
|{\nabla'}^\alpha \nabla' p_u (x)|\leq C (\frac{1}{|\lambda|^\frac14} + \frac{1}{|\lambda|^\frac34} ) \frac{1}{(1+|x|)^{d+\frac12}}
\end{align}
for any $\lambda\in S_{\pi-\epsilon}$ and $|\alpha|\leq 2$, where $C$ depends only on $\epsilon$ and $h$. 
On the other hand, from \eqref{proof.prop.smooth.appendix.3} and \eqref{est.prop.smooth.appendix}, we have 
\begin{align}
|{\nabla'}^\alpha \partial_{x_d} p_u (x)|\leq C ( |\lambda|^\frac34 + \frac{1}{|\lambda|^\frac14} + \frac{1}{|\lambda|^\frac34} ) \frac{1}{(1+|x|)^{d+\frac12}}
\end{align}
for any $\lambda\in S_{\pi-\epsilon}$ and $|\alpha|\leq 2$, where $C$ depends only on $\epsilon$ and $h$. 
}
\end{remark}

\section{A Liouville theorem for the nonstationary problem in $L^1_{uloc}$ spaces}
\label{sec.liouville.nonsteady}

The class of weak solutions for the non steady Stokes system \eqref{eq.s.appendix} is stated as follows.
Let $u_0\in L^1_{uloc,\sigma} (\R^d_+)$ and $f\in L^1_{loc} ((0,T) \times \overline{\R^d_+}))^d$.
We say that $(u,\nabla p)$ is a solution to \eqref{eq.s.appendix} in the sense of distributions if 

\noindent (i) $u\in L^\infty (0,T; L^1_{uloc,\sigma} (\R^d_+))$, $\nabla p \in L^1_{loc} ((0,T)\times \R^d_+)^d$ with $p\in L^1_{loc} ((0,T)\times \R^d_+)$, and 
\begin{align}\label{condition.p.B}
\sup_{x\in \R^d_+} \int_\delta^T \| \nabla p (t) \|_{L^1 (B(x)\cap \R^d_+)} d t <\infty \qquad \text{for any}~~0<\delta<T\,.
\end{align}
Here $B (x)$ is the ball of radius $1$ centered at $x$.

\noindent (ii) The map $t\mapsto \int_{\R^d_+} u(t,x) \cdot \varphi (x) d x$ belongs to $C([0,T))$ for any $\varphi \in C_0^\infty (\overline{\R^d_+})^d$. In particular, the initial condition is satisfied in this sense.

\noindent (iii) For all $t',t\in (0,T)$ with $t>t'$ and for all $\varphi \in C_0^\infty ((0,T)\times \overline{\R^d})^d$ with $\varphi|_{x_d=0} =0$,
\begin{align}\label{weak.sol.B}
\begin{split}
& \int_{\R^d_+} u(t,x) \cdot \varphi (t,x) d x - \int_{t'}^t \int_{\R^d_+} u(t,x) \cdot \big ( \partial_s \varphi + \Delta \varphi \big ) (t,x) - \nabla p (t,x)  \cdot \varphi (t,x) d x  d s \\
& \qquad = \int_{\R^d_+} u(t',x) \cdot \varphi (t',x) d x + \int_{t'}^t  f(t,x) \cdot \varphi (t,x) d x d s\,.
\end{split}
\end{align}

\begin{proof}[Proof of Theorem \ref{thm.unique.time}] By considering the mollification $(u^\kappa, \nabla p^\kappa)$ instead of $(u,\nabla p)$ as in the proof of Theorem \ref{thm.unique}, we may assume in addition that $(u,\nabla p)$ is smooth in $x'$ and $\nabla u_d$ is bounded. We denote by $\langle \cdot, \cdot \rangle$ the usual inner product of $L^2 (\R^d_+)^d$. Take arbitrary $t,t'\in (0,T)$ with $t>t'$ and  $g\in C_0^\infty (\R^d_+)$. 
We introduce a mollification in time, and set $u^\rho=j_\rho * u$ and $p^\rho=j_\rho * p$, where $*$ here is the convolution in time and $j_\rho (\tau)\in C_0^\infty ((-\rho,\rho))$ is the mollifier with a small parameter $\rho>0$. The parameter $\rho>0$ is taken small enough so that $t'>2\rho$ and $t<T-2\rho$.
Then, we have the bound such as $\nabla p^\rho \in L^\infty (t',t; L^1_{uloc} (\R^d_+))$.
Note also that $(u^\rho, \nabla p^\rho)$ satisfies \eqref{weak.sol.B} for $t,t'$.
Fix arbitrary $\epsilon\in (0,1)$. Let $R\geq 1$ and $\chi_R$ be a smooth cut-off such that $\chi_R=1$ for $|x|\leq R$ and $\chi_R=0$ for $|x|\geq 2R$. Let $\mathbb{P}=I-\mathbb{Q}$ be the Helmholtz projection in $L^2(\R^d_+)$, where $\mathbb{Q}g = \nabla p_g$ is defined as in the proof of Proposition \ref{prop.smooth.appendix}. Note that ${\Delta'}^2 p_g$ and ${\Delta'}^2 \nabla p_g$ are smooth and decay fast enough so that $O(|x|^{-d-2})$ as $|x|\rightarrow \infty$.
Then one can verify that $\langle u^\rho (t), {\Delta'}^2 \mathbb{Q} g\rangle =0$, and thus,
$\langle u^\rho (t), \chi_R {\Delta'}^2\mathbb{Q} g \rangle = - \langle u^\rho (t) , (1-\chi_R) {\Delta'}^2 \mathbb{Q} g\rangle$. Hence, we can take $R=R_\epsilon$ large enough so that $|\langle u^\rho (t), \chi_{R_\epsilon} {\Delta'}^2 \mathbb{Q} g\rangle|\leq \epsilon$. We may also assume that ${\rm supp}\, g\subset \{|x|<R_\epsilon\}$.
Next, since $u^\rho (t) \chi_{R_\epsilon}\in L^1 (\R^d_+)$, there exists $u^{\rho, \epsilon}  (t)\in C_0^\infty (\R^d_+)^d$ such that $\| u^\rho (t)\chi_{R_\epsilon}- u^{\rho,\epsilon} (t) \|_{L^1 (\R^d_+)} \leq \epsilon$. We take $\tilde t>t$ which will be chosen later. 
Let $v$ be the velocity field defined by 
\begin{equation*}
v (s) = e^{-(\tilde t -s) \mathbf{A}} \Delta' \mathbb{P} g = \frac{1}{2\pi i} \int_\Gamma e^{(\tilde t -s) \lambda} (\lambda + \mathbf{A})^{-1} \Delta' \mathbb{P} g d\lambda
\end{equation*}
for $0\leq s<\tilde t$, where $\Gamma$ is the curve as in the proof of Proposition \ref{prop.L^p_u-L^q_u.semigroup}. 
Then $v$ satisfies $\partial_s v + \Delta v - \nabla q =0$ for $0\leq s < \tilde t$, where the associated pressure $\nabla q (s)$ is given by the formula $\nabla q (s) = \frac{1}{2 \pi i} \int_\Gamma e^{(\tilde t -s ) \lambda} \nabla q_\lambda d \lambda$. Here $\nabla  q_\lambda$ is the pressure for each resolvent problem. 
Note that for $\nabla q_\lambda$ we can apply the pointwise estimate stated in Remark \ref{rem.prop.smooth.appendix}, which gives the bounds 
\begin{align}\label{proof.thm.unique.time.1}
|{\nabla'}^\alpha \nabla' q (s,x)| & \leq \frac{C_{\tilde t-t'}}{(\tilde t -s)^\frac34 (1+|x|)^{d+\frac12}} ,\\
|{\nabla'}^\alpha \partial_{x_d} q (s,x) | & \leq \frac{C_{\tilde t-t'}}{(\tilde t-s)^\frac74 (1+|x|)^{d+\frac12}},\label{proof.thm.unique.time.2}
\end{align}
for $t'\leq s <\tilde t$ and $|\alpha|\leq 2$.
We see
\begin{align*}
\langle u^\rho (t), \Delta'^2 g \rangle & = \langle u^\rho (t), \chi_{R_\epsilon}  \Delta'^2 g \rangle \\
& = \langle u^\rho (t), \chi_{R_\epsilon} \Delta' v(t) \rangle\\
&   \quad - \langle u^\rho (t), \chi_{R_\epsilon} \big ( e^{-(\tilde t-t)\mathbf{A}} {\Delta'}^2 \mathbb{P} g - {\Delta'}^2 \mathbb{P} g \big ) \rangle + \langle u^\rho (t), \chi_{R_\epsilon} \Delta' \mathbb{Q} \Delta' g\rangle\,.
\end{align*} 
Then, from the identity 
\begin{align*}
\langle u^\rho (t), \chi_{R_\epsilon} \big ( e^{-(\tilde t-t)\mathbf{A}} {\Delta'}^2 \mathbb{P} g - {\Delta'}^2 \mathbb{P} g \big ) \rangle  & = \langle u^\rho (t) \chi_{R_\epsilon} - u^{\rho,\epsilon} (t) , \big ( e^{-(\tilde t-t)\mathbf{A}} {\Delta'}^2 \mathbb{P} g - {\Delta'}^2 \mathbb{P} g \big ) \rangle\\
& \quad + \langle  u^{\rho,\epsilon} (t) , \big ( e^{-(\tilde t-t)\mathbf{A}} {\Delta'}^2 \mathbb{P} g - {\Delta'}^2 \mathbb{P} g \big ) \rangle,
\end{align*}
we have 
\begin{align*}
|\langle u^\rho (t), \chi_{R_\epsilon} \big ( e^{-(\tilde t-t)\mathbf{A}} {\Delta'}^2 \mathbb{P} g - {\Delta'}^2 \mathbb{P} g \big ) \rangle | & \leq \epsilon \|  e^{-(\tilde t-t)\mathbf{A}} {\Delta'}^2 \mathbb{P} g - {\Delta'}^2 \mathbb{P} g \|_{L^\infty} \\
& \quad + \| u^{\rho,\epsilon} (t)\|_{L^2}  \| e^{-(\tilde t-t)\mathbf{A}} {\Delta'}^2 \mathbb{P} g - {\Delta'}^2 \mathbb{P} g \|_{L^2}\\
& \leq C \epsilon \| {\Delta'}^2 \mathbb{P} g \|_{L^\infty} \\
& \quad + \| u^{\rho,\epsilon} (t)\|_{L^2}  \| e^{-(\tilde t-t)\mathbf{A}} {\Delta'}^2 \mathbb{P} g - {\Delta'}^2 \mathbb{P} g \|_{L^2}.
\end{align*}
Here we have used the fact that the Stokes semigroup is a bounded semigroup in $L^\infty_\sigma (\R^d_+)$.
Note that $\|{\Delta'}^2 \mathbb{P} g\|_{L^\infty}$ is finite since $g\in C_0^\infty (\R^d_+)^d$, and 
there exists  $\tilde t_\epsilon >t$ such that $\| u^{\rho,\epsilon} (t)\|_{L^2}  \| e^{-(\tilde t-t)\mathbf{A}} {\Delta'}^2 \mathbb{P} g - {\Delta'}^2 \mathbb{P} g \|_{L^2} \leq \epsilon$ for any $\tilde t\in (t,\tilde t_\epsilon)$.
Hence we have 
\begin{align*}
|\langle u^\rho (t), \chi_{R_\epsilon} \big ( e^{-(\tilde t-t)\mathbf{A}} {\Delta'}^2 \mathbb{P} g - {\Delta'}^2 \mathbb{P} g \big ) \rangle | & \leq C \epsilon.
\end{align*}
The term $\langle u^\rho (t), \chi_{R_\epsilon} \Delta' v(t) \rangle$ is estimated by using the definition of the solution in the sense of distributions, for $\chi_{R_\epsilon} \Delta' v$ is admissible as a test function on the time interval $[t',t]$. Then we observe that 
\begin{align*}
\langle u^\rho (t), \chi_{R_\epsilon} \Delta' v(t) \rangle & = \int_{t'}^t \langle u^\rho, (\partial_s + \Delta) \chi_{R_\epsilon} \Delta' v(s) \rangle - \langle \nabla p^\rho, \chi_{R_\epsilon} \Delta' v(s) \rangle d s \\
& \quad + \langle u^\rho (t'), \chi_{R_\epsilon} \Delta' v(t') \rangle\\
& =\int_{t'}^t \langle u^\rho, \big ( \Delta \chi_R + 2 \nabla \chi_{R_\epsilon} \cdot \nabla  \big ) \Delta' v(s)  - \chi_{R_\epsilon} \Delta' \nabla q (s) \rangle  d s \\
& \quad - \int_{t'}^t \langle \nabla p^\rho, (\chi_{R_\epsilon} -1) \Delta' v(s) \rangle d s  + \langle u^\rho (t'), \chi_{R_\epsilon} \Delta' v(t') \rangle\\
&=\int_{t'}^t \langle u^\rho, \big ( \Delta \chi_{R_\ep} + 2 \nabla \chi_{R_\epsilon} \cdot \nabla  \big ) \Delta' v(s) \rangle d s\\
& \quad + \int_{t'}^t \langle u^\rho ,  (\nabla \chi_{R_\epsilon}) \Delta'  q (s) \rangle  d s  - \int_{t'}^t \langle \nabla p^\rho, (\chi_{R_\epsilon} -1) \Delta' v(s) \rangle d s \\
& \quad + \langle u^\rho (t'), (\chi_{R_\epsilon} -1) \Delta' v(t') \rangle + \langle u^\rho (t'),  \Delta' v(t') \rangle.
\end{align*}
Here we have used the fact $\langle u^\rho (s) ,   \nabla \big (\chi_{R_\epsilon} \Delta' q (s) \big ) \rangle =0$ and  
$\langle \nabla p^\rho (s) ,  \Delta' v(s) \rangle=0$ for each $s\in (t',t)$, where the latter is verified from ${\nabla'}^\alpha \nabla p^\rho (s) \in L^1_{uloc} (\R^d_+)$ with $|\alpha|\leq 2$ and the pointwise estimate such as  
\begin{align}\label{proof.thm.unique.time.3}
|{\nabla'}^\alpha v (s,x)| + |{\nabla'}^\alpha \nabla v (s,x) | \leq C_{\tilde t-t'} (\tilde t-s)^{-\frac34} (1+|x|)^{-d-\frac12},
\end{align}
which are obtained from Proposition \ref{prop.smooth.appendix} for the resolvent problem and the representation $v(s) = \frac{1}{2\pi i} \int_\Gamma e^{(\tilde t-s)\lambda} (\lambda + \mathbf{A})^{-1} \Delta' \mathbb{P} g d\lambda$. 
From \eqref{proof.thm.unique.time.1} and \eqref{proof.thm.unique.time.3},
we also observe that 
\begin{align*}
|\int_{t'}^t \langle u^\rho, \big ( \Delta \chi_{R_\ep} + 2 \nabla \chi_{R_\epsilon} \cdot \nabla  \big ) \Delta' v(s) \rangle d s|  & \leq C R_{\ep}^{-1} \| u^\rho\|_{L^\infty (t',t; L^1_{uloc}(\R^d_+))} \int_{t'}^t (\tilde t -s)^{-\frac34} d s\\
&  \leq C R_\ep^{-1},\\
|\int_{t'}^t \langle u^\rho ,  (\nabla \chi_{R_\epsilon}) \Delta'  q (s) \rangle  d s |  & \leq C R_\ep^{-1} \| u^\rho \|_{L^\infty (t',t; L^1_{uloc} (\R^d_+))}   \int_{t'}^t (\tilde t -s)^{-\frac34} d s \\
& \leq C R_\ep^{-1},\\
|  \int_{t'}^t \langle \nabla p^\rho, (\chi_{R_\epsilon} -1) \Delta' v(s) \rangle d s  | & \leq C R_\ep^{-\frac14} \| \nabla p^\rho \|_{L^\infty(t',t; L^1_{uloc}(\R^d_+))} \int_{t'}^t (\tilde t -s)^{-\frac34} d s \\
& \leq C R_\ep^{-\frac14},
\end{align*}
and similarly,
\begin{align*}
|\langle u^\rho (t'), (\chi_{R_\epsilon} -1) \Delta' v(t') \rangle|\leq C (\tilde t-t)^{-\frac34} R_\ep^{-\frac14}.
\end{align*}
Therefore, we can take the limit $\ep\rightarrow 0$, which leads to $R_\ep\rightarrow \infty$ and $\tilde t\rightarrow t$, resulting in the identity
\begin{align*}
\langle u^\rho (t) , {\Delta'}^2 g\rangle = \langle u^\rho (t'), {\Delta'} e^{-(t-t')\mathbf{A}} \Delta' \mathbb{P} g \rangle.
\end{align*}
Then we take the limit $\rho\rightarrow 0$, which gives 
\begin{align}\label{proof.thm.unique.time.4}
\langle u (t) , {\Delta'}^2 g\rangle = \langle u (t'), {\Delta'} e^{-(t-t')\mathbf{A}} \Delta' \mathbb{P} g \rangle.
\end{align}
Finally, we take the limit $t'\rightarrow 0$ in \eqref{proof.thm.unique.time.4}. Then the time continuity in the weak sense, which is assumed in the definition of solutions, together with the pointwise estimate for $e^{-(t-t')\mathbf{A}} \Delta' \mathbb{P} g$ similar to \eqref{proof.thm.unique.time.3} implies that 
\begin{align}\label{proof.thm.unique.time.5}
\langle u (t) , {\Delta'}^2 g\rangle = 0.
\end{align}
Since $g\in C_0^\infty (\R^d_+)^d$ is arbitrary, we conclude that for a.e. $t \in (0,T), x_d>0$, $u(t,x',x_d) = (a' (t,x_d), 0)^\top$ by arguing as in the proof of Theorem \ref{thm.unique}.
Once this is shown, the argument is parallel to the proof of  Theorem \ref{thm.unique}; we can show from \eqref{weak.sol.B} that $p$ is independent of $x_d$ and also $\Delta' p=0$ for a.e. $t\in (0,T)$, which implies $p (t,x) = D(t) \cdot x' + c(t)$ for some $D\in L^1_{loc} (0,T)^{d-1}$ and $c\in L^1_{loc} (0,T)$. The last statement for the conclusion $u=0$ is proved in the same manner as in Theorem \ref{thm.unique}, so the details are omitted. The proof is complete.
\end{proof}

\section*{Acknowledgement}
The first author is partially supported by JSPS Program for Advancing Strategic International Networks
to Accelerate the Circulation of Talented Researchers, 'Development of Concentrated Mathematical Center Linking to Wisdom of  the Next Generation', which is organized by Mathematical Institute of Tohoku University.
The second author is partially supported by JSPS grant 25707005.
The third author acknowledges financial support from the French Agence Nationale de la Recherche under grant ANR-16-CE40-0027-01, as well as from the IDEX of the University of Bordeaux for the BOLIDE project.

\small
\bibliographystyle{abbrv}
\bibliography{resolventuloc}

\begin{thebibliography}{10}

\bibitem{Abe15}
K.~Abe.
\newblock The {N}avier-{S}tokes equations in a space of bounded functions.
\newblock {\em Comm. Math. Phys.}, 338(2):849--865, 2015.

\bibitem{Abe16}
K.~Abe.
\newblock On estimates for the {S}tokes flow in a space of bounded functions.
\newblock {\em J. Differential Equations}, 261(3):1756--1795, 2016.

\bibitem{AG13}
K.~Abe and Y.~Giga.
\newblock Analyticity of the {S}tokes semigroup in spaces of bounded functions.
\newblock {\em Acta Math.}, 211(1):1--46, 2013.

\bibitem{AG14}
K.~Abe and Y.~Giga.
\newblock The {$L^\infty$}-{S}tokes semigroup in exterior domains.
\newblock {\em J. Evol. Equ.}, 14(1):1--28, 2014.

\bibitem{AGH15}
K.~Abe, Y.~Giga, and M.~Hieber.
\newblock Stokes resolvent estimates in spaces of bounded functions.
\newblock {\em Ann. Sci. \'Ec. Norm. Sup\'er. (4)}, 48(3):537--559, 2015.

\bibitem{AGSS15}
K.~Abe, Y.~Giga, K.~Schade, and T.~Suzuki.
\newblock On the {S}tokes semigroup in some non-{H}elmholtz domains.
\newblock {\em Arch. Math. (Basel)}, 104(2):177--187, 2015.

\bibitem{AKLN15}
D.~M. Ambrose, J.~P. Kelliher, M.~C. Lopes~Filho, and H.~J. Nussenzveig~Lopes.
\newblock Serfati solutions to the 2{D} {E}uler equations on exterior domains.
\newblock {\em J. Differential Equations}, 259(9):4509--4560, 2015.

\bibitem{ARCD04}
J.~M. Arrieta, A.~Rodriguez-Bernal, J.~W. Cholewa, and T.~Dlotko.
\newblock Linear parabolic equations in locally uniform spaces.
\newblock {\em Math. Models Methods Appl. Sci.}, 14(2):253--293, 2004.

\bibitem{BJ12}
H.-O. Bae and B.~J. Jin.
\newblock Existence of strong mild solution of the {N}avier-{S}tokes equations
  in the half space with nondecaying initial data.
\newblock {\em J. Korean Math. Soc.}, 49(1):113--138, 2012.

\bibitem{BS15}
T.~{Barker} and G.~{Seregin}.
\newblock {A necessary condition of possible blowup for the Navier-Stokes
  system in half-space}.
\newblock {\em ArXiv e-prints}, Aug. 2015.

\bibitem{CPS00}
M.~Cannone, F.~Planchon, and M.~Schonbek.
\newblock Strong solutions to the incompressible {N}avier-{S}tokes equations in
  the half-space.
\newblock {\em Comm. Partial Differential Equations}, 25(5-6):903--924, 2000.

\bibitem{CD04}
J.~W. Cholewa and T.~Dlotko.
\newblock Cauchy problems in weighted {L}ebesgue spaces.
\newblock {\em Czechoslovak Math. J.}, 54(129)(4):991--1013, 2004.

\bibitem{DGV17}
A.-L. Dalibard and D.~G\'erard-Varet.
\newblock Nonlinear boundary layers for rotating fluids.
\newblock {\em Anal. PDE}, 10(1):1--42, 2017.

\bibitem{DP14}
A.-L. Dalibard and C.~Prange.
\newblock Well-posedness of the {S}tokes-{C}oriolis system in the half-space
  over a rough surface.
\newblock {\em Anal. PDE}, 7(6):1253--1315, 2014.

\bibitem{DZ14}
R.~Danchin and P.~Zhang.
\newblock Inhomogeneous {N}avier-{S}tokes equations in the half-space, with
  only bounded density.
\newblock {\em J. Funct. Anal.}, 267(7):2371--2436, 2014.

\bibitem{DHP01}
W.~Desch, M.~Hieber, and J.~Pr{\"u}ss.
\newblock {$L^p$}-theory of the {S}tokes equation in a half space.
\newblock {\em J. Evol. Equ.}, 1(1):115--142, 2001.

\bibitem{FK64}
H.~Fujita and T.~Kato.
\newblock On the {N}avier-{S}tokes initial value problem. {I}.
\newblock {\em Arch. Rational Mech. Anal.}, 16:269--315, 1964.

\bibitem{Galdi_book}
G.~P. Galdi.
\newblock {\em An introduction to the mathematical theory of the
  {N}avier-{S}tokes equations}.
\newblock Springer Monographs in Mathematics. Springer, New York, second
  edition, 2011.
\newblock Steady-state problems.

\bibitem{GG08}
M.~Geissert and Y.~Giga.
\newblock On the {S}tokes resolvent equations in locally uniform {$L^p$} spaces
  in exterior domains.
\newblock In {\em Functional analysis and evolution equations}, pages 307--314.
  Birkh\"{a}user, Basel, 2008.

\bibitem{GV97}
J.~Ginibre and G.~Velo.
\newblock The {C}auchy problem in local spaces for the complex
  {G}inzburg-{L}andau equation. {II}. {C}ontraction methods.
\newblock {\em Comm. Math. Phys.}, 187(1):45--79, 1997.

\bibitem{JSS12}
H.~Jia, G.~Seregin, and V.~Sver\'ak.
\newblock Liouville theorems in unbounded domains for the time-dependent
  {S}tokes system.
\newblock {\em J. Math. Phys.}, 53(11):115604, 9, 2012.

\bibitem{JS}
H.~Jia and V.~{\v{S}}ver{\'a}k.
\newblock Local-in-space estimates near initial time for weak solutions of the
  navier-stokes equations and forward self-similar solutions.
\newblock {\em Invent. Math.}, 196(1):233--265, 2014.

\bibitem{Kato84}
T.~Kato.
\newblock Strong {$L^{p}$}-solutions of the {N}avier-{S}tokes equation in
  {${\bf R}^{m}$}, with applications to weak solutions.
\newblock {\em Math. Z.}, 187(4):471--480, 1984.

\bibitem{KS07}
N.~Kikuchi and G.~Seregin.
\newblock Weak solutions to the {C}auchy problem for the {N}avier-{S}tokes
  equations satisfying the local energy inequality.
\newblock In {\em Nonlinear equations and spectral theory}, volume 220 of {\em
  Amer. Math. Soc. Transl. Ser. 2}, pages 141--164. Amer. Math. Soc.,
  Providence, RI, 2007.

\bibitem{Koz98}
H.~Kozono.
\newblock {$L^1$}-solutions of the {N}avier-{S}tokes equations in exterior
  domains.
\newblock {\em Math. Ann.}, 312(2):319--340, 1998.

\bibitem{lemariebook}
P.~G. Lemari\'e-Rieusset.
\newblock {\em Recent developments in the {N}avier-{S}tokes problem}, volume
  431 of {\em Chapman \& Hall/CRC Research Notes in Mathematics}.
\newblock Chapman \& Hall/CRC, Boca Raton, FL, 2002.

\bibitem{Leray}
J.~Leray.
\newblock Sur le mouvement d'un liquide visqueux emplissant l'espace.
\newblock {\em Acta Math.}, 63:193--248, 1934.

\bibitem{LOW16}
K.~{Li}, T.~{Ozawa}, and B.~{Wang}.
\newblock {Dynamical Behavior for the Solutions of the Navier-Stokes Equation}.
\newblock {\em ArXiv e-prints}, Aug. 2016.

\bibitem{MMP17b}
Y.~{Maekawa}, H.~{Miura}, and C.~{Prange}.
\newblock {Local energy weak solutions for the Navier-Stokes equations in the
  half-space}.
\newblock {\em ArXiv e-prints}, Nov. 2017.
\newblock to appear in \emph{Comm. Math. Phys.}

\bibitem{MT06}
Y.~Maekawa and Y.~Terasawa.
\newblock The {N}avier-{S}tokes equations with initial data in uniformly local
  {$L^p$} spaces.
\newblock {\em Differential Integral Equations}, 19(4):369--400, 2006.

\bibitem{MS95}
A.~Mielke and G.~Schneider.
\newblock Attractors for modulation equations on unbounded domains---existence
  and comparison.
\newblock {\em Nonlinearity}, 8(5):743--768, 1995.

\bibitem{P18Xedp}
C.~Prange.
\newblock Infinite energy solutions to the navier-stokes equations in the
  half-space and applications.
\newblock {\em S\'eminaire Laurent Schwartz-EDP et applications}, 2017-2018.

\bibitem{Ser12}
G.~Seregin.
\newblock A certain necessary condition of potential blow up for
  {N}avier-{S}tokes equations.
\newblock {\em Comm. Math. Phys.}, 312(3):833--845, 2012.

\bibitem{Serf95}
P.~Serfati.
\newblock Structures holomorphes \`a faible r\'egularit\'e spatiale en
  m\'ecanique des fluides.
\newblock {\em J. Math. Pures Appl. (9)}, 74(2):95--104, 1995.

\bibitem{Sol03}
V.~A. Solonnikov.
\newblock On nonstationary {S}tokes problem and {N}avier-{S}tokes problem in a
  half-space with initial data nondecreasing at infinity.
\newblock {\em J. Math. Sci. (N. Y.)}, 114(5):1726--1740, 2003.
\newblock Function theory and applications.

\bibitem{TTY10}
Y.~Taniuchi, T.~Tashiro, and T.~Yoneda.
\newblock On the two-dimensional {E}uler equations with spatially almost
  periodic initial data.
\newblock {\em J. Math. Fluid Mech.}, 12(4):594--612, 2010.

\bibitem{U87}
S.~Ukai.
\newblock A solution formula for the {S}tokes equation in {${\bf R}^n_+$}.
\newblock {\em Comm. Pure Appl. Math.}, 40(5):611--621, 1987.

\end{thebibliography}

\end{document}